\documentclass[11pt,leqno]{amsart}
\usepackage[latin1]{inputenc}
\usepackage{dsfont}
\usepackage{amsxtra}
\usepackage{amsmath}
\usepackage{amscd}
\usepackage{amssymb}
\usepackage{amsfonts}
\usepackage[all]{xy}
\usepackage{mathrsfs}
\usepackage{amsthm} 
\usepackage{color}
\usepackage{stmaryrd}
\usepackage{mathabx}
\usepackage{enumitem}
\usepackage[left=2cm, right=2cm, top=2cm]{geometry}

\def\bbA{\mathbb{A}}
\def\bbC{\mathbb{C}}

\def\bbN{\mathbb{N}}

\def\k{\mathbb{Q}}

\def\scrS{\mathscr{S}}

\def\frakg{\mathfrak{g}}

\def\frakl{\mathfrak{l}}

\def\frakC{\mathfrak{C}}
\def\frakD{\mathfrak{D}}

\def\frakh{\mathfrak{H}}

\def\frakN{\mathfrak{N}}

\def\frakp{\mathfrak{P}}

\def\frakS{\mathfrak{S}}

\def\frakU{\mathfrak{U}}

\def\calC{\mathcal{C}}

\def\calE{\mathcal{E}}
\def\calF{\mathcal{F}}
\def\calG{\mathcal{G}}

\def\calO{\mathcal{O}}

\def\calU{\mathcal{U}}

\def\frakg{\mathfrak{g}}
\def\frakh{\mathfrak{h}}

\def\frakl{\mathfrak{l}}

\def\frakn{\mathfrak{n}}
\def\frakp{\mathfrak{p}}

\def\fraku{\mathfrak{u}}

\def\frakgl{\mathfrak{gl}}


\def\1{{\mathbf{1}}}



 \def\AAA{{\mathbb{A}}}
 \def\Ab{{Ab}}
  \def\Ac{\mathcal{A}}
\def\ad{\operatorname{ad}\nolimits}

\def\Aff{{{\mathscr{A\!}f\!f}}}

 \def\an{{\on{an}}}
 \def\Ass{{{\mathcal A}ss}}
 \def\Aut{\operatorname{Aut}\nolimits}


\def\be{\begin{equation}}
\def\Bc{{\mathcal{B}}}

\def\BM{{\on{BM}}}
\def\Bun{{\on{Bun}}}


\def\Cc {{\mathcal {C}}}
\def\CC{{\mathbb{C}}}

\def\Co{{\on{C}}}

\def\Coh{{\on{Coh}}}
\def\coh{{\on{coh}}}


\def\D{\operatorname{D}\nolimits}
\def\DD{{\mathbb{D}}}
\def\del{\partial}

\def\Den{{\mathfrak{D}}}
\def\dg{{\on{dg}}}
\def\dgVect{{\on{dgVect}}}

\def\DK{{\on{DK}}}

\def\dim{\mathrm{dim}}


\def\Ec{\mathcal E}
\def\ee{\end{equation}}

\def\eps{{\varepsilon}}
\def\Eq{{\on{Eq}}}

\def\ev{{\operatorname{ev}\nolimits}}

\def\Ext{\operatorname{Ext}\nolimits}

\def\Fc{{\mathcal F}}
\def\FCoh{{\on{FCoh}}}
\def\FF{{\mathbb {F}}}
\def\FILT{{\on{FILT}}}


\def\Gc{\mathcal{G}}
\def\gen{{\mathfrak {g}}}

\def\gl{{\gen\len}}

\def\Gpd{{\on{Gpd}}}
\def\Gpdg{{\mathfrak{Gpd}}}


\def\h{{\on{h}}}

\def\HH{{\mathbb{H}}}

  \def\hocolim{{  \underrightarrow {\on{holim}} }}
 \def\holim{{  \underleftarrow {\on{holim}} }}
\def\Hom{\operatorname{Hom}\nolimits}

\def\hra{{\, \hookrightarrow}}

\def\Hc{{\mathcal{H}}}


\def\Id{\operatorname{Id}\nolimits}

\def\Ipt{{1\on{pt}}}


\def\k {{\mathbf{k}}}
\def\Kan{{\on{Kan}}}
\def\Kc{\mathcal{K}}
\def\Ker{{\operatorname{Ker}\nolimits}}


\def\Lc{{\mathcal {L}}}

\def\len{{\mathfrak{l}}}

\def\lla{\longleftarrow}

\def\lra{{\longrightarrow}}


\def\Map{{\on{Map}}}

\def\mc{{\mathbf{mc}}}
\def\MC{{\operatorname{MC}\nolimits}}
\def\Mc{{\mathcal{M}}}

\def\Mor{{\on{Mor}}}


\def\Nc{{\mathcal{N}}}
\def\NC{{NC}}
\def\ne{{\mathbf{ne}}}

\def\Ob{\operatorname{Ob}\nolimits}
\def\Oc{{\mathcal O}}

\def\ol{\overline}
\def\on{\operatorname}
\def\oo{{\infty}}
\def\op{{\operatorname{op}\nolimits}}


\def\Pc{\mathcal{P}}
\def\pol{{\on{pol}}}
\def\prim{{\on{prim}}}

\def\pt{{\on{pt}}}
\def\Perv{\operatorname{Perv}\nolimits}


\def\Qc{{\mathcal {Q}}}
\def\qcoh{{\on{qcoh}}}

\def\Qis{{\on{Qis}}}

 \def\QQ{{\mathbb{Q}}}
 \def\Quot{{\Qc uot}}

\def\R{\operatorname{R}\nolimits}

\def\Rc{{\mathcal{R}}}

\def\RHom{\operatorname{RHom}\nolimits}
\def\rk{{\operatorname{rk}\nolimits}}

\def\RR{{\mathbb {R}}}

\def\Sc{\mathcal {S}}
\def\Sch{{\on{Sch}}}

\def\Set{{Set}}
\def\SES{\on{SES}}

\def\Sing{{\on{Sing}}}

\def\Spec{{\operatorname{Spec}\nolimits}}

\def\sset{{\Delta^\circ \Set}}
\def\St{{{St}}}
\def\Stan{{Stan}}
\def\Stang{{\mathfrak{Stan}}}

\def\Stg{{\mathfrak{St}}}
\def\Stg{{\mathfrak St}}

\def\Sym{{\operatorname{Sym}\nolimits}}


\def\tf{{\on{tf}}}

\def\Top{\operatorname{Top}\nolimits}

\def\Tot{{\operatorname{Tot}\nolimits}}


\def\Uc{{\mathcal {U}}}
\def\Uen{{\mathfrak{U}}}

\def\ul{\underline}


\def\Vc{{\mathcal{V}}}
\def\Vect{{\on{Vect}}}

\def\vrk{{\on{vrk}}}

\def\W{{\operatorname{W}\nolimits}}

\def\wt{\widetilde}


\def\X{\operatorname{X}\nolimits}


 
 \def\ZZ{\mathbb{Z}}


\def\={{\,\simeq \,}}

\newtheorem{theorem}[equation]{\bf{Theorem}}
\newtheorem{thm}[equation]{\bf{Theorem}}
\newtheorem{lemma}[equation]{Lemma}
\newtheorem{lem}[equation]{Lemma}
\newtheorem{cor}[equation]{Corollary}

\newtheorem{proposition}[equation]{Proposition}
\newtheorem{prop}[equation]{Proposition}

\newtheorem{definition}[equation]{Definition}
\newtheorem{Defi}[equation]{Definition}

\theoremstyle{remark} 
\newtheorem{rem}[equation]{Remark}
\newtheorem{rems}[equation]{Remarks}
\newtheorem{remark}[equation]{Remark}
\newtheorem{example}[equation]{Example}
\newtheorem{ex}[equation]{Example}
\newtheorem{exas}[equation]{Examples}

\numberwithin{itemcounter}{subsection}
\numberwithin{equation}{subsection}

\makeatletter
\let\c@equation\c@subsubsection

\let\c@subsubsection\c@subsubsection
\makeatother

\title[The COHA   of a   surface]
{The cohomological Hall algebra  of  a surface and factorization cohomology}

\author{M. Kapranov, E. Vasserot}

\date{}

\begin{document}
\maketitle

\begin{abstract}
For a smooth quasi-projective surface $S$ over $\CC$ we consider the Borel-Moore homology of the
stack of coherent sheaves on $S$ with compact support and make this space into an associative
algebra by a version of the Hall multiplication. This multiplication involves data  (virtual pullbacks)
 governing the derived moduli stack, i.e., the perfect obstruction theory naturally existing on the
 non-derived stack. By restricting to sheaves with support of given dimension,
 we obtain several types of Hecke operators.  In particular, we study  $R(S)$, the Hecke
 algebra of $0$-dimensional
 sheaves. For the case $S=\AAA^2$, we show that $R(S)$ is an enveloping algebra
 and identify it, as a vector space, with the symmetric algebra of an explicit
 graded vector space.
 For a general $S$,  we find the
  graded dimension of
 $R(S)$, using the techniques of factorization cohomology.

 \end{abstract}

\setcounter{tocdepth}{2}

\tableofcontents

\setcounter{section}{-1}

\vfill\eject

\section{Introduction}

\subsection {Motivation}  A large part of the classical theory of automorphic forms 
for $GL_n$ over functional fields can be interpreted in terms of Hall algebras of abelian categories
\cite{kapranov-eis},  \cite{KSV}. Relevant here is
$Coh(C)$, the category of coherent sheaves on a smooth projective curve $C/\FF_q$.  Taking the Hall algebra
of $Bun(C)$, the subcategory of vector bundles, produces (unramified) automorphic forms, while
 $Coh_0(C)$, the category
of torsion sheaves, gives rise to the Hecke algebra.

\smallskip

The classical Hall algebra of a category such as $Coh(C)$ consists of functions on ($\FF_q$-) points of the
moduli stack of objects and so admits various modifications, cf.\cite[Ch.~8]{DyK}. 
Most important is the {\em cohomological Hall algebra}
(COHA) where we take the {\em  cohomology of the stack} instead of  the space of functions on 
the set of its points  \cite{KS}.  This allows us
to work over more general fields such as $\CC$. 

\smallskip

Study of Hall algebras (classical or cohomololgical) of the categories $Coh(S)$ for   varieties $S$ of dimension $d>1$
can be therefore considered as a  higher-dimensional analog of the theory of automorphic forms. In this paper
we consider the case of surfaces ($d=2$)  over $\CC$ and  study their  COHA.  In this case we have a whole
new range of motivations coning from gauge theory, where cohomology of the moduli spaces of instantons is an object
of longstanding interest  \cite{nekrason-inst}, \cite{AGT}, \cite{BFN}. 

\smallskip

\subsection{Description of the results} 
The familiar 2-fold subdivision into \emph{automorphic forms vs. Hecke operators} now becomes 3-fold: we have categories
$Coh_m(S)$, $m=0,1,2$,  of {\em purely $m$-dimensional sheaves}, see \S \ref{subsec:stack-coh}. 
Here, $Coh_2(S)$ consists of vector bundles, while $Coh_0(S)$ is the category of punctual sheaves.  
 An important feature is that the COHA of $Coh_{m-1}(S)$ acts on that of $Coh_m(S)$ by \emph{Hecke operators}. 
 
\smallskip
 
 We denote by $R(S)$ the COHA of the category 
   $Coh_0(S)$. It is  the most immediate analog of
 the unramified Hecke algebra of the classical theory and we relate it to objects studied before. 
 
 \smallskip
 
In the \emph{flat case} $S=\AAA^2$, the algebra $R(\AAA^2)$
  is identified with the direct sum, over $n\geq 0$, of the $GL_n$-equivariant
  Borel-Moore homology of the {\em  commuting varieties}
of $\gl_n$.

\smallskip
 
Our first main result, Theorem \ref{thm:flat}, shows that  
$R(\AAA^2)$ is an enveloping algebra
and is identified, as a graded vector space, with the symmetric algebra of 
an explicit graded vector space $\Theta$. It is convenient
to write $\Theta=H_\bullet^\BM(\AAA^2)\otimes\Theta'$, where the first factor is $1$-dimensional, in homological degree $4$.

\smallskip 
 
 For a general surface $S$, the algebra $R(S)$ 
 is non-commutative.  Our second main result, Theorem \ref{thm:PBW},
 provides a version of Poincar\'e-Birkhoff-Witt theorem for $R(S)$. It exhibits
 a system of generators as well as determines the graded dimension of $R(S)$. More precisely, it establishes
 an isomorphism of graded vector spaces
 \be\label{eq:intro-PBW}
\sigma: \Sym(H_\bullet^\BM(S)\otimes\Theta')   \,\simeq \,  R(S).
 \ee
Like the classical PBW isomorphism for enveloping algebras, $\sigma$ is given by the symmetrized product
map on the space of generators. 


\smallskip

\subsection{Role of factorization algebras}
  Our proof of Theorem  \ref{thm:flat} is based on  the techniques of factorization homology
  \cite{CG}, \cite{ginot}, \cite{GL}, \cite{lurie-HA}. 
    More precisely, we consider the cochain lift $\Rc(S)$ of
 $R(S)$. This can be seen as a homotopy associative   algebra whose cohomology is $R(S)$. 
 For any open set $U\subset S$ we have a similarly defined  algebra $\Rc(U)$.
 Further, one can consider $U$ to be any open set in the complex analytic topology.
 In this case $\Coh_0(U)$ can be considered as an analytic stack and so its Borel-Moore homology and 
 our entire construction of the COHA make sense. 
 
 \smallskip
 
 We prove in Theorem \ref{prop:loc-cst} that
the assignment 
  $U\mapsto \Rc(U)$ is a {\em factorization coalgebra in the category of $E_1$- (i.e., homotopy associative dg-)
  algebras}.  
  This is a reflection of a more fundamental fact:  $U\mapsto \Coh_0(U)$
  is
  a {\em factorization algebra  in the category of analytic stacks}, see Proposition \ref{prop:fact}. 
  These considerations allow us to  lift  $\sigma$ to a morphism of factorization coalgebras
   in the category of  dg-vector spaces
 and deduce the global isomorphism from the local one, i.e., from the case when $S$ is an open ball which
 is equivalent to that of  $S=\AAA^2$. 
 
 \smallskip
 
 In fact, the identification \eqref{eq:intro-PBW} is suggestive of
  {\em non-abelian Poincar\'e duality} (NAPD), compare
  \cite[thm.~5.5.6.6]{lurie-HA}, although it does not seem to be a formal consequence of it.
   NAPD   can be extended to include, for instance, the classical 
    Atiyah-Bott theorem
 on the cohomology of $\Bun_G(\Sigma)$, the moduli stack of  holomorphic $G$-bundles on a compact
 Riemann surface $\Sigma$, see \cite{GL}.  In that latter setting, we have that $H^\bullet(BG)=\Sym(V)$
 (with $V$ being the space of characteristic classes for $G$-bundles), and
 \[
 H^\bullet(\Bun_G(\Sigma)) \,\simeq \, \Sym(H_\bullet(\Sigma)\otimes V). 
 \]

 \subsection {Derived nature of the COHA} \label{subsec:der-nat-coha}
 As a vector space, our COHA is the Borel-Moore
 homology of the Artin stack $\Coh(S)$ (the moduli stack of objects of $Coh(S)$), i.e., it is
  the cohomology of the dualizing complex:
 \[
 H_\bullet^\BM(\Coh(S)) \,=\, H^{-\bullet}(\Coh(S), \omega_{\Coh(S)}). 
 \]
 Since $S$ is a surface, $\Coh(S)$ is singular due to obstructions encoded by $\Ext^2$, so the dualizing complex
 is highly non-trivial. However,  $\Coh(S)$ is in fact a truncation
 of a finer object, the {\em derived moduli stack} $R\Coh(S)$,  smooth in the derived sense,
 see \cite{toen-vezzosi}, \cite{T14}.   While the vector space underlying our COHA depends on $\Coh(S)$ alone,
 the multiplication makes appeal   to the derived structure: we use the refined pullbacks corresponding to the
 perfect obstruction theories on $\Coh(S)$ and  on the related stack of short exact sequences. 
 So our construction has appearance of applying some cohomology theory to the derived stack $R\Coh(S)$ itself
 and using its natural functorialities for morphisms of derived stacks.  
 More recently, this approach has been implemented by M. Porta and F. Sala
 \cite{porta-sala} at the K-theoretical level.

 \subsection{Relation to other work}
 The COHA of a surface that we consider here is a non linear analog of the COHA associated to the preprojective algebra
 of the Jordan quiver considered in \cite{SV13}, see, e.g., \cite{SV18} for the case of arbitrary quivers.
 M. Kontsevich and Y. Soibelman introduced in \cite{KS}  cohomological Hall algebras for 3-dimensional Calabi-Yau
 categories, by taking cohomology of the moduli stack of objects with coefficients in the natural perverse
 sheaf of ``vanishing cycles" with respect to the Chern-Simons functional. Although the details of the approach
 have been worked out only for  quiver-type situations, see, e.g., \cite{davison-comparison} for a comparison with  \cite{SV13},
it seems applicable, in principle,
  to the category of compactly supported
 coherent sheaves on any 3-dimensional Calabi-Yau manifold $M$. In particular, our COHA for a surface $S$
 should be related to the Kontsevich-Soibelman COHA for $M$ 
 the total space of the anticanonical bundle on $S$.  
 
 \smallskip
 
 Instead of Borel-Moore homology of the stack $\Coh(S)$, one can take its Chow groups or its algebraic K-theory,
 in particular, study K-theoretic analogs of the Hecke operators. This approach was developed
 by A. Negut \cite{N17} who studied the K-theoretic effect of explicit Hecke correspondences on the moduli spaces
 and, very recently, by Y. Zhao \cite{zhao} who defined independently the K-theoretic Hall algebra of $0$-dimensional 
 sheaves by a method  similar to ours.
 On the other hand, algebraic K-theory, being a more
 rigid object than homology, does not easily localize on the complex analytic topology  and so determining 
 the size of the resulting objects is  more difficult.

 \smallskip
 
 In the particular case where $S$ is the cotangent bundle to a smooth curve, 
 other versions of the COHA (of 0-dimensional sheaves and of purely 1-dimensional sheaves) of $S$
 appeared recently in \cite{Mi},  \cite{SS}.
 
 \smallskip

 After this paper first appeared on arXiv, there have been some important new developments. Thus, M. Porta and F. Sala \cite{porta-sala}  have defined a categorical
 and a K-theoretical version of the COHA for surfaces, using the derived enhancement
 of the stack of coherent sheaves. Further, A. Khan \cite{khan} introduced a motivic
 framework for Borel-Moore cohomology for Artin stacks which could potentially
 simplify the treatment of some questions considered in this paper.

 \subsection{Structure of the paper} In \S \ref{sec:gen-stacks} we discuss the basic generalities 
 on groupoids and stacks,
 including higher stacks understood as homotopy sheaves of simplicial sets. We pay special attention to
 Dold-Kan and Maurer-Cartan (Deligne) stacks associated to $3$-term complexes and dg-Lie algebras.
 These constructions are used in \S \ref{sec:stack-ext} to describe stacks of extensions (needed for
 defining the Hall multiplication) and filtrations (needed to prove associativity). 
 
\smallskip
 
 In \S \ref{sec:BM} we define and study the Borel-Moore homology of Artin stacks. This concept, which is
 a topological analog of  A. Kresch's  concept of Chow groups for Artin stacks
 \cite{K99}, can be defined easily once we have a good formalism of constructible derived categories and their
 functorialities $f^{-1}, Rf_*, Rf_c, f^!$. While in the ``classical'' approach (sheaves first, complexes later)
 this may present complications, cf. \cite{O07}, \cite{olsson-I} for a discussion, the modern point of
 view of homotopy descent cf. \cite{GR},  allows a straightforward
  definition of the {\em enhanced} derived category of a stack
 as the $\infty$-categorical limit of the corresponding categories for schemes. The desired functorialities
 are also inherited from the case of schemes. We  study virtual pullback in this context. 
 
\smallskip
 
 The COHA is defined in \S \ref{sec:COHA}, first as a vector space, then as an associative
 algebra.
 
\smallskip
 
 In \S \ref{sec:Hecke} we consider subalgebras in the COHA corresponding to sheaves with various
 condition on the dimension of support. These subalgebras play the role of Hecke algebras, since
 they act on other subspaces in COHA (corresponding to sheaves whose dimension of support is
 bigger) by natural ``Hecke operators'' (operators formally dual to those of the Hall multiplication). 
 
\smallskip
 
 In \S  \ref{sec:local-Hec} we study the flat Hecke algebra $R(\AAA^2)$  
 by relating it to the earlier work on commuting varieties in $\gen\len_n$. Here we prove Theorem 
 \ref{thm:flat}. 
 
\smallskip
 
 Finally, in \S \ref{sec:global-Hec} we globalize the consideration of  \S  \ref{sec:local-Hec} by describing the
 global Hecke algebra $R(S)$ as the factorization (co)homology of an appropriate factorization
 (co)algebra.  This leads to the proof of Theorem \ref{thm:PBW} .
 
 \smallskip
 
 The paper has two Appendices. 
  Appendix \ref{sec:appendix},   logically preceding the entire paper, 
   provides a reminder on $\oo$-categories and dg-categories.
  Appendix \ref{sec:hom-can-Euler}
 spells out the homotopy unique nature of  Euler (top Chern) classes and orientation classes
 at the cochain level. It logically depends on  \S \ref{sec:gen-stacks}-\ref{sec:BM}
 (i.e., assumes the formalism of stacks presented in these sections) but precedes
 \S \ref{sec:global-Hec} for which it provides necessary material.

 \subsection{Acknowledgements} We are grateful to B. Feigin, G. Ginot, B. Hennion,  A. Negut, M. Porta, 
 N. Rozenblyum and Y. Soibelman for useful discussions.  We would like to thank  J. Sch\"urrmann for
 pointing out  an inaccuracy in an earlier version, and B. Davison for pointing out to us that the statement about commutativity of $R(\bbA^2)$ in an earlier version is not true. We are also grateful to the referee
 for numerous detailed remarks and suggestions that helped us improve the
 paper.  The research of M.K. was supported by World Premier International Research Center Initiative (WPI Initiative), 
 MEXT, Japan and by the IAS School of Mathematics.
 The research of E.V. was supported by the grant ANR-18-CE40-0024. 
 
 \smallskip
 
 The results of this work have been presented at several conferences: 
 CMND international conference on geometric representation theory and symplectic varieties, 06/2018,
 University of Notre Dame\,;\,
 Vertex algebras, factorization algebras and applications, 07/2018, Kavli IPMU\,;\,
 Vertex algebras and gauge theory, 12/2018, Simons Center\,;\,
 TCRT6, 01/2019, Academia Sinica). 
 We are grateful to the organizers of these events for the invitations to speak.

 \medskip

\section{Generalities on stacks}\label{sec:gen-stacks}

\subsection{Groupoids and simplicial sets}

A {\em groupoid} is a category $G$  in which all morphisms are invertible. 
We write $G= \{G_1 \rightrightarrows G_0\}$ where $G_0 = \Ob(G)$ is the class of 
objects and  $G_1=\Mor(G)$ is the class of morphisms.
For an essentially small groupoid $G$ let $\pi_0(G)$ be the set of 
isomorphisms classes of objects of $G$.
For any object $x\in G_0$ let $\pi_1(G,x) = \Aut_G(x)$ be the automorphism group of $x$. 
All groupoids in the sequel will be assumed essentially small. 

\smallskip

Small groupoids form a 2-category $\Gpdg$. For each groupoids $G_1,$ $G_2$ we have
a groupoid whose objects are functors $G_1\to G_2$ and morphisms are natural 
transformations
of functors. 
We will refer to 1-morphisms of  $\Gpdg$  as simply
{\em morphisms of groupoids}. 
Considered with this notion of morphisms, groupoids form a category which we denote $\Gpd$. 
Let $\Eq\subset\Mor(\Gpd)$ be the class of equivalences of groupoids. 


\begin{prop}\label{prop:pi-gpd-iso}
Let $f: G\to G' $ be a morphism of  groupoids. Suppose that $f$ induces a bijection  of sets $\pi_0(G)\to \pi_0(G')$ 
and, for any
 $x\in\Ob(G)$, an isomorphism of groups $\pi(G, x)\to\pi_1(G', f(x))$.Then $f$ is an equivalence of groupoids.  
\end{prop}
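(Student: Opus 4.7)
The plan is to reduce the claim to the well-known characterization of equivalences of groupoids: a functor $f: G \to G'$ is an equivalence if and only if it is essentially surjective and fully faithful. Essential surjectivity is immediate from the hypothesis, since the induced map $\pi_0(f): \pi_0(G) \to \pi_0(G')$ being a bijection (in particular surjective) means every object of $G'$ is isomorphic to some $f(x)$.

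The substance is fully faithfulness, i.e., that for all $x, y \in \Ob(G)$ the map
\[
f_{x,y}: \Hom_G(x,y) \,\longrightarrow\, \Hom_{G'}(f(x), f(y))
\]
is a bijection. I would split into two cases according to whether $x$ and $y$ lie in the same $\pi_0$-class. If $x \not\simeq y$ in $G$, then by injectivity of $\pi_0(f)$ we have $f(x) \not\simeq f(y)$ in $G'$; both Hom-sets are empty and $f_{x,y}$ is vacuously a bijection.

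If on the other hand $x \simeq y$, choose an isomorphism $\phi: x \iso y$ in $G$. Then pre-composition with $\phi$ gives a bijection $\Aut_G(x) \to \Hom_G(x,y)$, and similarly $f(\phi)$ induces a bijection $\Aut_{G'}(f(x)) \to \Hom_{G'}(f(x), f(y))$. Under these identifications, $f_{x,y}$ becomes the map $\pi_1(G,x) \to \pi_1(G', f(x))$ induced by $f$, which is an isomorphism by hypothesis. Hence $f_{x,y}$ is a bijection, completing the proof.

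There is essentially no genuine obstacle here; the only thing to be careful about is that the case $x \not\simeq y$ truly requires injectivity of $\pi_0(f)$ and not just surjectivity, so both halves of the bijectivity hypothesis on $\pi_0$ are used. Once that is noted, the argument is a routine torsor calculation.
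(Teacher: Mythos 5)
Your proof is correct and takes essentially the same route as the paper, whose entire argument is the one-line observation that the hypotheses say exactly that $f$ is essentially surjective and fully faithful; you have simply spelled out the standard details. (One minor slip: the bijection $\Aut_G(x)\to\Hom_G(x,y)$ is given by post-composition with $\phi$ (or pre-composition starting from $\Aut_G(y)$), not pre-composition from $\Aut_G(x)$, but this does not affect the argument.)
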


\begin{proof} The conditions just mean that $f$ is essentially surjective and fully faithful hence an equivalence.
\end{proof}

\smallskip

For a category $C$ let $\Delta^\circ C$ be the category of simplicial objects in $C$.
In particular, we will use the category $\sset$ of simplicial sets and $\Delta^\circ\Ab$ of simplicial abelian groups.
 For a simplicial set $X$ let $|X|$ be its geometric
realization.  A morphism $f: X\to X'$ of simplicial sets is called a {\em weak equivalence}, if it induces a homotopy
equivalence $|X|\to |X'|$.  In this case we write $X\sim X'$. 
Let $\W$ be the class 
of weak equivalences. 

\smallskip

We also associate to  any simplicial set $X$ its {\em fundamental groupoid} $\Pi X$. 
Objects of $\Pi X$ are vertices
of $X$, i.e., elements $x\in X_0$, and, for $x,y\in X_0$, 
the set $\Hom_{\Pi X} (x,y)$ consists of homotopy classes of paths
in $|X|$ joining $x$ and $y$. 
Let $\pi_0(X)$ be the set of connected components of $|X|$, and, 
for each $i\geq 1$ and $x\in X_0$
let $\pi_i(X,x)$ be the topological homotopy groups of $|X|$ at $x$. 

\smallskip

Dually, the {\em nerve} $NG$ of a groupoid $G$ is a simplicial set with the set of $m$-simplices being
  \be\label{eq:nerve}
 N_mG \,= \, G_1 \times_{G_0} \times_{G_0} \cdots \times_{G_0} G_1 \quad \text{($m$ times)}.
  \ee
 The topological homotopy groups of $NG$ match those defined above algebraically for $G$:
 $$
 \pi_0(NG) \,=\,\pi_0(G), \quad \pi_1(NG, x) = \pi_i (G,x), \quad \pi_i (NG,  x)=0, \quad i\geq 2. 
 $$
A simplicial set is {\em of groupoid type}, if it is weak equivalent 
to the nerve of some groupoid.  We denote by $\sset^{\leq 1}\subset\sset$ the full subcategory of
simplicial sets of groupoid type.

\smallskip

\begin{prop}\label{prop:gpd} 
\hfill
\begin{itemize}[leftmargin=8mm]
\item[$\mathrm{(a)}$]
A simplicial set $X$ is of groupoid type if and only if  $\pi_i(X, x)=0$ for each $i\geq 2 $, $x\in X_0$. 
Then, we have $X \simeq N \Pi X$.
\item[$\mathrm{(b)}$]
The functors $\Pi$, $N $ yield quasi-inverse equivalences of homotopy categories 
$\sset^{\leq 1}[\W^{-1}] \simeq \Gpd[\Eq^{-1}].$
\qed
\end{itemize}

\end{prop}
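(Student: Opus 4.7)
The plan is to produce the two natural comparison maps coming from the adjunction $\Pi \dashv N$ between $\sset$ and $\Gpd$, and then reduce everything to a homotopy-group bookkeeping, using Proposition \ref{prop:pi-gpd-iso} as the recognition criterion on the groupoid side. For each simplicial set $X$ there is a unit map $\eta_X : X \to N\Pi X$ (sending a vertex to itself and an edge to the homotopy class of paths it represents), and for each groupoid $G$ there is a counit $\varepsilon_G : \Pi N G \to G$ (identity on objects, sending a homotopy class of an edge-path in $|NG|$ to the corresponding composition of morphisms in $G$).

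For part (a), the forward implication is immediate from the homotopy-group identification $\pi_i(NG,x)=0$ for $i\geq 2$ already recorded in the text, which is preserved under weak equivalence. For the converse, I would run the argument via $\eta_X$. By the construction of $\Pi X$, the map $\eta_X$ induces a bijection $\pi_0(X)\to\pi_0(\Pi X)=\pi_0(N\Pi X)$ and, for each vertex $x$, an isomorphism $\pi_1(X,x)\to\pi_1(\Pi X,x)=\pi_1(N\Pi X,x)$. By assumption $\pi_i(X,x)=0$ for $i\geq 2$, and the same vanishing holds for $N\Pi X$. Replacing $X$ by a Kan fibrant model if needed, a map of Kan complexes inducing an isomorphism on all homotopy groups is a weak equivalence (Whitehead), so $\eta_X$ exhibits $X \simeq N\Pi X$.

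For part (b), I would check that the two functors descend to the localizations and become quasi-inverse. The functor $N$ sends an equivalence of groupoids $G\to G'$ to a map of nerves inducing isomorphisms on $\pi_0$ and each $\pi_1$, hence a weak equivalence. Conversely, a weak equivalence $f : X \to X'$ between objects of $\sset^{\leq 1}$ induces a bijection on $\pi_0$ and, for each vertex, an isomorphism on $\pi_1$, so by Proposition \ref{prop:pi-gpd-iso} the induced functor $\Pi f$ is an equivalence of groupoids. Hence $\Pi$ and $N$ descend to functors between $\sset^{\leq 1}[\W^{-1}]$ and $\Gpd[\Eq^{-1}]$. Part (a) then gives that $\eta_X$ is invertible in the homotopy category, and the counit $\varepsilon_G$ is manifestly an isomorphism of groupoids, which finishes the proof.

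The only step that is not pure bookkeeping is the description of the counit $\varepsilon_G$: one must check that homotopy classes of paths in $|NG|$ from $x$ to $y$ correspond bijectively to elements of $\Hom_G(x,y)$, with composition given by concatenation. This is the classical calculation of $\pi_1$ of a classifying space of a groupoid, obtained by presenting the edge-path groupoid of $NG$ using 1- and 2-simplices: generators are the 1-simplices (i.e.\ morphisms of $G$), relations are encoded by the 2-simplices (i.e.\ composition triangles). Once this identification is in hand, both (a) and (b) follow by the homotopy-group comparisons described above.
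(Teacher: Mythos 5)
Your argument is correct, and in fact the paper gives no proof of this proposition at all: it is stated with a \qed as a standard fact, so there is nothing in the text to diverge from. What you wrote is the standard justification one would supply, via the adjunction unit $\eta_X\colon X\to N\Pi X$ and counit $\varepsilon_G\colon \Pi NG\to G$, with the counit handled by the classical presentation of the fundamental groupoid of $|NG|$ by $1$-simplices modulo the relations imposed by $2$-simplices, and the unit handled by a homotopy-group comparison. One small simplification: since the paper defines $\pi_i(X,x)$ and weak equivalence directly through the geometric realization $|X|$, you do not need a Kan fibrant replacement at all --- a map inducing isomorphisms on all $\pi_i$ gives a weak homotopy equivalence $|X|\to |N\Pi X|$ of CW complexes, hence a homotopy equivalence by the topological Whitehead theorem, which is exactly the paper's notion of weak equivalence. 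With that remark, parts (a) and (b) follow just as you describe, the descent to the localizations being guaranteed by the two functors preserving the respective classes $\W$ and $\Eq$ (the latter via Proposition \ref{prop:pi-gpd-iso}).
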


\smallskip

Let $\Ac$ be an abelian category. We denote by $\Co(\Ac)$ the category of cochain complexes 
$K= (K^n, d^n: K^{n-1}\to K^n)_{n\in\ZZ}$ over $\Ac$ bounded below, with morphisms
being morphisms of complexes.
For $n\in \ZZ$ we denote by $\Co^{\leq n}(\Ac)$ the category of complexes concentrated in degrees 
$\leq n$. 
For $K\in \Co(\Ac)$ we denote by
\[
\begin{gathered}
K^{\leq n} \,=\, \bigl\{ \cdots \buildrel d^{n-1} \over\lra K^{n-1} \buildrel d^{n}\over\lra K^n \lra 0\lra  \cdots\bigr\}\,\in 
\, \Co^{\leq n}(\Ac),
\\
\tau_{\leq n} K \,=\,  \bigl\{ \cdots \buildrel d^{n-1} \over\lra K^{n-1} \buildrel d^{n}\over\lra \Ker(d^{n+1})\lra 0
\lra  \cdots\bigr\}\,\in \, \Co^{\leq n}(\Ac)
\end{gathered}
\]
its \emph{stupid} and \emph{cohomological} truncation in degrees $\leq n$. 
Note that $\tau_{\leq n}$ sends quasi-isomorphisms
of complexes to quasi-isomorphisms. 

\begin{exas}[Dold-Kan groupoids] \label{ex:compl-stack}
Let $\Ab$ denote the category of abelian groups. 
\hfill
\begin{itemize}[leftmargin=8mm]
\item[$\mathrm{(a)}$]
Given a $3$-term complex over $\Ab$
$$\xymatrix{K=\{K^{-1}\ar[r]^-{d^0}&K^{0}\ar[r]^-{d^1}&K^1\}},$$
 we  have the \emph{action groupoid} 
 $$\varpi K=\Ker(d^1)/\!/K^{-1}:=\{K^{-1}\times\Ker(d^1)\rightrightarrows\Ker(d^1)\}$$ 
 whose
  set of objects is $\Ker(d^1)$ and whose morphisms
$s\to t$ are given by $\{h\in K^{-1}\,;\,s+d^0(h)=t\}.$ Then we have
\[
\pi_0 (\varpi K) = H^0(K), \quad \pi_1(\varpi K, s) = H^{-1}(K),  \quad \forall \, s\in \Ob \,\varpi K. 
\]

\smallskip

\item[$\mathrm{(b)}$] 
The {\em Dold-Kan correspondence}
$\DK: \dg^{\leq 0}\Ab \to\Delta^\circ\Ab$
associates to a $\ZZ_{\leq 0}$-graded complex $K$ 
the simplicial abelian group $\DK(K)$ such that
 \begin{itemize}
 \item[$\bullet$] $\DK(K)_0=K^0$,
 
 \item[$\bullet$] the set of edges joining $s,t\in K^0$ is $\{h\in K^{-1}\,;\,s+d^0(h)=t\}$,
 
 \item[$\bullet$] 2-simplices with given 1-faces are in bijection with certain elements of $K^{-2}$, and so on,   
 see, e.g., \cite[\S 8.4.1]{weibel}. 
 \end{itemize} 
 For each $i\geqslant 0$, we have an isomorphism $\pi_i(\DK(K)) \,\simeq \, H^{-i}(K)$
 which is independent of the base point. 
 In fact, the correspondence preserves the respective standard model structures.
 In particular, for a 3-term complex $K$ as in (a), we have
 \begin{align}\label{PDK}
 \varpi K \, = \Pi \DK(\tau_{\leq 0} K). 
 \end{align}
 \end{itemize}
 \end{exas}

 \begin{exas}[Maurer-Cartan groupoids]\label{ex:mc-grpd}
 We will use a non-abelian generalization of Examples \ref{ex:compl-stack}, due to Deligne,
 see \cite{GM-BAMS},  \cite{GM} and references therein,   Hinich \cite{hinich}  and Getzler \cite{getzler}. 
 \hfill
\begin{itemize}[leftmargin=8mm]
\item[$\mathrm{(a)}$] 
Consider a (possibly infinite dimensional) dg-Lie algebra  $\gen$ over $\bbC$ situated in degrees $[0,2]$:
$$\xymatrix{\frakg=\{\frakg^{0}\ar[r]^-{d^0}&\frakg^{1}\ar[r]^-{d^1}&\frakg^2\}}.$$
Thus $\frakg^0$ is an ordinary complex Lie algebra. We assume that it is nilpotent, so we have
the nilpotent group $G^0 = \exp({\gen^0})$. By definition, $G^0$ consists of formal symbols $e^y, y\in \gen^0$
(so $G^0$ is identified with 
  $\frakg^0$ as a  set), 
with the multiplication given by the Campbell-Hausdorff formula.
The set of Maurer-Cartan elements of $\frakg$ is
\[
\mc(\frakg)\, =\, \biggl\{x\in\frakg^1\,;\,d^1x+ {1\over 2} [x,x]=0 \biggr\}.
\]
The group $G^0$ acts on $\mc(\frakg)$ by the formula
\be\label{eq:gauge}
e^y * x \,=\, e^{\ad}(y) (x)  + { 1-e^{\ad(y)} \over \ad(y)} (d^1(y)), 
\ee
see  \cite[p.~45]{GM}.
We define the \emph{Maurer-Cartan groupoid \footnote{In this paper we use the terms 
``Maurer-Cartan groupoid'' and ``Maurer-Cartan stack'' in order
to avoid clashes with the algebro-geometric notion of Deligne-Mumford stacks.}} (or {\em Deligne groupoid}) of $\frakg$ to be the action groupoid
$$\MC(\frakg)=\mc(\frakg)/\!/G^0:=\{G^0\times\mc(\frakg)\rightrightarrows\mc(\frakg)\}.$$
Note that if the dg-Lie algebra $\frakg$ is abelian, i.e., if it reduces to a 3-term cochain complex, then
$G^0=\frakg^0$ and it acts on $\mc(\frakg)=\Ker(d^1)$ by translation, so we have
$\MC(\frakg)=\varpi(\frakg[1])$ where $\varpi$ is as in Example \ref{ex:compl-stack} (a) .

\smallskip

\item[$\mathrm{(b)}$] More generally, let $\gen$ be any nilpotent dg-Lie algebra over $\CC$. The {\em Maurer-Cartan
simplicial set} $\mc_\bullet(\gen)$ is defined by
\[
\mc_n(\gen) \,=\, \mc(\gen \otimes_\CC \Omega^\bullet_\pol (\Delta^n)), 
\]
where $\Omega^\bullet_\pol (\Delta^n)$ is the commutative dg-algebra of polynomial differential forms on the 
standard $n$-simplex, see \cite{hinich}, \cite{getzler}. Further, in \cite{getzler} it is proved that
if $\gen$ is concentrated in degrees $[0,2]$
then $N_\bullet(\MC(\gen))$, is weak equivalent to $\mc_\bullet(\gen)$. 
  
\end{itemize}
\end{exas}

\begin{prop}\label{prop:mc-qis}
A quasi-isomorphism $\phi: \gen_1\to\gen_2$ of nilpotent dg-Lie algebras induces a weak equivalences of simplicial sets
$\mc_\bullet(\gen_1)\to \mc_\bullet(\gen_2)$. In particular:
 \hfill
\begin{itemize}[leftmargin=8mm]
\item[$\mathrm{(a)}$] 
If $\gen_1$, $\gen_2$ are concentrated in degrees $[0,2]$, then $\phi$ induces an equivalence of groupoids
$\MC(\gen_1) \to \MC(\gen_2)$.

\item[$\mathrm{(b)}$] A quasi-isomorphism $K_1\to K_2$ of cochain complexes as in Example 
$\ref{ex:compl-stack}\mathrm{(a)}$ induces an equivalence of groupoids $\varpi K_1\to \varpi K_2$. 
\end{itemize}
 \qed

\end{prop}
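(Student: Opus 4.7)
This is a classical result established by Hinich \cite{hinich} and Getzler \cite{getzler}; my plan is to indicate how one would organize the argument and to derive (a) and (b) as formal consequences of the main (unlabelled) statement.

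I would first deduce (a) and (b) from the main claim. For (a): since $\gen_i$ both lie in degrees $[0,2]$, the equivalence quoted at the end of Example \ref{ex:mc-grpd}(b) provides natural weak equivalences $N_\bullet(\MC(\gen_i)) \sim \mc_\bullet(\gen_i)$ compatible with the map induced by $\phi$. The main statement then gives that $N_\bullet(\MC(\phi))$ is a weak equivalence, which in particular induces bijections on $\pi_0$ and on all automorphism groups; by Proposition \ref{prop:pi-gpd-iso}, $\MC(\phi)$ is an equivalence of groupoids. For (b): regard $K_i$ as abelian dg-Lie algebras in degrees $[-1,1]$ and set $\gen_i = K_i[-1]$, which is abelian (and so nilpotent) in degrees $[0,2]$. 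The identification $\MC(\gen[1]) = \varpi \gen$ from Example \ref{ex:mc-grpd}(a) then reduces (b) to (a).

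For the main statement, the standard strategy is induction on the common nilpotency class $k$ of the dg-Lie algebras involved, exploiting the compatibility of $\phi$ with the descending central series. In the abelian base case $k=1$, the Maurer--Cartan equation reduces to $dx=0$ and the gauge action to translation by $d\gen^0$, so $\mc_\bullet(\gen_i)$ becomes the Dold--Kan simplicial abelian group associated to a truncation of $\gen_i[1]$; quasi-isomorphisms then yield weak equivalences via the classical Dold--Kan correspondence. For the inductive step, let $\mathfrak{a}_i$ be the last nonzero term of the descending central series of $\gen_i$ (a central abelian dg-ideal). The short exact sequences
\[
0 \to \mathfrak{a}_i \to \gen_i \to \overline\gen_i \to 0
\]
are preserved by $\phi$, and the induced maps $\mathfrak{a}_1 \to \mathfrak{a}_2$ and $\overline\gen_1 \to \overline\gen_2$ are both quasi-isomorphisms by the five lemma applied to the associated long exact cohomology sequences. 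The key structural fact, due to Getzler, is that $\mc_\bullet$ sends such a central extension to a principal Kan fibration $\mc_\bullet(\gen_i) \to \mc_\bullet(\overline\gen_i)$ with fiber $\mc_\bullet(\mathfrak{a}_i)$. Comparing the resulting long exact sequences of homotopy groups via the five lemma, and using the inductive hypothesis together with the abelian base case, completes the induction.

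The principal obstacle is the Kan fibration statement above; its proof rests on Getzler's explicit Dupont-style retraction of the polynomial de Rham complex of the simplex, and I would simply invoke it from \cite{getzler} rather than reproduce it. Once that input is granted, every other step is formal five-lemma bookkeeping combined with the Dold--Kan correspondence in the abelian case.
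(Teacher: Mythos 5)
Your reduction of (a) and (b) to the main statement is fine (modulo a harmless shift slip: the identification in Example \ref{ex:mc-grpd}(a) is $\MC(\gen)=\varpi(\gen[1])$, so with $\gen_i=K_i[-1]$ one gets $\MC(\gen_i)=\varpi K_i$), and note that the paper itself offers no proof of this proposition: it is stated with a \qed as a known result of Hinich \cite{hinich} and Getzler \cite{getzler}. The problem is in your inductive step. From the map of short exact sequences
\[
0\to \mathfrak{a}_i\to \gen_i\to \overline{\gen}_i\to 0
\]
the five lemma does \emph{not} let you conclude that $\mathfrak{a}_1\to\mathfrak{a}_2$ and $\overline{\gen}_1\to\overline{\gen}_2$ are quasi-isomorphisms: knowing only the middle vertical arrows in the two long exact cohomology sequences are isomorphisms determines nothing about the outer ones. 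And the claim is false in general: take $\gen_2=0$ and $\gen_1$ the acyclic two-step nilpotent dg-Lie algebra with basis $a$ in degree $0$, $x=da,\,y$ in degree $1$, $z=dy$ in degree $2$, and only nonzero bracket $[a,y]=z$. Then $\gen_1\to 0$ is a quasi-isomorphism, but $[\gen_1,\gen_1]=\CC z$ has nonzero cohomology, so the induced map on the last term of the lower central series is not a quasi-isomorphism, and your induction hypothesis is unavailable. This is exactly the well-known subtlety in Goldman--Millson type theorems: a quasi-isomorphism of nilpotent dg-Lie algebras need not induce quasi-isomorphisms on the lower-central-series subquotients.

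The standard proofs circumvent this in one of two ways. In the classical setting of \cite{GM}, \cite{hinich} one works with $\gen\otimes\frakm$ for $\frakm$ the maximal ideal of an Artin local ring and filters by $\gen\otimes\frakm^k$; there the associated graded of the comparison map is $\phi\otimes\frakm^k/\frakm^{k+1}$, which \emph{is} automatically a quasi-isomorphism, so your fibration-plus-five-lemma induction goes through verbatim. For abstract nilpotent (or complete filtered) dg-Lie or $L_\infty$-algebras one must either add the hypothesis that $\phi$ induces quasi-isomorphisms on the associated graded of a compatible filtration, or argue differently, e.g.\ by homotopy transfer to minimal $L_\infty$-models (producing an honest homotopy inverse and reducing to strict isomorphisms plus linear contractions), which is how invariance statements of the kind quoted from \cite{getzler} are established. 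So the architecture of your argument (Getzler's fibration theorem for surjections, Dold--Kan in the abelian case, long exact sequences of homotopy groups) is the right one, but as written the key input to the induction is unjustified, and the proof has a genuine gap at that point.
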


Let now  $p:\frakg\to\frakh$ be a surjective morphism of dg-Lie algebras,
both situated in degrees $[0,2]$. 
Let $\frakn\subset\frakg$ be the kernel of $p$ and assume that there is an embedding $i:\frakh\to\frakg$ with
$p\circ i=1$ such that $\frakg=\frakh\ltimes\frakn$ is the semi-direct product. 

\smallskip

We have a functor of groupoids $p_*:\MC(\frakg)\to\MC(\frakh)$.
Recall that for a functor $\phi:C\to D$ and an object $x\in\Ob(D)$,
the \emph{fiber category} $\phi/x$ consists of pairs $(y,h)$ with $y\in\Ob(C)$ and
$h:\phi(y)\to x$ a morphism in $D$, with the obvious notion of morphisms of such pairs.
If $C$, $D$ are groupoids, so is $\phi/x$.
We apply this when $C=\MC(\frakg)$, $D=\MC(\frakh)$ and $\phi=p_*$.
We get the fiber category $p_*/x$.
On the other hand, the object $x\in\Ob(\MC(\frakh))$ being an element of $\mc(\frakh)$, 
it gives a new differential $d_x=d-\ad(x)$ on $\frakn$, where we abbreviate $x=i(x)$.
Let $\frakn_x$ be the dg-Lie algebra with underlying Lie algebra $\frakn$ and differential $d_x$.

\begin{proposition} \label{prop:local}
The fiber category
$p_*/x$ is equivalent to the groupoid $\MC(\frakn_x)$.
\qed
\end{proposition}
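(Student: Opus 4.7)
The plan is to build an explicit functor $F\colon p_*/x \to \MC(\frakn_x)$, show it is essentially surjective and fully faithful, and then invoke Proposition \ref{prop:pi-gpd-iso}. Throughout I identify $\frakh$ with its image $i(\frakh)\subset\frakg$, giving a decomposition $\frakg=\frakh\oplus\frakn$ of graded vector spaces; since $\frakn=\Ker(p)$ is a dg-Lie ideal, $\ad(x)$ preserves $\frakn$ and $d_x$ is a genuine differential on it. Write $G^0_\frakg$, $G^0_\frakh$, $G^0_\frakn$ for the corresponding nilpotent groups. On objects, $F$ sends $(y,h)$, with $h\in G^0_\frakh$ satisfying $h*p(y)=x$, to the element $z\in\frakn^1$ characterized by $i(h)*y=x+z$ in $\mc(\frakg)$, where $i(h):=e^{i(v)}$ if $h=e^v$. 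On morphisms, $F$ sends $g\colon(y,h)\to(y',h')$ (so $g*y=y'$ and $h'\,p(g)=h$) to the conjugate $\bar g:=i(h')\,g\,i(h)^{-1}$; since $p(\bar g)=h'\,p(g)\,h^{-1}=h\,h^{-1}=1$, this element lies in $G^0_\frakn$.

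Next I would check that $F$ really takes values in $\MC(\frakn_x)$ and respects the gauge action. Substituting $i(h)*y=x+z$ into the MC equation $d(x+z)+\tfrac12[x+z,x+z]=0$ of $\frakg$ and using $x\in\mc(\frakh)$ splits the identity along $\frakg^2=\frakh^2\oplus\frakn^2$: the $\frakh^2$-component reduces to the MC equation for $x$, while the $\frakn^2$-component reads $d_x z+\tfrac12[z,z]=0$, so $z\in\mc(\frakn_x)$. For morphisms, a direct expansion of \eqref{eq:gauge} with $\bar g=e^u$, $u\in\frakn^0$, reduces the assertion $e^u*(x+z)=x+(e^u*_x z)$ to the single power-series identity
\[
(e^{\ad u}-1)(x) \;=\; \frac{1-e^{\ad u}}{\ad u}\bigl([x,u]\bigr),
\]
which is immediate from $t\cdot\tfrac{1-e^t}{t}=1-e^t$. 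Functoriality $F(g_2g_1)=F(g_2)F(g_1)$ is a telescoping cancellation in the conjugation formula.

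Essential surjectivity is immediate: for $z\in\mc(\frakn_x)$, the pair $(x+z,1)$ is an object of $p_*/x$ with $F(x+z,1)=z$. For full faithfulness, given $(y,h),(y',h')$ with $F(y,h)=z$, $F(y',h')=z'$, any morphism $\bar g\colon z\to z'$ in $\MC(\frakn_x)$ lifts uniquely to $g:=i(h')^{-1}\,\bar g\,i(h)\in G^0_\frakg$, and a short check using the identity of the previous paragraph confirms that $g$ is a morphism $(y,h)\to(y',h')$ in $p_*/x$ with $F(g)=\bar g$. Proposition \ref{prop:pi-gpd-iso} then delivers the equivalence.

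The main obstacle is the gauge-compatibility computation in the second paragraph: the Campbell-Hausdorff-type formula \eqref{eq:gauge} involves infinite series in $\ad u$, and its interaction with the splitting $\frakg=\frakh\oplus\frakn$ and the twist defining $d_x$ must be tracked term-by-term. Once the displayed power-series identity is established, the rest of the argument is straightforward bookkeeping through the splitting $i$.
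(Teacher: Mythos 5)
The paper does not actually supply an argument for this proposition (it is stated with a terminal \qed), so there is no authorial proof to compare against; judged on its own, your proof is correct and is the natural Goldman--Millson/Deligne-groupoid verification. The structure is sound: transporting an object $(y,h)$ by the lifted gauge element $i(h)$ into the normal form $x+z$ with $z\in\frakn^1$; splitting the Maurer--Cartan equation of $\frakg$ along $\frakg=i(\frakh)\oplus\frakn$ (legitimate because $i$ is a dg-Lie section and $\frakn$ is a dg-ideal, so both $d$ and $[x,-]$ respect the splitting in the needed way); conjugating morphisms into $\exp(\frakn^0)$, with functoriality by telescoping; essential surjectivity via $(x+z,1)$; and full faithfulness via the conjugation bijection $g\mapsto i(h')\,g\,i(h)^{-1}$, all check out, as does the reduction of gauge-compatibility to your displayed power-series identity. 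One caveat worth a sentence in a final write-up: your component computation yields $dz+[x,z]+\tfrac12[z,z]=0$, and the displayed identity is precisely the one needed for the twisted differential $d+[x,-]$ on $\frakn$; identifying this with the paper's $d_x=d-\ad(x)$ requires reading $\ad(x)$ as $[-,x]$ (or amounts to a uniform sign slip on the paper's side), so you should fix the convention explicitly rather than assert that the $\frakn^2$-component "reads $d_xz+\tfrac12[z,z]=0$" without comment. This is a convention issue, not a gap. Finally, once you have an explicit functor shown to be fully faithful and essentially surjective you are already done; the appeal to Proposition \ref{prop:pi-gpd-iso} is harmless but unnecessary.
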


\medskip


\subsection {Stacks and homotopy sheaves}
Let  $\scrS$ be  a Grothendieck site.  We recall that  a {\em stack}   (of essentially small 
groupoids)   on $\scrS$ is  a  presheaf   of groupoids $B: T\mapsto B(T)$, $T\in\Ob(\scrS)$, 
 satisfying the 2-categorical descent condition  extending that for sheaves of sets, see [...] for background. 
Stacks on $\scrS$ form a 2-category $\Stg_\scrS$. 
 We will refer to 1-morphisms   of $\Stg_\scrS$ as {\em morphisms of stacks} and 
will denote by $\St_\scrS$ the category of
stacks  on $\scrS$
with these morphisms. Let $\Eq\subset\Mor(\St_\scrS)$ be the class of equivalences of stacks.

\begin{rem}
For most purposes, the above 1-categorical point of view on stacks will be sufficient. 
However, in various constructions below such as gluing, 
the  full 2-categorical structure on $\Stg_\scrS$ becomes important.  
In particular,  as  with objects of any 2-category, to define a stack ``uniquely'' (e.g., naively,   in a way ``independent''
 on some choices)
means, more formally,  to define it {\em uniquely up to an equivalence which is  defined uniquely up to a unique
  isomorphism}. 

\end{rem}

A  stack of groupoids $B$ gives rise to a sheaf of sets $\ul\pi_0(B)$ on $\scrS$, 
obtained by sheafifying the presheaf $T\mapsto \pi_0(B(T))$. 
Similarly, for any $T\in\Ob(\scrS)$
and any object $x\in B(T)$ we  have a  sheaf of groups $\ul\pi_1(B, x)$ on $T$, i.e., on the site $\scrS/T$,
obtained by sheafifying the presheaf $T'\mapsto \pi_1(B(T'),x|_{T'})$,
where $x|_{T'}$ is the pullback by the  morphism $T'\to T$. 

\begin{prop}\label{prop:pi-stack-iso}
Let $f: B\to B'$ be a morphism in $\St_\scrS$ which
induces an isomorphism of sheaves  of sets 
$\ul\pi_0(B)\to \ul\pi_0(B')$ and an isomorphism of sheaves of groups 
$\ul\pi_1(B, x)\to \ul\pi_1(B', f(x))$ for any
$T\in\Ob(\scrS)$, $x\in\Ob(B(T))$.
Then $f$ is an  equivalence
of stacks. 
\end{prop}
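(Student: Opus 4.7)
The plan is to reduce Proposition \ref{prop:pi-stack-iso} to the groupoid-level statement of Proposition \ref{prop:pi-gpd-iso}, applied after passing to suitable covers. I will use the standard characterization that a morphism of stacks $f: B \to B'$ on $\scrS$ is an equivalence in $\St_\scrS$ if and only if: (i) it is locally essentially surjective, that is, for every $T\in\Ob(\scrS)$ and every $x'\in\Ob(B'(T))$ there is a cover $\{T_i\to T\}$ such that each $x'|_{T_i}$ is isomorphic in $B'(T_i)$ to some $f(x_i)$; and (ii) it is fully faithful, in the sense that for every $T$ and every pair $x,y\in\Ob(B(T))$ the induced morphism of sheaves $\on{Isom}_B(x,y) \to \on{Isom}_{B'}(f(x),f(y))$ on $\scrS/T$ is an isomorphism. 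Both properties can be checked locally on $T$, so the descent condition for $B$ and $B'$ reduces the problem to the arguments below.

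To establish (i), an object $x'\in\Ob(B'(T))$ determines a section of $\ul\pi_0(B')$ over $T$, which by hypothesis is matched by a unique section of $\ul\pi_0(B)$. Because $\ul\pi_0(B)$ is the sheafification of the presheaf $T'\mapsto \pi_0(B(T'))$, this section is represented, on some cover $\{T_i\to T\}$, by actual classes of objects $x_i\in\Ob(B(T_i))$. By construction $[f(x_i)]$ and $[x'|_{T_i}]$ agree as sections of $\ul\pi_0(B')$, and by refining the cover further these equalities get realized by honest isomorphisms in $B'(T_i)$, giving the local lifts required for (i).

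For (ii), I would prove injectivity and local surjectivity separately. Injectivity: given $\alpha,\beta \in \on{Isom}_B(x,y)(T')$ with $f(\alpha) = f(\beta)$, the element $\beta^{-1}\alpha \in \ul\pi_1(B,x)(T')$ lies in the kernel of an isomorphism of sheaves of groups, hence vanishes, so $\alpha = \beta$. Local surjectivity: a local section $\varphi$ of $\on{Isom}_{B'}(f(x),f(y))$ forces $[x]$ and $[y]$ to agree locally as sections of $\ul\pi_0(B)$ via the $\ul\pi_0$-isomorphism, so after further refinement there is an isomorphism $\psi: x\to y$ in $B$. On the resulting cover both $\on{Isom}_B(x,y)$ and $\on{Isom}_{B'}(f(x),f(y))$ become torsors under $\ul\pi_1(B,x)$ and $\ul\pi_1(B',f(x))$ respectively, and the map between them is a morphism of torsors compatible with the isomorphism of their structure groups, hence an isomorphism; in particular $\varphi$ is locally in the image.

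The main subtlety — and the reason the proposition is genuinely stack-theoretic rather than a verbatim transcription of Proposition \ref{prop:pi-gpd-iso} — is that both hypotheses concern the sheafifications $\ul\pi_0$ and $\ul\pi_1$ rather than the presheaves $T\mapsto \pi_0(B(T))$ and $T\mapsto \pi_1(B(T),x)$; one should not expect each individual functor $f_T: B(T)\to B'(T)$ to be an equivalence of groupoids. The work therefore consists in organizing the passage to covers so that at each step the sheafified hypothesis becomes an honest equality, at which point Proposition \ref{prop:pi-gpd-iso} applies fibrewise.
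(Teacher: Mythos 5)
Your proof is correct and is essentially the paper's argument: the paper disposes of the statement with the one-line ``follows from Proposition \ref{prop:pi-gpd-iso} by sheafification'', and your verification of local essential surjectivity together with the isomorphism of $\on{Isom}$-sheaves (injectivity plus local surjectivity via the $\ul\pi_1$-torsor structure) is exactly what that sheafification argument amounts to. The only tacit point is that your injectivity step uses that the automorphism presheaf of a stack is already a sheaf (so elements of $\Aut_{B(T')}(x|_{T'})$ embed into $\ul\pi_1(B,x)(T')$), which is part of the descent condition and therefore harmless.
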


 \begin{proof} 
 Follows from Proposition \ref{prop:pi-gpd-iso} by sheafification. 
 \end{proof}
 
 \smallskip
 
 Let $\sset_\scrS$ be the category of presheaves of simplicial sets on $\scrS$. 
  Recall \cite{toen-vezzosi} that such a presheaf $X$ is called a {\em homotopy sheaf} or an {\em $\infty$-stack},
  if it satisfies descent in the homotopy sense. We denote by $\St^\infty_\scrS$ the category of homotopy sheaves
  of simplicial sets on $\scrS$ and by $W\subset \Mor(\St^\infty_\scrS)$ the class of weak equivalences (defined
  stalk-wise). 
  A homotopy sheaf $X$ gives rise to a sheaf of sets $\ul\pi_0(X)$ on $\scrS$ and, for any $T\in\Ob(\scrS)$ and
  any vertex $x\in X(T)_0$, a sheaf of groups  $\ul\pi_i(X,x)$ on $\scrS/T$.  We have:

  \begin{prop}
  
   Let $f: X\to X'$ be a morphism in  $\St^\infty_\scrS$. Suppose $f$ induces an isomorphism of sheaves of sets
  $\ul\pi_0(X) \to \ul\pi_0(X')$ and, for each $T\in\Ob(\scrS)$ and $x\in X(T)_0$, an isomorphism of sheaves of 
  groups $\ul\pi_i(X,x)\to \ul\pi_i(X', f(x))$.
  Then $f$ is a weak equivalence. 
  \end{prop}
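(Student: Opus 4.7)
The plan is to reduce the claim, which is global on the site, to Whitehead's theorem for simplicial sets by passing to stalks. Recall that the class $W$ of weak equivalences in $\St^\infty_\scrS$ is defined stalk-wise: a morphism $f: X\to X'$ lies in $W$ if and only if, for every point $p$ of the site $\scrS$ (i.e., every appropriate filtered system defining a ``fiber functor''), the induced morphism of simplicial sets $f_p: X_p \to X'_p$ is a weak equivalence. So the global claim reduces to showing that $f_p$ is a weak equivalence of simplicial sets for each $p$.

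Next, the key compatibility: sheafification commutes with filtered colimits, hence with stalks. Thus for any $T\in\Ob(\scrS)$, any vertex $x\in X(T)_0$, and any point $p$ of $\scrS/T$, the stalk of the sheaf $\underline\pi_i(X,x)$ at $p$ coincides with $\pi_i(X_p, x_p)$, where $x_p$ denotes the image of $x$ in $X_p$. Granted this, the hypothesis that $\underline\pi_0(X) \to \underline\pi_0(X')$ is an isomorphism of sheaves of sets, and that $\underline\pi_i(X,x)\to \underline\pi_i(X',f(x))$ is an isomorphism of sheaves of groups for all such $T,x$ and all $i\geqslant 1$, implies that for every point $p$ and every base vertex $x_p\in X(T_p)_0$ in a neighborhood of $p$, the map $f_p$ induces bijections of $\pi_0$ and isomorphisms of $\pi_i$ for $i\geqslant 1$. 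Since any vertex of the stalk $X_p$ comes from some $x\in X(T)_0$ for $T$ in a sufficiently small neighborhood of $p$, \emph{every} base vertex of $X_p$ is covered by the hypothesis.

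Finally, after replacing $X$ and $X'$ by their functorial Kan fibrant replacements (which preserve stalks and the sheaves $\underline\pi_i$), each $f_p$ becomes a map of Kan complexes inducing isomorphisms on all homotopy groups for every choice of base point. By the classical Whitehead theorem for Kan complexes, $f_p$ is a weak equivalence of simplicial sets. Hence $f$ is stalk-wise a weak equivalence, which is the definition of a weak equivalence in $\St^\infty_\scrS$.

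The main potential obstacle is the base point issue: one must check that the hypothesis, formulated only for sections $x\in X(T)_0$ defined on objects $T\in\Ob(\scrS)$, actually controls the homotopy groups of every stalk at every possible base point. This is handled precisely by the remark above that stalks are filtered colimits over neighborhoods, so every vertex of $X_p$ lifts to some $x\in X(T)_0$ on some $T$, and the stalk of $\underline\pi_i(X,x)$ at $p$ agrees with $\pi_i$ of the stalk. Modulo this, the argument is essentially formal, being the natural sheaf-theoretic upgrade of Proposition \ref{prop:pi-gpd-iso}.
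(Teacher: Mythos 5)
Your argument is correct and is essentially the paper's own (very terse) proof, namely reduce by sheafification to the stalk-wise/pointwise case and apply the classical Whitehead theorem for simplicial sets; you have simply spelled out the details (stalks of $\ul\pi_i$ versus $\pi_i$ of stalks, lifting base points from stalks to sections, fibrant replacement) that the paper leaves implicit. No gap, given the paper's convention that weak equivalences in $\St^\infty_\scrS$ are defined stalk-wise.
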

  
 \begin{proof} If $\scrS$ is a point, this is the standard: a map of simplicial sets is a weak equivalence
  iff it induces isomorhism on homotopy groups. The case of general $\scrS$ is obtained from this by sheafification.
  \end{proof}
  
  \smallskip
  
  Any homotopy sheaf $X$ gives a stack of groupoids $\Pi X$ on $\scrS$,  defined by taking $T\mapsto \Pi X(T)$. 
  Any stack of groupoids $B$ on $\scrS$ gives rise to a homotopy sheaf $N(B)$ taking $T$ to the nerve of the
  groupoid $B(T)$. A homotopy sheaf $X$ is called of {\em groupoid type}, if it is weak equivalent to $N(B)$ for
  some stack $B$. 
  We denote by $\St^{\infty, \leq 1}_\scrS\subset\St^\infty_\scrS$ the full category of homotopy sheaves
   of
  groupoid type. 
  
  \begin {prop}\label{prop:simpl-stack}
   \hfill
\begin{itemize}
[leftmargin=8mm]
\item[$\mathrm{(a)}$]
 A homotopy sheaf $X$ is of groupoid type if and only if $\ul\pi_i(X,x)=0$ for each $T\in\Ob(\scrS)$,  $x\in X(T)_0$ 
 and  $i\geq 2$. 
  
\item[$\mathrm{(b)}$]
The functors $\Pi$, $N$ induce mutually quasi-inverse equivalences of homotopy categories
$\St^{\infty, \leq 1}_\scrS [\W^{-1}] \simeq \St_\scrS[\Eq^{-1}].$  \qed
 \end{itemize}

  \end{prop}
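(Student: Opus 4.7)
The overall strategy is to sheafify Proposition \ref{prop:gpd} and to combine it with the previous proposition, which characterizes weak equivalences in $\St^\infty_\scrS$ via the sheaves $\ul\pi_i$. In other words, both (a) and (b) reduce to the groupoid-level statements once everything is tested on the sheaves of homotopy groups. The key technical point is that $\ul\pi_0$, $\ul\pi_i(-,x)$ only see the stalks (since they are sheafifications of the section-wise homotopy groups), so we can translate statements about equivalences or vanishing into facts about ordinary simplicial sets and invoke Proposition \ref{prop:gpd}.

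For (a), the implication $(\Rightarrow)$ is immediate: if $X \sim N(B)$ for a stack $B$ of groupoids, then for every $T \in \Ob(\scrS)$ and every $x \in X(T)_0$ one has $\pi_i(N(B(T)), x) = 0$ for $i \geq 2$, and the sheafification $\ul\pi_i(X,x)$ vanishes. For $(\Leftarrow)$, I form the stack $\Pi X$ section-wise and consider the natural morphism $\eta_X: X \to N(\Pi X)$ arising from the simplicial-set-level unit of adjunction $\Pi \dashv N$. By construction $\eta_X$ induces an isomorphism on $\ul\pi_0$ and on $\ul\pi_1(-,x)$ for every base point, and by hypothesis (together with the corresponding vanishing $\pi_i(N\Pi X(T),x)=0$ for $i\geq 2$) both sides have vanishing higher $\ul\pi_i$. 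The previous proposition then yields that $\eta_X$ is a weak equivalence, hence $X$ is of groupoid type. As a byproduct one also obtains that $X \simeq N(\Pi X)$, as claimed.

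For (b), I would first verify that $\Pi$ and $N$ respect the respective classes $\W$ and $\Eq$: a section-wise equivalence of stacks gives a section-wise weak equivalence of nerves (classical fact for groupoids), while a section-wise weak equivalence of homotopy sheaves of groupoid type induces an isomorphism on $\ul\pi_0$ and $\ul\pi_1(-,x)$, which by Proposition \ref{prop:pi-stack-iso} gives an equivalence of the associated stacks $\Pi X \to \Pi X'$. Thus both functors descend to the homotopy categories. To see they are quasi-inverse, I use the unit $\eta_X: X \to N(\Pi X)$, which is a weak equivalence by (a) when $X \in \St^{\infty,\leq 1}_\scrS$, and the counit $\varepsilon_B: \Pi(N(B)) \to B$, which section-wise is the equivalence $\Pi(N(B(T))) \to B(T)$ of Proposition \ref{prop:gpd}(b), hence an equivalence of stacks by Proposition \ref{prop:pi-stack-iso}.

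The only real obstacle is checking that all the constructions and natural maps are genuinely functorial and compatible with the 2-categorical/$\infty$-categorical enhancements, so that $\Pi$ and $N$ really do induce well-defined functors of homotopy categories. This is a standard bookkeeping exercise given the ``stalk-by-stalk'' description of $\ul\pi_i$, and no further conceptual input beyond Propositions \ref{prop:gpd} and \ref{prop:pi-stack-iso} is required.
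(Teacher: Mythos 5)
Your argument is correct and is exactly the route the paper intends: the statement is left without proof (marked \qed) precisely because it follows by sheafifying Proposition \ref{prop:gpd} and invoking the $\ul\pi_i$-criteria for weak equivalences of homotopy sheaves and equivalences of stacks (Proposition \ref{prop:pi-stack-iso}), which is what you spell out via the unit $X\to N(\Pi X)$ and counit $\Pi(N(B))\to B$. No discrepancy with the paper's approach.
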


\medskip


\subsection{Artin  and f-Artin stacks}\label{sec:stack}
In this paper all schemes, algebras, etc., will be considered over the base field $\CC$ of complex numbers.
Let $\wt \Aff $ be the category of affine schemes over $\CC$ equipped with the \'etale topology. 
We refer to  \cite{LMB}, \cite{olsson-book} for
general background on {\em Artin stacks}, i.e., stacks of groupoids on $\wt\Aff$ with a smooth atlas and a 
representable, quasi-compact, quasi-separated diagonal.

\begin{exas}\label  {ex:grpd-stack}
\hfill
\begin{itemize}
[leftmargin=8mm]
\item[$\mathrm{(a)}$]
Let 
  $G = \{\xymatrix{
  G_1  \ar@<.7ex>[r]^s \ar@<-.7ex>[r]_t & G_0
  }
  \}$ 
be a groupoid 
 the category of schemes of finite type such that  the source and target maps $s,t$ are smooth morphisms. It
  gives rise to an Artin  stack which we denote by $\|G\|$. By definition,  $\|G\|$ is
  the stack associated with the prestack
  \[
  T\,\mapsto \{ \Hom(T, G_1)  \rightrightarrows \Hom(T, G_0)\}.  
  \]
  
\item[$\mathrm{(b)}$] In particular, let $G$ be an affine  algebraic group acting on a scheme $Z$ of finite type. Then we have the
 {\em action groupoid }$\{G\times Z\rightrightarrows Z\}$ in the category of schemes of finite type. The corresponding Artin stack
  is denoted $Z/\!/G$ and is called the {\em quotient stack} of $Z$ by $G$. Explicitly, for $T\in\wt\Aff$ 
  the groupoid
  $(Z/\!/G)(T)$ is identified with the category of pairs $(P, u)$, where $P$ is a $G$-torsor over $T$
  (locally trivial in \'etale topology) and $u: P\to Z$ is a $G$-equivariant map. 
  \end{itemize}
 \end{exas}

\begin{Defi}
An Artin stack $B$ is called:
\hfill
\begin{itemize}
[leftmargin=8mm]
\item[$\mathrm{(a)}$]
 {\em Of finite type}, if it is equivalent to the stack of the form  $\|G\|$ for a groupoid $G$
as in Example $\ref{ex:grpd-stack}\mathrm{(a)}. $

\item[$\mathrm{(b)}$] An {\em f-Artin stack}, if  it is locally of finite type.

\item[$\mathrm{(c)}$] A {\em quotient} (resp. {\em locally quotient)  stack} is it is equivalent
(resp. locally equivalent)  to the stack of the form $Z/\!/G$ where $Z, G$ are in Example 
$\ref{ex:grpd-stack}\mathrm{(b)}. $

\end{itemize} 
\end{Defi}

All the stacks we will use will be f-Artin. 
Let the 2-category $\Stg$ and the category $\St$ be
the full 2-subcategory  in  $\Stg_{\wt \Aff}$ and   the full subcategory in $\St_{\wt\Aff}$ formed by f-Artin
stacks. 

\smallskip

Let $\Aff\subset\wt\Aff$ be the category of affine schemes {\em of finite type} with its \'etale topology.
We note that f-Artin stacks are determined by their restrictions to $\Aff$, and so we can  and will
consider them as stacks of groupoids on $\Aff$. 

\smallskip

   Given an f-Artin stack $B$, let $\Stg_B$ be the 2-category of  f-Artin stacks over $B$, i.e., 
   of f-Artin stacks $X$ together with
   a morphism of stacks $X\to B$. Objects of $\Stg_B$ can, equivalently, be seen as stacks of groupoids
   over the Grothendieck site $\Aff_B$ formed by affine schemes $T$  of finite type
   together with a morphism of stacks 
   $f:T\to B$.
    Thus, an f-Artin
   stack  $X$ over $B$ can be seen as associating to each $T\in\Aff_B$ a groupoid $X(T)$.

 \medskip

  
 \section{Stacks of extensions and filtrations} \label{sec:stack-ext}
  
\subsection{Cone stacks}\label{subsec:cone-stack}
We refer to \cite{O07, olsson-book} for general background on quasi-coherent sheaves on Artin stacks.
 For an f-Artin stack $B$ we denote by $QCoh (B)$, resp. $Coh(B)$ the category of quasi-coherent,
resp. coherent sheaves of $\Oc_B$-modules. By a {\em vector bundle} we mean a locally free sheaf of
finite rank. 

\smallskip

 Let $B$ be an f-Artin  stack 
 and $R=\bigoplus_{i\in\bbN}R^i$ be
  a graded quasi-coherent sheaf of $\calO_B$-algebras 
such that $R^0=\calO_B$, $R^1$ is coherent and $R$ is generated by $R^1$ locally over $B$.
The relative  affine $B$-scheme
 $C=\Spec\,  R$ is called a \emph{cone} over $B$, see, e.g., \cite[\S 1]{BF}.
 
 \smallskip

If $\calE$ is a coherent sheaf over $B$, we get the {\em associated cone}
$C(\calE)=\Spec ( \Sym_{\calO_B}(\calE))$ which is   an affine group scheme over $B$.
 Its value  (the set of points) on 
  $(T\buildrel f\over\to B)\in \Aff_B$  is   $\Hom_{\calO_T}(f^*\Ec ,\calO_T)$. 
 We call such a cone an \emph{abelian cone}.
 
 \smallskip
 
For instance, the {\em  total space} of a vector bundle $\calE$ over $X$ is defined as 
\[
\Tot(\calE)\,=\, C(\calE^\vee)\,=\, \Spec \, \Sym_{\Oc_B}(\Ec^\vee)
\]
 where $\calE^\vee$ is the dual sheaf of $\calO_B$-modules.  
 For any affine $B$-scheme $f: T\to B$ we have
 \be\label{eq:total-bundle}
 \Tot(\Ec)(T)\,=\, H^0(T, f^*\Ec). 
 \ee
 Thus, a section $s\in H^0(B, \Ec)$ is the same as  a morphism  $B\to\Tot(\Ec)$
 of schemes over $B$.

 \smallskip
 
Any cone $C=\Spec\,  R$ is canonically a closed subcone of the abelian cone $\Spec ( \Sym_{\calO_B} (R^1)),$
called the \emph{abelian hull} of $C$.

\begin{ex}
\label{ex:ncone}
  Let $d: \Ec\to\Fc$ be a morphism of vector bundles on $B$. 
We denote by $\ul\Ker(d)\subset \Ec$ the sheaf-theoretic
kernel of $d$. On the other hand, let $\pi: \Tot(\Ec)\to B$ be the projection. 
The morphism $d$ determines a section $s$
of the vector bundle $\pi^*\Fc$ on $\Tot(\Ec)$, and we define the abelian cone $\Ker(d)\subset \Tot(\Ec)$ as the zero 
locus of this section. 
We note that $H^0(B, \ul\Ker(d))\subset H^0(B,\Ec)$ consists  
precisely of those sections $s$ which, considered as morphisms $B\to\Tot(\Ec)$, 
factor through the substack $\Ker(d)$.

\end{ex}

A {\em morphism of abelian cones} over $B$ is, by definition a morphism of group schemes over $B$.
Given a morphism of abelian cones $E\to F$, we have an action of the affine group scheme $E$ over $B$
on $F$.
Hence, we can form the quotient Artin stack $F/\!/E$.  Stacks of this form are called \emph{abelian cone stacks}.


\subsection  {Total spaces of perfect complexes}\label{subsec:tot-compl}

Let $B$ be an f-Artin  stack.
 We denote $C_\qcoh(B)$ the category formed by complexes of $\Oc_B$-modules with quasi-coherent cohomology.
Let $\on{qis}$ be the class of quasi-isomorphisms in  $C_\qcoh(B)$
and $D_\qcoh(B) = C_\qcoh(b)[\on{qis}^{-1}]$ be the corresponding derived category.
For any integers $p \leq q$ let  $C^{[p,q]}_\qcoh(B)\subset C_\qcoh(B)$ be the full subcategory formed
 by  complexes
situated in degrees from $p$ to $q$.

\begin{Defi}\label{def:perfect}
 Let $\Cc\in C_\qcoh (B)$ and $p\leq q$ be integers. 
 \hfill
\begin{itemize}[leftmargin=8mm]
\item[$\mathrm{(a)}$]
$\Cc$ is
 {\em strictly $[p,q]$-perfect}, if $\Cc$ is quasi-isomorphic to a complex  of vector bundles 
 \[
 \bigl\{\Cc^p \buildrel d^{p+1} \over\to \Cc^{p+1}\buildrel d^{p+2}\over\to \cdots \buildrel d^q\over\to \Cc^q\bigr\}
 \]
 situated in degrees from $p$ to $q$. This complex is called a {\em presentation} of $\Cc$. 
 
\item[$\mathrm{(b)}$]
$\Cc$ is $[p,q]$-{\em perfect}, if, locally on $B$, it is strictly $[p,q]$-perfect and, moreover, 
the set of open substacks
 $U\subset B$ such that $\Cc|_U$ is strictly $[p,q]$-perfect, is  filtering
 with respect to the partial order by inclusion.
 \end{itemize} 
 \end{Defi}
 
 For a $[p,q]$-perfect complex $\Cc$  and an open $U\subset B$ as above we will refer to a quasi-isomorphism
 $\Cc|_U\to \Cc_U$, with $\Cc_U$ strictly $[p,q]$-perfect, as a {\em presentation} of $\Cc$ over $U$.
 
 \smallskip

 A $[p,q]$-perfect complex $\Cc$ has a {\em virtual rank} $\vrk(\Cc)$ which is a $\ZZ$-valued locally constant 
 function on $B$, i.e., a function constant on each connected component of $B$. 
 It is defined in terms of a presentation of $\Cc$ as
 $\vrk(\Cc) \,=\,\sum _{i=p}^q (-1)^i \rk(\Cc^i).$
 
 \smallskip

We will be interested in making sense of total spaces of perfect complexes using \eqref{eq:total-bundle}
as a motivation, cf.  \cite[\S 3.3]{T14} . 

\begin{Defi}
 \hfill
\begin{itemize}
[leftmargin=8mm]
\item[$\mathrm{(a)}$]
Let $\Cc \in C^{\leqslant 0} _\qcoh(B)$. We define the simplicial presheaf $\Tot^\infty(\Cc)$ on $\Aff_B$ by
\[
\Tot^\infty (\Cc)(T) \,=\, \DK(H^0(T, f^*\Cc)), \quad( T\buildrel f\over\to B)\,\in \,\Aff_B. 
\]
\item[$\mathrm{(b)}$]
Let $\Cc\in C^{[-1,0]}_\qcoh(B)$. We define the pre-stack of groupoids $\Tot(\Cc)$ 
on $\Aff_B$ by
\[
\Tot(\Cc)(T) \,=\, \varpi (H^0(T, f^*\Cc)), \quad( T\buildrel f\over\to B)\,\in \,\Aff_B. 
\]
\end{itemize}
\end{Defi}

We call $\Tot(\Cc) $ the \emph{total space} of $\Cc$.

\begin{prop}\label{prop:Tot-simpl}
 \hfill
\begin{itemize}
[leftmargin=8mm]
\item[$\mathrm{(a)}$]
Let $\Cc \in C^{\leqslant 0} _\qcoh(B)$.
The simplicial presheaf $\Tot^\infty(\Cc)$ is a homotopy sheaf. 
For any $x\in  \Tot^\infty(\Cc)(T)_0$ we have (independently on the choice of base points)
  \[
 \ul\pi_i(\Tot^\infty(\Cc)) \,= \,  \ul H^{-i}(\Cc), \quad i\geq 0. 
 \]
A  morphism $\phi: \Cc_1\to \Cc_2$ in $C^{\leqslant 0}_\qcoh(B)$ induces a morphism of homotopy sheaves 
 $\phi_\flat: \Tot^\infty(\Cc_1)\to\Tot^\infty(\Cc_2)$ which is an equivalence, if $\phi$ is a quasi-isomorphism.

\item[$\mathrm{(b)}$]
Let $\Cc\in C^{[-1,0]}_\qcoh(B)$. The pre-stack $\Tot(\Cc)$ on $\Aff_B$ is a stack. 
The homotopy sheaf $\Tot^\infty(\Cc)$
is of groupoid type and  $\Pi\Tot^\infty(\Cc)=\Tot(\Cc)$. In particular, 
the total space is functorial and
takes quasi-isomorphisms $\phi$ to isomorphisms $\phi_\flat$. 
\end{itemize}
\end{prop}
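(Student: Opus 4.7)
The plan is to verify (a) directly from formal properties of the Dold--Kan functor and of quasi-coherent cohomology, and then deduce (b) from (a) together with the comparison \eqref{PDK} and Proposition \ref{prop:simpl-stack}.

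For part (a), I would first interpret $H^0(T, f^*\Cc)$ as the complex $R\Gamma(T, f^*\Cc)$; for affine $T$ this is computed termwise (quasi-coherent sheaves have no higher cohomology on affines) and lies in $\dg^{\leq 0}\Ab$. The identity $\pi_i(\DK(K))=H^{-i}(K)$ then gives, after sheafification on $\Aff_B$, the stated formula $\ul\pi_i(\Tot^\infty(\Cc))=\ul H^{-i}(\Cc)$; base-point independence is automatic since $\DK$ produces a simplicial abelian group. Next I would verify the homotopy sheaf property: \'etale descent for complexes of quasi-coherent $\Oc_B$-modules says that $R\Gamma$ sends \'etale hypercovers in $\Aff_B$ to homotopy limits in $\dg^{\leq 0}\Ab$, and because $\DK$ is part of a Quillen equivalence $\dg^{\leq 0}\Ab \rightleftarrows \Delta^\circ\Ab$ it preserves both weak equivalences and such homotopy limits. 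Composing, $T\mapsto \DK(R\Gamma(T, f^*\Cc))$ is a homotopy sheaf. Finally, for any quasi-isomorphism $\phi:\Cc_1\to\Cc_2$, the induced map $R\Gamma(T, f^*\phi)$ is again a quasi-isomorphism in $\dg^{\leq 0}\Ab$, which $\DK$ converts to a weak equivalence; hence $\phi_\flat$ is an equivalence.

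For part (b), once $\Cc \in C^{[-1,0]}_\qcoh(B)$, the complex $R\Gamma(T, f^*\Cc)$ also lies in degrees $[-1,0]$, so by (a) we have $\ul\pi_i(\Tot^\infty(\Cc))=0$ for $i\geq 2$. Proposition \ref{prop:simpl-stack}(a) then tells us that $\Tot^\infty(\Cc)$ is of groupoid type. The identity \eqref{PDK} specializes, with $\tau_{\leq 0}K=K$, to $\varpi(K)=\Pi\DK(K)$ for $K$ concentrated in degrees $[-1,0]$; applying this sectionwise identifies the prestacks $\Tot(\Cc)$ and $\Pi\Tot^\infty(\Cc)$. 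By Proposition \ref{prop:simpl-stack}(b), the functor $\Pi$ carries homotopy sheaves of groupoid type to stacks, so $\Tot(\Cc)$ is a stack corresponding to $\Tot^\infty(\Cc)$ under the equivalence with $N$; in particular the functorial and weak-equivalence statements in (b) follow from those in (a).

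The hard part will be the descent step in (a). Derived-category descent for quasi-coherent complexes is standard, but to interpret it as homotopy-sheaf descent of the simplicial presheaf $\Tot^\infty(\Cc)$ one has to pass through $\DK$ at the model-categorical level, checking that $\DK$ applied to the cosimplicial \v Cech complex computing $R\Gamma$ on a hypercover is indeed a homotopy limit of simplicial sets. The cleanest route is to invoke the Quillen equivalence between $\dg^{\leq 0}\Ab$ and simplicial abelian groups, together with the fact that the forgetful functor from simplicial abelian groups to simplicial sets preserves homotopy limits of connective objects, reducing matters to the known \'etale descent for complexes of quasi-coherent sheaves on $\Aff_B$.
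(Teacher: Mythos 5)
Your proposal is correct and takes essentially the same route as the paper, whose proof is simply the remark that part (a) follows from the sheaf property of $\Cc$ together with the properties of the Dold--Kan correspondence, and that part (b) follows from Proposition \ref{prop:simpl-stack}. You have merely filled in the details the paper leaves implicit (exactness of sections of quasi-coherent sheaves on affines, descent plus the behaviour of $\DK$ under weak equivalences and homotopy limits, and the identification \eqref{PDK} giving $\Tot(\Cc)=\Pi\Tot^\infty(\Cc)$), so no further comparison is needed.
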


\begin{proof} Part (a) follows from the fact that $\Cc$ is a sheaf and from the properties of  the Dold-Kan correspondence. 
Part (b) follows  by
 Proposition \ref{prop:simpl-stack}. 
 \end{proof}

\smallskip 

Recall that a stack morphism $f$ is called an   {\em l.c.i.}, i.e., a \emph{locally complete intersection morphism},
if it factorizes as $f=p\circ i$ where
$p$ is a smooth map and $i$ is a regular immersion. 

\begin{prop}\label{prop:tot(-1,0)}
\hfill
\begin{itemize}[leftmargin=8mm]
\item[$\mathrm{(a)}$]
Let $\Cc\in C^{[-1,0]}_\qcoh(B)$ be strictly $[-1,0]$-perfect. 
 Then we have a canonical equivalence of stacks of groupoids 
 $u: \Tot(\Cc) \lra \Tot(\Cc^0)/\!/ \Tot(\Cc^{-1})$ on $\Aff_B$.

\item[$\mathrm{(b)}$]
Let $\Cc\in C^{[-1,0]}_\qcoh(B)$ be $[-1,0]$-perfect. Then $\Tot(\Cc)$ is an Artin stack over $B$. 

\item[$\mathrm{(c)}$] For any morphism $\phi$ of $[-1,0]$-perfect complexes, 
 the induced morphism $\phi_\flat$ of stacks is an l.c.i. 
\end{itemize}
\end{prop}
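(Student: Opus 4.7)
The plan is to treat the three parts in sequence, with (a) providing the local model used in (b) and (c).

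For (a), I would unwind the definitions directly. For a strictly $[-1,0]$-perfect complex $\Cc=\{\Cc^{-1}\xrightarrow{d}\Cc^0\}$ and $(T\xrightarrow{f}B)\in \Aff_B$, the two-term complex $H^0(T,f^*\Cc)$, viewed as a three-term complex with vanishing top term, has $\varpi$-groupoid equal to $H^0(T,f^*\Cc^0)/\!/H^0(T,f^*\Cc^{-1})$ by Example \ref{ex:compl-stack}(a), with the action given by translation along $H^0(T,f^*d)$. By the total-space formula \eqref{eq:total-bundle}, this identifies with $(\Tot(\Cc^0)/\!/\Tot(\Cc^{-1}))(T)$, where $\Tot(\Cc^{-1})$ acts on $\Tot(\Cc^0)$ via $\Tot(d)$. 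Naturality in $T$ promotes these pointwise equivalences to an equivalence of stacks, and functoriality of $\Tot$ in $\Cc$ makes it canonical.

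For (b), choose a filtering cover $\{U_\alpha\}$ of $B$ on which $\Cc$ admits strictly $[-1,0]$-perfect presentations $\Cc|_{U_\alpha}\to \Cc_\alpha$. By (a), each $\Tot(\Cc_\alpha)$ is equivalent to $\Tot(\Cc_\alpha^0)/\!/\Tot(\Cc_\alpha^{-1})$, the quotient of a vector-bundle $U_\alpha$-scheme by the smooth action of an abelian vector-bundle group scheme; this is an abelian cone stack in the sense of \S \ref{subsec:cone-stack}, hence an f-Artin stack smooth over $U_\alpha$. Proposition \ref{prop:Tot-simpl}(b) turns each presentation quasi-isomorphism into a canonical equivalence $\Tot(\Cc|_{U_\alpha})\to \Tot(\Cc_\alpha)$. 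On overlaps, any two presentations are related by zigzags of quasi-isomorphisms that, by the same proposition, induce coherent $2$-cells between the corresponding local Artin stacks. These descent data assemble into a global f-Artin stack $\Tot(\Cc)$ over $B$.

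For (c), I would first reduce to the strictly perfect case: on a sufficiently fine filtering cover one can lift $\phi$ to an honest chain map between strictly perfect presentations $\phi:\Cc_1\to \Cc_2$ (strictly perfect complexes being cofibrant in the relevant model structure on complexes of vector bundles). By (b), each $\Tot(\Cc_i)$ is smooth over $B$. Factor $\phi_\flat$ through its graph:
\[
\Tot(\Cc_1)\xrightarrow{\Gamma_{\phi_\flat}} \Tot(\Cc_1)\times_B \Tot(\Cc_2)\xrightarrow{\mathrm{pr}_2}\Tot(\Cc_2).
\]
The projection $\mathrm{pr}_2$ is smooth, being the base change of $\Tot(\Cc_2)\to B$. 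The graph $\Gamma_{\phi_\flat}$ is a section of the smooth projection $\mathrm{pr}_1$, and a short cotangent-complex computation identifies its relative cotangent complex with a shifted pullback of the vector-bundle cotangent complex $L_{\Tot(\Cc_2)/B}$, so $\Gamma_{\phi_\flat}$ is a regular immersion. Hence $\phi_\flat$ is l.c.i.

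The main obstacles I anticipate are the $2$-categorical gluing in (b), specifically verifying that the canonical equivalences between different local presentations satisfy the cocycle condition needed to descend to a global Artin stack, and the Artin-stack justification in (c) of the ``section of a smooth morphism is a regular immersion'' criterion. Both are essentially standard in derived algebraic geometry but require some care with cotangent complexes and with the coherence of $2$-cells.
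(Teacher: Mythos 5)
Two points, one of which is a genuine error. In part (a), the identification you assert ``by the total-space formula'' is precisely what has to be proved: by Example \ref{ex:grpd-stack}(b), an object of $(\Tot(\Cc^0)/\!/\Tot(\Cc^{-1}))(T)$ is a $\Tot(\Cc^{-1})$-torsor $P$ over $T$ together with an equivariant map $P\to\Tot(\Cc^0)$, not simply an element of $H^0(T,f^*\Cc^0)$; the action groupoid of global sections is only the value of the \emph{prestack} quotient. The paper's entire proof of (a) consists of the missing step: the sections define the full subcategory of data with trivial torsor, and since $T$ is affine one has $H^1(T,f^*\Cc^{-1})=0$, so every torsor is trivial and the inclusion is essentially surjective, hence an equivalence. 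Your write-up skips exactly this torsor-triviality argument. (In (b), by contrast, your re-gluing with coherent $2$-cells is more than is needed: $\Tot(\Cc)$ already exists globally and is a stack by Proposition \ref{prop:Tot-simpl}(b), and being an Artin stack over $B$ is local on $B$, so it suffices that each restriction $\Tot(\Cc)|_U\simeq\Tot(\Cc_U^0)/\!/\Tot(\Cc_U^{-1})$ is Artin; no cocycle verification is required.)

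The proof of (c) is broken at the step ``$\Gamma_{\phi_\flat}$ is a regular immersion.'' For a morphism into a stack with nontrivial stabilizers the graph is a base change of the relative diagonal of the target, which here is not a monomorphism: $\Tot(\Cc_2)$ is a vector-bundle stack, and its points have automorphism groups given by sections of $\ker(d_2^0)$. Concretely, take $\Cc_1=\{0\to 0\}$ and $\Cc_2=\{\Oc_B\to 0\}$, so $\Tot(\Cc_2)=B\times B\bbG_a$ and $\phi_\flat\colon B\to B\times B\bbG_a$ is the canonical atlas; its graph is $\phi_\flat$ itself, a smooth affine morphism with $1$-dimensional fibers, hence not an immersion at all, and no cotangent-complex computation changes that. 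The conclusion of (c) is still true, but the argument must be repaired, e.g.\ by working locally on $B$ via (a), pulling back along the smooth atlas $\Tot(\Cc_2^0)\to\Tot(\Cc_2^0)/\!/\Tot(\Cc_2^{-1})$ so that $\phi_\flat$ becomes a morphism between stacks (indeed schemes after a further atlas on the source) smooth over $B$, where the graph-factorization argument is legitimate, and then checking the l.c.i.\ property smooth-locally; this is presumably the content behind the paper's terse ``(b) and (c) follow from (a).''
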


\begin{proof} Part (a)
   is similar to the proof of \cite[lem~0.1]{He04}.
    That is, look at any $(T\buildrel f\over\to B)\in \Aff_B$. 
     By definition, the groupoid 
   $\Tot(\Cc)(T)$  is the category whose objects are elements of $x$  of $H^0(T, f^*\Cc^0)$ and a morphism 
   $x\to x'$ is an element of $H^0(T, f^*\Cc^{-1})$ mapping by $d^0$ to $x'-x$.
 At the same time, the groupoid 
$(\Tot(\Cc^1)/\!/\Tot(\Cc^0))(T)$
is the category of pairs consisting of an $f^*\Cc^{-1}$-torsor $P$
over $T$ and an $f^*\calC^{-1}$-equivariant morphism $P\to\Cc^0$ of sheaves over $T$.
We see that the former category is the full subcategory of the second  consisting of data with
the torsor $P$ being the standard  trivial one, $P=f^* \Cc^{-1}$. This defines a fully faithful functor $u_T$,
and such functors for all $T$ give the sought-for morphism of stacks $u$. 
Now, since $T$ is affine,  $H^1(T, f^*\Cc^{-1})=0$ and so any torsor $P$ above is trivial. This means that the
functor $u$ is (locally) essentially surjective hence an equivalence of stacks.  
This proves (a). 
Parts (b) and (c)  follow  from (a). \end{proof}

\begin{ex}\label{ex:perf-com}
Now, let    $\Cc$ be a strictly $[-1,1]$-perfect complex
\be\label{eq:presentation}
\xymatrix{\calC=\{\calC^{-1}\ar[r]^-{d^0}&\calC^{0}\ar[r]^-{d^1}&\calC^1\}.}
\ee
 The stupid truncation $\Cc^{\leq 0} = \{ \Cc^{-1}\to\Cc^0\}$
 is strictly $[-1,0]$-perfect. 
 We denote by 
 \[
 \pi: \Tot(\Cc^0)\to B ,\quad  \ol\pi: \Tot(\Cc^{\leq 0})\ = \Tot(\Cc^0)/\!/\Tot(\Cc^{-1}) \lra  B
 \]
 the projections.  We recall from Example \ref{ex:ncone}(c) the abelian cone $\Ker(d^1)\subset\Tot(\Cc^0)$
 given as the zero locus of the section $s$ of $\pi^*\Cc^1$ induced by $d^1$.
 \end{ex}
 
 \begin{prop}\label{prop:perf-com}
  \hfill
\begin{itemize}
[leftmargin=8mm]
\item[$\mathrm{(a)}$]
If $\Cc$ is  strictly $[-1,1]$-perfect, then we have a canonical equivalence of
stacks 
$ \Ker(d^1)/\!/\Cc^{-1} \lra \Tot(\tau_{\leq 0}\Cc),$
 i.e., the section $s$ descends to a section $\ol s$ of $\ol\pi^*\Cc^1$, and
 $\Tot(\tau_{\leq 0} \Cc)$ is the zero locus of  $\ol s$. 
 
 \item[$\mathrm{(b)}$]
 If $\Cc$ is $[-1,1]$-perfect, then $\Tot(\tau_{\leq 0} \Cc)$ is an Artin stack over $B$. 
 \end{itemize} 
 \end{prop}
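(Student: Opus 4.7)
The plan is to establish (a) directly from Proposition \ref{prop:tot(-1,0)}(a) together with an identification of zero loci, and then to deduce (b) by gluing from (a) applied locally.

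For part (a), I start with the presentation \eqref{eq:presentation} and apply Proposition \ref{prop:tot(-1,0)}(a) to the stupid truncation $\Cc^{\leq 0}$, obtaining the equivalence $\Tot(\Cc^{\leq 0}) \simeq \Tot(\Cc^0)/\!/\Tot(\Cc^{-1})$. The identity $d^1 \circ d^0 = 0$ forces the section $s$ of $\pi^*\Cc^1$ determined by $d^1$ to be invariant under the translation action of $\Tot(\Cc^{-1})$ on $\Tot(\Cc^0)$ (adding $d^0(h)$ does not change $d^1$), hence $s$ descends to a section $\ol s$ of $\ol\pi^*\Cc^1$ on the quotient. Since $\Ker(d^1) \subset \Tot(\Cc^0)$ is by construction the zero locus of $s$ and is preserved by the $\Tot(\Cc^{-1})$-action, the quotient stack $\Ker(d^1)/\!/\Tot(\Cc^{-1})$ is realized as the zero locus of $\ol s$ inside $\Tot(\Cc^{\leq 0})$.

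The key remaining step is to identify this quotient with $\Tot(\tau_{\leq 0}\Cc)$ by unwinding the definitions. For $(T \buildrel f \over \to B) \in \Aff_B$, the groupoid $\Tot(\tau_{\leq 0}\Cc)(T) = \varpi(H^0(T, f^*\tau_{\leq 0}\Cc))$ has objects $x \in H^0(T, f^*\Ker(d^1))$ and morphisms $x \to x'$ given by sections $h \in H^0(T, f^*\Cc^{-1})$ with $d^0(h) = x' - x$. On the other hand, since $T$ is affine and $\Cc^{-1}$ is locally free, $H^1(T, f^*\Cc^{-1}) = 0$, so every $\Tot(f^*\Cc^{-1})$-torsor over $T$ is trivializable. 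Exactly as in the proof of Proposition \ref{prop:tot(-1,0)}(a), this reduces $(\Ker(d^1)/\!/\Tot(\Cc^{-1}))(T)$ to its full subgroupoid of data with the standard trivial torsor, which is precisely the groupoid above. Checking naturality in $T$ yields a fully faithful, locally essentially surjective functor, hence an equivalence of stacks.

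For part (b), I proceed by gluing from (a). By Definition \ref{def:perfect}(b), the open substacks $U \subset B$ on which $\Cc$ admits a strictly $[-1,1]$-perfect presentation form a filtering cover. Over each such $U$, part (a) realizes $\Tot(\tau_{\leq 0}\Cc)|_U$ as a closed substack of the Artin stack $\Tot(\Cc^{\leq 0})|_U$ (the latter being Artin by Proposition \ref{prop:tot(-1,0)}(b)), so $\Tot(\tau_{\leq 0}\Cc)|_U$ is Artin over $U$. The quasi-isomorphism invariance of $\Tot$ from Proposition \ref{prop:Tot-simpl}(b) supplies canonical isomorphisms between the Artin structures coming from two different local presentations on overlaps, and the filtering condition on the cover allows these to be assembled into a global Artin stack structure. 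The main obstacle I anticipate is the naturality bookkeeping in part (a) — namely, ensuring the trivialization-of-torsors reduction is canonical enough in $T$ and in morphisms of presentations to yield an honest equivalence of stacks rather than merely a pointwise equivalence of groupoids; this is the same subtlety that drives the proof of Proposition \ref{prop:tot(-1,0)}(a), and I would closely follow that template.
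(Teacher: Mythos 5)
Your proof is correct and follows essentially the same route as the paper, which simply runs the argument of Proposition \ref{prop:tot(-1,0)}(a) with $\Cc^0$ replaced by $\ul\Ker(d^1)$ (descent of the section via $d^1\circ d^0=0$, triviality of $\Cc^{-1}$-torsors over affine $T$) and deduces (b) from (a) by working locally on a strictly perfect presentation. Your write-up just makes explicit the gluing and functoriality steps that the paper leaves implicit.
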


 \begin{proof} Part (a) is completely analogous to the proof of Proposition
 \ref{prop:tot(-1,0)}(a), with $\Cc^0$ replaced by $\ul\Ker(d^1)$. Part (b) follows from (a). 
 \end{proof}

\smallskip

We call $\Tot(\tau_{\leq 0}\Cc) $ the \emph{truncated total space} of $\Cc$.

\begin{proposition}\label{prop:C}
Let $\Cc$ be a  $[-1,1]$-perfect complex and $(T\buildrel f\over\to B)\in\Aff_B$. 
\begin{itemize}
[leftmargin=8mm]
 \item[$\mathrm{(a)}$] 
For all $s\in\Ob(\Tot(\tau_{\leq 0}\Cc)(T))$ we have 
$$\ul \pi_0(\Tot(\tau_{\leq 0}\Cc)) \,\simeq \, \ul H^0(\Cc),\quad
\ul \pi_1(\Tot(\tau_{\leq 0}\Cc), s)\,\simeq \, \ul H^{-1}(f^*\Cc).$$

\item[$\mathrm{(b)}$]
The truncated total space of   $[-1,1]$-perfect complexes is functorial and
takes quasi-isomorphisms $\phi$ to isomorphisms $\phi_\flat$.  \qed

\end{itemize}
\end{proposition}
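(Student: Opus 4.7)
The plan is to reduce to the strictly $[-1,1]$-perfect case and use the explicit action-groupoid description from Proposition \ref{prop:perf-com}(a). Since $[-1,1]$-perfectness is local, we may work on an open $U \subset B$ on which $\Cc$ admits a strict presentation
\[
\Cc|_U \,=\, \bigl\{ \Cc^{-1} \buildrel d^0 \over \lra \Cc^0 \buildrel d^1 \over \lra \Cc^1 \bigr\},
\]
so that $\Tot(\tau_{\leq 0}\Cc)|_U \simeq \Ker(d^1)/\!/\Cc^{-1}$. For $(T \buildrel f\over \to U) \in \Aff_U$, the groupoid has objects $\{s \in H^0(T, f^*\Cc^0) : f^*d^1(s) = 0\}$ and morphisms $s \to s'$ given by $\{h \in H^0(T, f^*\Cc^{-1}) : f^*d^0(h) = s'-s\}$.

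For part (a), since $T$ is affine, the global sections functor is exact on quasi-coherent sheaves, so the cohomology of the three-term complex of global sections equals the global sections of the cohomology sheaves $\ul H^i(f^*\Cc)$. Hence
\[
\pi_0(\Tot(\tau_{\leq 0}\Cc)(T)) \,=\, H^0(T, \ul H^0(f^*\Cc)), \qquad \pi_1(\Tot(\tau_{\leq 0}\Cc)(T), s) \,=\, H^0(T, \ul H^{-1}(f^*\Cc)),
\]
the latter independently of the base point $s$ since $\Cc^{-1}$ is abelian. Sheafifying over $\Aff_B$ yields the claimed identifications $\ul\pi_0 \simeq \ul H^0(\Cc)$ and $\ul\pi_1(-,s) \simeq \ul H^{-1}(f^*\Cc)$. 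Because the cohomology sheaves are intrinsic to $\Cc$ (and not to the presentation), the local identifications glue canonically to global ones on $B$.

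For part (b), functoriality of $\phi \mapsto \phi_\flat$ is inherited from functoriality of the assignment $\Cc \mapsto \tau_{\leq 0}\Cc$ together with functoriality of $\Tot$ established in Proposition \ref{prop:Tot-simpl}(b) (or, locally, simply from the functoriality of the action-groupoid construction applied to the induced maps on $\Ker(d^1)$ and on $\Cc^{-1}$). If $\phi: \Cc_1 \to \Cc_2$ is a quasi-isomorphism, then $\phi$ induces isomorphisms $\ul H^i(\Cc_1) \iso \ul H^i(\Cc_2)$ for all $i$; by part (a) this says that $\phi_\flat$ induces an isomorphism on $\ul\pi_0$ and on $\ul\pi_1$ at every base point. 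Proposition \ref{prop:pi-stack-iso} (applied sheaf-wise via Proposition \ref{prop:simpl-stack}) then concludes that $\phi_\flat$ is an equivalence of Artin stacks.

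The only non-routine point is verifying that the local identifications in (a) are independent of the chosen presentation and glue coherently; but this follows at once from the fact that the right-hand sides $\ul H^0(\Cc)$ and $\ul H^{-1}(f^*\Cc)$ are defined intrinsically from $\Cc$, so the comparison maps obtained on an overlap $U \cap U'$ of two presentations must agree by naturality.
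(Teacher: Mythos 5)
Your proof is correct and follows essentially the same route as the paper: part (a) is read off from the action-groupoid description supplied by Proposition \ref{prop:perf-com} after reducing to a strict local presentation, and part (b) rests on functoriality of truncation and of the total space. The only cosmetic difference is that for quasi-isomorphism invariance you argue via part (a) together with the $\ul\pi_0/\ul\pi_1$ criterion of Proposition \ref{prop:pi-stack-iso}, whereas the paper simply notes that $\tau_{\leq 0}$ preserves quasi-isomorphisms and cites Proposition \ref{prop:Tot-simpl}(b).
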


\begin{proof} 
  Part (a) is a consequence of Proposition \ref{prop:perf-com}.
 Part (b) follows from (c).
 More precisely, a morphism  (resp. quasi-isomorphism) $\phi:\Cc_1\to\Cc_2$ of $[-1,1]$-perfect complexes yields
 a morphism (resp. quasi-isomorphism) $ \tau_{\leq 0} \Cc_1\to \tau_{\leq 0} \Cc_2$ and
 the statement follows from Proposition \ref{prop:Tot-simpl}(b). 
 \end{proof}


 \subsection{Stacks of extensions} 

We now consider the following general situation. 
Let $B$ be an f-Artin stack and $p:Y\to B$ be a scheme  of finite type over $B$.
Let $\calE$, $\calF$ be coherent sheaves over $Y$ which are flat over $B$. 
We can form the object $\Cc\in D^b_\qcoh(B)$ given by
$$\Cc= Rp_* \, R\underline\Hom_{\calO_Y}(\calF,\calE)[1].$$
Let $\SES$ be the stack over $B$
 classifying  short exact sequences 
$0\to \calE\lra \calG\lra \calF\to 0$
  of coherent sheaves over $Y$. 
  That is, for any $B$-scheme $T\in\Aff_B$ the objects of the groupoid $\SES(T)$ are 
short exact sequences
\be\label{eq:exten}
0\to \calE|_T\to\calG\to \calF|_T\to 0
\ee
 of coherent sheaves  of $\calO_{Y\times_B T}$-modules, and the morphisms are the isomorphisms
of such sequences identical on the boundary terms. We then have
\be\label{eq:ext-on-T}
\pi_0 (\SES(T)) \,=\,  \Ext^1_{\calO_{Y\times_B T}}(\calF|_T,\calE|_T), \quad 
\pi_1(\SES(T),\Gc) \,=\,  \Ext^0_{\calO_{Y\times_B T}}(\calF|_T,\calE|_T),
\ee
for any object $\Gc$ of $\SES(T)$. This implies identifications of sheaves of sets on $\Aff_B$, 
and of sheaves of groups on $\Aff_T$: 
 \be
\begin{gathered}
\ul\pi_0(\SES) \,=\, \ul H^0 ( \Cc), 
\quad 
 \ul \pi_1(\SES\times_BT, \Gc) \,=\,
 \ul H^{-1} (\Cc|_T). 
 \end{gathered} 
\ee
These identifications, together with those of Proposition \ref{prop:B} (b), suggest the following.

\begin{proposition} \label{prop:A}
Assume that the complex $\Cc$ is $[-1,1]$-perfect.
Then, we have an equivalence 
$\Tot (\tau_{\leq 0}\Cc) = \SES$
of cone stacks over $B$. 
 \end{proposition}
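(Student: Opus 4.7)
The plan is to construct a natural morphism of stacks over $B$, say $\Phi : \SES \to \Tot(\tau_{\leq 0}\Cc)$, to verify that it induces isomorphisms on the sheaves $\ul\pi_0$ and $\ul\pi_1$, and then to invoke Proposition \ref{prop:pi-stack-iso} to conclude that $\Phi$ is an equivalence of stacks. The matches on homotopy sheaves are essentially given to us already: by \eqref{eq:ext-on-T} one has $\ul\pi_0(\SES) = R^1 p_*\underline\Hom_{\Oc_Y}(\Fc,\Ec) = \ul H^0(\Cc)$ and $\ul\pi_1(\SES,\Gc) = p_*\underline\Hom_{\Oc_Y}(\Fc,\Ec) = \ul H^{-1}(\Cc)$, and these coincide with the homotopy sheaves of $\Tot(\tau_{\leq 0}\Cc)$ computed in Proposition \ref{prop:C}(a). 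The real content is therefore the construction of $\Phi$ itself.

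To build $\Phi$ I would argue locally on $B$. There $\Cc$ admits a strictly $[-1,1]$-perfect presentation $\{C^{-1}\to C^0\to C^1\}$, and after further shrinking one may realise this presentation by choosing a locally free resolution $\calP^\bullet \to \Fc$ on $Y$ by sheaves flat over $B$, so that $C^\bullet$ is (a bounded truncation of) $p_*\underline\Hom_{\Oc_Y}(\calP^\bullet,\Ec)[1]$ and cocycles representing $Rp_*R\underline\Hom_{\Oc_Y}(\Fc,\Ec)[1]$ become explicit. For a test object $T\to B$ and an extension $0\to\Ec|_T\to\Gc\to\Fc|_T\to 0$ in $\SES(T)$, lifting the identity of $\Fc|_T$ to a map of complexes $\calP^\bullet|_T \to \Gc$ (possible because the resolution is locally free) produces a Yoneda $1$-cocycle, i.e., a section $\sigma \in H^0(T,\ul\Ker(d^1)|_T)$ well-defined modulo the action of $C^{-1}(T)$. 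This gives an object of $\Tot(\tau_{\leq 0}\Cc)(T)$, and isomorphisms of extensions translate into equalities of such classes in the quotient groupoid. Flatness of $\calP^\bullet$ over $B$ ensures functoriality in $T$.

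By construction $\Phi$ realises the Yoneda identification on $\pi_0$ and the identification of automorphisms of extensions with $\Hom(\Fc,\Ec)$ on $\pi_1$; so Proposition \ref{prop:pi-stack-iso} upgrades $\Phi$ to an equivalence over the local chart. Two different choices of presentation yield naturally isomorphic $\Phi$'s via quasi-isomorphisms of presentations together with the functoriality of the truncated total space (Proposition \ref{prop:C}(b)), producing descent data that glue the local equivalences into a global equivalence on $B$. The cone stack structure is preserved since the $C^{-1}$-action on $\ul\Ker(d^1)$ matches the action of automorphisms of extensions on cocycles, so the equivalence respects the abelian cone stack presentations on the two sides.

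The main obstacle is the explicit construction in the middle step: one needs a presentation of $\Cc$ enriched with a flat locally free resolution $\calP^\bullet$ from which cocycles can be read off naturally in $T$, together with the independence of this choice needed for gluing. Once this is done, the remaining verifications are formal, since the homotopy sheaf computations are already in place and Proposition \ref{prop:pi-stack-iso} does the rest.
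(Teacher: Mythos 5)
Your high-level strategy coincides with the paper's: both proofs note that the identifications of $\ul\pi_0$ and $\ul\pi_1$ are already in place and reduce the statement, via Proposition \ref{prop:pi-stack-iso}, to producing one explicit morphism of stacks inducing them (the direction, $\SES\to\Tot(\tau_{\leq 0}\Cc)$ versus $\Tot(\tau_{\leq 0}\Cc)\to\SES$, is immaterial). The gap is in your construction of that morphism. First, you assume that, locally on $B$, a strictly $[-1,1]$-perfect presentation of $\Cc$ can be \emph{realised} by a locally free resolution $\calP^\bullet\to\Fc$ on $Y$ with $\Cc^i$ of the form $p_*\underline\Hom(\calP^{-i-1},\Ec)$. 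That is not a consequence of $[-1,1]$-perfectness: it requires a resolution of $\Fc$ by locally free sheaves that are ``sufficiently negative'' relative to $\Ec$, so that the higher direct images $R^{>0}p_*\underline\Hom(\calP^j,\Ec)$ vanish and the pushforwards are vector bundles commuting with base change — exactly the argument of Proposition \ref{eq:prop:C}, which uses relative projectivity and twisting by $\Oc(n)$, hypotheses not present in Proposition \ref{prop:A} (there $p:Y\to B$ is an arbitrary finite-type scheme and the only assumption is perfectness of $\Cc$). Second, and independently, your key step ``lifting the identity of $\Fc|_T$ to a map $\calP^\bullet|_T\to\Gc$ (possible because the resolution is locally free)'' is not justified as stated: the obstruction to lifting $\calP^0|_T\to\Fc|_T$ through $\Gc\to\Fc|_T$ lies in $\Ext^1_{\Oc_{Y\times_BT}}(\calP^0|_T,\Ec|_T)=H^1\bigl(Y\times_BT,(\calP^0|_T)^\vee\otimes\Ec|_T\bigr)$, which need not vanish since $Y\times_BT$ is not affine; local freeness only gives local lifts, and you would again need the extra negativity (plus base change, to view the resulting cocycle as a section of $\Cc^0|_T$) to proceed. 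The paper sidesteps all of this by working with the dg-enhancement: the vertices of $\DK\bigl(\tau_{\leq 0}\RHom_{C_\qcoh(Y\times_BT)}(\Fc|_T,\Ec|_T[1])\bigr)$ are maps of injective resolutions, which exist unconditionally, and the extension is recovered as the cone; this yields a single globally defined morphism $h:\Pi X\to\SES$ with no choices to compare.

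A secondary, lesser issue: even where your local construction applies, you glue the locally defined equivalences by appealing to ``natural isomorphisms'' between the $\Phi$'s attached to different resolutions. For a morphism of stacks this requires producing explicit homotopies between the two cocycle assignments coming from a comparison map of resolutions and verifying the cocycle (triple-overlap) coherence for these 2-morphisms; this is plausible but is real work that your sketch omits, and it is precisely the bookkeeping the paper's global injective-resolution construction renders unnecessary.
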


 \begin{proof} As pointed out, the $\ul\pi_0$ and $\ul\pi_1$ of the two stacks 
 $\Tot (\tau_{\leq 0}\Cc)$ and $\SES$ are isomorphic.
 So it remains to construct a morphism of stacks inducing these identifications. 
 For this, we first make some general discussion. 
 
 \smallskip

  We recall \cite{bondal-kapranov}, \cite{keller-dg}, \cite{toen-morita}
 that for any Artin stack $Z$ the category $D_\qcoh^b(Z)$ has a {\em dg-thickening}, i.e., there is a pre-triangulated 
  dg-category $C_\qcoh(Z)$ with the same objects and spaces of morphisms being upgraded to
 complexes $\RHom_{C_\qcoh(Z)}(\Kc, \Lc)$ of $\CC$-vector spaces such that
 \[
 \Hom_{\Oc_Z}(\Kc, \Lc) \,=\, H^0 \RHom_{C_\qcoh(Z)}(\Kc, \Lc). 
 \]
The complex $\RHom$ above can be explicitly found as 
\be\label{eq:RHom-expl}
\RHom_{C_\qcoh(Z)}(\Kc, \Lc) \,=\, \Hom^\bullet_{\Oc_Z}(I(\Kc), I(\Lc)),
\ee
 where $I(\Kc)$ is a fixed injective resolution of $\Kc$ for each $\Kc$. 
 
 \smallskip
 
 We now specialize to the case
 \[
 Z = Y\times_BT, \quad \Kc= \Fc|_T, \,\, \Lc = \Ec|_T[1],
 \]
 where $T\in\Aff_B$ is an affine $B$-scheme. The complex of $\CC$-vector spaces
 \[
 \tau_{\leq 0} \RHom_{C_\qcoh(Z)}(\Fc|_T, \Ec|_T[1])
 \] 
 has cohomology only in degrees $0$ and $-1$, given by the Ext groups in \eqref{eq:ext-on-T}. 
We consider the simplicial set 
 $$X(T) \,=\, \DK \bigl(  \tau_{\leq 0} \RHom_{C_\qcoh(Z)}(\Fc|_T, \Ec|_T[1]) \bigr),$$
which is of groupoid type by Proposition \ref{prop:gpd}(a).
Its vertices are morphisms of complexes $I(\Fc|_T)\to I(\Ec|_T[1])$. 
The cone of such a morphism is a complex of sheaves which has only one cohomology
sheaf, in degree $-1$, and this sheaf $\Gc$ fits into a short exact sequence as in \eqref{eq:exten}. 
In this way we get 
a morphism of groupoids
$$h(T): \Pi X(T)\to \SES(T).$$ At the same time, by \eqref{PDK}, 
the groupoid $\Pi X(T)$ is equivalent to the groupoid 
$\Gamma H^0(T, \Cc|_T)$ in Example \ref{ex:compl-stack}(a), hence to
$ \Tot(\tau_{\leq 0}\Cc)(T)$ by Proposition \ref{prop:C}(a). 
Combining these constructions for all $T\in\Aff_B$, we get a homotopy sheaf $X$ of simplicial sets on 
  $\Aff_B$ 
  of groupoid type, together with an equivalence and a morphism of stacks
  $$\Tot(\tau_{\leq 0} \Cc) \simeq \Pi X \buildrel h\over\lra \SES.$$
The morphism $h$ induces the required identification on 
  $\ul\pi_0$ and $\ul\pi_1$, so it is an equivalence of stacks. 
  Proposition \ref{prop:A} is proved. \end{proof}

 \medskip
 
 
 \subsection{Maurer-Cartan stacks}\label{sec:MCS}
 We now describe a non-abelian generalization of the construction of \S \ref {subsec:tot-compl}.  
 Let $B$ be an f-Artin  stack
 and $(\Gc, d, [-,-])$ be an $\Oc_B$-dg-Lie algebra with quasi-coherent
 cohomology. In other words, $\Gc$ is a Lie algebra object in the symmetric monoidal category 
 $(C_\qcoh(B), \otimes_B)$. We will assume that $\Gc$ is nilpotent. 
 We define the {\em Maurer-Cartan $\infty$-stack} of $\Gc$ to be the simplicial presheaf $\mc_\bullet(\Gc)$
 on $\Aff_B$ defined by
 \[
 \mc_\bullet(\Gc) (T) \,=\, \mc_\bullet (H^0(T, f^*\Gc)). 
 \]
 Here $(T\buildrel f\over\to B)$ is an object of $\Aff_B$, and we apply the functor $\mc_\bullet$
 to the dg-Lie algebra $H^0(T, f^*\Gc)$ over $\CC$. 
 
 \begin{prop}
  \hfill
\begin{itemize}
[leftmargin=8mm]
\item[$\mathrm{(a)}$]
The simplicial presheaf $\mc_\bullet(\Gc)$ is a homotopy sheaf.
 
\item[$\mathrm{(b)}$] A morphism (resp. quasi-isomorphism) $\phi: \Gc_1\to \Gc_2$ of nilpotent $\Oc_B$-dg-Lie 
algebras induces
 a morphism (resp. weak equivalence) of homotopy sheaves $\phi_\flat: \mc_\bullet(\Gc_1) \to \mc_\bullet(\Gc_2)$.
 \end{itemize}

 \end{prop}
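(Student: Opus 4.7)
The plan is to factor the construction of $\mc_\bullet(\Gc)$ through a sheaf of nilpotent dg-Lie algebras on $\Aff_B$ and then apply the Hinich--Getzler construction pointwise. For (a), I would first verify that the presheaf $F: (T \xrightarrow{f} B) \mapsto H^0(T, f^*\Gc)$ of nilpotent dg-Lie algebras on $\Aff_B$ is an honest sheaf in the \'etale topology. Since $\Gc$ has quasi-coherent cohomology, one may replace it up to quasi-isomorphism by a complex of honest quasi-coherent sheaves, so that term-by-term $H^0(T, f^*\Gc^i)$ is the module of sections of a quasi-coherent sheaf and hence satisfies \'etale descent; the dg-Lie bracket is inherited levelwise and preserves the nilpotent filtration used to define $\mc_\bullet$.

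Next I would observe that the functor $\gen \mapsto \mc_\bullet(\gen)$ on nilpotent dg-Lie algebras is built out of two operations that commute with limits: tensoring with the fixed polynomial de Rham cdga $\Omega^\bullet_{\pol}(\Delta^n)$ (termwise tensoring with a finite-dimensional flat $\CC$-vector space, hence limit-compatible in the dg-Lie variable), and extracting the set of Maurer--Cartan elements (a subset cut out by polynomial equations, hence limit-preserving). Combined with the \'etale sheaf property of $F$, this shows that $\mc_\bullet(\Gc)$ sends an \'etale hypercover to a homotopy-limit diagram of simplicial sets, which is precisely the homotopy-sheaf condition.

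For (b), the construction of $\phi_\flat$ is immediate by functoriality: apply $H^0(T, f^*(-))$ and then $\mc_\bullet$. The content is that a quasi-isomorphism $\phi: \Gc_1 \to \Gc_2$ produces a weak equivalence. For each affine $f: T \to B$, the induced map $H^0(T, f^*\phi)$ is a quasi-isomorphism of nilpotent dg-Lie algebras over $\CC$: higher cohomology of quasi-coherent sheaves on an affine vanishes, so a quasi-isomorphism of complexes with quasi-coherent cohomology descends to a quasi-isomorphism on sections. Proposition \ref{prop:mc-qis} then gives a weak equivalence $\mc_\bullet(H^0(T, f^*\phi))$ for every $T$, and since weak equivalences of homotopy sheaves are detected stalkwise (and sectionwise implies stalkwise), this produces the required weak equivalence $\phi_\flat$.

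The hard part is the sheaf/limit-preservation claim in (a): one must choose a model of $\Gc$ carefully so that levelwise global sections really assemble into a sheaf of dg-Lie algebras, and then confirm that $\mc_\bullet$ commutes with the relevant \v{C}ech-type homotopy limits while the nilpotency hypothesis is preserved along the way. For abelian $\gen$ this step collapses to the Dold--Kan correspondence and Proposition \ref{prop:simpl-stack}; in the nonabelian nilpotent case it requires some bookkeeping with the nilpotent tower, but no genuinely new input beyond Hinich's formalism.
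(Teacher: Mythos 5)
Your part (b) is essentially the paper's own argument: reduce to Proposition \ref{prop:mc-qis} affine-by-affine (equivalently, stalkwise), which is what the paper summarizes as ``sheafification''; that part is fine.

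The gap is in part (a). What your headline argument actually shows is that $T\mapsto \mc_\bullet(H^0(T,f^*\Gc))$ is a \emph{strict} sheaf of simplicial sets: the operations you invoke (tensoring with the fixed cdga $\Omega^\bullet_\pol(\Delta^n)$, extracting Maurer--Cartan elements) commute with ordinary limits, so sections over $T$ agree with the strict limit over a \v{C}ech nerve. The homotopy-sheaf condition asks instead that the map to the \emph{homotopy} limit be a weak equivalence, and the inference ``strict descent $\Rightarrow$ homotopy descent'' is not valid in this generality. Concretely, already for $\Gc$ with only $\Gc^0,\Gc^1$ nonzero the value of $\mc_\bullet$ is (the nerve of) the action groupoid $\mc(\Gc^1)/\!/\exp(\Gc^0)$, and a homotopy-descent datum along an \'etale cover of an affine $T$ produces an $\exp(\Gc^0)$-torsor on $T$ which need not be trivial for an arbitrary sheaf of nilpotent (dg-)Lie algebras; your argument never excludes this, so if it were correct as stated it would prove homotopy descent for any sheaf of nilpotent dg-Lie algebras on any site, which is false. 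The real input is quasi-coherence: higher \'etale cohomology of the quasi-coherent terms vanishes on the affines appearing in the \v{C}ech nerve, and this is what forces the strict limit to compute the homotopy limit. That statement is exactly Hinich's descent theorem for Deligne/Maurer--Cartan groupoids \cite{hinich} (equivalently, one filters by the lower central series and reduces to quasi-coherent descent in the abelian case). Your closing paragraph points in this direction (``nilpotent tower \dots\ Hinich's formalism''), but the abelian base case does not ``collapse to Dold--Kan and Proposition \ref{prop:simpl-stack}'': that proposition concerns groupoid-type homotopy sheaves, not why descent holds, and the cohomology-vanishing step is precisely what your write-up omits. (A minor further slip: $\Omega^\bullet_\pol(\Delta^n)$ is infinite-dimensional, so tensoring with it only commutes with finite limits; this is harmless because covers of quasi-compact affines can be refined to finite ones, but it should be said.)
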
 
 
 \begin{proof} Part (b) follows from Proposition \ref{prop:mc-qis}   by sheafification. 
 \end{proof}
 
 \smallskip
 
Assume that  the dg-Lie algebra $\Gc$ is situated in degrees $[0,2]$, i.e., 
 \begin{align}\label{SPdgLA}
 \xymatrix{\Gc=\{\Gc^{0}\ar[r]^-{d^0}&\Gc^{1}\ar[r]^-{d^1}&\Gc^2\}.}
 \end{align}
Then we define the stack $\MC(\Gc)$ of groupoids on $\Aff_B$ by
 \[
 \MC(\Gc)(T) \,=\, \MC (H^0(T, \Gc|_T))
 \]
 We call $\MC(\Gc)$ the {\em Maurer-Cartan stack} associated to
 a 3-term $\Oc_B$-dg-Lie algebra $\Gc$. 
 
 \begin{prop}
 If $\Gc$ is situated in degrees $[0,2]$, then the simplicial sheaf $\mc_\bullet(\Gc)$ is of groupoid type
 and $\Pi\mc_\bullet(\Gc)=\MC(\Gc)$. \qed
 \end{prop}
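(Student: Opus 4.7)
The plan is to reduce the statement to the pointwise result of Getzler, recalled in Example \ref{ex:mc-grpd}(b), and then sheafify. Fix $(T\buildrel f\over\to B)\in\Aff_B$ and write $\gen(T)=H^0(T,f^*\Gc)$, a nilpotent dg-Lie algebra over $\CC$ concentrated in degrees $[0,2]$. By construction,
\[
\mc_\bullet(\Gc)(T)\,=\,\mc_\bullet(\gen(T)),\qquad \MC(\Gc)(T)\,=\,\MC(\gen(T)),
\]
and Getzler's theorem supplies a weak equivalence of simplicial sets
\[
\eta_{\gen(T)}\colon N_\bullet\bigl(\MC(\gen(T))\bigr)\,\lra\,\mc_\bullet(\gen(T)).
\]

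The first step is to verify that $\eta$ is natural in $\gen$: for any morphism of nilpotent $[0,2]$-dg-Lie algebras $\psi\colon\gen\to\gen'$, Getzler's construction gives a commutative square of simplicial sets, because both functors $\gen\mapsto N_\bullet\MC(\gen)$ and $\gen\mapsto\mc_\bullet(\gen)$ send $\psi$ to the evidently induced maps and Getzler's equivalence is defined by an integration map that is functorial in $\gen$. Applying this naturality to the pullback maps $\gen(T')\to\gen(T)$ induced by morphisms $T\to T'$ in $\Aff_B$, we obtain a morphism of simplicial presheaves
\[
\eta_\Gc\colon N_\bullet(\MC(\Gc))\,\lra\,\mc_\bullet(\Gc),
\]
and $\eta_\Gc(T)=\eta_{\gen(T)}$ is a weak equivalence for every $T$.

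The second step is to promote this to a weak equivalence of homotopy sheaves. Both sides are homotopy sheaves: $\mc_\bullet(\Gc)$ by part (a) of the preceding proposition, and $N_\bullet(\MC(\Gc))$ because $\MC(\Gc)$ is a stack of groupoids (this is the content of the previous construction, essentially Proposition \ref{prop:A} applied formally, or more directly by noting that the stack axiom for $\MC(\Gc)$ reduces to the sheaf property of $\ul H^0$ and $\ul H^{-1}$ of $\Gc$ combined with the nilpotent gauge action). A stalk-wise (equivalently, section-wise) weak equivalence of homotopy sheaves is a weak equivalence, so $\eta_\Gc$ is one.

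It now follows from Proposition \ref{prop:simpl-stack}(a) that $\mc_\bullet(\Gc)$ is of groupoid type, since it is weakly equivalent to the nerve of a stack of groupoids, so its sheafified higher homotopy vanishes in degrees $\geq 2$. By Proposition \ref{prop:simpl-stack}(b), applying $\Pi$ yields the equivalence
\[
\Pi\mc_\bullet(\Gc)\,\simeq\,\Pi N_\bullet(\MC(\Gc))\,=\,\MC(\Gc),
\]
the last equality because $\Pi\circ N=\Id$ on stacks of groupoids. The main (and only real) obstacle is the naturality of Getzler's comparison, which enters when promoting the pointwise equivalence to a morphism of presheaves; once granted, the result reduces to formal properties of homotopy sheaves of groupoid type.
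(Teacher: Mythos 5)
Your argument is correct and is exactly the one the paper intends (the proposition is stated with no written proof): apply Getzler's equivalence $N_\bullet(\MC(\gen))\sim\mc_\bullet(\gen)$ section-wise to $\gen=H^0(T,\Gc|_T)$, observe it assembles naturally over $\Aff_B$, and conclude with the formal properties of homotopy sheaves of groupoid type and of the functors $\Pi$, $N$. The only point you rightly flag -- naturality of Getzler's comparison in $\gen$ -- is indeed the sole nontrivial input, and it holds since his model $\gamma_\bullet(\gen)\subset\mc_\bullet(\gen)$ and its identification with the nerve of the Deligne groupoid are functorial constructions.
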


 Let  $\Gc$ be any $\Oc_B$-dg-Lie algebra with quasi-coherent cohomology. 
 As for complexes, we call $\Gc$ {\em strictly $[0,2]$-perfect}, if it is quasi-isomorphic, 
 as an $\Oc_B$-dg-Lie algebra,
 to a $3$-term dg-Lie algebra
 \eqref{SPdgLA}
 with each $\Gc^i$ being a vector bundle on $B$. We say that $\Gc$ is {\em $[0,2]$-perfect}, if,
 locally on $B$, it is strictly $[0,2]$-perfect and, moreover, the set of open substacks
 $U\subset B$ such that $\Gc|_U$ is strictly $[0,2]$-perfect, is  filtering
 with respect to the partial order by inclusion.

 \smallskip
 
 We now assume that  $\Gc$ be a strictly $[0,2]$-perfect dg-Lie algebra as in \eqref{SPdgLA}. 
 Then, we have the closed substack
$\mc(\Gc)\subset\Tot(\Gc^1)$
``given by the equation $d^1x+{1\over 2}  [x,x]=0$'', with 
two equivalent definitions :

\begin{itemize}
\item[($\mathrm{mc1}$)]  
For any affine $B$-scheme $T\buildrel f\over\to B$ we have a dg-Lie algebra $H^0(T, \Gc|_T)$, and we define 
\[
\mc(\Gc)(T)=\mc (H^0(T,\Gc|_T)).
\]

\item[($\mathrm{mc2}$)]  
The stack $\mc(\Gc)$ is the zero locus of the section $s_\Gc$ of $\pi^*\Gc^2$ given by
the \emph{curvature}
\be\label{eq:curv}
\Gc^1\to\Gc^2,\quad x\mapsto d^1x+ {1\over 2} [x,x].
\ee

\end{itemize} 

Since the Lie algebra $\Gc^0$ is nilpotent, we have a sheaf of groups
$G^0 = \exp(\Gc^0)$ on $B$ by Malcev theory, which acts on the stack $\mc(\Gc)$ as in \eqref{eq:gauge}, 
and we can consider the quotient stack
$\mc(\Gc)/\!/G^0.$
 Consider also the quotient stack
$$\Tot(\Gc^{\leqslant 1})=\Tot(\Gc^1)/\!/G^0$$ 
and denote its projection to $B$ by $\ol \pi$.
 
 \begin{prop}
  \hfill
\begin{itemize}
[leftmargin=8mm]
\item[$\mathrm{(a)}$]
 Let $\Gc$ be a strictly $[0,2]$-perfect  dg-Lie algebra as in \eqref{SPdgLA}.  
 
\begin{itemize}
 
\item[$\mathrm{(a1)}$] We have an equivalence of stacks $u: \MC(\Gc)\to \mc(\Gc)/\!/G^0$,
so $\MC(\Gc)$ is an Artin stack. 
 
\item[$\mathrm{(a2)}$]
The section $s_\Gc$ of the bundle $\pi^*\Gc^2$ on $\Tot(\Gc^1)$ descends to a
section $\ol s_\Gc$ of the bundle $\ol \pi^*\Gc^2$ on $\Tot(\Gc^{\leqslant 1})$, and the substack
$\MC(\Gc)\subset \Tot(\Gc^{\leqslant 1})$ 
is the zero locus of  $\ol s_\Gc$.

\end{itemize}
 
\item[$\mathrm{(b)}$]
If $\Gc$ is a $[0,2]$-perfect $\Oc_B$-dg-Lie algebra, then the simplicial sheaf $\mc_\bullet(\Gc)$ is
 of groupoid type.  The stack of groupoids $\MC(\Gc):=\Pi\mc_\bullet(\Gc)$ is an Artin stack over $B$. 
 \end{itemize}
  \end{prop}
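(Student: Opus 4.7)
The plan is to mimic, in the non-abelian setting, the arguments of Proposition \ref{prop:tot(-1,0)} and Proposition \ref{prop:perf-com}, replacing the linear pair $(\Cc^{-1},\Cc^0)$ by the gauge datum $(\Gc^0,\Gc^1)$ and the differential $d^1\colon \Cc^0\to\Cc^1$ by the curvature map \eqref{eq:curv}. The key inputs are: (i) the comparison between action groupoids and torsor groupoids when the acting group is unipotent over an affine base; (ii) the classical gauge-covariance of the Maurer--Cartan curvature; and (iii) Proposition \ref{prop:mc-qis} (quasi-isomorphism invariance) together with Getzler's theorem (cited in Example \ref{ex:mc-grpd}(b)) to descend the structure to the $[0,2]$-perfect case.

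For part (a1), fix $(T\buildrel f\over\to B)\in\Aff_B$. By definition $\MC(\Gc)(T)$ is the action groupoid of the nilpotent group $\exp(H^0(T,\Gc^0|_T))$ on the Maurer--Cartan set $\mc(H^0(T,\Gc|_T))$. On the other hand, $(\mc(\Gc)/\!/G^0)(T)$ is the groupoid of pairs $(P,u)$ with $P$ a $G^0|_T$-torsor and $u\colon P\to\mc(\Gc)|_T$ equivariant; there is an obvious fully faithful functor from the former into the latter obtained by taking $P$ to be the trivial torsor. Since $\Gc^0$ is a vector bundle and $G^0=\exp(\Gc^0)$ is an iterated extension of vector bundles via Malcev theory, $G^0$ is unipotent, so $H^1_{\mathrm{et}}(T,G^0|_T)=0$ on the affine $T$, hence every torsor is trivial and the functor is essentially surjective. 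This gives the equivalence $u$ and identifies $\MC(\Gc)$ with $\mc(\Gc)/\!/G^0$; since $\mc(\Gc)\subset\Tot(\Gc^1)$ is a closed substack (algebraic) and $G^0$ acts algebraically, $\MC(\Gc)$ is Artin.

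For part (a2), the point to check is that the curvature section $s_\Gc$ of $\pi^*\Gc^2$ is $G^0$-equivariant with respect to the gauge action \eqref{eq:gauge} on the base $\Tot(\Gc^1)$ and the adjoint action on the fibre $\Gc^2$. This is the standard identity $F(e^y\!*\!x)=e^{\ad(y)}F(x)$ where $F(x)=d^1x+\tfrac12[x,x]$, which follows from the Campbell--Hausdorff formula and the Jacobi identity; modulo this well-known computation, $s_\Gc$ descends to a section $\bar s_\Gc$ of $\bar\pi^*\Gc^2$ on $\Tot(\Gc^{\leq 1})=\Tot(\Gc^1)/\!/G^0$. The zero locus of $\bar s_\Gc$ is then the quotient of the zero locus of $s_\Gc$, i.e.\ of $\mc(\Gc)$, by $G^0$, which is exactly $\MC(\Gc)$ by (a1).

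For part (b), we first observe that for each $T\in\Aff_B$ the simplicial set $\mc_\bullet(H^0(T,\Gc|_T))$ is of groupoid type: by Getzler's theorem quoted in Example \ref{ex:mc-grpd}(b) it is weakly equivalent to $N(\MC(H^0(T,\Gc|_T)))$, so its higher homotopy sheaves vanish; by Proposition \ref{prop:simpl-stack}(a) the homotopy sheaf $\mc_\bullet(\Gc)$ is therefore of groupoid type and $\MC(\Gc)=\Pi\mc_\bullet(\Gc)$ is well defined. To see that it is Artin, use the filtering assumption in the definition of $[0,2]$-perfect: cover $B$ by open substacks $U\subset B$ on which $\Gc|_U$ is strictly $[0,2]$-perfect, so that $\MC(\Gc|_U)$ is Artin by (a). For two such presentations on $U$ and $U'$, the quasi-isomorphism between them, restricted to $U\cap U'$, induces a weak equivalence of $\mc_\bullet$ by Proposition \ref{prop:mc-qis}, hence an equivalence of the groupoid-type stacks by Proposition \ref{prop:simpl-stack}(b); these equivalences provide the gluing data, and Artinness is local. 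The main obstacle I foresee is the bookkeeping for (a2)---verifying the gauge-covariance of the curvature at the level of $\Oc_B$-dg-Lie algebras (as opposed to honest Lie algebras)---but this is formal once one expands \eqref{eq:gauge} and \eqref{eq:curv} and invokes the Jacobi identity in $\Gc$.
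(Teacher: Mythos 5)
Your proposal follows essentially the same route as the paper: (a1) is exactly the paper's argument (the comparison of the action groupoid with the torsor groupoid, using that $G^0=\exp(\Gc^0)$ is unipotent so every $G^0$-torsor over an affine $T$ is trivial, as in Proposition \ref{prop:tot(-1,0)}(a)); for (a2) the paper simply invokes the equivalence of the two descriptions (mc1) and (mc2) of $\mc(\Gc)$ together with (a1), and your explicit check of the gauge covariance $F(e^y*x)=e^{\ad(y)}F(x)$ is precisely the computation hiding behind that invocation; for (b) the paper says in one line that being of groupoid type and being Artin are local on $B$, which is the localization-plus-strictification argument you carry out.

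One small inaccuracy in your (b): you begin by asserting that for \emph{every} $T\in\Aff_B$ the simplicial set $\mc_\bullet(H^0(T,\Gc|_T))$ is of groupoid type ``by Getzler's theorem''. For a $[0,2]$-perfect but not strictly perfect $\Gc$, the dg-Lie algebra $H^0(T,\Gc|_T)$ need not be concentrated in degrees $[0,2]$, so the version of Getzler's theorem quoted in Example \ref{ex:mc-grpd}(b) does not apply directly, and $\MC(H^0(T,\Gc|_T))$ is not even defined in the paper's conventions. The correct order — which you in fact use two sentences later for Artinness — is to first restrict to opens $U\subset B$ on which $\Gc|_U$ admits a strict $[0,2]$-perfect model, use Proposition \ref{prop:mc-qis} (sheafified) to replace $\mc_\bullet(\Gc)|_U$ by $\mc_\bullet$ of the strict model, apply Getzler and part (a) there, and then conclude globally because vanishing of the higher homotopy sheaves and Artinness are local on $B$. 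With that reordering your argument coincides with the paper's.
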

 
 \begin{proof} Part (a1) is proved similarly to Proposition \ref{prop:tot(-1,0)}(a), using the fact
 that, $G^0$ being a unipotent sheaf of groups, any $f^*G^0$-torsor over any  $T\in\Aff_B$
 is trivial. Part (a2) follows from (a) and from the equivalence of the two definitions (mc1) and (mc2)
 of the stack $\mc(\Gc)$. Part (b) follows because being of groupoid type and being an Artin stack over $B$
 are properties local on  $B$. \end{proof}

\begin{example}\label{ex:abelian}
If the dg-Lie algebra $\Gc$ is abelian, i.e., it reduces to a  $[0,2]$-perfect complex on $B$, then 
$\MC(\Gc)=\Tot(\tau_{\leqslant 0}(\Gc[1]))$.
\end{example}

\smallskip

Let us now  globalize the considerations of Proposition \ref{prop:local}  as follows.
Let $p:\Gc=\Hc\ltimes\Nc\to\Hc$ be a split extension of strictly $[0,2]$-perfect dg-Lie algebras on $B$.
The $B$-scheme $\pi_\Hc:\mc(\Hc)\to B$ carries a strictly $[0,2]$-perfect dg-Lie algebra $\tilde\Nc$
which is equal to $\pi_\Hc^*\Nc$ as a sheaf graded of $\Oc_{\mc(\Hc)}$-Lie algebras, 
with the differential $d_x$ at a point $x\in\mc(\Hc)$ defined as above.
The action of the sheaf of groups $H^0$ on $\mc(\Hc)$
extends to a compatible action on $\tilde\Nc$, so that $\tilde\Nc$ descends to a
strictly $[0,2]$-perfect dg-Lie algebras on the stack $\MC(\Hc)$.
We denote this descended dg-Lie algebra by the same symbol $\tilde\Nc$.
Note that $\MC(\tilde\Nc)$ is a stack over $\MC(\Hc)$, hence over $B$.
Now, we have the following global analogue of Proposition \ref{prop:local}.

\smallskip

\begin{proposition}\label{prop:global}
The stacks 
$\MC(\Gc)$ and $\MC(\tilde\Nc)$
over $B$ are isomorphic.
\end{proposition}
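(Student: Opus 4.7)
The plan is to mimic the fiberwise statement of Proposition \ref{prop:local} globally over $B$, by decomposing the Maurer--Cartan datum along the split extension and then quotienting in two steps by the correspondingly decomposed gauge group.

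First, I would reduce to the strictly $[0,2]$-perfect case. Since the statement, the identification $\MC(\Gc) \simeq \mc(\Gc)/\!/G^0$, and the construction of $\tilde\Nc$ and its descent to $\MC(\Hc)$ are all local on $B$, one may pass to an open substack of $B$ over which $\Hc$, $\Nc$, and hence $\Gc$ admit simultaneous strict $[0,2]$-presentations. The splitting $i:\Hc\hookrightarrow\Gc$ gives a direct sum decomposition $\Gc^k = \Hc^k \oplus \Nc^k$ of $\Oc_B$-modules for $k=0,1,2$, and hence $\Tot(\Gc^1) \simeq \Tot(\Hc^1)\times_B\Tot(\Nc^1)$.

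Next I would decompose the curvature along this splitting. For a section $\xi = x+y$ of $\Gc^1$ with $x$ a section of $\Hc^1$ and $y$ of $\Nc^1$, the curvature \eqref{eq:curv} expands as
\[
d\xi + \tfrac{1}{2}[\xi,\xi] \;=\; \bigl(dx + \tfrac{1}{2}[x,x]\bigr) \;+\; \bigl(dy + [x,y] + \tfrac{1}{2}[y,y]\bigr),
\]
the first parenthesis lying in $\Hc^2$ and the second in $\Nc^2$, because $\Nc$ is a differential ideal of $\Gc$ and $\Hc$ is a sub-dg-Lie algebra. Its vanishing is therefore equivalent to $x\in\mc(\Hc)$ together with $y$ being a Maurer--Cartan element of the twisted dg-Lie algebra $\tilde\Nc$ at $x$. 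This yields a canonical isomorphism of $B$-schemes $\mc(\Gc) \simeq \mc(\tilde\Nc)$, lying over $\mc(\Hc)$.

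I would then analyse the gauge group. By Campbell--Hausdorff applied to the split nilpotent Lie algebra $\Gc^0 = \Hc^0 \ltimes \Nc^0$ one gets a semi-direct decomposition $G^0 = H^0 \ltimes N^0$ of sheaves of groups on $B$. Using \eqref{eq:gauge} together with $[\Gc,\Nc]\subseteq\Nc$ and $[\Hc,\Hc]\subseteq\Hc$, one checks that $N^0$ acts fiberwise over $\mc(\Hc)$ by the standard gauge action on $\mc(\tilde\Nc)$, and that $H^0$ covers the standard gauge action on $\mc(\Hc)$ in a way that agrees with the $H^0$-equivariant structure used to descend $\tilde\Nc$ to $\MC(\Hc)$. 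Taking successive quotients,
\[
\MC(\Gc) \;\simeq\; \mc(\Gc)/\!/G^0 \;\simeq\; \bigl(\mc(\tilde\Nc)/\!/N^0\bigr)/\!/H^0 \;\simeq\; \MC(\tilde\Nc),
\]
where on the right $\MC(\tilde\Nc)$ is the Maurer--Cartan stack of the descended dg-Lie algebra on $\MC(\Hc)$, viewed as a stack over $B$.

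The main obstacle, I expect, will be the bookkeeping in the last paragraph: one must verify that the twisted differential $d_x$ produced at a point $x\in\mc(\Hc)$ by expanding the curvature of $\Gc$ coincides with the differential built into $\tilde\Nc$, and that the $H^0$-action coming from the gauge formula \eqref{eq:gauge} matches exactly the equivariant structure on $\tilde\Nc$ used in its descent to $\MC(\Hc)$. Apart from sign conventions (e.g.\ $d_x = d-\ad(x)$ versus $d+\ad(x)$), everything reduces to the naturality of Campbell--Hausdorff and the formal properties of Maurer--Cartan theory established earlier in the section.
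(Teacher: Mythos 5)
Your argument is correct, but it runs along a different track than the paper's. The paper proves Proposition \ref{prop:global} pointwise: for each affine $T\in\Aff_B$ it takes global sections to get a split exact sequence of nilpotent dg-Lie algebras over $\CC$, applies Proposition \ref{prop:local} to identify the fiber categories of $p_*:\MC(H^0(T,\Gc|_T))\to\MC(H^0(T,\Hc|_T))$ with the Maurer--Cartan groupoids of the twisted algebras, and concludes the identification of groupoids of $T$-points. You instead work on the atlas: using the presentation $\MC(\Gc)\simeq\mc(\Gc)/\!/G^0$ established earlier in the subsection, you split the curvature along $\Gc^1=\Hc^1\oplus\Nc^1$ to get $\mc(\Gc)\simeq\mc(\tilde\Nc)$ over $\mc(\Hc)$, split the gauge group as $G^0=H^0\ltimes N^0$, and quotient in two steps. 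The two proofs rest on the same decomposition, but yours is more explicit and self-contained: it in effect also proves Proposition \ref{prop:local} (which the paper states without proof), at the cost of the gauge-action bookkeeping you describe, whereas the paper's $T$-point argument is shorter because it delegates exactly that bookkeeping to \ref{prop:local} and needs no analysis of $G^0$. Two small remarks: the reduction in your first paragraph is unnecessary, since strict $[0,2]$-perfection of $\Hc$, $\Nc$, $\Gc$ is already part of the hypothesis of the proposition; and your curvature expansion produces the twisted differential $d+\ad(x)$, which is the one compatible with the gauge formula \eqref{eq:gauge} --- the paper's $d_x=d-\ad(x)$ is a sign-convention discrepancy, and flagging it rather than silently matching signs was the right call.
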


\begin{proof}
For each affine $B$-scheme $T\in\Aff_B$,  we have a split exact sequence of dg-Lie algebras
$$\xymatrix{
0\ar[r]& H^0(T,\Nc|_T)\ar[r]& H^0(T,\Gc|_T)\ar[r]^p& H^0(T,\Hc|_T)\ar[r]& 0}$$
which gives rise to a functor $p_*:\MC(H^0(T,\Gc|_T))\to \MC(H^0(T,\Hc|_T))$
with the fiber category over an object $x$ equivalent to $\MC(H^0(T,\Hc|_T)_x)$.
This yields the following isomorphism of groupoids over $\MC(H^0(T,\Hc|_T))$
$$\MC(H^0(T,\Gc|_T))=\MC(H^0(T,\tilde\Nc|_T)).$$
\end{proof}

 \medskip
 

 \subsection{Stacks of filtrations}\label{sec:FILT}
 Let $B$ be an f-Artin stack  and $p:Y\to B$ be a scheme over $B$, locally of finite type.
 Let $\Ec_{01}$, $\Ec_{12}$, $\Ec_{23}$ be coherent sheaves over $Y$ which are flat over $B$.
 We define $\FILT$ to be the stack over $B$ classifying filtered coherent sheaves
 $\Ec_{01}\subset\Ec_{02}\subset\Ec_{03}$ over $Y$, together with identifications
 $\Ec_{0j}/\Ec_{0i}\simeq\Ec_{ij}$ for $ij=12,23$.
 We have a sheaf of associative dg-algebras over $B$ defined by
 \begin{align}\label{Gc}
 \Gc=\bigoplus_{ij<kl} Rp_*\underline\RHom(\Ec_{kl},\Ec_{ij}),\quad 01<12<23.
 \end{align}
 We'll consider $\Gc$ as a sheaf of dg-Lie algebras using the supercommutator.
 Then, we have the following generalization of Proposition \ref{prop:A}.
 \smallskip
 
\begin{proposition}\label{prop:B}
Assume that $\Gc$ is a strictly $[0,2]$-perfect dg-Lie algebra on $B$.
Then, we have an equivalence $\MC(\Gc)=\FILT$ of stacks over $B$.
\end{proposition}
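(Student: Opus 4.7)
The plan is to reduce the statement to two applications of Proposition \ref{prop:A} via the splitting technology of Proposition \ref{prop:global}. The components of $\Gc$ are indexed by the three pairs $ij<kl$ with $ij,kl\in\{01,12,23\}$, giving summands $\Gc_{01,12}$, $\Gc_{12,23}$, $\Gc_{01,23}$. Among the summands of $\Gc$, the only pair admitting a nontrivial composition of morphisms is $(\Gc_{12,23},\Gc_{01,12})$, which composes into $\Gc_{01,23}$. Hence the only nonzero bracket is $[\Gc_{01,12},\Gc_{12,23}]\subset\Gc_{01,23}$, so that $\Hc:=\Gc_{12,23}=Rp_*\underline\RHom(\Ec_{23},\Ec_{12})$ is an abelian dg-Lie subalgebra of $\Gc$ and $\Nc:=\Gc_{01,12}\oplus\Gc_{01,23}$ is an abelian dg-Lie ideal, producing a split extension $\Gc=\Hc\ltimes\Nc$ of strictly $[0,2]$-perfect dg-Lie algebras.

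By Example \ref{ex:abelian} combined with Proposition \ref{prop:A} applied to the pair $(\Ec_{12},\Ec_{23})$, I identify $\MC(\Hc)$ with the stack $\SES_{12,23}$ classifying extensions $0\to\Ec_{12}\to\Ec_{13}\to\Ec_{23}\to 0$ over $B$. Proposition \ref{prop:global} then yields $\MC(\Gc)\simeq\MC(\tilde\Nc)$ as stacks over $\MC(\Hc)$, where the twisted differential on $\tilde\Nc$ at a point $x_{12,23}\in\mc(\Hc)$ is $d+\ad(x_{12,23})$. Since $\ad(x_{12,23})$ acts as composition with $x_{12,23}$ from $\Gc_{01,12}$ to $\Gc_{01,23}$ and vanishes on $\Gc_{01,23}$, it is precisely the connecting morphism in the distinguished triangle
\[
Rp_*\underline\RHom(\Ec_{23},\Ec_{01})\lra Rp_*\underline\RHom(\Ec_{13},\Ec_{01})\lra Rp_*\underline\RHom(\Ec_{12},\Ec_{01})\buildrel [1]\over\lra
\]
induced by the universal extension defining $\Ec_{13}$. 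Consequently $\tilde\Nc$ is quasi-isomorphic to $Rp_*\underline\RHom(\Ec_{13},\Ec_{01})$ as an abelian $\Oc_{\MC(\Hc)}$-dg-Lie algebra. A second application of Proposition \ref{prop:A}, now over the base $\MC(\Hc)$ for the pair $(\Ec_{01},\Ec_{13})$, identifies $\MC(\tilde\Nc)$ with the stack $\SES_{01,13}$ of extensions $0\to\Ec_{01}\to\Ec_{03}\to\Ec_{13}\to 0$.

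Chaining the equivalences, a point of $\MC(\Gc)$ produces an extension $\Ec_{13}$ of $\Ec_{23}$ by $\Ec_{12}$ together with an extension $\Ec_{03}$ of $\Ec_{13}$ by $\Ec_{01}$; letting $\Ec_{02}\subset\Ec_{03}$ be the preimage of $\Ec_{12}\subset\Ec_{13}$ recovers the three-step filtration $\Ec_{01}\subset\Ec_{02}\subset\Ec_{03}$ with prescribed subquotients and defines the desired morphism $h:\MC(\Gc)\to\FILT$ of stacks over $B$. Since every intermediate step is an equivalence, so is $h$ (one can alternatively finish by checking that $h$ induces isomorphisms on $\underline\pi_0$ and $\underline\pi_1$ and invoking Proposition \ref{prop:pi-stack-iso}). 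The main technical obstacle is justifying the identification of the twist $d+\ad(x_{12,23})$ on $\tilde\Nc$ with the natural dg-model of $Rp_*\underline\RHom(\Ec_{13},\Ec_{01})$ arising from the mapping cone of the extension defining $\Ec_{13}$, and ensuring that the strict $[0,2]$-perfectness of $\Gc$ descends to the perfectness of $\Hc$ and, after twisting, of $\tilde\Nc$ over $\MC(\Hc)$, so that Proposition \ref{prop:A} is genuinely applicable at each reduction step.
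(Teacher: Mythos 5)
Your argument is correct in outline and structurally parallel to the paper's: both write $\Gc$ as a split extension of an abelian subalgebra by an abelian ideal, invoke Proposition \ref{prop:global} to replace $\MC(\Gc)$ by the Maurer--Cartan stack of the twisted ideal over the intermediate base, and then recognize that stack as a stack of extensions. The difference is which factor you split off. The paper's proof takes $\Hc=Rp_*\underline\Hom(\Ec_{12},\Ec_{01})$ and $\Nc=Rp_*\underline\Hom(\Ec_{23},\Ec_{01}\oplus\Ec_{12})$, so the intermediate base is $\SES_{012}$, and the second step identifies $\MC(\tilde\Nc)$ with extensions of $\Ec_{23}$ by $\Ec_{02}$ through an explicit mapping-cone/lifting argument (the dotted arrow $u_{0123}$ into $C(u_{012})$). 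You use the mirror decomposition $\Hc'=Rp_*\underline\RHom(\Ec_{23},\Ec_{12})$, $\Nc'=Rp_*\underline\RHom(\Ec_{12}\oplus\Ec_{23},\Ec_{01})$, work over $\SES_{123}$, and then cite Proposition \ref{prop:A} a second time after identifying $\tilde\Nc'$ with $Rp_*\underline\RHom(\Ec_{13},\Ec_{01})$; this is exactly the ``primed'' decomposition the paper itself exploits in \S\ref{sec:associativity}, so nothing is lost and the recovery of the filtration via $\Ec_{02}=$ preimage of $\Ec_{12}\subset\Ec_{13}$ is the right inverse construction. What your route buys is a more formal reduction (two applications of Proposition \ref{prop:A} plus quasi-isomorphism invariance of $\MC$); what it costs is precisely the point you flag yourself: one must exhibit an actual chain-level quasi-isomorphism of $\tilde\Nc'$ with a model of $Rp_*\underline\RHom(\Ec_{13},\Ec_{01})$ over the universal base (e.g. via injective resolutions as in the proof of Proposition \ref{prop:A}), together with strict $[0,2]$-perfectness of the pieces so that Proposition \ref{prop:A} applies over $\MC(\Hc')$ --- this is essentially the content that the paper's cone argument supplies directly at the groupoid level, and its level of detail there is comparable to yours. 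One small convention mismatch: with the paper's normalization the twisted differential is $d_x=d-\ad(x)$ rather than $d+\ad(x_{12,23})$; this only affects signs, not the structure of the argument.
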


\begin{proof}
Let $\SES_{012}$ be the stack over $B$
classifying  short exact sequences 
\be\label{eq:ext-02}
\Ec_{012} \,=\,\bigl\{ 0\to \Ec_{01}\lra \Ec_{02}\lra \Ec_{12}\to 0\bigr\}
\ee
of coherent sheaves over $Y$. 
Then $\FILT$ is the stack over $\SES_{012}$
classifying  short exact sequences 
\be\label{eq:ext-03}
\Ec_{0123} \,=\, \bigl\{ 0\to \Ec_{02}\lra \Ec_{03}\lra \Ec_{23}\to 0\bigr\},
\ee
and
$\Gc=\Hc\ltimes\Nc$
where
\begin{align*}
\Nc=Rp_*\underline\Hom(\Ec_{23},\Ec_{01}\oplus \Ec_{12}),\quad
\Hc=Rp_*\underline\Hom(\Ec_{12},\Ec_{01}).
\end{align*}
Since the dg-Lie algebra $\Hc$ is abelian, 
by Example \ref{ex:abelian} and Proposition \ref{prop:A} the stacks
$\MC(\Hc)$, $SES_{012}$ are equivalent,
and $\Nc$ gives an abelian strictly $[0,2]$-perfect dg-Lie algebra $\tilde\Nc$ over $\SES_{012}$.
Further, by Proposition \ref{prop:global}, we have $\MC(\Gc)=\MC(\tilde\Nc)$ as stacks over $\SES_{012}$.
So we are reduced to prove that $\MC(\tilde\Nc)$ is the stack over $\SES_{012}$
classifying  short exact sequences \eqref{eq:ext-03}. 

\smallskip

Let $T\buildrel f\over\to B$ be an affine $B$-scheme. Suppose the object $\Ec_{012}$  of $\SES_{012}(T)$ 
is the cone of a morphism
$u_{012}$ in $\RHom^1_{Y\times_B T} (f^* \Ec_{12}, f^* \Ec_{01}).$
Thus, given injective resolutions of $f^*\Ec_{ij}$ for each $i,$ $j$,
the complex $\Ec_{02} $ is quasi-isomorphic to the complex 
$C(u_{012})=I_{12}\oplus I_{01}$ where the differential is the sum of the differentials
of $I_{12}$, $I_{01}$ and the composition with $u_{012}$, viewed as a morphism of complexes of sheaves
$I_{12}\to I_{01}[1]$. 

\smallskip

 Next, we have $\tilde\Nc=\pi_\Hc^*\Nc$ as a graded sheaf, and the differential $d_{012}$ of $\wt\Nc$ at the point
 $\Ec_{012}$  is given by
 \[
 d_{012}(u) = d(u) -  \ad(u_{012})(u), \quad\forall u\in\Hom_{Y\times_BT}(f^*\Ec_{23},f^*\Ec_{01}\oplus f^*\Ec_{12}),
 \]
see Proposition \ref {prop:local} and the discussion before it. 
In our case $\ad(u_{012})(u)$ reduces  to the composition $u_{012}u$. 
Thus, the condition for $u$  to satisfy the equation $d_{012}(u)=0$
 is equivalent to saying that it lifts to a morphism of complexes
$f^*\Ec_{23}\to C(u_{012})$, i.e.,  to a dotted arrow $u_{0123}$ in the diagram. 
\[
\xymatrix{
&\Ec_{02}\ar[dr]
&&
\\
f^*\Ec_{01} \ar[ur]
&&  \ar[ll]_{+1}^{u_{012}} f^* \Ec_{12}&\ar[l]^{u} f^*\Ec_{23}
\ar@{-->}[ull]_{ u_{0123}}^{+1}\ar@/^2pc/[lll]^-u
}
\]

The cone of such an arrow defines $\Ec_{03}$ with a short exact sequence
\eqref{eq:ext-03}.   
We have thus constructed a morphism $\MC(\tilde\Nc)\to \FILT$ of stacks over $\SES_{012}$,
and it is easy to check that this morphism is an equivalence.  \end{proof}

\medskip
  
 
 \section{Borel-Moore homology of stacks and virtual pullbacks}\label{sec:BM}

 \subsection{BM homology  and operations for schemes}\label{subsec:BM-spaces}
   We fix a field $\k$ of characteristic $0$ which will serve as the field of coefficients for (co)homology. 
  The cases $\k=\QQ$ or $\k=\QQ_l$  will be the most important.
   For basics on simplicial categories, $\infty$-categories and dg-categories, see \S\ref{sec:appendix} and the references there.
By  $\dgVect=\dgVect_\k$ we denote
  the dg-category of cochain complexes over $\k$. 
 We recall the standard formalism of constructible derived categories of complexes of $\k$-vector spaces
 and their functorialities
 \cite{kashiwara-schapira}, together with its $\infty$-categorical enhancement. 
 
\smallskip

 Let $\Sch$ denote the category of schemes of finite type over $\CC$. For a scheme
   $T\in\Sch$  we denote by $\Co(T)$ the category
 of  constructible complexes of sheaves of $\k$-vector spaces on
 $T(\CC)$. 
 Let $\D(T)=\Co(T)[\Qis^{-1}]$
 be the constructible derived category, i.e., the localization of $\Co(T)$ by the class of
 quasi-isomorphisms.  We denote by $\D(T)_\dg$ and $\D(T)_\infty$ the dg- and $\oo$-categorical enhancements of 
 $\D(T)$ defined as 
  in \S \ref{subsec:enh-der}.  
 If $\k=\QQ_l$,  we can use the \'etale  $l$-adic version of the constructible derived category, see \cite{O07}, 
 \cite{O15}. 
 It admits similar enhancements.

 \smallskip
 
 These categories carry the Verdier duality  functor which we denote by $\DD$. 
  For a morphism $f: S\to T$  in $\Sch$ we have the usual functorialities
 \[
 \xymatrix{
 \D(S) \ar@<.7ex>[r]^{Rf_*, f_!} & \D(T) \ar@<.7ex>[l]^{f^{-1}, f^!}
 }
 \]
with their standard adjunctions,  see \cite{kashiwara-schapira} for the case of classical topology or \cite{O07}, \cite{O15}
 for the case of \'etale topology.
 They extend to the above enhancements and we will be using these extensions.
 
\smallskip

 We denote by $\omega_T = p^!\,  \k$, $p: T\to\pt$,   the dualizing complex of $T$. 
 The  {\em Borel-Moore homology} of $T$ and its complex of Borel-Moore chains are defined by 
   \be\label{eq:HBM}
 H_\bullet^\BM(T) \,=\, H^{-\bullet} (T, \omega_T),\quad
 C_\bullet^\BM(T)=R\Gamma(T, \omega_T).
 \ee
 The Poincar\'e-Verdier duality implies that
 \be\label{eq:PVD}
 H_\bullet^\BM(T) \,=\, H^\bullet_c(T)^*. 
 \ee
    
 \smallskip
 
  A morphism $f: S\to T$ in $\Sch$ is called {\em strongly orientable of relative dimension $m\in\ZZ$,} if there is an isomorphism
  $\ul\k_S\to f^! \ul\k_T[m] $ in $\D(S)$. A choice of such an isomorphism is called a
  {\em strong orientation} of $f$. 
For not necessarily connected $S$ we can speak of relative dimension being a locally constant function
  on $S$, with the obvious modifications of the above. 
  
  \smallskip
  
  Recall that $H_\bullet^\BM$  is covariantly functorial
 with respect to proper morphisms. 
 By \eqref{eq:HBM}, an oriented morphism $f: S\to T$ of relative dimension $m$ gives rise to a pullback map
 $f^*: H_\bullet^\BM(T)\to H_{\bullet+m}^\BM(S)$, and such maps are compatible with
 compositions of oriented morphisms. 
  
  \begin{exas}\label{exas-orient}\hfill
  \begin{itemize}[leftmargin=8mm]
  \item[$\mathrm{(a)}$]
  A smooth morphism $f$ of dimension $d$ is strongly oriented of relative dimension $2d$.
  
  \item[$\mathrm{(b)}$] An l.c.i.~(locally complete intersection) morphism is
  a morphism $f: S\to T$ represented as a composition $f=p\circ i$ where $p$ is smooth and
  $i$ is a regular embedding. Thus an l.c.i. morphism $f$ has a well defined 
  {\em dimension}  $d$,
  which is a locally constant $\ZZ$-valued  function on $S$.
  If the embedding $i$ is strongly oriented, then $f$ is also strongly oriented of relative dimension $2d$,
  hence gives rise to a pullback morphism $f^*$. Note that the map $f^*$ still make sense for any l.c.i.~morphism,
  see, e.g., \cite[\S 2.17]{O15}.
  \end{itemize}


  \end{exas}

 \smallskip
 
 \begin{ex}\label{ex:s!-spaces}
 Let $\Ec$ be a rank $r$  vector bundle on $T$. 
 We recall that the $r$th Chern class $c_r(\Ec)\in H^{2r}(T,\k)$
 is the obstruction to the existence of a  continuous  section of $\Ec$ which does not vanish anywhere. 
 Let $s$ be any  section of $\Ec$. 
 We denote the zero locus of $s$ with its embedding into $T$ by $T_s\buildrel i_s\over\to T$.
 In this situation we have the {\em refined $r$th Chern class}
 \[
 c_r(\Ec,s) \,\in\, H^{2r}_{T_s}(T, \k) \,=\, H^{2r}(T_s, i_s^! \ul\k_T)
 \]
 whose image in $H^{2r}(T,\k)$ is   $c_r(\Ec)$, yieldding
 a \emph{virtual pullback} map $ s^!: H_\bullet^\BM(T) \lra H_{\bullet-2r}^\BM(T_s).$
More precisely, following  
   \cite[\S 7.3]{fulton-macpherson},  we introduce
 the {\em bivariant cohomology} of any morphism $f:S\to T$ to be
 \[
 H^\bullet(S\buildrel f\over\to T) \,=\, H^\bullet(S, f^! \ul\k_T). 
 \]
 Recall that
 \begin{itemize}[leftmargin=8mm]
\item[$\mathrm{(a)}$]
 We have $H^\bullet(S\buildrel\Id\over\to S) = H^\bullet(S, \k)$ while 
 $H^\bullet(S\to\pt) = H_{-\bullet}^\BM(S)$. 
 
\item[$\mathrm{(b)}$] 
For a composable pair of maps $S\buildrel f\over\to T\buildrel g\over\to U$ we have the product
$$H^\bullet(S\buildrel f\over\to T) \otimes H^\bullet ( T\buildrel g\over\to U) \lra 
H^\bullet(S\buildrel gf\over \to U ).$$
 So, taking $U=\pt$, each $h\in H^d(S\buildrel f\over\to T)$ gives
 rise to a map $u_h: H_\bullet^\BM(T)\to H_{\bullet-d}^\BM(S)$.
 \end{itemize}
We deduce that 
 $c_r(\Ec,s) \,\in\,  H^{2r}(T_s\buildrel i_s\over \to T)$ defines a map
 $H_\bullet^\BM(T) \lra H_{\bullet-2r}^\BM(T_s).$
 
 \smallskip
 
 The construction of $c_r(\Ec,s)$ is as follows. We consider  the embedding $T \buildrel 0\over \to 
 \Tot(\Ec)$ 
 as the zero section.  It is strongly oriented of relative dimension $2r$, 
 see  \cite[prop.~4.1.3, 7.3.2]{fulton-macpherson},
 hence we get a class $\eta\in H^{2r}_T(\Tot(\Ec))$.
  Now $T_s$ is the intersection of $T$ with $\Gamma_s$,
 the graph of $s$
 inside $\Tot(\Ec)$, and  $c_r(\Ec,s)$ is the  image of  $\eta$ under the restriction map
 \[
 H^{2r}_T(\Tot(\Ec), \ul\k) \lra H^{2r}_{T\cap \Gamma_s} (\Gamma_s, \ul \k)   \,=\, H^{2r}_{T_s}(T,\ul\k). 
 \]
 See also  \cite[\S 2.17]{O15} for a different approach. 
\end{ex}  

\begin{prop}
Let $\Ec$ be a vector bundle on $T$ of rank $r$ and let  $p: \Tot(\Ec)\to B$ be the projection.
The pullback $p^*: H_\bullet^\BM(T)\to H_{\bullet+r}^\BM(\Tot(\Ec))$
is an isomorphism. \qed
\end{prop}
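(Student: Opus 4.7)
The plan is to recognize $p$ as a smooth morphism of relative complex dimension $r$, use the resulting strong orientation to reformulate the pullback as a unit-of-adjunction map, and then invoke the contractibility of the fibers. (Note: by Example \ref{exas-orient}(a), the shift should be $2r$ rather than $r$; I will write $2r$ throughout, assuming this is a typo in the displayed formula.)

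First, I would observe that the projection $p : \Tot(\Ec) \to T$ is Zariski-locally trivial of complex fiber dimension $r$, hence smooth of relative complex dimension $r$. By Example \ref{exas-orient}(a) it is therefore strongly oriented of relative real dimension $2r$, and this is what defines the pullback $p^* : H_\bullet^\BM(T) \to H_{\bullet + 2r}^\BM(\Tot(\Ec))$.

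Second, I would translate the assertion into a statement about a unit of adjunction. Strong orientation of $p$ amounts to an isomorphism $p^! \ul{\k}_T \simeq \ul{\k}_{\Tot(\Ec)}[2r]$, equivalently $p^! \simeq p^*[2r]$ on constructible complexes. Applying this to $\omega_T$ and using $\omega_{\Tot(\Ec)} = p^! \omega_T$, one identifies
\[
H_\bullet^\BM(\Tot(\Ec)) \,=\, H^{-\bullet}(\Tot(\Ec), \omega_{\Tot(\Ec)}) \,\simeq\, H^{-\bullet - 2r}(T, Rp_* p^* \omega_T),
\]
and under this identification the pullback $p^*$ is induced by the unit $\omega_T \to Rp_* p^* \omega_T$. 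By the projection formula this map is an isomorphism if and only if the simpler unit $\ul{\k}_T \to Rp_* \ul{\k}_{\Tot(\Ec)}$ is an isomorphism.

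Finally, I would settle this last statement using the topology of vector bundles. The zero section $s_0 : T \hookrightarrow \Tot(\Ec)$ satisfies $p \circ s_0 = \Id_T$, and fiberwise scaling $(v, t) \mapsto t v$ for $t \in [0,1]$ exhibits $s_0$ as a strong deformation retract. Hence $p$ is a homotopy equivalence on underlying topological spaces, so it induces an isomorphism on sheaf cohomology with constant coefficients, i.e.\ $\ul{\k}_T \overset{\sim}{\to} Rp_* \ul{\k}_{\Tot(\Ec)}$, which is what we needed.

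\textbf{Main point to verify carefully.} The delicate step is checking that the pullback map defined abstractly via the strong orientation of $p$ coincides with the one coming from the homotopy equivalence $s_0 \simeq p^{-1}$ up to the canonical complex orientation. For complex vector bundles this is standard (every complex vector bundle is canonically oriented), but it is the one place where a careful diagram chase is needed so as not to lose a sign or an orientation twist.
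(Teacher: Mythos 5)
The paper states this proposition without proof (the \verb|\qed| is attached to the statement itself), treating it as the standard homotopy-invariance fact for vector-bundle projections; so your task was to supply the standard argument, and your outline does that correctly, including the observation that the degree shift should be $2r$ (with the paper's conventions a smooth map of relative dimension $r$ is strongly oriented of relative dimension $2r$) and the identification of $p^*$ with the unit map $\omega_T\to Rp_*p^*\omega_T$, which is exactly the right formulation.

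Two steps should be tightened. First, the inference ``$p$ is a homotopy equivalence of underlying spaces, hence $\ul\k_T\to Rp_*\ul\k_{\Tot(\Ec)}$ is an isomorphism'' is not valid for an arbitrary homotopy equivalence: the inclusion of a point into an interval is a homotopy equivalence, yet the pushforward of the constant sheaf is a skyscraper. What you need --- and what your fiberwise scaling actually provides, since it is a homotopy \emph{over} $T$ --- is the local statement: the map $K\to Rp_*p^*K$ being an isomorphism can be checked locally on $T$, hence over trivializing opens, where $p$ becomes the projection $U\times\RR^{2r}\to U$ and the claim is homotopy invariance of sheaf cohomology with arbitrary (pulled-back) coefficients, as in Kashiwara--Schapira, Prop.~2.7.8. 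Second, once you know $K\to Rp_*p^*K$ is an isomorphism for every constructible complex $K$ on $T$, apply it directly to $K=\omega_T$; this avoids the projection formula for $Rp_*$, which is not formal for non-proper maps (only the $Rp_!$ version is automatic) and whose validity here would in any case rest on the same local computation. With these repairs the argument is complete; also note that in the unit formulation your final ``orientation-comparison'' worry disappears, since the complex orientation enters only once, through the identification $\omega_{\Tot(\Ec)}\simeq p^*\omega_T[2r]$ used to rewrite $H_\bullet^\BM(\Tot(\Ec))$.
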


\begin{rem}
For $T\in \Sch$ let $A_m(T)$ be the Chow group of $m$-dimensional cycles in $T$.
We have the canonical {\em class map} $\on{cl}: A_m(T)\to H_{2m}^\BM(T)$. All the above constructions
(proper pushforwards, l.c.i. pullbacks, virtual pullbacks) have natural analogs for the Chow groups,
see \cite{F},
which are compatible, via $\on{cl}$,  with the sheaf-theoretical constructions described above.  
\end{rem}

\medskip

  
  \subsection{BM homology and operations for stacks} 
  
  The formalism of constructible derived categories and their functorialities extends to f-Artin stacks. For the case $\k=\QQ_l$ and
  \'etale topology this is done in \cite{O07, O15}. Another approach using $\oo$-categorical limits, which we outline below,
   is applicable for the complex analytic topology, any $\k$, as well
  as for  the case of analytic stacks  in \S \ref{subsec:anal-stack}.
   It is an adaptation of    the
  approach used in \cite{GR}, \S 3.1.1 for ind-coherent sheaves,  to
  the  constructible case. 
     All stacks in this sections will be f-Artin.

\smallskip
   
  Let $B$ be a stack.  By 
  $\Sch_B$ we denote the category formed by schemes  $T$ of finite type over $\CC$ together with a
  morphism of stacks $T\to B$. 
  We define
  \be\label{eq: D-holim}
 \D(B)_\infty \,=\, \varprojlim_{\{T\to B\} } \D(T)_\infty, \quad \D(B)_\dg \,=\, \varprojlim_{\{T\to B\} } \D(T)_\dg,
  \ee
  the $\infty$-categorical  projective limit,  resp. dg-categorical (homotopy) projective limit
  over the category  $\Sch_B$, with respect to the pullback functors. 
  Note that  $\D(B)_\infty$, resp.    $\D(B)_\dg$ also carries the Verdier duality $\DD$ induced by such dualities
  on the $\D(T)_\infty$, resp.  $\D(T)_\dg$ above.

 \smallskip
 
 We compare this with the following. 
  Let $Z$ be a scheme of finite type over $\CC$ with an action of an affine algebraic group $G$.
 Then we have action groupoid $\{G\times Z\rightrightarrows Z\}$ in the category of schemes, so its
 nerve $N_\bullet\{G\times Z\rightrightarrows Z\}$ is a simplicial scheme defined as in
 \eqref{eq:nerve}. 
 The {\em Bernstein-Lunts
 equivariant derived constructible $\infty$-category} of $Z$ is 
 \[
 \D(Z,G)_\infty \,=\,\varprojlim_{[n]\in\Delta^\circ} \D(N_n\{G\times Z\rightrightarrows Z\})_\infty.
 \]
 It is an $\infty$-categorical version of the definition from \cite{bernstein-lunts}. 
 Just as in \cite{bernstein-lunts},
 given  a $G(\CC)$-equivariant constructible complex  $\Fc^\bullet$ on $Z(\CC)$,
 then 
 \[
 \Ext^\bullet_{\D(Z,G)_\infty} (\ul \k_Z, \Fc^\bullet) \,=\, H^\bullet_{G(\CC)}(Z(\CC), \Fc^\bullet)
 \]
 is the topological equivariant (hyper)cohomology. 
 
 \smallskip

 \begin{prop}
 The $\infty$-category $\D(Z,G)_\infty$ is identified with $\D(Z/\!/G)_\infty$. 
 \end{prop}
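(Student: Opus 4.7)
The plan is to exhibit both sides as computing the same limit, namely the one associated to smooth descent along the atlas $p: Z \to Z/\!/G$. First I would observe that $p$ is a $G$-torsor (in particular smooth and surjective) and compute its Čech nerve $\check{C}(p)_\bullet$: the standard trivialization $Z \times_{Z/\!/G} Z \simto G \times Z$ given by $(z, z') \mapsto (g, z)$ where $gz = z'$, iterated, identifies $\check{C}(p)_n = Z \times_{Z/\!/G} \cdots \times_{Z/\!/G} Z$ ($n+1$ factors) with $G^n \times Z = N_n\{G\times Z \rightrightarrows Z\}$, with the face and degeneracy maps matching the bar construction for the action groupoid. In particular $\check{C}(p)_\bullet$ defines an augmented simplicial object $\check{C}(p)_\bullet \to Z/\!/G$ in $\Sch_{Z/\!/G}$.

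Next I would package the defining limit $\D(Z/\!/G)_\infty = \varprojlim_{\{T \to Z/\!/G\}} \D(T)_\infty$ and restrict it along the functor $\Delta^\circ \to \Sch_{Z/\!/G}$, $[n] \mapsto \check{C}(p)_n$. This produces a canonical comparison functor
\[
\D(Z/\!/G)_\infty \lra \varprojlim_{[n]\in\Delta^\circ} \D(\check{C}(p)_n)_\infty \,=\, \D(Z,G)_\infty,
\]
and the task is to show it is an equivalence. Equivalently, one shows that the Čech diagram is cofinal for the limit defining $\D(Z/\!/G)_\infty$.

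The mechanism is smooth descent for the constructible derived $\infty$-category. Concretely, for any object $f: T \to Z/\!/G$ of $\Sch_{Z/\!/G}$, pulling $p$ back yields a $G$-torsor $P \to T$ with $P$ a scheme and a $G$-equivariant map $P \to Z$; the Čech nerve of $P \to T$ is $f^*\check{C}(p)_\bullet$, and ordinary smooth (in fact étale-locally split) descent on the scheme $T$ gives $\D(T)_\infty \simeq \varprojlim_{[n]} \D(f^*\check{C}(p)_n)_\infty$. Passing to the limit over $T \in \Sch_{Z/\!/G}$ and commuting limits yields the cofinality statement. This step uses the $\infty$-categorical enhancement in an essential way, because the standard simplicial totalization is only guaranteed to converge correctly after passing to an appropriate enhancement of $\D$.

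The main obstacle is precisely this smooth-descent step: at the triangulated level the totalization of the Čech object is not generally a limit, so one really needs the fact that $\D(-)_\infty$ (built, as in \cite{GR}, from an $\infty$-categorical version of the six-functor formalism) satisfies hyperdescent along smooth surjections. Granting this input—which is standard in the classical topology for any characteristic zero field $\k$ via constructible sheaves of $\k$-vector spaces, and in the $\ell$-adic case via \cite{O07,O15}—the comparison functor is an equivalence and the identification $\D(Z,G)_\infty \simeq \D(Z/\!/G)_\infty$ follows. As a sanity check, one recovers the classical Bernstein-Lunts identification $\Ext^\bullet_{\D(Z,G)_\infty}(\underline\k_Z, \Fc^\bullet) = H^\bullet_{G(\CC)}(Z(\CC), \Fc^\bullet)$ on the level of $\Ext$-groups.
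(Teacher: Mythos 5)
Your proposal is correct and follows essentially the same route as the paper: identify the nerve of the action groupoid with the Čech nerve of the smooth atlas $Z\to Z/\!/G$, and then deduce the identification from smooth descent for $\D(-)_\infty$, reduced to (and taken as known for) the scheme-level statement. The extra detail you give on why stack-level descent follows from the scheme case (pulling the torsor back over each $T\to Z/\!/G$ and commuting limits) is exactly the "formal consequence" the paper asserts without spelling out.
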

 
\begin{proof} Each $N_n\{G\times Z\rightrightarrows Z\}$ is an affine scheme over $Z$, therefore
 over $Z/\!/G$. In fact
 \[
 N_n\{G\times Z\rightrightarrows Z\} \,=\, Z\times_{Z/\!/G} \cdots \times_{Z/\!/G} Z \quad  (n \text{ times}). 
 \]
 So $N_\bullet\{G\times Z\rightrightarrows Z\}$ is the nerve of  the (smooth)
  morphism $Z\to Z/\!/G$,  which we can see as 
 a $1$-element covering
 of $Z/\!/G$ in the smooth topology. Our statement therefore means that  $\D(-)_\infty$
 satisfies ($\infty$-categorical) descent with respect to this covering. A more general statement
 if true: $\D(-)_\infty$ as a functor from stacks to $\infty$-categories satisfies descent
(for any covering) in the smooth topology. This statement is a formal consequence of the corresponding,
obvious, statement for
shemes: $\D(-)_\infty$ as a functor from $\Sch$ to  $\infty$-categories satisfies descent
(for any covering) in the smooth topology.
\end{proof}

\smallskip

Given a morphism of stacks $f: B\to C$, the composition with $f$ defines a functor
$f_\square: \Sch_B \to \Sch_C$, hence a functor which we denote
\[
f^{-1}: \D(C)_\infty \,=\, \varprojlim_{(U\to C)} \D(U) \,\, \lra \varprojlim _{(T\to B\buildrel f\over\to C)} \D(T)
= \D(B)_\infty. 
\]
The right adjoint functor to $f^{-1}$ is denoted by  $Rf_*: \D(C)_\infty \to \D(B)_\infty$.   
\smallskip

We further define the functors
\[
f^!  = \DD \circ f^{-1}\circ \DD: \D(C)_\infty \lra \D(B)_\infty, \quad 
Rf_! = \DD \circ Rf_*\circ\DD: \D(B)_\infty \lra \D(C)_\infty. 
\]
 In particular, we have the {\em dualizing complex} $\omega_B = \DD (\ul\k_B) = p^!(\k)$,
where $p: B\to\pt$,  cf. \cite{olsson-I}. 
Note that, for each affine algebraic group $G$ over $\CC$, 
then $\omega_{BG} \simeq \ul\k_{BG}[-2\dim(G)]$, while
for each smooth complex variety $S$ we have $\omega_S \simeq\ul\k_S[2\dim(S)]$.

\smallskip

 We define the   
 {\em Borel-Moore homology}, resp. {\em cohomology with compact support} of an  (f-Artin) stack $B$ as  
 \be
 H_\bullet^\BM(B) = H^{-\bullet}(B, \omega_B), \quad H^\bullet_c(B, \ul\k_B) = H^\bullet(Rp_! \ul\k_B). 
 \ee
 The Poincar\'e-Verdier duality extends from schemes of finite type to f-Artin stacks  and implies that 
$ H_\bullet^\BM(B)\,=\, H^\bullet_c(B, \ul\k_B)^*. $
By gluing the corresponding properties of schemes, we get that
 $H_\bullet^\BM$ is covariantly functorial for proper morphisms 
 and has pullbacks with respect to   l.c.i. morphisms.

\smallskip
 
 \begin{rem}
 The BM homology for stacks is the topological analog of
  the Chow groups for stacks
 as defined by Kresch \cite {K99}. 
 \end{rem} 
 
 We also note the following, cf.  \cite[thm.~2.1.12]{K99}.
 
 \begin{prop} \label{prop:BM-Tot-2term}
 Let $\Cc^\bullet = \{\Cc^{-1}\to\Cc^0\}$ be a two-term strictly perfect complex on $B$ of virtual
 rank $r$, with the total space $\Tot(\Cc^\bullet) = \Cc^0/\!/ \Cc^{-1}\buildrel \pi\over\to B$.
 Then $\pi$ is a smooth morphism, hence it is strongly
 oriented of relative dimension $2r$, and $\pi^*: H_\bullet^\BM(B)\to H_\bullet^\BM(\Tot(\Cc))$
 is an isomorphism if $B$ admits a stratification by global quotients
 (\cite[def.~3.5.3]{K99}), in particular if $B$ is
 locally quotient. 
 \qed
 \end{prop}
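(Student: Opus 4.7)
The plan is to adapt Kresch's proof of the analogous statement for Chow groups of stacks, \cite[thm.~2.1.12]{K99}, to the Borel-Moore setting developed above. First I would verify that $\pi$ is smooth of relative dimension $r$. By Proposition \ref{prop:tot(-1,0)}(a), $\Tot(\Cc) = \Tot(\Cc^0)/\!/\Tot(\Cc^{-1})$, where $\Tot(\Cc^{-1})$ acts on $\Tot(\Cc^0)$ by translations along $d^0$. The atlas $q: \Tot(\Cc^0) \to \Tot(\Cc)$ is a torsor under the relative vector group $\Tot(\Cc^{-1}) \to B$, hence smooth of relative dimension $\rank(\Cc^{-1})$; its composition $\pi \circ q$ is the vector bundle projection $\Tot(\Cc^0) \to B$, smooth of relative dimension $\rank(\Cc^0)$. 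Smooth descent then yields smoothness of $\pi$ of relative dimension $r$, and Example \ref{exas-orient}(a) supplies the strong orientation of relative dimension $2r$.

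For the isomorphism, after identifying $\pi^!\omega_B \simeq \omega_{\Tot(\Cc)}$ via this orientation, the pullback $\pi^*$ is induced (up to the degree shift $+2r$) by the unit $\omega_B \to R\pi_*\pi^!\omega_B$, and I must show this is an equivalence. The claim is smooth-local on $B$. Using the stratification of $B$ by global quotients together with excision in BM homology (the long exact sequence for open-closed decompositions), I would reduce by induction on the number of strata to the case $B = Z/\!/G$ where $Z$ is a scheme of finite type and $G$ an affine algebraic group, and, after refining the atlas if necessary, $\Cc$ is represented by a $G$-equivariant two-term complex of vector bundles on $Z$; then $\Tot(\Cc) = [\Tot_Z(\Cc^0)/(\Tot_Z(\Cc^{-1}) \rtimes G)]$. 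By Bernstein-Lunts in its $\infty$-categorical form established above, both sides may be computed as $G$-equivariant BM homology of the underlying schemes, so the statement reduces to the combination of two classical facts: homotopy invariance of equivariant BM homology along the vector bundle $\Tot_Z(\Cc^0) \to Z$ (giving the shift $+2\rank(\Cc^0)$), and the fact that for any relative vector group $V \to Z$ in characteristic $0$, the projection $BV \to Z$ induces an isomorphism in BM homology after the shift $-2\rank V$, because a unipotent group over a characteristic-$0$ base is cohomologically trivial.

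The hard part, I expect, will be this last point: controlling the Borel-Moore homology of the ``vector-bundle stack'' $[\Tot_Z(\Cc^0)/\Tot_Z(\Cc^{-1})]$ when $d^0$ is not fiberwise injective, so that nontrivial $B(\ul\Ker\, d^0_x)$-type automorphism groups appear. This is precisely the content of Kresch's analysis of vector bundle stacks in \cite{K99}, and is where the stratification-by-global-quotients hypothesis is indispensable, since it is what allows one to control the derived pushforward $R\pi_*$ uniformly and avoid convergence issues in the smooth topology on $B$.
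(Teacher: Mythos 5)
The paper offers no proof of this proposition: it is stated with a bare ``cf.~\cite[thm.~2.1.12]{K99}'' and a \textbf{qed}, so your sketch is not an alternative route but a filling-in of the adaptation of Kresch's Chow-theoretic argument that the authors implicitly invoke. As such it is essentially sound: smoothness of $\pi$ of relative dimension $r$ by descent along the atlas $\Tot(\Cc^0)\to\Tot(\Cc)$ (using Proposition \ref{prop:tot(-1,0)}(a)), the strong orientation from Example \ref{exas-orient}(a), reduction by the open--closed excision sequence and induction on a stratification by global quotients, and finally two homotopy-invariance facts: invariance of (equivariant) Borel--Moore homology along the vector bundle $\Tot_Z(\Cc^0)\to Z$, and cohomological triviality of quotients by a relative vector group in characteristic zero. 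You also correctly locate where the stratification hypothesis is indispensable, namely in controlling the ``vector bundle stack'' directions when $d^0$ is not fiberwise injective, exactly as in Kresch.

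Two points deserve tightening. First, the passing claim that ``the claim is smooth-local on $B$'' is not justified (an isomorphism in Borel--Moore homology is not obviously detected after smooth surjective base change); fortunately it is not load-bearing, since the reduction you actually use is the closed--open stratification together with excision, and no refinement is needed to make $\Cc$ strictly perfect because that is already assumed. Second, the Bernstein--Lunts comparison in the paper is stated for a quotient of a scheme by an affine algebraic group over a point, whereas $\Tot(\Cc)$ over $B=Z/\!/G$ is the quotient of $\Tot_Z(\Cc^0)$ by a groupoid whose translation part is the \emph{relative} vector group $\Tot_Z(\Cc^{-1})$ over $Z$; to compute its Borel--Moore homology you should either invoke the general smooth-descent property of $\D(-)_\infty$ used in the proof of that proposition, or interpose the intermediate quotient $\Tot_Z(\Cc^0)/\!/G\to\Tot(\Cc)$, which is a torsor under (the pullback of) $\Tot(\Cc^{-1})$, and apply homotopy invariance to that torsor directly. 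With this adjustment the reduction to $G$-equivariant Borel--Moore homology of $Z$, and hence the shift by $2r=2(\rk\Cc^0-\rk\Cc^{-1})$, goes through as you describe.
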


 \medskip


\subsection{Virtual pullback for a perfect complex}\label{subsec:ref-pullback}
Let $B$ be a stack and
$\Ec$ be a vector bundle of rank $r$ over $B$. 
Let $s\in H^0(B, \Ec)$ be a section of $\Ec$
and $$i:B_s = \{s=0\}\hookrightarrow B$$ be the inclusion of the zero locus of $s$, 
which is a closed substack. The section $s$ gives a regular embedding in the total space of $\calE$,
which we denote also $s: B\to\Tot(\Ec)$.  The construction of Example \ref{ex:s!-spaces}
extends (by gluing) from schemes to stacks and gives the 
  {\em refined pullback morphism},
or \emph{refined Gysin morphism}
\begin{align}\label{pullback}
s^!: H_\bullet^\BM(B) \lra H_{\bullet-2r}^\BM(B_s),
\end{align}
making the following diagram commute
\[
\xymatrix{
H_\bullet^\BM(B)
\ar[d]_{s_*}
 \ar[r]^{s^!} & H_{\bullet -2r}^\BM(B_s)
 \ar[d]^{i_*}
\\
H_\bullet^\BM (\Tot(\Ec)) & \ar[l]^-{\pi^*}_-\sim H_{\bullet-2r}^\BM(B)
}
\]

\begin{rem} The map $s^!$ is  the BM-homology analog of the refined pullback on Chow groups for
Artin stacks which is a particular case of Construction 3.6 of  \cite{M12}, or of \cite[\S 3.1]{K99}
which uses deformation to the normal cone. 
 
\end{rem}

\smallskip

Now, let $\Cc$ be a strictly $[-1,1]$-perfect complex on $B$ and
$$\pi: \Tot(\Cc^{\leq 0})  \to B,\quad q: \Tot(\tau_{\leq 0} \Cc)  \to B$$ 
be the obvious projections.  
The differential $d^1$ of $\Cc$ 
gives a section $s_\Cc$ of the vector bundle $ \pi^* \Cc^1$ on $\Tot(\Cc^{\leq 0})$ whose zero
locus is the cone stack $\Tot(\tau_{\leq 0} \Cc)$, yielding the diagram
\[
 \xymatrix{
 &\pi^*\Cc^1&\ar[l]_-{s_\Cc}\Tot(\Cc^{\leqslant 0})\\
B  & \Tot(\calC^{\leq 0})  \ar[l]_-{\pi}\ar[u]^-0&\, \Tot(\tau_{\leq 0} \calC) \ar@{_{(}->}[l]_-{i}\ar@{_{(}->}[u]_-i
}
\]
such that $q=\pi\circ i$.
By Proposition \ref{prop:BM-Tot-2term}, see also 
\cite[thm.~2.1.12]{K99},    
the pullback along $\pi$ defines a morphism
$$\pi^*: H_\bullet^\BM(B) \buildrel\sim\over \lra H_{\bullet +2\vrk(\Cc^{\leq 0})}^\BM (\Tot(\Cc^{\leq 0})),$$
which is an isomorphism if $B$ admits a  stratification by global quotients.  
 Further, we have the refined pullback map on Borel-Moore homology
$$s_\Cc^!: H_{\bullet +2\vrk(\Cc^{\leqslant 0})}^\BM (\Tot(\Cc^{\leq 0}) )
\to H_{\bullet + 2\vrk(\Cc)}^\BM (\Tot(\tau_{\leq 0} \Cc)). $$
We define the {\em virtual pullback} associated with $\Cc$ to be the composite map
$$q^!_\Cc= s_\Cc^! \circ\pi^*: H_\bullet^\BM(B) \to H_{\bullet +2\vrk(\Cc)}^\BM (\Tot(\tau_{\leq 0} \Cc)).$$

\smallskip

By Proposition \ref{prop:C},
the stack $\Tot(\tau_{\leq 0}\Cc)$ depends only on the isomorphism class of the complex 
$\Cc$ in  $D^b_\coh(B)$ and not on the choice of the presentation \eqref{eq:presentation}.

\begin{prop}\label{prop:glue}
Let $\Cc$ be a strictly $[-1,1]$-perfect complex on $B$.
The virtual pullback $q^!_\Cc$ 
depends only on the isomorphism class of the strictly $[-1,1]$-perfect complex $\Cc$ in $D^b_\coh(B)$. 
\end{prop}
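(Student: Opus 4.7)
The plan is to establish invariance of the virtual pullback $q^!_\Cc$ under quasi-isomorphisms of strictly $[-1,1]$-perfect presentations, since any isomorphism in $D^b_\coh(B)$ between two such presentations can be represented by a roof $\Cc_1 \xleftarrow{\sim} \Cc \xrightarrow{\sim} \Cc_2$ of strictly $[-1,1]$-perfect quasi-isomorphisms (using that $[-1,1]$-perfectness is preserved by the standard roof construction applied to bounded complexes of vector bundles). It therefore suffices to fix a quasi-isomorphism $\phi : \Cc_1 \to \Cc_2$ and show that $q^!_{\Cc_1} = \phi_\flat^{\,*}\circ q^!_{\Cc_2}$, where $\phi_\flat$ is the stack equivalence of Proposition \ref{prop:C}(b).

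The next step reduces to elementary moves. A quasi-isomorphism of bounded complexes of vector bundles factors, via the mapping cylinder, through split inclusions and projections whose cokernels (resp.\ kernels) are acyclic and hence, locally on $B$, further split into direct sums of contractible two-term complexes $N = \{M \xrightarrow{\mathrm{Id}} M\}$ sitting in adjacent degrees. Since our assertion concerns a map on Borel-Moore homology and the refined pullbacks of \S\ref{subsec:ref-pullback} are compatible with restriction to open substacks, we may work locally and so are reduced to checking $q^!_{\Cc\oplus N} = q^!_\Cc$ for each elementary addition $N$ placed either in degrees $[-1,0]$ or in degrees $[0,1]$.

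For $N$ placed in degrees $[-1,0]$, the new total spaces satisfy $\Tot((\Cc\oplus N)^0) = \Tot(\Cc^0)\times_B \Tot(M)$ and $\Tot((\Cc\oplus N)^{-1}) = \Tot(\Cc^{-1})\times_B \Tot(M)$, with the extra $\Tot(M)$-factor acting on itself by translation; the quotient absorbs it, giving a canonical equivalence $\Tot((\Cc\oplus N)^{\leq 0}) \simeq \Tot(\Cc^{\leq 0})$ under which the new section $s_{\Cc\oplus N}$ is identified with $s_\Cc$. Hence the corresponding data coincide on the nose. For $N$ placed in degrees $[0,1]$ one has $\Tot((\Cc\oplus N)^{\leq 0}) = \Tot(\Cc^{\leq 0})\times_B \Tot(M)$ and the section splits as $(s_\Cc,\tau_M)$ with $\tau_M$ the tautological section of the pullback of $M$. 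The identity $q^!_{\Cc\oplus N} = q^!_\Cc$ then reduces to two facts: that the refined top Chern class is multiplicative on a direct sum of bundles, so that $(s_\Cc,\tau_M)^! = \tau_M^! \circ s_\Cc^!$ after the evident reindexing; and that $\tau_M^!$ composed with the smooth pullback along $p_M : \Tot(\Cc^{\leq 0})\times_B \Tot(M) \to \Tot(\Cc^{\leq 0})$ is the identity, since the tautological section of $p_M^*M$ has zero locus the zero section, which is smooth-pulled-back from $B$.

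The main obstacle is the excess-intersection/multiplicativity property of the refined top Chern class used in the last paragraph: namely, for a direct sum $\Ec = \Ec'\oplus \Ec''$ with section $(s',s'')$ on a stack $X$, the bivariant class $c_{\mathrm{top}}(\Ec,(s',s''))\in H^\bullet(X_{(s',s'')}\to X)$ is the bivariant product $c_{\mathrm{top}}(\Ec'',s'')\cdot c_{\mathrm{top}}(\Ec',s')$ in the sense of \cite[\S 7.3]{fulton-macpherson}. On schemes this is standard, and it extends to Artin stacks by descent of the enhanced derived category as in \S\ref{subsec:BM-spaces}; once it is in place, the reduction above closes the argument.
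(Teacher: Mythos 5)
Your verification of the two elementary moves (absorbing a contractible summand in degrees $[-1,0]$ into the quotient stack, and handling a summand in degrees $[0,1]$ by multiplicativity of the refined class plus the tautological-section identity) is sound and is close in spirit to the excess-intersection lemma the paper uses. But the reduction that precedes it has two genuine gaps. First, the step ``we may work locally'' is not justified: the assertion is an equality of maps into $H^\BM_\bullet$ of a fixed stack, and Borel--Moore homology (equivalently, the relative groups $H^{2r}_{B_s}(B)$ in which the refined classes live) is not determined by restriction to the opens of an arbitrary cover -- a class killed on each member of a cover need not vanish, by Mayer--Vietoris. The limit formula \eqref{limit} that the paper does use applies only to filtering exhaustions and is part of the definition of $q^!_\Cc$ in the non-strict case; it does not let you check the identity $q^!_{\Cc_1}=\phi_\flat^*\circ q^!_{\Cc_2}$ locally on a cover by small opens over which your splittings exist. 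Without a descent argument at the level of the classes themselves, the local verification does not close the global statement.

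Second, the factorization into elementary moves does not stay inside the framework. The mapping cylinder (or cone) of a quasi-isomorphism of strictly $[-1,1]$-perfect complexes has a term in degree $-2$, so the contractible pieces it produces include summands $\{M\xrightarrow{\Id}M\}$ in degrees $[-2,-1]$; for such an $N$ the complex $\Cc\oplus N$ is no longer $[-1,1]$-perfect and $q^!_{\Cc\oplus N}$ is simply not defined, yet your list of moves covers only degrees $[-1,0]$ and $[0,1]$. To make this route work you would have to prove (say, by a minimal-complex argument, which is again only local) that two quasi-isomorphic presentations become isomorphic after stabilizing by contractibles confined to the window $[-1,1]$, and you would still face the globalization problem above; the roof reduction for a general isomorphism in $D^b_\coh(B)$ is likewise asserted rather than proved. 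The paper avoids all of this by a direct global argument: for a quasi-isomorphism $\phi$, functoriality gives $\phi_\flat:\Tot(\Cc_1^{\leq 0})\to\Tot(\Cc_2^{\leq 0})$, which is an l.c.i.\ morphism by Proposition \ref{prop:tot(-1,0)}, inducing an isomorphism on truncated total spaces, and the identity $s_{\Cc_1}^!\circ\pi_1^*=\phi_\flat^*\circ s_{\Cc_2}^!\circ\pi_2^*$ is then a single excess-intersection statement for the compatible sections $s_{\Cc_1}$ and $\phi_\flat^*s_{\Cc_2}$, with no decomposition into elementary moves and no localization on $B$.
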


\begin{proof}
Fix two presentations $\calC_1$, $\calC_2$ of the complex $\calC$ as in \eqref{eq:presentation}, with 
\begin{align*}
\xymatrix{
\calC_k=\{\calC_{k}^{-1}\ar[r]^-{d^0_{k}}&\calC_{k}^{0}\ar[r]^-{d^1_{k}}&\calC_{k}^1}\},\quad k=1,2\end{align*}
and fix a quasi-isomorphism $\phi:\calC_{1}\to\calC_{2}$.
By functoriality of the total space and the truncated total space, we have 
the commutative diagram
$$\xymatrix{
\Tot(\tau_{\leqslant 0}\calC_1)\ar@{=}[r]^-{\phi_\flat}\ar@{^{(}->}[d]_-{i_1}&
\Tot(\tau_{\leqslant 0}\calC_2)\ar@{^{(}->}[d]^-{i_2}\\
\Tot(\calC_1^{\leqslant 0})\ar[r]^-{\phi_\flat}\ar[d]_-{\pi_1}&\Tot(\calC_2^{\leqslant 0})\ar[dl]^-{\pi_2}\\
B.
}$$
We claim that the following triangle commutes
$$\xymatrix{
H_\bullet^\BM(B) \ar[r]^-{q^!_{\Cc_1}}\ar[rd]_-{q^!_{\Cc_2}}
& H_{\bullet +2\vrk(\Cc_1)}^\BM (\Tot(\tau_{\leq 0} \Cc_1))\ar@{=}[d]^-{(\phi_\flat)_*}\\
&H_{\bullet +2\vrk(\Cc_2)}^\BM (\Tot(\tau_{\leq 0} \Cc_2)).}$$
To prove this, we must prove that we have 
$$s_{\Cc_1}^!\circ\pi_1^*=\phi_\flat^* \circ s_{\Cc_2}^!\circ\pi_2^*.$$

By Proposition \ref{prop:tot(-1,0)},  the map
$\phi_\flat:\Tot(\calC_1^{\leqslant 0})\to\Tot(\calC_2^{\leqslant 0})$ is an l.c.i.
Hence there is a Gysin map $(\phi_\flat)^*$
 and we have expressions through the local Chern classes associated to the sections $s_{C_i}$
 of $\pi_i^*\Cc_i^1$, i=1,2: 
\begin{align*}
s_{\Cc_1}^!\circ\pi_1^*&=
c_{\rk(\Cc_1^1)}(\pi_1^*\Cc_1^1, s_{\Cc_1})
\circ \phi_\flat^*\circ\pi_2^*,\\
\phi_\flat^* \circ s_{\Cc_2}^!\circ\pi_2^*&=\phi_\flat^*\circ
c_{\rk(\Cc_2^1)}(\pi_2^*\Cc_2^1, s_{\Cc_2})
\circ\pi_2^*.
\end{align*}
The proposition is a consequence of the following version of the excess intersection formula.

\smallskip

\begin{lemma}
Let $f:B_1\to B_2$ be a morphism of stacks which is an l.c.i\! of relative dimension $r_2-r_1$.
Let $\calE_1$, $\calE_2$ be vector bundles on $B_1$, $B_2$ of ranks $r_1$, $r_2$ and sections
$s_1$, $s_2$ of $\calE_1$, $\calE_2$.
Let $h:\calE_1\to f^*\calE_2$ be a vector bundle homomorphism 
such that $h\circ s_1=s_2\circ f$, which yields a fiber diagram
\begin{align*}
\begin{split}
\xymatrix{
(B_2)_{s_2}\ar@{^{(}->}[r]^-{i_2}& B_2\ar[r]^-{s_2}&\Tot(\Ec_2)\\
(B_1)_{s_1}\ar[u]^g\ar@{^{(}->}[r]^-{i_1}& B_1\ar[r]^-{s_1}\ar[u]^f&\Tot(\Ec_1)\ar[u]^h
}
\end{split}
\end{align*}
where $g$ is an isomorphism.
Then, we have a commutative square
$$\xymatrix{
H^\BM_\bullet(B_1)\ar[r]^-{
c_{r_1}(\Ec_1, s_1)
}&H^\BM_{\bullet-2r_1}((B_1)_{s_1})\\
H^\BM_{\bullet-2r_1+2r_2}(B_2)\ar[r]^-{
c_{r_2}(\Ec_2, s_2)
}\ar[u]^-{f^*}&H^\BM_{\bullet-2r_1}((B_2)_{s_2}).\ar[u]^{g^*}
}$$
\qed
\end{lemma}

\end{proof}

\smallskip

Finally, let now $B$ be an Artin stack   and
let $\Cc$ be any $[-1,1]$-perfect complex on $B$.
Let $\frakU$ be a filtering open cover of $B$ consisting of
open substacks  $U$  such that $\Cc|_U$ is strictly $[-1,1]$-perfect. 
We have
\begin{align}\label{limit}
H_\bullet^\BM(B) \,=\,\varprojlim_{U\in\,\frakU} H_\bullet^\BM(U),\quad 
H_\bullet^\BM(\Tot(\tau_{\leq 0}\Cc)) \,=\,\varprojlim_{U\in\,\frakU} 
H_\bullet^\BM(\Tot(\tau_{\leq 0} \Cc|_U)). 
\end{align}

\smallskip

\begin{definition}
A \emph{coherent perfect system} on a $[-1,1]$-perfect complex $\Cc$ on $B$ is a collection of quasi-isomorphisms
$\phi_U:\Cc|_U\to\Cc_U$ and $\phi_{V\subset U}:\Cc_U|_V\to\Cc_V$ for each
$U,V\in\frakU$ with $V\subset U$ such that $\Cc_U$ is a strictly $[-1,1]$-perfect complex on $U$ 
with a presentation as in \eqref{eq:presentation}, and $\phi_V=\phi_{V\subset U}\circ\phi_U|_V$.
\end{definition}

\smallskip

Given a coherent perfect system on $\Cc$, we define the virtual pullback 
$$q^!_\Cc: H_\bullet^\BM(B) \lra H_{\bullet +2\vrk(\Cc)}^\BM (\Tot(\tau_{\leq 0} \Cc))$$ 
as the map
\begin{align}\label{vpb2}
q^!_\Cc \, =\,\varprojlim_{U\in\,\frakU} ((\phi_U)^*\circ q^!_{\Cc_U}). 
\end{align}

\smallskip

\begin{rem}\label{rem:dgstack}
If $\Cc$ is a strictly $[-1,1]$-perfect complex on the stack $B$, then its total space has a
{\em dg-stack} structure given by
\begin{align}\label{dg-tot}
\Tot(\Cc) \,=\, \Big(  \Tot(\Cc^{\leq 0})\,,\, \big( \Sym (\pi^*  (\Cc^1)^\vee [1])\,,\,  \del_s \big)
\Big), 
\end{align}
that is,  the stack $ \Tot(\Cc^{\leq 0})$ equipped with the sheaf of commutative dg-algebras which is the Koszul
complex of the section $s$ above. 
This dg-stack gives rise to a \emph{derived stack} in the sense of 
\cite{T14}.
The derived stack $\Tot(\Cc)$ depends, up to a natural equivalence, only on the isomorphism class of
the complex
$\Cc$ in $D^b_\coh(B)$. 
We expect a direct conceptual interpretation of the virtual pullback $q^!_\Cc$ in
terms of the derived stack $\Tot(\Cc)$. However, this would require a well behaved 
Borel-Moore homology theory
for derived stacks and we do not know how to do it. 
\end{rem}

 \medskip
 

\subsection{Virtual pullback for Maurer-Cartan stacks}

Let $B$ be an Artin stack of finite type and $\Gc$ be a strictly $[0,2]$-perfect dg-Lie algebra over $B$ 
as in \eqref{SPdgLA}.
We define now a  virtual pullback 
$$q^!_\Gc: H_\bullet^\BM(B) \to H_{\bullet +2\vrk(\Gc)}^\BM (\MC(\Gc))$$
using the diagram 
$$\xymatrix{
B  & \Tot(\Gc^{\leqslant 1})  \ar[l]_-{\pi}&\, \MC(\Gc) \ar@{_{(}->}[l]\ar@/^1pc/[ll]^-q\ar@{_{(}->}[l]_-{i}.}$$
In order to define the map $q^!_\Gc= s_\Gc^! \circ\pi^*$ as in \S\ref{subsec:ref-pullback},
we must check that the pullback morphism
$$\pi^*:H_\bullet^\BM(B)\to H^\BM_{\bullet+2\vrk(\Gc^{\leqslant 1})}( \Tot(\Gc^{\leqslant 1}))$$
and the refined  pullback
$$s_\Gc^!:H^\BM_{\bullet+2\vrk(\Gc^{\leqslant 1})}(\Tot(\Gc^{\leqslant 1}))\to 
H_{\bullet +2\vrk(\Gc)}^\BM (\MC(\Gc))$$
are well-defined.
The refined pullback is defined as in the previous sections, using the fact that 
$\MC(\Gc)$ is the zero locus of the section
$s$ of the bundle $\pi^*\Gc^2$ on $\Tot(\Gc^{\leqslant 1})$ associated with the curvature \eqref{eq:curv}.
The pullback map $\pi^*$ is well-defined, because $\pi$ is a vector bundle stack,
hence is smooth although non representable.

\smallskip

Next, we study the behavior of the virtual pullback under extensions of dg-Lie algebras.
Let $\Gc=\Hc\ltimes\Nc$ and $\tilde\Nc=\pi_\Hc^*\Nc$ be as in \S\ref{sec:MCS}.
Note that Proposition \ref{prop:global}  allows to write the commutative diagram
$$\xymatrix{ 
B&\Tot(\Hc^{\leqslant 1})\ar[l]_-{\pi_\Hc}&\ar@{_{(}^->}[l]_-{i_\Hc}\MC(\Hc)\\
&\ar[lu]^-{\pi_\Gc}\Tot(\Gc^{\leqslant 1})&\ar[u]_-{\pi_{\tilde\Nc}}\Tot(\tilde\Nc^{\leqslant 1})\\
&&\ar@{_{(}^->}[lu]^-{i_\Gc}\MC(\Gc)\ar@{_{(}^->}[u]_-{i_{\tilde\Nc}}.
}$$
The virtual pullback maps $q_\Gc^!$, $q_{\tilde\Nc}^!$ and $q_\Hc^!$ are defined as above.

\begin{proposition}\label{prop:REL}
We have the equality 
$q_\Gc^!=q_{\tilde\Nc}^!\circ q_\Hc^!.$
\end{proposition}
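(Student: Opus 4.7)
The plan is to decompose each of the three virtual pullbacks into its two constituent pieces---the smooth pullback $\pi^*$ from a vector bundle stack and the refined Gysin map $s^!$ for the curvature section---and then verify that, under the split extension $\Gc = \Hc \ltimes \Nc$, the two pieces factor compatibly.

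First I would use the splitting $\Gc^i = \Hc^i \oplus \Nc^i$ on graded pieces to factor the projection $\pi_\Gc : \Tot(\Gc^{\leqslant 1}) \to B$ through $\Tot(\Hc^{\leqslant 1})$ via a canonical morphism $\rho : \Tot(\Gc^{\leqslant 1}) \to \Tot(\Hc^{\leqslant 1})$. This $\rho$ is a vector bundle stack of virtual rank $\vrk(\Nc^{\leqslant 1})$, hence smooth, and functoriality of smooth pullback gives $\pi_\Gc^* = \rho^* \circ \pi_\Hc^*$.

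Second, I would decompose the curvature section. The section $s_\Gc$ of $\pi_\Gc^* \Gc^2 = \pi_\Gc^* \Hc^2 \oplus \pi_\Gc^* \Nc^2$ has $\Hc$-component $\rho^* s_\Hc$, while at a point $(x_\Hc, x_\Nc)$ its $\Nc$-component equals $d(x_\Nc) + [x_\Hc, x_\Nc] + \tfrac{1}{2}[x_\Nc, x_\Nc]$, which is the curvature of the twisted dg-Lie algebra $\tilde\Nc$ with differential $d_{x_\Hc} = d - \ad(x_\Hc)$ once restricted to $x_\Hc \in \mc(\Hc)$. Consequently, the zero locus $\MC(\Gc) \subset \Tot(\Gc^{\leqslant 1})$ is the successive vanishing locus of these two components and fits into a Cartesian square with vertices $\MC(\Gc)$, $\MC(\tilde\Nc) \hookrightarrow \Tot(\tilde\Nc^{\leqslant 1})$, $\MC(\Hc) \hookrightarrow \Tot(\Hc^{\leqslant 1})$, and $\Tot(\Gc^{\leqslant 1})$; this is the geometric incarnation of Proposition~\ref{prop:global}.

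Third, I would invoke the multiplicativity of refined top Chern classes for a direct sum of vector bundles equipped with a split section, together with base change in the Cartesian square of Step~2, to obtain $s_\Gc^! = s_{\tilde\Nc}^! \circ s_\Hc^!$ on the appropriate Borel--Moore groups. Combining with $\pi_\Gc^* = \rho^* \circ \pi_\Hc^*$ from Step~1 yields $q_\Gc^! = s_\Gc^! \circ \pi_\Gc^* = s_{\tilde\Nc}^! \circ s_\Hc^! \circ \rho^* \circ \pi_\Hc^* = q_{\tilde\Nc}^! \circ q_\Hc^!$, as desired. The main obstacle is that $\tilde\Nc$ is a non-trivial deformation of $\Nc$: its differential depends on the $\Hc$-component of the Maurer--Cartan element, so $s_{\tilde\Nc}$ is not a section of a bundle on a straight product $\Tot(\Hc^{\leqslant 1}) \times_B \Tot(\Nc^{\leqslant 1})$. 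The decomposition of Step~2 must therefore be set up on all of $\Tot(\Hc^{\leqslant 1})$ (not only on $\MC(\Hc)$), so that $s_\Gc$ genuinely splits as a pair of sections before one restricts to the MC-locus. Once this is arranged, the excess intersection argument already used in the proof of Proposition~\ref{prop:glue} carries through without essential change.
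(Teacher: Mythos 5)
Your proposal is correct and follows essentially the same route as the paper: you factor $\pi_\Gc^*$ through $\Tot(\Hc^{\leqslant 1})$, split the curvature section of $\pi_\Gc^*\Gc^2=\pi_\Gc^*\Hc^2\oplus\pi_\Gc^*\Nc^2$ so that the pulled-back $\Hc$-component cuts out $\Tot(\tilde\Nc^{\leqslant 1})$ inside $\Tot(\Gc^{\leqslant 1})$ with the $\Nc$-component restricting to $s_{\tilde\Nc}$, and conclude by smooth base change plus composition (Whitney multiplicativity) of refined Gysin maps --- which is exactly the paper's fiber diagram together with its two section-compatible bundle homomorphisms and the functoriality of refined pullbacks. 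Your remark that the splitting must be arranged over all of $\Tot(\Hc^{\leqslant 1})$, not just over $\MC(\Hc)$, correctly identifies the only delicate point, so no gap remains.
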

 
 \begin{proof}
Let 
$s_\Gc$, $s_{\tilde\Nc}$, $s_\Hc$ be the sections of the bundles
$\pi^*_\Gc\Gc^2$, $\pi^*_{\tilde\Nc}\tilde\Nc^2$, $\pi^*_\Hc\Hc^2$
associated with the curvature maps of $\Gc$, $\tilde\Nc$, $\Hc$ respectively.
We must prove that
$$s_\Gc^!\circ\pi^*_\Gc=s_{\tilde\Nc}^!\circ\pi^*_{\tilde\Nc}\circ s_\Hc^!\circ\pi_\Hc^*.$$
First, observe that we have the diagram whose square is a fiber square
$$\xymatrix{ 
B&\Tot(\Hc^{\leqslant 1})\ar[l]_-{\pi_\Hc}&\ar@{_{(}^->}[l]_-{i_\Hc}\MC(\Hc)\\
&\ar[lu]^-{\pi_\Gc}\ar[u]^-{p_\flat}\Tot(\Gc^{\leqslant 1})&
\ar[u]_-{\pi_{\tilde\Nc}}\ar@{_{(}^->}[l]_-{j_\flat}\Tot(\tilde\Nc^{\leqslant 1})\\
&&\ar@{_{(}^->}[lu]^-{i_\Gc}\MC(\Gc)\ar@{_{(}^->}[u]_-{i_{\tilde\Nc}},
}$$
and the maps $p_\flat$, $j_\flat$ are given by the functoriality of the total space of a $[-1,0]$-complex.
Note further that we have vector bundle homomorphisms
$$\pi^*_\Gc\Gc^2\to(p_\flat)^*\pi^*_\Hc(\Hc^2),\quad \pi^*_{\tilde\Nc}\tilde\Nc^2\to(j_\flat)^*\pi^*_\Gc\Gc^2.$$
These vector bundle homomorphisms being compatible with the sections $s_\Gc$, $s_{\tilde\Nc}$ and $s_\Hc$,
the claim follows from the functoriality of the refined pullback with  respect to  pullback by smooth maps.
\end{proof}

 \medskip
 

\section{The COHA of a surface} \label{sec:COHA}
  
  \subsection{The COHA as a vector space}\label{subsec:stack-coh}
  
  Let $S$ be a smooth connected quasi-projective surface over $\CC$. 
  Let $\Coh(S)$ be the stack of coherent sheaves on 
  $S$
  with proper support.
 It  is not smooth because the deformation theory can be 
   obstructed due to $\Ext^2$. 
  
  \begin{prop}\label{prop:coh-artin}
   $\Coh(S)$ is a  locally quotient f-Artin stack. 
  \end{prop}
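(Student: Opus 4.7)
I would prove this by the standard Quot-scheme construction. Fix an ample line bundle $\calO_S(1)$ on $S$ and a projective compactification $\bar S \supset S$. For a coherent sheaf $\calF$ on $S$ with proper support, the Hilbert polynomial $P_\calF(t) = \chi(\calF(t))$ is well-defined (the Euler characteristic being finite by properness of support), and this yields a decomposition into open-and-closed substacks
\[
\Coh(S) \,=\, \bigsqcup_{P} \Coh_P(S)
\]
indexed by numerical polynomials $P$. Since a disjoint union of locally quotient finite-type stacks is locally quotient and locally of finite type, it suffices to exhibit each $\Coh_P(S)$ as a locally quotient Artin stack of finite type.

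Next, I would invoke Grothendieck's boundedness in the form adapted to sheaves with proper support: for each integer $m$ the locus
\[
\Coh_P^{(m)}(S) \,\subset\, \Coh_P(S)
\]
consisting of sheaves $\calF$ such that $\calF(m)$ is globally generated and $H^i(S, \calF(m))=0$ for $i>0$ is an open substack, and as $m \to \infty$ these opens exhaust $\Coh_P(S)$. The required boundedness/regularity estimates hold on the quasi-projective $S$ because properness of support lets the usual Castelnuovo--Mumford arguments go through after extending by zero to $\bar S$.

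For each such $m$, set $N = P(m)$ and let $Q_m$ be the scheme parametrising coherent quotients $\calO_S(-m)^{\oplus N} \twoheadrightarrow \calF$ with Hilbert polynomial $P$ and $\calF$ of proper support in $S$. This $Q_m$ is representable by a quasi-projective scheme of finite type: it is the open locus in the projective Quot scheme on $\bar S$ where the quotient has support inside $S$. The group $\GL_N$ acts on $Q_m$ by changing the trivialisation of the source, and on the open, $\GL_N$-stable subscheme $Q_m^\circ \subset Q_m$ where the induced map $\CC^N \to H^0(S, \calF(m))$ is an isomorphism, the tautological family induces an equivalence
\[
Q_m^\circ /\!/ \GL_N \,\,\simeq\,\, \Coh_P^{(m)}(S),
\]
exhibiting each open substack as a global quotient. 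Putting the steps together gives the result.

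The main (mild) obstacle is checking the boundedness and the representability of the Quot scheme in the quasi-projective rather than projective setting; this is resolved by passing to the projective compactification $\bar S$ and using that ``support proper inside $S$'' is an open condition on the Quot scheme of $\bar S$. Once this is in place, the remaining ingredients (boundedness of $m$-regular sheaves, compatibility of the $\GL_N$-quotient with the moduli functor) are standard.
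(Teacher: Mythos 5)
Your proof follows essentially the same route as the paper's: decompose $\Coh(S)$ by Hilbert polynomial, exhaust each piece by the open substacks of sheaves satisfying the regularity/global-generation conditions at level $m$, and present each of these as a $\GL_N$-quotient of an open subscheme of a Quot scheme, with quasi-projectivity of $S$ handled through the compactification $\bar S$. The only slip is the claim that each $\Coh_P(S)$ is itself of finite type — it is only locally of finite type, being exhausted by the finite-type opens $\Coh_P^{(m)}(S)$, which is precisely what your subsequent exhaustion argument establishes, so the conclusion stands.
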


  \begin{proof}  This is standard, see
    \cite[thm.~4.6.2.1]{LMB}. Here are the details for future use in Prop. \ref{eq:prop:C}. 
  Let ${\overline S}$ be a smooth projective variety containing $S$ an an open set.
  Then $\Coh(S)$ is an open substack in $\Coh({\ol S})$. So it is enough to assume that $S$ is projective which we will. 
Let $\Oc(1)$ be the ample line bundle  on $S$ induced by a projective embedding. 
The stack $\Coh(S)$ splits into disjoint union
  \[
  \Coh(S) \,=\,\bigsqcup_{h\in \k[[t]]} \Coh^{(h)}(S), 
  \]
  where $\Coh^{(h)}(S)$ consists of sheaves $\Fc$ with Hilbert polynomial $h$, i.e., of $\Fc$ such that
  \[
  \dim \, H^0(S,\Fc(n)) \,=\, h(n), \quad n\gg 0.
   \]
 For any $N\in\bbN$, let $\Coh^{(h,N)}(S)\subset \Coh^{(h)}(S)$ be the open substack formed by
 $\Fc$ such that for each $n\geq N$ two conditions hold:
\begin{itemize}[leftmargin=8mm]
  \item[$\mathrm{(a)}$] $H^i(S,\Fc(n)) = 0, \,  i>0$, 
 \item[$\mathrm{(b)}$] the canonical map $H^0(S, \Fc(n))\otimes \Oc(-n) \to \Fc$ is surjective. 
 \end {itemize}
Now, for any  coherent sheaf $\Ec$ on a scheme $B$, let $Quot_\Ec$ be the scheme
such that, for any $B$-scheme $T\to B$, the set of $T$-points 
$Quot_\Ec(T)$ is the set of surjective sheaf homomorphisms $\Ec|_T\to\Fc$ where $\Fc$ is flat over $T$, modulo 
the equivalence relation
$$(q:\Ec|_T\to\Fc)\sim(q':\Ec|_T\to\Fc')\iff\Ker(q)=\Ker(q').$$
Let $Quot^{(h,N)}(S)$ be the open subscheme  of $Quot_{\Oc(-N)^{\oplus h(N)}}$ formed by equivalence classes 
of  surjections $\phi:\Oc(-N)^{\oplus h(N)} \to\Fc$ with $\Fc\in \Coh^{(h,N)}(S)$ such that $\phi(N)$
induces an isomorphism $H^0(S, \Oc)^{\oplus h(N)} \to H^0(S,\Fc(N))$.
Then, the stack $\Coh^{(h,N)}(S)$ is isomorphic to the quotient stack of $Quot^{(h,N)}(S)$
by the obvious action of the group
$GL_{h(N)}$.
It is a stack of finite type and, as $N\to\infty$, the  substacks
$\Coh^{(h, N)}(S)$ form an open exhaustion of $\Coh^{(h)}(S)$.   
\end{proof}
  
  \smallskip
  
  \medskip

\subsection{The induction diagram}
  Let $\SES$ be the Artin stack classifying short exact sequences 
\be\label{eq:SES}
0\to\Ec \lra \Gc \lra \calF\to 0
\ee
 of coherent sheaves with proper support over $S$. Morphisms in $\SES$ 
  are isomorphisms of such sequences. We then have the {\em induction diagram} 
 \begin{equation}\label{eq:ind-diagram}
\xymatrix{
\Coh(S) \times \Coh(S) & \SES  \ar[l]_-{q} \ar[r]^-{p} & \Coh(S),
}
\end{equation}
where the map $p$ projects a   sequence \eqref{eq:SES} to $\Gc$,
while $q$ projects it to $(\Ec, \Fc)$. 

\begin{prop}\label{eq:ind-diag-proper}
The morphism $p$ is schematic (representable) and proper.
\end{prop}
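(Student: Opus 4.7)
The plan is to identify the fibers of $p$ with relative Quot schemes and then invoke Grothendieck's existence theorem.

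First, I would spell out the fiber. Given a scheme $T$ and a morphism $T\to\Coh(S)$, corresponding to a coherent sheaf $\Gc_T$ on $S\times T$ flat over $T$ with proper support over $T$, the fiber product $\SES\times_{\Coh(S)} T$ is the functor on $T$-schemes sending $T'\to T$ to the set of subsheaves $\Ec'\subset \Gc_{T'}$ such that both $\Ec'$ and $\Gc_{T'}/\Ec'$ are $T'$-flat (properness of support being automatic since it is contained in the support of $\Gc_{T'}$). Equivalently, this is the set of surjections $\Gc_{T'}\twoheadrightarrow \Fc'$ modulo automorphisms of the target, with $\Fc'$ coherent on $S\times T'$ and $T'$-flat. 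This is the moduli problem represented by the relative Quot scheme $\mathrm{Quot}_{S\times T/T}(\Gc_T)$, so $p$ is schematic.

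Second, for properness I would reduce to the projective case exactly as in the proof of Proposition \ref{prop:coh-artin}: embed $S$ as an open subscheme of a smooth projective surface $\overline S$ and fix an ample line bundle $\Oc(1)$ on $\overline S$. Since $\Gc_T$ has proper support over $T$, and that support is closed in $\overline S\times T$, the sheaf $\Gc_T$ extends by zero to a coherent sheaf on $\overline S\times T$. Grothendieck's theorem then gives a decomposition
\[
\mathrm{Quot}_{\overline S\times T/T}(\Gc_T) \,=\, \bigsqcup_{h} \mathrm{Quot}^{h}_{\overline S\times T/T}(\Gc_T)
\]
into a disjoint union of projective $T$-schemes, indexed by Hilbert polynomial $h$ taken with respect to $\Oc(1)$. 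Since any coherent quotient of $\Gc_T$ has support contained in that of $\Gc_T$, hence in $S\times T$, we have $\mathrm{Quot}^{h}_{\overline S\times T/T}(\Gc_T)=\mathrm{Quot}^{h}_{S\times T/T}(\Gc_T)$.

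Finally, this decomposition matches the decomposition of $\SES$ into components $\SES^{(h_1,h_2)}$ indexed by the Hilbert polynomials of the sub and quotient sheaves: a quotient of $\Gc_T$ with Hilbert polynomial $h_2$ corresponds to a sub-sheaf with Hilbert polynomial $h_1 = h - h_2$, where $h$ is the Hilbert polynomial of $\Gc_T$. Restricting $p$ to each $\SES^{(h_1,h_2)}$, the fibers are a single projective component of the relative Quot scheme, so $p$ is projective, hence proper, on each such component. The only subtlety is that $\SES$ has infinitely many connected components (indexed by pairs of Hilbert polynomials), so properness is understood component-by-component in the usual sense for moduli stacks of coherent sheaves; this is the only delicate bookkeeping point, but no new difficulty arises beyond the standard Grothendieck representability.
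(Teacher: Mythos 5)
Your argument is essentially the paper's own proof: the paper disposes of this in one line by observing that the fiber of $p$ over a sheaf $\Gc$ with proper support is the Grothendieck Quot scheme $Quot_\Gc$, which is representable and proper. Your write-up spells out the same ingredients (extension by zero to a projective compactification $\overline S$, Grothendieck's decomposition by Hilbert polynomial, and the component-by-component reading of properness), so it is correct and takes the same route, just with more bookkeeping made explicit.
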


\begin{proof} For any coherent sheaf $\Gc$ on $S$ with
proper support, the Grothendieck Quot scheme $Quot_\Gc$ is proper. 
\end{proof}

\medskip

\subsection{The derived induction diagram}
We have the projections
 \[
\Coh(S)\times\Coh(S) \buildrel p_{12}\over\lla \Coh(S)\times\Coh(S)\times S \buildrel p_{13}, \,p_{23} \over \lra \Coh(S)\times S.  
\]
Consider the tautological coherent sheaf
$\calU$ over $\Coh(S)\times S$ 
and the complex of coherent sheaves
   over $\Coh(S)\times \Coh(S)$ given by
 \be\label{eq:PC}
 \Cc \,=\, R(p_{12})_*\ul\RHom(p_{23}^*\Uc, p_{13}^* \Uc)[1]. 
 \ee
Its fiber at a point $(\Ec,\Fc)$ is  the complex of
 vector spaces $\RHom_S(\Fc, \Ec) [1]$. 
Given a substack $X\subset Coh(S)$, let 
$\Uc_X = \Uc|_{X\times S}$ and $ \Cc_X = \Cc|_{X\times X}$
be the restrictions of $\Uc$ and $\Cc$. 


   \begin{prop}\label{eq:prop:C}
      \hfill
\begin{itemize}[leftmargin=8mm]
  \item[$\mathrm{(a)}$]
   The complex $\Cc$ is $[-1,1]$-perfect and admits a perfect coherent system.
  \item[$\mathrm{(b)}$] 
  The complex $\Cc_X$ is strictly $[-1,1]$-perfect if $X=\Coh_0(S)$.
  \end{itemize}
   \end{prop}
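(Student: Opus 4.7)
Both parts follow by analyzing the derived pushforward $R(p_{12})_*$ of the relative $\ul\RHom$ along the morphism $p_{12}$. Since the universal sheaf $\Uc$ has proper support over $\Coh(S)$, the restriction of $p_{12}$ to the relevant supports is proper, and after replacing $S$ by a smooth projective compactification we may assume $p_{12}$ is smooth proper of relative dimension $2$. Perfection and amplitude $[-1,1]$ of $\Cc$ will then come from the standard theory of relative $\ul\RHom$ applied to flat families of coherent sheaves, combined with the smoothness and dimension $\dim S = 2$. The difference between (a) and (b) is purely a globalization issue: $\Coh(S)$ is exhausted by a filtering family of quasi-compact opens (forcing a coherent system), whereas $\Coh_0(S)$ decomposes as an honest disjoint union of quasi-compact components.

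\textbf{Part (a).} I would cover $\Coh(S)\times\Coh(S)$ by the filtering family of quasi-compact open substacks $U=\Coh^{(h_1,N)}(S)\times\Coh^{(h_2,N)}(S)$ constructed in the proof of Proposition~\ref{prop:coh-artin}. On each such $U$, both $p_{13}^*\Uc$ and $p_{23}^*\Uc$ are coherent on $U\times S$ and flat over $U$, so by the standard perfection theorem for relative $\ul\RHom$ under a smooth proper base change (Lieblich, T\"oen) the restriction $\Cc|_U$ is a perfect complex on $U$. The derived fiber at a point $(\Ec,\Fc)\in U$ is, by flat base change, $\RHom_S(\Fc,\Ec)[1]$, which has cohomology concentrated in degrees $[-1,1]$ because $S$ is smooth of dimension $2$. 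This fiberwise Tor-amplitude control, together with the resolution property of the quotient stack $U$, allows me to invoke an Illusie-type structural result to obtain a strict $[-1,1]$-perfect presentation of $\Cc|_U$. Choosing these presentations functorially with respect to the inclusions $U'\subset U$ in the filtering cover then supplies the perfect coherent system.

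\textbf{Part (b).} When $X=\Coh_0(S)$, I would decompose $X\times X=\bigsqcup_{n_1,n_2}\Coh_0^{n_1}(S)\times\Coh_0^{n_2}(S)$ into quasi-compact components, each a global quotient stack with the resolution property. Part (a) already gives $\Cc_X$ perfect of Tor-amplitude $[-1,1]$ on each component, and the Illusie-type result on quotient stacks then yields a global strict $[-1,1]$-perfect presentation there. Because strict $[-1,1]$-perfection is preserved under disjoint unions (take direct sums of presentations), assembling across $(n_1,n_2)$ gives a global strict $[-1,1]$-perfection of $\Cc_X$ on all of $X\times X$.

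\textbf{Main obstacle.} The central technical black box is the Illusie-type structural theorem for perfect complexes on a (non-smooth) quotient stack: that Tor-amplitude $[a,b]$ combined with the resolution property yields a strict presentation by a complex of vector bundles in degrees $[a,b]$. A more hands-on alternative for part (b), which avoids this abstract input, is to construct a global two-term locally free resolution of $\Uc_X$ on $\Coh_0^n(S)\times S$ directly: on affine charts $\AAA^2\subset S$ the Koszul-type complex associated to the commuting-matrix description of $\Coh_0^n(\AAA^2)$ produces an explicit $GL_n$-equivariant resolution, and these can be assembled across a chart cover of $S$. Then the strict $[-1,1]$-perfect presentation of $\Cc_X$ is obtained by a direct computation of $\ul\RHom$ and its pushforward. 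I would prefer this route, as it keeps the argument self-contained and makes the construction fully explicit for the subsequent use in the definition of the COHA multiplication.
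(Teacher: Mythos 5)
Your route is genuinely different from the paper's. The paper does not invoke the Lieblich/To\"en perfection theorem nor a strictification theorem for stacks with the resolution property: after the same reduction to projective $S$ as in Proposition \ref{prop:coh-artin}, it builds an \emph{explicit} three-term locally free resolution $\Pc^{-2}\to\Pc^{-1}\to\Pc^{0}$ of $\Fc$ from sufficiently negative twists of $\Oc(1)$ (Hilbert's syzygy theorem on the smooth surface guarantees the last kernel is locally free), and then represents $\Cc$ by the complex of vector bundles $\Hom_S(\Pc^{0},\Ec)\to\Hom_S(\Pc^{-1},\Ec)\to\Hom_S(\Pc^{-2},\Ec)$ on the open substack $U_{N_1,N_0,h,h'}$ where the required vanishing of higher cohomology holds and the ranks are constant; these opens exhaust $\Coh^{(h)}(S)\times\Coh^{(h')}(S)$. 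For (b) the paper observes simply that for $0$-dimensional sheaves of fixed lengths the twists $N_0,N_1$ can be chosen uniformly, so the same construction is global on each component of $\Coh_0(S)\times\Coh_0(S)$. Your abstract approach (perfection of the relative $\RHom$ for flat families, Tor-amplitude read off from the fibers $\RHom_S(\Fc,\Ec)[1]$, then strictification on quotient stacks with the resolution property) is legitimate for the perfection and amplitude statements, and is shorter at the price of heavier black boxes.

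The genuine gap is the coherent perfect system in part (a). The paper's definition requires presentations $\Cc_U$ together with quasi-isomorphisms $\phi_U:\Cc|_U\to\Cc_U$ and $\phi_{V\subset U}:\Cc_U|_V\to\Cc_V$ satisfying the strict compatibility $\phi_V=\phi_{V\subset U}\circ\phi_U|_V$; this is what feeds the limit formula \eqref{vpb2} defining the virtual pullback. The Illusie/Thomason-type strictification you invoke is a pure existence statement: for $V\subset U$ it gives at best a zig-zag of quasi-isomorphisms relating $\Cc_U|_V$ and $\Cc_V$, and ``choosing these presentations functorially'' is precisely the assertion that needs proof, not a consequence of the existence result. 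The paper's explicit resolutions are designed so that restriction to a smaller open (or passing to more negative twists via the canonical comparison maps of the twisted resolutions) produces the compatibilities on the nose; some such concrete construction, or an argument that the homotopy ambiguity does not affect $q^!_\Cc$, must replace your one-line claim. Secondly, your proposed hands-on alternative for (b) --- gluing Koszul/commuting-matrix resolutions over affine charts $\AAA^2\subset S$ --- does not work as stated: locally constructed resolutions of $\Uc_X$ on the charts do not glue to a global resolution on $\Coh_0^{(n)}(S)\times S$, and the paper's actual argument for (b) is the uniform-twist observation above, which you would do better to adopt.
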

   
   \begin{proof} As in the proof of Proposition \ref{prop:coh-artin}, the statements
   reduce to the case when $S$ is projective which we assume. We also keep the notation
   from that proof. 
   Fix two polynomials $h, h'\in\k[t]$ and 
   let $\Ec\in\Coh^{(h)}(S)$, 
   $\Fc\in \Coh^{(h')}(S)$ be two fixed coherent sheaves on $S$ with Hilbert polynomials $h, h'$. 
   Since $S$ is smooth of dimension $2$, we can fix a locally free resolution 
    $\Pc^\bullet = \{ \Pc^{-2}\to \Pc^{-1}\to \Pc^0\}$ of $\Fc$.  If we know that
   the $\Pc^i$ are ``sufficiently negative" with respect to $\Ec$, i.e., 
   for each $i\in [-2,0]$ and $j>0$ the space $\Ext^j_S(\Pc^i, \Ec)= H^j(S, (\Pc^i)^\vee\otimes \Ec)$ vanishes, then
   the complex  of vector spaces $\RHom_S(\Fc, \Ec)[1]$ is represented by the complex
   \be\label{eq:Hom-complex}
   \Hom_S(\Pc^0, \Ec) \to \Hom_S(\Pc^{-1}, \Ec) \to \Hom_S(\Pc^{-2}, \Ec)
   \ee
   situated in degrees $[-1,1]$. In order to achieve this, we define, in a standard way,
   \[
   \Pc^0= H^0(S, \Fc(N_0)) \otimes\Oc(-N_0) \buildrel \ev_0 \over\lra \Fc, \quad N_0 \ll 0, 
   \]
   with $\ev_0$ being the evaluation map. Then we set 
   $\Kc_0 = \Ker(\ev_0)\buildrel \eps_1\over\hookrightarrow \Pc^0$ and 
   \[
   \Pc^{-1} = H^0(S, \Kc_0(N_1)) \otimes \Kc(-N_1) \buildrel \ev_1\over\lra \Kc_0, \quad N_1 \ll N_0,
   \]
   and $\Pc^{-2}=\Ker(\ev_1)\buildrel \eps_2\over\hookrightarrow \Pc^{-1}$. Then by Hilbert's syzygy theorem, $\Pc^{-2}$ is locally free, and
   \[
  \xymatrix{
  \{ \Pc^{-2} \ar[rr]^{ d^{-2}=\eps_2} &&\Pc^{-1}\ar[rr] ^{d^{-1} = \eps_1\circ\ev_1} && \Pc^0 \} \ar[r] ^{\ev_0}  & \Fc
  }
   \]
  is a locally free resolution of $\Fc$.   
  Further, if $N_1\ll N_0 \ll 0$ are sufficiently negative with respect to $\Ec$ and $\Fc$, 
  then the dimensions (denote then $r_{-1}, r_0, r_1$)
  of the term of the complex \eqref{eq:Hom-complex} are determined by $h, h'$ and $N_0, N_1$. For
  fixed $N_1\ll N_0 \ll 0$ the locus of $(\Ec,\Fc)$  for which it is true, form an open substack $U_{N_1, N_0, h,h'}$
   in $\Coh^{(h)}(S)\times\Coh^{(h')}(S)$. On $U_{N_1, N_0, h,h'}$, the complex $\Cc$ is then represented by a
    complex of vector bundles
   whose ranks are $r_{-1}, r_0, r_1$, so it is strictly perfect. 
   Further, as $N_1, N_0\to-\infty$, the substacks $U_{N_1, N_0, h,h'}$ form an open exhaustion of 
     $\Coh^{(h)}(S)\times\Coh^{(h')}(S)$. This proves (a). 
     
     \smallskip
     
     To see (b), we notice that for $0$-dimensional $\Ec$ and $\Fc$  with given $h$ and $h'$, i.e., with
     given dimensions of $H^0(S,\Ec)$ and $H^0(S, \Fc)$,  one can choose $N_0, N_1$ in a universal way. 
     \end{proof}

\smallskip

Let now $X\subset\Coh(S)$ be a substack whose points
 are closed under extensions in $\Coh(S)$.
Let $\SES_X\subset\SES$ be the substack which classifies all short exact sequences 
of coherent sheaves over $S$ which belong to $X$.
We abbreviate $\Uc=\Uc_X$, $\Cc=\Cc_X$ and $\SES=\SES_X$. 
Assume further  that the complex $\calC$ over $X\times X$ is strictly $[-1,1]$-perfect. 
Fix a presentation of $\calC$ as in Example \ref{ex:perf-com}.

\begin{proposition}\label{eq:prop-tau-ses}
The stack $\Tot(\tau_{\leq 0}\Cc)$ is isomorphic to $\SES$. 
\end{proposition}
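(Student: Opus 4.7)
The plan is to apply Proposition \ref{prop:A} in the relative setting over the base $B := X\times X$. Take $Y := B\times S$ with structure morphism $p_{12}: Y\to B$, and consider on $Y$ the two coherent sheaves $\Ec := p_{13}^*\Uc$ and $\Fc := p_{23}^*\Uc$. Both are flat over $B$, each being the pullback of the tautological family $\Uc$ on $X\times S$ (flat over $X$) along a projection to one of the two copies of $X\times S$ sitting inside $B\times S$. With these choices the object $R(p_{12})_* R\ul\Hom_{\Oc_Y}(\Fc,\Ec)[1]$ appearing in Proposition \ref{prop:A} is, by construction, precisely the complex $\Cc=\Cc_X$ of the present section, which is strictly $[-1,1]$-perfect by hypothesis.

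Next I would identify the extension stack produced by Proposition \ref{prop:A} — call it temporarily $\SES'$ — with the stack $\SES=\SES_X$ of the present statement. For each $T\in\Aff_B$, the groupoid $\SES'(T)$ classifies short exact sequences
\[
0\to p_{13}^*\Uc|_T \to \Gc \to p_{23}^*\Uc|_T \to 0
\]
of $\Oc_{T\times S}$-modules with morphisms identical on the outer terms; that is, short exact sequences whose outer terms are the two $T$-families in $X$ encoded by the two projections $T\to X$. Since $X$ is closed under extensions in $\Coh(S)$ by assumption, the middle term $\Gc$ automatically defines a $T$-family of objects of $X$, and conversely every object of $\SES_X(T)$ arises in this way. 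The assignment is manifestly functorial in $T$, yielding an equivalence $\SES'\simeq \SES_X$ of stacks over $B$.

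Combining these two observations with Proposition \ref{prop:A} then gives the asserted equivalence $\Tot(\tau_{\leq 0}\Cc)\simeq \SES$. The only subtlety worth noting is that the equivalence of Proposition \ref{prop:A} was constructed $T$-by-$T$ in a functorial manner (via the Dold-Kan machinery of \S\ref{subsec:tot-compl}), so it automatically descends to an equivalence of stacks over the possibly non-affine base $B=X\times X$; no additional gluing argument is required. There is thus no essential obstacle — the statement is a direct instantiation of Proposition \ref{prop:A} in the universal (moduli) setting, and the main care needed is simply the bookkeeping of what it means to pull back the two universal sheaves to $B\times S$ and to invoke the closure hypothesis on $X$ at the level of middle terms.
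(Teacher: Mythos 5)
Your proposal is correct and is essentially identical to the paper's proof, which consists of the single line ``apply Proposition \ref{prop:A} with $Y=X\times X\times S$, $\Ec=p_{13}^*\Uc$, $\Fc=p_{23}^*\Uc$''. The extra points you spell out -- that the resulting complex is exactly $\Cc$ from \eqref{eq:PC}, and that closure of $X$ under extensions forces the middle term of each extension to define a family in $X$, so the extension stack of Proposition \ref{prop:A} agrees with $\SES_X$ -- are exactly the details the paper leaves implicit.
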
 
 
 \begin{proof} Apply Proposition \ref{prop:A} with $Y=X\times X\times S$ and $\Fc=p_{23}^*\calU$,
$\Ec=p_{13}^*\calU$.
\end{proof}

 \smallskip

Thus, for all $X$ as above we have the following diagram of f-Artin stacks
\begin{equation}\label{eq:virdiagram}
\xymatrix{
X \times X & \Tot(\calC^{\leq 0}) \ar[l]_-{\pi}&\, \SES\ar@{_{(}->}[l]_-{i} \ar[r]^-{p} & X
}
\end{equation}
with $q=\pi\circ i$, which can be viewed as a refinement of the induction diagram \eqref{eq:ind-diagram}.
We call this diagram the \emph{derived induction diagram}.

 \medskip

\subsection{The COHA as an algebra}\label{subsec:HOHA}

 We apply the analysis of \S \ref{subsec:ref-pullback} to all diagrams \eqref{eq:virdiagram} as 
 $X$ runs over the set of open substacks of finite type of $\Coh(S)$ such that
 the complex $\calC$  in \eqref{eq:PC} is strictly $[-1,1]$-perfect over $X\times X$.
 Note that the stack $\Coh(S)$ is covered by all such $X$'s by the proof of 
Proposition \ref{eq:prop:C}.
Since the map $p$ is representable and proper, the pushforward $p_*$ in Borel-Moore homology is well-defined.
 Hence, we have the maps 
\[
H_\bullet^\BM(X\times X)  \buildrel q^!_\Cc\over \lra H_{\bullet+2\vrk(\Cc)}^\BM  ( \SES) 
\buildrel p_*\over\lra H_{\bullet+2\vrk(\Cc)}^\BM (X),
\]
which, by \eqref{limit}, give rise to the maps
\[
H_\bullet^\BM(\Coh(S)\times \Coh(S))  \buildrel q^!_\Cc\over \lra H_{\bullet+2\vrk(\Cc)}^\BM  ( \SES) 
\buildrel p_*\over\lra H_{\bullet+2\vrk(\Cc)}^\BM (\Coh(S)).
\]
Composing the maps $q^!_\Cc$, $p_*$ and the exterior product
$$H_\bullet^\BM(X) \otimes H_\bullet^\BM(X)\to H_\bullet^\BM(X\times X),$$
we get the map
\begin{align}\label{eq:virm}
m~:H_\bullet^\BM(X) \otimes H_\bullet^\BM(X) \lra H_{\bullet+2\vrk(\Cc)}^\BM(X),
\end{align}
and, by \eqref{limit}, the map
\begin{align*}
m~:H_\bullet^\BM(\Coh(S)) \otimes H_\bullet^\BM(\Coh(S)) \lra H_{\bullet+2\vrk(\Cc)}^\BM(\Coh(S)).
\end{align*}
The first main result of this paper is the following theorem.
It is proved in the next section.

\smallskip

\begin{theorem}\label{thm:ALG}
The map $m$ equips $H_\bullet^\BM(X)$ and $H_\bullet^\BM(\Coh(S))$
with an associative $\k$-algebra structure. 
\qed
\end{theorem}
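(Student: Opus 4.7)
The plan is to prove associativity by identifying both iterated products $(a\cdot b)\cdot c$ and $a\cdot(b\cdot c)$ with a single map obtained by virtual pullback to the stack $\FILT$ of 3-step filtrations followed by a single proper pushforward. The combinatorial input is that a pair of extensions (in either order) is the same as a 3-step filtration with prescribed successive quotients $\Ec_1,\Ec_2,\Ec_3$.

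Concretely, I would first set up the ``double induction diagram''. For $X\subset\Coh(S)$ closed under extensions, let $\FILT=\FILT_X$ be the stack classifying filtered sheaves $\Ec_1\subset \Ec_{12}\subset\Ec_{123}$ in $X$ with quotients $\Ec_2, \Ec_3$. There are natural morphisms $\bfq:\FILT\to X^3$ (remembering the three quotients) and $\bfp:\FILT\to X$ (remembering $\Ec_{123}$), with $\bfp$ proper and representable by iterated Quot-scheme arguments. I would then consider the two factorizations
\begin{equation*}
\FILT\longrightarrow \SES\times_X \SES\rightrightarrows X\times\SES\sqcup\SES\times X\longrightarrow X^3,
\end{equation*}
corresponding respectively to ``first multiply $a,b$, then the result with $c$'' and ``first multiply $b,c$, then $a$ with the result''. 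On the level of $H^\BM_\bullet$, by standard base change for proper pushforward against smooth/l.c.i. pullback, both iterated products equal $\bfp_*\circ \bfq^!_{\text{iter}}$ for an appropriate \emph{iterated} virtual pullback built from the two-term complexes $\Cc$ in the respective orderings.

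Next, I would use Proposition \ref{prop:B} to identify $\FILT$ with the Maurer-Cartan stack $\MC(\Gc)$ of the strictly $[0,2]$-perfect dg-Lie algebra
\begin{equation*}
\Gc=\bigoplus_{ij<kl} Rp_*\underline\RHom(\Uc_{kl},\Uc_{ij}),\qquad 01<12<23,
\end{equation*}
over $X^3$. This $\Gc$ carries two complementary split extensions $\Gc=\Hc\ltimes\Nc$: the first, used for $(a\cdot b)\cdot c$, extracts the $01<12$ piece as $\Hc$ and the rest as $\Nc$; the second, used for $a\cdot(b\cdot c)$, extracts the $12<23$ piece as $\Hc$ and the rest as $\Nc$. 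In each case, $\MC(\Hc)$ is identified (via Proposition \ref{prop:A} and Example \ref{ex:abelian}) with the appropriate copy of $\SES$, and $\MC(\tilde\Nc)$ over $\MC(\Hc)$ is identified with the second layer of extensions. Proposition \ref{prop:REL} then gives
\begin{equation*}
q^!_\Gc \;=\; q^!_{\tilde\Nc}\circ q^!_\Hc,
\end{equation*}
so that both iterated virtual pullbacks $\bfq^!_{\text{iter}}$ arising from the two orderings coincide with the single virtual pullback $q^!_\Gc: H^\BM_\bullet(X^3)\to H^\BM_{\bullet+2\vrk(\Gc)}(\FILT)$ associated to $\Gc$ itself, which is intrinsic and independent of the choice of extension.

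Having achieved this, both iterated products become $\bfp_*\circ q^!_\Gc$ composed with the exterior product on $H^\BM_\bullet(X)^{\otimes 3}$, proving associativity on each open substack of finite type $X$. The extension to $\Coh(S)$ follows by passing to the inverse limit along the filtering open cover, using \eqref{limit} and the compatibility of $m$ with restriction (which is part of the coherent perfect system). The main technical obstacle is the careful verification that the base-change squares relating $\FILT$, $\SES\times_X\SES$ and $X^3$ are Cartesian after passing to the truncated total spaces of the relevant strictly perfect complexes, so that Proposition \ref{prop:REL} genuinely applies; this requires checking strict $[0,2]$-perfectness of $\Gc$ (locally on $X$), which is analogous to Proposition \ref{eq:prop:C} with two sheaves replaced by three, and matching the functorialities of refined Gysin maps with those of proper pushforward, which follows from the excess-intersection lemma used in the proof of Proposition \ref{prop:glue}.
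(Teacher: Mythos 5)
Your proposal is correct and follows essentially the same route as the paper: reduce both iterated products to the stack of $3$-step filtrations via the base-change lemma for proper pushforward against virtual pullback, identify $\FILT$ with $\MC(\Gc)$ by Proposition \ref{prop:B}, and use the two semidirect decompositions $\Gc=\Hc\ltimes\Nc=\Hc'\ltimes\Nc'$ together with Proposition \ref{prop:REL} to show both iterated virtual pullbacks equal the intrinsic $q^!_\Gc$. The only cosmetic difference is that the paper phrases the comparison through the explicit fiber diagrams \eqref{diag1}, \eqref{diag2} and the three relations \eqref{3REL} rather than through a single ``iterated'' pullback, but the content is identical.
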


\smallskip

\subsection{Proof of associativity}\label{sec:associativity}
We must prove the associativity of the map $m$. It is enough to do it for $H_\bullet^\BM(X)$.
To do that, we consider the Artin stack $\FILT$ classifying flags of coherent sheaves
$\Ec_{01}\subset \Ec_{02}\subset \Ec_{03}$ over $S$ such that the sheaves
$\Ec_{01},$ $ \Ec_{12},$ $\Ec_{23}$ defined by $\Ec_{ij}=\Ec_{0j}/\Ec_{0i}$
belong to the substack $X\subset \Coh(S)$. 
For any $i<j$ we introduce a copy $X_{ij}$ of the stack $X$
parametrizing sheaves $\Ec_{ij}$.
For any $i<j<k$ we introduce a copy $\SES_{ijk}$ of the stack $\SES$
parametrizing short exact sequences
$$0\to\Ec_{ij} \lra \Ec_{ik} \lra \Ec_{jk}\to 0.$$
Then, we have the fiber diagrams of stacks
\begin{align}\label{diag1}
\begin{split}
\xymatrix{
\FILT\ar[r]^-{x}\ar[d]_-{y}&\SES_{023}\ar[d]_{q}\ar[r]^-p&X_{03}\\
\SES_{012}\times X_{23}\ar[r]^-{p\times 1}\ar[d]_-{q\times 1}&X_{02}\times X_{23}\\
X_{01}\times X_{12}\times X_{23}
}
\end{split}
\end{align}
and
\begin{align}\label{diag2}
\begin{split}
\xymatrix{
\FILT\ar[r]^-{v}\ar[d]_-{w}&\SES_{013}\ar[d]_{q}\ar[r]^-p&X_{03}\\
X_{01}\times\SES_{123}\ar[r]^-{1\times p}\ar[d]_-{1\times q}&X_{01}\times X_{13}\\
X_{01}\times X_{12}\times X_{23}
}
\end{split}
\end{align}
given by 
\begin{align*}
x(\Ec_{01}\subset \Ec_{02}\subset \Ec_{03})&=(\Ec_{02}\subset\Ec_{03}),\quad
y(\Ec_{01}\subset \Ec_{02}\subset \Ec_{03})=(\Ec_{01}\subset\Ec_{02}\,,\,\Ec_{23}),\\
v(\Ec_{01}\subset \Ec_{02}\subset \Ec_{03})&=(\Ec_{01}\subset\Ec_{03}),\quad
w(\Ec_{01}\subset \Ec_{02}\subset \Ec_{03})=(\Ec_{01}\,,\,\Ec_{12}\subset\Ec_{13}).
\end{align*}
We must prove that we have
$$p_*\circ q_\Cc^!\circ(p_*\times 1)\circ(q_\Cc^!\times 1)=p_*\circ q_\Cc^!\circ(1\times p_*)\circ(1\times q_\Cc^!).$$
Note that the morphisms $x$, $z$ are both proper and representable and that we have 
the following equalities of stack homomorphisms 
$$(q\times 1)\circ y=(1\times q)\circ w,\quad p\circ v=p\circ x.$$
We claim that there are virtual pullback homomorphisms $y_\Cc^!$ and $w_\Cc^!$ such that 
\begin{align}\label{3REL}
\begin{split}
x_*\circ y_\Cc^!&=q_\Cc^!\circ(p_*\times 1),\\
v_*\circ w_\Cc^!&=q_\Cc^!\circ(1\times p_*),\\
y_\Cc^!\circ(q_\Cc^!\times 1)&=w_\Cc^!\times(1\times q_\Cc^!).
\end{split}
\end{align}
The complex $\Cc_{023}=(p\times 1)^*\calC$ on $\SES_{012}\times X_{23}$ and
the complex $\Cc_{013}=(1\times p)^*\calC$ on $X_{01}\times \SES_{123}$ are both
strictly $[-1,1]$-perfect. 
Since the squares in the diagrams \eqref{diag1}, \eqref{diag2} 
are Cartesian, by Proposition \ref{prop:A} we have stack isomorphisms
\begin{align*}
\Tot(\tau_{\leqslant 0}\calC_{023})=SES_{012}\times_{X_{02}} SES_{023}=\FILT,\\
\Tot(\tau_{\leqslant 0}\calC_{013})=SES_{123}\times_{X_{13}} SES_{013}=\FILT.
\end{align*}
Therefore, we have virtual pullback maps
\begin{align*}
y^!_\Cc=y^!_{\Cc_{023}}&: H_\bullet^\BM(SES_{012}\times X_{23}) \to H_{\bullet +2\vrk(\Cc)}^\BM (\FILT),\\
w^!_\Cc=w^!_{\Cc_{013}}&: H_\bullet^\BM(X_{01}\times SES_{123}) \to H_{\bullet +2\vrk(\Cc)}^\BM (\FILT)
\end{align*}
associated with the complexes $\Cc_{023}$ and $\Cc_{013}$.
Then, the first two equations in \eqref{3REL} follow from the following base change property of virtual pullbacks.

\begin{lemma}
Let $B,$ $B'$ be Artin stacks of finite type,
$\Cc$ be a strictly $[-1,1]$-perfect complex on $B$,
and  $f:B'\to B$ be a representable and proper morphism of stacks.
Then, the complex $\Cc':=f^*\Cc$ on $B'$ is strictly $[-1,1]$-perfect and gives rise to the following 
Cartesian square
$$\xymatrix{
\Tot(\tau_{\leqslant 0}\calC')\ar[r]^-g\ar[d]_-{q'}&
\Tot(\tau_{\leqslant 0}\calC)\ar[d]^-{q}\\
B'\ar[r]^-f&B.
}$$
Further, we have the following equality of maps 
$$g_*\circ {q'}^!_{\Cc'}=q_\Cc^!\circ f_*:H^\BM_\bullet(B')\to H^\BM_{\bullet+2\vrk(\Cc)}(\Tot(\tau_{\leqslant 0}\Cc)).
$$
\qed
\end{lemma}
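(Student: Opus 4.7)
The plan is to verify that $\Cc' := f^*\Cc$ is strictly $[-1,1]$-perfect and to identify the two Cartesian squares in the statement. Applying $f^*$ termwise to the presentation \eqref{eq:presentation} of $\Cc$ produces a presentation of $\Cc'$ by vector bundles in degrees $[-1,1]$, so strict perfectness is immediate. Since $\Tot$ and $\Tot \circ \tau_{\leqslant 0}$ are defined locally using $\Spec$ and $\Sym$ of the sheaves ${\Cc'}^i = f^*\Cc^i$, they commute with base change along $f$; functoriality at the level of $T$-points as in the proofs of Proposition~\ref{prop:tot(-1,0)}(a) and Proposition~\ref{prop:perf-com}(a) yields the factorization
$$
\xymatrix{
\Tot(\tau_{\leqslant 0}\Cc') \ar@{^{(}->}[r] \ar[d]_-{g} & \Tot({\Cc'}^{\leqslant 0}) \ar[d]_-{\tilde f} \ar[r]^-{\pi'} & B' \ar[d]^-{f} \\
\Tot(\tau_{\leqslant 0}\Cc) \ar@{^{(}->}[r] & \Tot(\Cc^{\leqslant 0}) \ar[r]^-{\pi} & B
}
$$
in which both squares are Cartesian; in particular the outer rectangle is Cartesian. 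Since $f$ is representable and proper, so are $\tilde f$ (base change of $f$ along $\pi$) and $g$ (base change of $\tilde f$ along a closed immersion), so the proper pushforwards $\tilde f_*$ and $g_*$ are well defined.

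Next, recall from \S\ref{subsec:ref-pullback} that $q^!_\Cc = s^!_\Cc \circ \pi^*$, where $\pi$ is smooth of relative rank $\vrk(\Cc^{\leqslant 0})$ (it is a vector bundle stack, cf.~Proposition~\ref{prop:BM-Tot-2term}) and $s_\Cc$ is the section of $\pi^*\Cc^1$ induced by $d^1$. By functoriality, the section $s_{\Cc'}$ of ${\pi'}^*{\Cc'}^1$ agrees with the pullback of $s_\Cc$ along the Cartesian square, and its zero locus is $\Tot(\tau_{\leqslant 0}\Cc') = \Tot(\tau_{\leqslant 0}\Cc) \times_B B'$. I would therefore split the desired identity $g_* \circ q^!_{\Cc'} = q^!_\Cc \circ f_*$ into two intermediate identities:
\begin{align*}
\tilde f_* \circ (\pi')^* \,&=\, \pi^* \circ f_*, \\
g_* \circ s^!_{\Cc'} \,&=\, s^!_\Cc \circ \tilde f_*.
\end{align*}

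The first identity is standard smooth/proper base change: the isomorphism $\pi^{-1} Rf_* \simeq R\tilde f_* (\pi')^{-1}$ in the $\infty$-categorical derived category $\D(-)_\infty$ of \S\ref{subsec:BM-spaces}, combined with the smooth orientation $\pi^! \simeq \pi^{-1}[2\vrk(\Cc^{\leqslant 0})]$, translates into the claimed identity on Borel--Moore homology after applying $H^{-\bullet}$ to $\omega_{B'}$. The second identity, which is the crux of the proof, amounts to the ``compatibility of $\ast$ and $\cdot$'' axiom of Fulton--MacPherson applied to the bivariant class $c_r(\pi^*\Cc^1, s_\Cc) \in H^{2r}(\Tot(\tau_{\leqslant 0}\Cc) \to \Tot(\Cc^{\leqslant 0}))$ of Example~\ref{ex:s!-spaces}, with $r = \rk(\Cc^1)$. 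The Thom class $\eta \in H^{2r}_{\Tot(\Cc^{\leqslant 0})}(\Tot(\pi^*\Cc^1))$ is natural with respect to arbitrary pullback, and its restriction along the graph of a section is compatible with the Cartesian pullback of that section; hence $c_r({\pi'}^*{\Cc'}^1, s_{\Cc'})$ is the bivariant pullback of $c_r(\pi^*\Cc^1, s_\Cc)$ along $\tilde f$, and the desired equality becomes the bivariant projection formula for the proper morphism $g$.

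The hard part will be justifying this last projection formula in the setting of f-Artin stacks with Borel--Moore homology defined via the $\infty$-categorical homotopy limit $\D(-)_\infty$, since the formalism of \cite{fulton-macpherson} is originally stated for schemes. My strategy is to reduce to a smooth atlas $U \to B$: pulling both the total space diagram and the section $s_\Cc$ back along a smooth surjection, the Cartesian squares become squares of schemes (after a further pullback to smooth cover $U' \to B'$), the Thom/refined Chern classes and their pullbacks are characterized by the classical constructions of \cite{fulton-macpherson}, and the projection formula is the statement on that cover. A routine descent argument, using that smooth pullback is compatible with the dualizing complex and that $g_*$ and $\tilde f_*$ for proper morphisms commute with smooth base change on BM homology, then promotes the identity from the atlas to the stacks $B', B$ and completes the proof.
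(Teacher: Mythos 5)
First, a point of comparison: the paper offers no proof of this lemma at all --- it is stated with a \qed as ``the base change property of virtual pullbacks,'' the Chow-theoretic counterpart being implicitly delegated to \cite{M12} and \cite{K99}. So your write-up fills a gap the authors left open, and your route is the standard one they clearly have in mind. The identification of the Cartesian squares and the strict perfectness of $f^*\Cc$ are fine, and the decomposition of the desired identity into $\tilde f_*\circ(\pi')^*=\pi^*\circ f_*$ and $g_*\circ s^!_{\Cc'}=s^!_\Cc\circ\tilde f_*$ is exactly right: the first is proper base change (note $\tilde f$ and $g$ are again representable and proper, being base changes of $f$) combined with the smooth orientation of the vector bundle stack $\pi$, and the second is the Fulton--MacPherson compatibility of bivariant product with proper pushforward, once one knows, as you argue, that $c_r((\pi')^*(\Cc')^1,s_{\Cc'})$ is the bivariant pullback of $c_r(\pi^*\Cc^1,s_\Cc)$ along $\tilde f$ (both are restrictions of the same orientation class of the zero section, which is stable under pullback).

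The one step I would not let stand as written is the final reduction to a smooth atlas. An equality of two maps $H^\BM_\bullet(B')\to H^\BM_{\bullet+2\vrk(\Cc)}(\Tot(\tau_{\leq 0}\Cc))$ cannot be checked naively after composing with smooth pullback to an atlas, because that pullback is far from injective (already for $B$ a classifying stack $BG$ it kills almost everything). The repair, consistent with how the paper itself sets things up in \S\ref{sec:BM}, is to note that all three operations are induced by sheaf-level data: $f_*$ by the adjunction map $Rf_*\omega_{B'}\to\omega_B$, $\pi^*$ by the orientation $\pi^!\ul\k\simeq\ul\k[2\vrk(\Cc^{\leq 0})]$, and $s^!$ by the bivariant class viewed as a morphism $\ul\k\to i^!\ul\k[2r]$; the required identities among these morphisms are inherited from the scheme case through the $\infty$-categorical limit definition of $\D(-)_\infty$ (equivalently, by cohomological descent along the \v{C}ech nerve with the maps realized compatibly on the nerve), and only afterwards does one apply $H^{-\bullet}(-,\omega)$. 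With that adjustment your argument is complete and is, as far as one can tell, the proof the authors intended.
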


\smallskip

Now, we concentrate on the third  equation in \eqref{3REL}. 
To do this, we first apply Proposition \ref{prop:B} to the stack homomorphism
$$p:Y=X_{01}\times X_{12}\times X_{23}\times S\to B=X_{01}\times X_{12}\times X_{23}$$ 
and to the coherent sheaves
$\Ec_{ij}=p_{ij}^*\calU$ with $ij=01,12,23$ 
given by the pullback of the tautological sheaf $\Uc$ by the obvious projections $Y\to  X\times S$.
The sheaf $\Gc$ of associative dg-algebras in \eqref{Gc} is a strictly $[0,2]$-perfect dg-Lie algebra on $B$.
So, Proposition \ref{prop:B} yields an equivalence of stacks over $B$
$$\MC(\Gc)\simeq  \FILT.$$
More precisely, we realize $\Gc$ as a semi-direct product in two ways
$\Gc=\Hc\ltimes\Nc=\Hc'\ltimes\Nc'$
where
\begin{align*}
\Nc=Rp_*\underline\Hom(\Ec_{23},\Ec_{01}\oplus \Ec_{12}),\quad
\Hc=Rp_*\underline\Hom(\Ec_{12},\Ec_{01}),\\
\Nc'=Rp_*\underline\Hom(\Ec_{12}\oplus \Ec_{23},\Ec_{01}),\quad
\Hc'=Rp_*\underline\Hom(\Ec_{23},\Ec_{12}).
\end{align*}
Then, the proof of Proposition \ref{prop:B} yields the following isomorphism of stacks
\begin{align*}
\MC(\Hc)&=\SES_{012}\times X_{23},\\
 \MC(\Hc')&=X_{01}\times \SES_{123},\\
\MC(\Gc)&=\MC(\tilde\Nc)=\SES_{012}\times_{X_{02}} \SES_{023}=\FILT,\\
\MC(\Gc)&=\MC(\tilde\Nc')=\SES_{123}\times_{X_{13}} \SES_{013}=\FILT.
\end{align*}
In particular, we can identify the diagram
\begin{align*}
\xymatrix{
&\pi^*\Cc^1_{023}&\Tot(\calC^{\leq 0}_{023})\ar[l]_-{s}\\
\SES_{012} \times X_{23} & \Tot(\calC^{\leq 0}_{023}) \ar[l]_-{\pi}\ar@{_{(}->}[u]&\, 
\FILT\ar@{_{(}->}[l]_-i\ar@/^1pc/[ll]^-y  \ar@{_{(}->}[u],
}
\end{align*}
with the diagram
\begin{align*}
\xymatrix{
&\pi^*_{\tilde\Nc}\tilde\Nc^2&\Tot(\tilde\Nc^1)/\!/\tilde N^0\ar[l]_-{s_{\tilde\Nc}}\\
\MC(\Hc) & \Tot(\tilde\Nc^1)/\!/\tilde N^0 \ar[l]_-{\pi_{\tilde\Nc}}\ar@{_{(}->}[u]&\, 
\MC(\Gc).\ar@{_{(}->}[l]_-{i_{\tilde\Nc}}\ar@/^1pc/[ll]^-{q_{\tilde\Nc}}  \ar@{_{(}->}[u],
}
\end{align*}
We deduce that $y_\Cc^!=q^!_{\tilde\Nc}$.
Similarly, we get 
$$q_\Cc^!\times 1=q_\Hc^!,\quad
w_\Cc^!=q^!_{\tilde\Nc'},\quad
1\times q_\Cc^!=q_{\Hc'}^!.$$
So the third  equation in \eqref{3REL} follows from Proposition \ref{prop:REL}.
This finishes the proof of Theorem \ref{thm:ALG}.


\medskip

\subsection{Chow groups and K-theory versions of COHA}
 
Given an f-Artin stack $B$, we denote by $A_\bullet(B)$ its rational Kresch-Chow groups, as in 
\cite{K99}. By $K(B)$ we denote the Grothendieck group of the category of coherent sheaves on $B$. 
The construction in  \S \ref{subsec:ref-pullback}  makes sense as well for $A_\bullet$ and K-theory, 
yielding virtual pullback morphisms
\begin{align*}
&q^!_\Cc:A_\bullet(\Coh(S)\times \Coh(S)) \to A_{\bullet+\vrk(\Cc)}  ( \SES),\\
&q^!_\Cc:K(\Coh(S)\times \Coh(S)) \to K ( \SES),
\end{align*}
 associated with
 the complex $\calC$  in \eqref{eq:PC}.
 Composing them with the pushforward 
$p_*:A_\bullet(\SES)\to A_\bullet(\Coh(S))$  and
$p_*:K(\SES)\to K(\Coh(S))$ by the map $p$ in
\eqref{eq:ind-diagram}, we get an associative ring structure
on $A_\bullet(\Coh(S))$ and on $K(\Coh(S))$. 

\smallskip

 A definition of the K-theoretic COHA of finite length coherent
 sheaves over $S$ was independently proposed along these lines in the recent paper of Zhao \cite{zhao}.
 
\medskip

\vfill\eject
 
 \section{Hecke operators}
\label{sec:Hecke} 

\subsection{Hecke patterns and Hecke diagrams}
We continue to assume that $S$ is a smooth quasi-projective surface over $\CC$. Recall that $\Coh(S)$ is the stack of
coherent sheaves on $S$ with proper support. 

\begin{Defi}
A {\em Hecke pattern} for $S$ is a pair  $(X,Y)$ of substacks in $\Coh(S)$ with the following properties:
\hfill
\begin{itemize}
  \item[$\mathrm{(H1)}$]
$X$ is open and $Y$ is  closed. 
 \item[$\mathrm{(H2)}$] For any short exact sequence
 \be\label{eq:ses-xxy}
0\to \Ec\lra \Gc\lra\Fc\to 0
 \ee
  with $\Gc\in X$ and $\Fc\in Y$, we have $\Ec\in X$.
  \item[$\mathrm{(H3)}$] $Y$ is closed under extensions, i.e., if  in the sequence \eqref {eq:ses-xxy} we have $\Ec, \Fc\in Y$, then $\Gc\in Y$. 
\end{itemize}

 \end{Defi}

 To a Hecke pattern $(X,Y)$ we associate a version of the induction diagram \eqref{eq:ind-diagram} which we call the
 {\em Hecke diagram}
 \be
 X\times Y \buildrel  q \over\lla \SES_{XXY} \buildrel p\over\lra X.
 \ee
Here $\SES_{XXY}$ is the moduli stack of short exact sequences \eqref{eq:ses-xxy} with $\Ec, \Gc\in X$ and $\Fc\in Y$, and
the projections
$q: \SES_{XXY}\to X\times Y ,$ $p: \SES_{XXY}\to Y$
 associate to a sequence  \eqref{eq:ses-xxy} the pair of  sheaves $(\Ec, \Fc)$ and to the sheaf $\Gc$ respectively. 
 We note the following analog of Propositions \ref{eq:ind-diag-proper} and \ref{eq:prop-tau-ses}.
 
 \begin{prop}
   \hfill
\begin{itemize}[leftmargin=8mm]
  \item[$\mathrm{(a)}$]
The morphism $p$ is schematic and proper.

  \item[$\mathrm{(b)}$]
 The morphism $q$ identifies $\SES_{XXY}$ with an open substack in $\Tot(\tau_{\leq 0}\Cc_{XY})$, where
 $\Cc_{XY}$ is the $[0,2]$-perfect complex on $X\times Y$  defined as in \eqref {eq:PC}.
 \end{itemize}
 \end{prop}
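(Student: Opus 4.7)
For part (a), the plan is to imitate the argument of Proposition \ref{eq:ind-diag-proper}. Given a test scheme $T$ and a $T$-point $\Gc$ of $X$, the fiber $\SES_{XXY} \times_X T$ parametrizes quotient sheaves $\Gc \twoheadrightarrow \Fc$ (flat over $T$) with $\Fc \in Y$; by (H2) the kernel automatically lies in $X$, so no further restriction is needed. Since $\Gc$ has proper support, the Quot scheme $Quot_\Gc$ is a scheme, proper over $T$. The condition $\Fc \in Y$ is a closed condition on $Quot_\Gc$ because $Y$ is closed in $\Coh(S)$ by (H1), so it cuts out a closed subscheme of a proper $T$-scheme. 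This shows simultaneously that $p$ is schematic and proper.

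For part (b), the strategy is to apply Proposition \ref{prop:A} with $B = X \times Y$, with $Y_B = X \times Y \times S$ equipped with the projection to $B$, and with $\Ec = p_{13}^* \Uc_X$, $\Fc = p_{23}^* \Uc_Y$ the pullbacks of the universal sheaves. The corresponding complex $Rp_* \underline{\RHom}(\Fc, \Ec)[1]$ is precisely $\Cc_{XY}$, and perfectness follows as in Proposition \ref{eq:prop:C} (using resolutions of the universal family on $Y$ negative enough with respect to the universal family on $X$; the relevant open substacks of finite type form a filtering cover). Applying Proposition \ref{prop:A} yields an equivalence between $\Tot(\tau_{\leq 0}\Cc_{XY})$ and the stack $\SES_{XY}$ of short exact sequences $0 \to \Ec \to \Gc \to \Fc \to 0$ with $(\Ec, \Fc) \in X \times Y$ but with no restriction on the middle term $\Gc \in \Coh(S)$.

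It then remains to identify $\SES_{XXY}$ as the open substack of $\SES_{XY}$ cut out by the condition $\Gc \in X$. This is an open condition on $\SES_{XY}$ because membership in $X$ is open in $\Coh(S)$ by (H1) and the forgetful map $\SES_{XY} \to \Coh(S)$, $(0\to\Ec\to\Gc\to\Fc\to 0) \mapsto \Gc$, is a morphism of stacks. Conversely, conditions (H2) and (H3) guarantee nothing extra: once $\Gc \in X$ and $\Fc \in Y$, the kernel $\Ec$ automatically lies in $X$ by (H2), so our three conditions reduce to the two imposed in $\SES_{XXY}$. The main (mild) obstacle is ensuring that $\Cc_{XY}$ is genuinely $[-1,1]$-perfect on all of $X \times Y$ with a coherent perfect system, which is handled exactly as in the proof of Proposition \ref{eq:prop:C}.
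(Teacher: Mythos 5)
Your proposal is correct and follows essentially the same route as the paper: for (a) you identify the fiber of $p$ over $\Gc$ with a closed subscheme (cut out by the closed condition $\Fc\in Y$, with $\Ec\in X$ automatic by (H2)) of the proper Quot scheme of $\Gc$, and for (b) you identify $\Tot(\tau_{\leq 0}\Cc_{XY})$ via Proposition \ref{prop:A} with the stack of sequences having $\Ec\in X$, $\Fc\in Y$ and arbitrary middle term, inside which $\SES_{XXY}$ is the open preimage of $X$ under the middle-term projection. Your explicit remark that perfectness of $\Cc_{XY}$ (which should read $[-1,1]$-perfect, as in \eqref{eq:PC}) is established as in Proposition \ref{eq:prop:C} only makes the argument slightly more careful than the paper's, which cites Proposition \ref{eq:prop-tau-ses} for this step.
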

 
 \begin{proof} The fiber of $p$ over $\Gc$ consists of subsheaves $\Ec\subset\Gc$ such that $\Ec\in X$ and
 $\Gc/\Ec\in Y$.  Because of the property  (H2) we can say that it consists of $\Ec\subset\Gc$ such that $\Gc/\Ec\in Y$.
 Since $Y$ is closed in $\Coh(S)$, our fiber  is a closed part of the Quot scheme of $\Gc$ hence proper. 
 Parts (a) is proved. To prove (b), note that, 
 similarly to  Proposition   \ref{eq:prop-tau-ses}, the full  $\Tot(\tau_{\leq 0}\Cc_{XY})$ is the stack $\SES_{X?Y}$
 formed by short exact sequences \eqref{eq:ses-xxy} with $\Ec\in X$, $\Fc\in Y$ but $\Gc$ being an arbitrary coherent sheaf.
 Now, $\SES_{XXY}$ in the intersection of $\SES_{X?Y}$ with the preimage of $X\subset\Coh(S)$ under the projection 
 to the middle term. Since $X$ is open in $\Coh(S)$, we see that $\SES_{XXY}$ is open in $\Tot(\tau_{\leq 0}\Cc_{XY})$.\end{proof}

 
 \subsection{The derived Hecke action}\label{sec:heckeaction}
 Let $(X,Y)$ be a Hecke pattern for $S$. Denote   $\Hc_X = H_\bullet^\BM(X)$ and $\Hc_Y = H_\bullet^\BM(Y)$. 
 From the property (H3) we see,  as  in  Theorem \ref{thm:ALG}, that the derived induction diagram \eqref{eq:virdiagram} for $Y$ makes
$\Hc_Y$ into an associative algebra.  Further, 
similarly to \eqref{eq:virdiagram}, we have the diagram of f-Artin stacks which
 we call the {\em derived Hecke diagram}:
 \be
 \label{eq:virdiagram}
\xymatrix{
X \times Y & \Tot(\calC_{XY}^{\leq 0}) \ar[l]_-{\pi}&\, \SES_{XXY}\ar@{_{(}->}[l]_-{i} \ar[r]^-{p} & X
}
 \ee
 Here $i$ identifies $\SES_{XXY}$ with an open subset of the zero locus of a section of  the  vector bundle 
 $\pi^* \Cc^1_{XY}$ and so gives rise to the virtual pullback $i^!$. So as in \S \ref{subsec:HOHA}, 
 we define the map
 \[
 \nu:  \Hc_X\otimes \Hc_Y = H_\bullet^\BM(X)\otimes H^\BM_\bullet(Y) \lra H_{\bullet + 2 \on{vrk} \Cc_{XY}}^\BM(X) = \Hc_X.
 \]
 
 \begin{thm}
 The map $\nu$ makes $\Hc_X$ into a right module over the algebra $\Hc_Y$. 
 \end{thm}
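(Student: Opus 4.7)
The plan is to adapt the associativity proof of $m$ from Section \ref{sec:associativity} to the module setting, replacing the triple $X_{01}\times X_{12}\times X_{23}$ by $X_{01}\times Y_{12}\times Y_{23}$. We must verify
\[
\nu\circ(\nu\otimes\id_{\Hc_Y}) \,=\, \nu\circ(\id_{\Hc_X}\otimes m)\,:\,\Hc_X\otimes \Hc_Y\otimes \Hc_Y\lra \Hc_X.
\]
First I would introduce the filtration stack $\FILT_{XYY}$ classifying three-step filtrations $\Ec_{01}\subset\Ec_{02}\subset\Ec_{03}$ with $\Ec_{03}\in X$ and $\Ec_{12},\Ec_{23}\in Y$. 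Axiom (H2) applied successively to $\Ec_{02}\hookrightarrow\Ec_{03}$ and $\Ec_{01}\hookrightarrow\Ec_{02}$ forces $\Ec_{02},\Ec_{01}\in X$, while axiom (H3) applied to $0\to\Ec_{12}\to\Ec_{13}\to\Ec_{23}\to 0$ forces $\Ec_{13}\in Y$. Thus all intermediate subquotients land automatically in the expected Hecke stacks.

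Parallel to \eqref{diag1}-\eqref{diag2}, $\FILT_{XYY}$ then fits into two Cartesian diagrams: one via the decomposition through $\SES_{XXY,023}$ and $\SES_{XXY,012}\times Y_{23}$ over $X_{02}\times Y_{23}$, and one via decomposition through $\SES_{XXY,013}$ and $X_{01}\times \SES_{YYY,123}$ over $X_{01}\times Y_{13}$. In both, the top horizontal maps remain schematic and proper by the Hecke analog of Proposition \ref{eq:ind-diag-proper}. Applying the base-change lemma from Section \ref{sec:associativity} for virtual pullbacks along proper representable morphisms, each of the two iterated compositions $\nu\circ(\nu\otimes\id)$ and $\nu\circ(\id\otimes m)$ rewrites as a single $p_*\circ ?^!$ on $\FILT_{XYY}$. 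To see that these two virtual pullbacks coincide, I would identify $\FILT_{XYY}$ with an open substack of $\MC(\Gc)$ via Proposition \ref{prop:B}, where $\Gc=\bigoplus_{ij<kl}Rp_*\underline\RHom(\Ec_{kl},\Ec_{ij})$ on $X_{01}\times Y_{12}\times Y_{23}$ is the $[0,2]$-perfect dg-Lie algebra of \eqref{Gc}. The two semi-direct product decompositions $\Gc=\Hc\ltimes\Nc=\Hc'\ltimes\Nc'$ used in Section \ref{sec:associativity} transfer to the present setting without change, and Proposition \ref{prop:REL} then identifies both virtual pullbacks with $q^!_\Gc$, yielding the desired equality.

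The main obstacle, though minor, is the openness bookkeeping: since $X$ is open but not closed in $\Coh(S)$, each relevant $\SES_{XXY,ijk}$ (and $\FILT_{XYY}$) is only an open substack of the corresponding $\Tot(\tau_{\leq 0}\Cc)$ or $\MC(\Gc)$, and one must verify that the Cartesian squares, the base change identities for virtual pullbacks, and the semi-direct product decompositions of $\Gc$ all restrict compatibly to these opens. Once this geometric compatibility is checked, the argument transcribes verbatim from the proof of Theorem \ref{thm:ALG}.
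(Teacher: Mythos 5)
Your proposal is correct and follows essentially the same route as the paper, whose proof is exactly the one you reconstruct: imitate the associativity argument for Theorem \ref{thm:ALG} using the flag stack $\FILT_{XYY}$, its two fibered decompositions through the relevant $\SES$-stacks, the base-change lemma for virtual pullbacks, and Propositions \ref{prop:B} and \ref{prop:REL} for the two semidirect-product descriptions of $\Gc$. The only (harmless) difference is normalization of the flag stack: the paper imposes $\Ec_{01}\in X$, $\Ec_{12},\Ec_{23}\in Y$, while you impose $\Ec_{03}\in X$, which by (H2)--(H3) cuts out precisely the open locus where both iterated products live, so your openness bookkeeping is consistent with (and makes explicit) what the paper leaves implicit.
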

 
 \begin{proof}  Completely similar to that of Theorem \ref{thm:ALG}. It is  based on considering $\FILT_{XYY}$, the stack of flags of coherent sheaves
$\Ec_{01}\subset \Ec_{02}\subset \Ec_{03}$ with
$\Ec_{01} \in X$ and $\Ec_{02}/\Ec_{01}$, $\Ec_{03}/\Ec_{02}\in Y.$
\end{proof}
 
 
 \subsection{Examples of Hecke patterns} 
The general phenomenon is that sheaves with support of lower dimension act, by Hecke operators,
on sheaves with support of higher dimension. We consider several refinements of the condition
on dimension of support. 
 
  \begin{Defi}
  Let   $0\leq m\leq 2$. 
   \hfill
\begin{itemize}[leftmargin=8mm]
  \item[$\mathrm{(a)}$]
  A coherent sheaf $\Fc$ on $S$ with proper support is called {\em $m$-dimensional},
  if $\dim\,  \on{Supp}(\Fc) \leq m$. We denote by $\Coh_{\leq m}= \Coh_{\leq m}(S)
  \subset\Coh$ the substack formed by $m$-dimensional sheaves. 
  
  \item[$\mathrm{(b)}$] We say that $\Fc$  is 
   {\em purely $m$-dimensional}, if   any non-zero $\Oc_S$-submodule $\Fc'\subset\Fc$ is
   $m$-dimensional.
  
  \item[$\mathrm{(c)}$] We further say that $\Fc$ is 
   {\em homologically $m$-dimensional}, if it is $m$-dimensional and for any $\CC$-point $x\in S$ we have
  $ \Ext^j_{\Oc_S}(\Oc_x, \Fc) =0$ for $0\leq j< m$.  We denote by $\Coh_m = \Coh_m(S) \subset\Coh$ the substack formed by 
  $m$-dimensional sheaves. 
  \end{itemize}
   \end{Defi}
   
   \begin{prop}
    \hfill
\begin{itemize}[leftmargin=8mm]
  \item[$\mathrm{(a)}$]
   For $m=0$, the conditions ``$\,0$-dimensional'', ``purely $0$-dimensional''  and ``homologically $0$-dimensional'' sheaves are the same.
   
   \item[$\mathrm{(b)}$] For $m=1$,  the conditions ``purely $1$-dimensional"  and     ``homologically $1$-dimensional'' 
   are the same. 
   
   \item[$\mathrm{(c)}$]For $m=2$, the condition ``purely 2-dimensional'' is  the same as ``torsion-free''   while ``homologically 2-dimensional''
 is  the same as  ``vector bundle''. 
 \end{itemize}
   \end{prop}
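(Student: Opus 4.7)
The plan is to verify all three parts by reducing to local statements at closed points of $S$ and then applying standard commutative algebra on the regular local rings $\Oc_{S,x}$, which have Krull dimension equal to $2$ since $S$ is a smooth surface.

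For (a), I would simply observe that the Ext vanishing condition defining homologically $0$-dimensional is vacuous (the range $0\leq j<0$ is empty), and that any nonzero subsheaf of a $0$-dimensional sheaf has nonempty, necessarily $0$-dimensional, support. So both refinements collapse to plain $0$-dimensionality. This step is essentially formal and will not be a serious obstacle.

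For (b), the key identification is that $\Hom_{\Oc_S}(\Oc_x,\Fc)\neq 0$ if and only if $\Fc$ contains a nonzero subsheaf supported set-theoretically at $x$, which, being a finite length module, is necessarily $0$-dimensional. Conversely, any nonzero $0$-dimensional subsheaf has a simple socle, which provides a nonzero map from some $\Oc_x$. Thus for a $1$-dimensional $\Fc$, the condition ``no nonzero $0$-dimensional subsheaf'' is equivalent to ``$\Hom(\Oc_x,\Fc)=0$ for all closed $x$'', which matches the two candidate definitions. I would carry this out in one short paragraph.

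For (c), I would first note that on the smooth irreducible surface $S$, a coherent sheaf $\Fc$ is torsion-free precisely when every nonzero subsheaf has generic rank at least $1$ on each component of $\on{Supp}(\Fc)$, which is the same as every nonzero subsheaf having $2$-dimensional support, i.e., being purely $2$-dimensional. The substantial step is identifying the homologically $2$-dimensional condition with local freeness: working at a closed point $x$, the condition $\Ext^j_{\Oc_{S,x}}(k(x),\Fc_x)=0$ for $j=0,1$ means $\on{depth}_{\frakm_x}\Fc_x\geq 2$, and by Auslander--Buchsbaum over the regular local ring $\Oc_{S,x}$ of dimension $2$, this is equivalent to $\on{pd}_{\Oc_{S,x}}\Fc_x=0$, i.e., $\Fc_x$ is free. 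Conversely, for $\Fc$ a vector bundle the Koszul resolution of $\Oc_x$ (of length $2$) shows the relevant Ext groups vanish. Thus homologically $2$-dimensional is equivalent to being locally free, i.e., a vector bundle. The main (mild) obstacle is to correctly invoke Auslander--Buchsbaum and check that the local freeness established pointwise indeed glues to a vector bundle structure on $S$; this however follows immediately from the fact that the property of being locally free is checkable on stalks for coherent sheaves on a Noetherian scheme.
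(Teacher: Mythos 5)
Your proposal is correct and follows essentially the same route as the paper: the paper dismisses (a), (b) and the first claim of (c) as obvious, and for the last claim identifies ``homologically $2$-dimensional'' with the maximal Cohen--Macaulay condition and invokes that MCM sheaves on a smooth variety are locally free, which is exactly the depth computation via $\Ext^j(k(x),\Fc_x)$ and Auslander--Buchsbaum that you spell out. Your write-up just makes explicit the steps the paper leaves implicit.
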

   
  \begin{proof} Parts (a) and (b) are obvious, as is the first statement in (c).  Let us show the second statement.
   Notice that condition of being homologically $2$-dimensional, i.e., $\Ext^j(\Oc_x, \Fc)=0$ for $j<2$ and all $x$,
   is nothing but the maximal Cohen-Macaulay condition. Since $S$ is assumed to be smooth, any 
   maximal Cohen-Macaulay sheaf
   is locally free. 
   \end{proof}

\smallskip
   
   We denote by $\Coh_m(S)$ the moduli stack of homologically 2-dimensional sheaves with proper support, and
   by
   $\Coh_\tf(S)$ denote the moduli stack of torsion-free (i.e., purely 2-dimensional) sheaves.
   
   \begin{prop}\label{prop:hecke-pairs}
   The following pairs of substacks are Hecke patterns: $(\Coh_1(S), \Coh_0(S)$, $(\Coh_2(S), \Coh_1(S))$, 
    $(\Coh_\tf(S), \Coh_0(S))$ and $(\Coh_\tf(S), \Coh_1(S))$. 
   \end{prop}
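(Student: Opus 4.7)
The plan is to verify conditions (H1), (H2), (H3) of the Hecke pattern definition for each of the four pairs. For (H1), I will use standard facts: torsion-freeness and local freeness are open conditions in flat families of coherent sheaves, so $\Coh_\tf(S)$ and $\Coh_2(S)$ are open in $\Coh(S)$; upper semicontinuity of support dimension shows that $\Coh_{\leq 0}(S) = \Coh_0(S)$ and $\Coh_{\leq 1}(S)$ are closed; and absence of $0$-dimensional torsion is an open condition inside $\Coh_{\leq 1}(S)$, making $\Coh_1(S)$ open there.

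Condition (H2) is the substantive part. In three of the four cases the conclusion is immediate from preservation under subsheaves: a subsheaf of a torsion-free sheaf is torsion-free (cases $(\Coh_\tf,\Coh_0)$ and $(\Coh_\tf,\Coh_1)$), and a subsheaf of a purely $1$-dimensional sheaf is purely $1$-dimensional (case $(\Coh_1,\Coh_0)$). The interesting case is $(\Coh_2,\Coh_1)$, where $\Gc$ is a vector bundle and $\Fc=\Gc/\Ec$ is purely $1$-dimensional. I will apply the depth lemma to the short exact sequence: at every point $x\in S$,
\[
\on{depth}_x(\Ec) \,\geq\, \min\bigl(\on{depth}_x(\Gc),\, \on{depth}_x(\Fc)+1\bigr) \,=\, 2,
\]
since $\Gc$ has depth $2$ everywhere, while $\Fc$, being purely $1$-dimensional, has depth $\geq 1$ at every point of its (at most $1$-dimensional) support and depth $2$ elsewhere. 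Hence $\Ec$ is maximal Cohen-Macaulay on the smooth surface $S$, which forces $\Ec$ to be locally free.

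For (H3), the case $Y=\Coh_0$ is immediate since the support of an extension is the union of the supports of its ends. For $Y=\Coh_1$, given $0\to\Ec\to\Gc\to\Fc\to 0$ with $\Ec,\Fc$ purely $1$-dimensional, any nonzero $0$-dimensional subsheaf $\Kc\subset\Gc$ would have $0$-dimensional image in $\Fc$, hence zero image by purity of $\Fc$; so $\Kc\subset\Ec$, and then $\Kc=0$ by purity of $\Ec$. Hence $\Gc$ is purely $1$-dimensional.

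The only genuine obstacle is the depth/Cohen-Macaulay step in the $(\Coh_2,\Coh_1)$ case, which depends crucially on the smoothness of $S$ (ensuring that maximal Cohen-Macaulay sheaves are locally free). All other verifications are essentially formal consequences of the homological conditions defining purity, torsion-freeness, and support dimension.
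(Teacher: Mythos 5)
Your treatment of (H2) and (H3) is correct and is essentially the paper's argument in different clothing. The paper proves the key step by applying the long exact sequence of $\Ext^\bullet(\Oc_x,-)$ to the defining vanishing of homologically $m$-dimensional sheaves (treating $(\Coh_1,\Coh_0)$ and $(\Coh_2,\Coh_1)$ uniformly), which is exactly your depth computation, since the depth of $\Ec$ at $x$ is detected by the vanishing of $\Ext^j(\Oc_x,\Ec)$; the concluding step that a maximal Cohen--Macaulay sheaf on a smooth surface is locally free is the same in both. Your explicit (H3) argument for $Y=\Coh_1(S)$ supplies a point the paper only asserts.

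The gap is in (H1). For the pairs $(\Coh_2(S),\Coh_1(S))$ and $(\Coh_\tf(S),\Coh_1(S))$ the definition requires $Y=\Coh_1(S)$ to be \emph{closed} in $\Coh(S)$ --- this is precisely what is used later to make the fibres of $p$ closed subschemes of Quot schemes, hence proper --- and your proof never addresses it: you only establish that $\Coh_1(S)$ is open inside $\Coh_{\leq 1}(S)$. The paper's proof states explicitly that $\Coh_1(S)$ and $\Coh_0(S)$ are both open \emph{and closed} in $\Coh(S)$. Moreover this is not a routine semicontinuity fact you could wave through: purity is preserved under generization but not, in general, under specialization (for a complete curve $C\subset S$ and $p\in C$, the flat family of extensions of $\Oc_p$ by $\Oc_C(-p)$ with class $t\cdot e$ degenerates the pure sheaf $\Oc_C$ to $\Oc_C(-p)\oplus\Oc_p$), so the missing half of (H1) needs its own justification rather than an appeal to openness statements. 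A smaller point in the same direction: for the pair $(\Coh_1(S),\Coh_0(S))$ you need $\Coh_1(S)$ open in $\Coh(S)$ itself; openness inside the closed substack $\Coh_{\leq 1}(S)$ only suffices after also noting that $\Coh_{\leq 1}(S)$ is open (the dimension of support, i.e.\ the degree of the Hilbert polynomial, is locally constant in flat families with proper support), which your ``upper semicontinuity, hence closed'' phrasing does not deliver as written.
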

   
 To prove the proposition, we note that $\Coh_1(S)$ and $\Coh_0(S)$ are both open and closed in $\Coh(S)$. 
   Further, $\Coh_2(S)$, the stack of vector bundles, is open, as is $\Coh_\tf(S)$. Further, all these stacks are closed under extensions. 
   So it remains to prove the following.
   
   \begin{lem}
   Suppose we have a short exact sequence as in \eqref{eq:ses-xxy}. 
    \hfill
\begin{itemize}[leftmargin=8mm]
\item[$\mathrm{(a)}$]
 If $\Gc\in\Coh_m(S)$ and $\Fc\in\Coh_{m-1}(S)$, then $\Ec\in\Coh_m(S)$. 
 \item[$\mathrm{(b)}$]  
 if $\Gc\in\Coh_\tf(S)$, then $\Ec\in\Coh_\tf(S)$.
 \end{itemize}
   \end{lem}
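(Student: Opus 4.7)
The plan is to deduce both statements from the long exact $\Ext$-sequence attached to \eqref{eq:ses-xxy}, together with the elementary observation that support only shrinks under taking subsheaves.

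For part (b), the argument is immediate: $\Ec$ embeds in $\Gc$, so any torsion subsheaf of $\Ec$ is a torsion subsheaf of $\Gc$, hence vanishes. Therefore $\Ec$ is torsion-free, i.e., purely $2$-dimensional, which puts $\Ec \in \Coh_\tf(S)$.

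For part (a), I proceed in two steps. \emph{Dimension of support.} Since $\Ec\subset\Gc$, we have $\on{Supp}(\Ec)\subset\on{Supp}(\Gc)$, so $\dim\,\on{Supp}(\Ec)\leq m$. \emph{Vanishing of low $\Ext$'s.} For any closed point $x\in S$, apply $\Hom_{\Oc_S}(\Oc_x,-)$ to \eqref{eq:ses-xxy}; this gives a long exact sequence
\[
\cdots \to \Ext^{j-1}(\Oc_x,\Fc) \lra \Ext^j(\Oc_x,\Ec) \lra \Ext^j(\Oc_x,\Gc) \lra \Ext^j(\Oc_x,\Fc)\to\cdots,
\]
with the convention $\Ext^{-1}=0$. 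I want $\Ext^j(\Oc_x,\Ec)=0$ for $0\leq j<m$. The third term vanishes because $\Gc\in\Coh_m(S)$ is homologically $m$-dimensional; the first term vanishes because $j-1<m-1$ and $\Fc\in\Coh_{m-1}(S)$ is homologically $(m-1)$-dimensional. Hence the middle term vanishes, and the two steps together give $\Ec\in\Coh_m(S)$.

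There is no real obstacle here: both (a) and (b) reduce to a mechanical check once one recognizes that the shift in the homological dimension index in the definition of $\Coh_m(S)$ is exactly what is needed to make the $\Ext$-boundary terms cancel in the long exact sequence. The only point to verify is the edge case $j=0$, handled by the convention $\Ext^{-1}(\Oc_x,\Fc)=0$.
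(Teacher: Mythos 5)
Your proof is correct and follows exactly the paper's own argument: support shrinks under passing to subsheaves, the vanishing of $\Ext^j(\Oc_x,\Ec)$ for $j<m$ comes from the long exact sequence of $\Ext^\bullet(\Oc_x,-)$, and (b) is the observation that a subsheaf of a torsion-free sheaf is torsion-free. You merely spell out the boundary-term bookkeeping (including the $j=0$ edge case) that the paper leaves implicit.
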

   
   \begin{proof} (a) Since $\Ec\subset\Gc$, it is clear that $\dim\on{Supp}(\Ec)\leq m$. 
The vanishing of $\Ext^j(\Oc_x, \Ec')$
   for $j<m$ follows at once from the long exact sequence of $\Ext^\bullet(\Oc_x, -)$  induced by the short exact
   sequence above.  Part (b) is obvious: any subsheaf of a torsion free sheaf is torsion free. 
   \end{proof}
 This ends the proof of  Proposition \ref{prop:hecke-pairs}. 
 
 \begin{rem}
  The non-trivial part of the  proposition says that   homologically (or, what is the same, purely) 1-dimensional sheaves
 govern Hecke modifications of vector bundles on a surface. 
 \end{rem}


\subsection{Stable sheaves and Hilbert schemes}

Let $S$ be a smooth connected projective surface and $m=0,1$.
We can apply the construction  in \S \ref{subsec:HOHA} to the 
substack of $m$-dimensional sheaves
$X=\Coh_{\leqslant m}(S)$ of $\Coh(S)$.
We have the derived induction diagram \eqref{eq:virdiagram}, 
hence the formula \eqref{eq:virm} yields an associative multiplication on
$H_\bullet^\BM(\Coh_{\leqslant m}(S))$.

\smallskip

Now, let $P(\Ec):m\mapsto\chi(\Ec(m))$ be the Hilbert polynomial of a coherent sheaf $\Ec$ on $S$,
and $p(\Ec)=P(\Ec)/$(leading coefficient) be the reduced Hilbert polynomial.
The sheaf $\Ec$ is \emph{stable} if it is pure and $p(\Fc) < p(\Ec)$ for any proper 
subsheaf $\Fc\subset\Ec$.
Let $M_S(r,d,n)$ be the moduli space of rank $r$ semi-stable sheaves with first Chern number $d$
and second Chern number $n$.
See \cite{HL} for a general background on these moduli spaces.

\smallskip

\begin{theorem}\label{thm:action}
\hfill
\begin{itemize}[leftmargin=8mm]
\item[$\mathrm{(a)}$]
The direct image by the closed embeddings
$\Coh_{0}(S)\subset\Coh_{\leqslant 1}(S)\subset\Coh(S)$ gives algebra homomorphisms
$H_\bullet^\BM(\Coh_{0}(S))\to H_\bullet^\BM(\Coh_{\leqslant 1}(S))\to H_\bullet^\BM(\Coh(S)).$
\item[$\mathrm{(b)}$]
The algebra $H_\bullet^\BM(\Coh_{\leqslant 1}(S))^\op$ acts on 
$\bigoplus_{d,n}H_\bullet^\BM(M_S(1,d,n))$.
\item[$\mathrm{(c)}$]
The algebra $H_\bullet^\BM(\Coh_{0}(S))^\op$ acts on 
$\bigoplus_{n}H_\bullet^\BM(M_S(1,d,n))$ for each $d$.
\end{itemize}
\end{theorem}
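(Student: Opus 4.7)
All three parts fit into the substack and Hecke-pattern framework of \S\ref{subsec:HOHA}--\S\ref{sec:heckeaction}, and their proofs amount to combining the derived Hall product construction with the base-change property of virtual pullback along proper representable maps established as a lemma in \S\ref{sec:associativity}.

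For part (a), the substacks $\Coh_0(S)\subset\Coh_{\leqslant 1}(S)\subset\Coh(S)$ are closed embeddings, and each source is closed under extensions, so each inherits an associative Hall algebra structure from the construction of \S\ref{subsec:HOHA}. Writing $i:\Coh_0(S)\hookrightarrow\Coh_{\leqslant 1}(S)$ and $k:\SES_0\hookrightarrow\SES_{\leqslant 1}$ for the induced closed embeddings of the stacks of short exact sequences, the key observation is that both squares
\[
\xymatrix @R=1.5pc{
\SES_0 \ar[r]^-k \ar[d]_-{q_0} & \SES_{\leqslant 1} \ar[d]^-{q_1} \\
\Coh_0\!\times\!\Coh_0 \ar[r]^-{i\times i} & \Coh_{\leqslant 1}\!\times\!\Coh_{\leqslant 1}
}
\qquad
\xymatrix @R=1.5pc{
\SES_0 \ar[r]^-k \ar[d]_-{p_0} & \SES_{\leqslant 1} \ar[d]^-{p_1} \\
\Coh_0 \ar[r]^-i & \Coh_{\leqslant 1}
}
\]
are Cartesian (both express that the middle term of a short exact sequence lies in $\Coh_0$ iff its end-terms do), and that the complex $\Cc_0$ governing virtual pullback on $\Coh_0$ agrees with $(i\times i)^*\Cc_{\leqslant 1}$. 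The base-change lemma of \S\ref{sec:associativity} applied to the first square gives $k_*\circ q_0^!=q_1^!\circ(i\times i)_*$, while proper base change for the second (all four maps being proper) gives $i_*\circ p_{0*}=p_{1*}\circ k_*$. Combining these identities yields $i_*(\alpha\cdot\beta)=(i_*\alpha)\cdot(i_*\beta)$, and the inclusion $\Coh_{\leqslant 1}(S)\hookrightarrow\Coh(S)$ is treated identically.

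For parts (b) and (c), I would take $X$ to be the open substack of $\Coh(S)$ consisting of rank-one torsion-free sheaves and $Y=\Coh_{\leqslant 1}(S)$ (respectively $Y=\Coh_0(S)$). The pair $(X,Y)$ is a Hecke pattern in the sense of \S\ref{sec:Hecke}: (H1) is immediate since $\Coh_\tf$ is open and $\Coh_{\leqslant 1}$, $\Coh_0$ are closed; (H2) holds because a subsheaf of a rank-one torsion-free sheaf with quotient $\Gc/\Ec$ of rank zero is again torsion-free of rank one; (H3) holds since $\Coh_{\leqslant 1}$ and $\Coh_0$ are closed under extensions. The construction of \S\ref{sec:heckeaction} then supplies a right $H_\bullet^\BM(Y)$-module structure on $H_\bullet^\BM(X)$. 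Since a universal sheaf exists in rank one, $X$ decomposes as a trivial $\bbG_m$-gerbe over $\bigsqcup_{d,n}M_S(1,d,n)$, and the Hecke correspondence $\SES_{XXY}$ is $\bbG_m$-equivariant under the scaling of sheaves; this allows the action to descend to the coarse moduli, giving the required module structure on $\bigoplus_{d,n}H_\bullet^\BM(M_S(1,d,n))$. For part (c), since $\Gc/\Ec\in\Coh_0$ has vanishing first Chern class, the first Chern number is preserved by the Hecke correspondence, so the action of $H_\bullet^\BM(\Coh_0(S))^\op$ preserves each direct summand indexed by $d$.

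The main obstacle will be the descent step from the gerbe $X$ to the coarse moduli spaces $M_S(1,d,n)$: while the $\bbG_m$-equivariance of the Hecke correspondence makes it intuitively clear that the action descends, verifying that this yields a well-defined action on the Borel-Moore homology of the coarse moduli (and not merely on its $\bbG_m$-equivariant version) requires carefully comparing the two via the Leray spectral sequence of the gerbe and tracking compatibility of the convolution kernel. The remaining steps are essentially formal consequences of the base-change properties already in place.
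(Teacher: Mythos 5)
Your proposal is correct and is essentially the paper's own route. For (a) the paper says only that it follows from base change, and your expansion (the two Cartesian squares, the base-change lemma for virtual pullbacks from \S\ref{sec:associativity}, and the identification $\Cc_0=(i\times i)^*\Cc_{\leqslant 1}$) is exactly the intended argument; note that your second identity $i_*\circ p_{0*}=p_{1*}\circ k_*$ is simply functoriality of proper pushforward, no base change needed. For (b) and (c) the paper performs the same construction with the letters exchanged: it takes the algebra side to be $X=\Coh_{\leqslant 1}(S)$ (resp.\ $\Coh_0(S)$) and the module side to be the open substack $Y\subset\Coh(S)$ of rank-one torsion-free (equivalently, stable) sheaves, which is stable under subobjects; the action is built from $\bar\pi^*$, the refined pullback $\bar s_\Cc^!$, an open restriction along $p^{-1}(Y)\subset\SES|_{Y\times X}$, and the proper pushforward $\bar p_*$, and the module axiom is verified by rerunning the associativity diagrams --- formally the mirror image of your appeal to the Hecke-pattern theorem of \S\ref{sec:Hecke}. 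The one point of divergence is the step you flag as the main obstacle: the paper does no gerbe descent at all. Its proof ends by noting that a rank-one sheaf is stable iff torsion-free, thereby identifying $H_\bullet^\BM$ of the stack of rank-one torsion-free sheaves with $\bigoplus_{d,n}H_\bullet^\BM(M_S(1,d,n))$ without discussing the $\CC^\times$-gerbe over the coarse moduli spaces; the Leray/descent comparison you anticipate is therefore not supplied by the paper, and on this point your proposal is more careful than the paper's own argument rather than missing something contained in it.
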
  

\begin{proof}
Part (a) follows from base change.
Parts (b), (c) are proved as in \S \ref{sec:heckeaction}.
Let us give more details on (b),  part (c) is proved in a similar way.

\smallskip

First, let us consider the following more general setting : let $X=\Coh(S)$ and $Y\subset\Coh(S)$
the substack consisting of torsion free sheaves.
Note that the substack $Y\subset X$ is both open and stable by subobjects.
We claim that the algebra $H_\bullet^\BM(X)^\op$ acts on $H^\BM_\bullet(Y)$. 
To prove this, we consider the restrictions of $\Tot(\calC^{\leq 0})$ and $\SES$ to the stack $Y\times X$ given by
$$\Tot(\calC^{\leq 0})|_{Y\times X}=\pi^{-1}(Y\times X), \quad\SES|_{Y\times X}=q^{-1}(Y\times X).$$
Then, the derived induction diagram \eqref{eq:virdiagram} gives rise to the following commutative diagram
\begin{equation*}
\xymatrix{
\Coh(S) \times \Coh(S) & \Tot(\calC^{\leq 0}) \ar[l]_-{\pi}&\, \SES\ar@{_{(}->}[l]_-{i} \ar@{=}[r]&\SES\ar[r]^-{p} & \Coh(S)\\
Y \times X \ar@{_{(}->}[u]& \Tot(\calC^{\leq 0})|_{Y\times X} \ar[l]_-{\bar\pi}\ar@{_{(}->}[u]&\, \SES|_{Y\times X} 
\ar@{_{(}->}[l]_-{\bar\imath}\ar@{_{(}->}[u]& \ar@{_{(}->}[l]_-{j}\overline{SES}\ar[r]^-{\bar p} \ar@{_{(}->}[u]& 
Y\ar@{_{(}->}[u]
}
\end{equation*}
where $\overline{SES}=p^{-1}(Y)$ and $j$ is the obvious open immersion of stacks
$j:\overline{SES}\subset \SES|_{Y\times X}.$
Let $\bar s_\Cc$ be the restriction of the section 
$s_\Cc$ of $\pi^*\Cc^1$ to $Y\times X$.
We define a map
\begin{align}\label{eq:virmbis}
\bar m~:H_\bullet^\BM(Y) \otimes H_\bullet^\BM(X) \lra H_{\bullet+2\vrk(\Cc)}^\BM(Y)
\end{align}
as the composition of the exterior product and the composed map 
$\bar p_*\circ \bar j^*\circ \bar s_\Cc^!\circ\bar\pi^*$.
We claim that the map $\bar m$ above defines an action of 
the algebra $H_\bullet^\BM(X)^\op$ on $H_\bullet^\BM(Y)$.
Then, the diagrams \eqref{diag1}, \eqref{diag2} yield the following fiber diagrams of stacks
\begin{align}\label{diag3}
\begin{split}
\xymatrix{
\overline{FILT}\ar[r]^-{x}\ar[d]_-{y}&\overline{\SES}\ar[d]_{q}\ar[r]^-p&Y\\
\overline{\SES}\times X\ar[r]^-{p\times 1}\ar[d]_-{q\times 1}&Y\times X\\
Y\times X\times X
}
\end{split}
\end{align}
and
\begin{align}\label{diag4}
\begin{split}
\xymatrix{
\overline{FILT}\ar[r]^-{v}\ar[d]_-{w}&\overline{\SES}\ar[d]_{q}\ar[r]^-p&Y\\
Y\times\SES\ar[r]^-{1\times p}\ar[d]_-{1\times q}&Y\times X\\
Y\times X\times X,
}
\end{split}
\end{align}
where $\overline{FILT}\subset FILT$ is the open substack
classifying flags of coherent sheaves
$\Ec_{01}\subset \Ec_{02}\subset \Ec_{03}$ over $S$ such that 
$\Ec_{01}, \Ec_{02}, \Ec_{03}\in Y$. 
Then, the claim is proved as in \S\ref{sec:associativity}, replacing the diagrams
\eqref{diag1}, \eqref{diag2} by \eqref{diag3}, \eqref{diag4}.

\smallskip

Now,
a rank 1 coherent sheaf is stable if and only if it is torsion free. 
Thus, setting $X=\Coh_{\leqslant 1}(S)$ and $Y\subset\Coh(S)$ to be the susbtack
consisting of rank 1 torsion free sheaves, the argument above proves the part (b).

\end{proof}

\smallskip

\begin{remark}
\hfill
\begin{itemize}[leftmargin=8mm]
\item[$\mathrm{(a)}$]
The moduli space
$M_S(1,\Oc_S,n)$ of rank one sheaves with trivial determinant and second Chern number $n$
is canonically isomorphic to the Hilbert scheme $\text{Hilb}^n(S)$.
If $S$ is a $K3$ surface, then $\text{Hilb}^n(S)$ is further isomorphic to $M_S(1,0,n)$.
\item[$\mathrm{(b)}$]
The rings  $A_\bullet(\Coh_{\leqslant 1}(S))^\op$ 
and $K(\Coh_{\leqslant 1}(S))^\op$ act on 
$$\bigoplus_{d,n}A_\bullet(M_S(1,d,n)),\quad \bigoplus_{d,n}K(M_S(1,d,n))$$
respectively, as in Theorem \ref{thm:action}.
The proofs are analogous to the proof in Borel-Moore homology.
\end{itemize}
\end{remark}

\medskip


\section{The flat COHA}\label{sec:local-Hec}

 \subsection{$R(\AAA^2)$ and  commuting varieties} 
In this section we assume $S=\AAA^2$ and denote
\[
R(\AAA^2) \,=\, H_\bullet^\BM(\Coh_0(\AAA^2))
\]
the  COHA of $0$-dimensional coherent sheaves on $\AAA^2$.  We note that
\[
\Coh_0(\AAA^2) \,=\, \bigsqcup_{n\geq 0} \Coh_0^{(n)}(\AAA^2),
\]
where $\Coh_0^n(\AAA^2)$ is the stack of $0$-dimensional sheaves $\Fc$ such that the
{\em length of $\Fc$}, i.e., 
$\dim \, H^0(\Fc)$, is equal to $n$. 
We further recall that 
\[
\Coh_0^{(n)}(\AAA^2) \,\simeq \, C_n/\!/ GL_n,
\]
where  $C_n$ is  the $n\times n$ {\em commuting variety} 
\[
C_n\,=\,\bigl\{ (A,B)\in \gl_n(\CC)\times\gl_n(\CC)\, ;\, [A,B]=0\bigr\}, 
\]
  acted upon by $GL_n$ (simultaneous conjugation). Indeed, a  $0$-dimensional coherent sheaf $\Fc$ on
  $\AAA^2$ of length $n$ is the same as a $\CC[x,y]$-module $H^0(\Fc)$ which has dimension $n$ over $\CC$,
  i.e., can be represented by the space $\CC^n$ with two commuting operators $A,B$, the actions of $x$ and $y$. 
  We recall.

\begin{prop}\label{prop:NCn-irr}
$C_n$ is an irreducible variety of dimension $n^2+n$.
Therefore $\Coh_0^{(n)}(\AAA^2)$  is an irreducible stack of
dimension $n$.  \qed
\end{prop}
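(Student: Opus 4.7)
The plan is to deduce both assertions from the (classical) statement on the commuting variety. Since $\Coh_0^{(n)}(\AAA^2) \simeq C_n /\!/ GL_n$ and $\dim GL_n = n^2$, an irreducibility and dimension statement on $C_n$ immediately yields the second assertion: if $C_n$ is irreducible of dimension $n^2+n$, then the quotient stack is irreducible of dimension $(n^2+n) - n^2 = n$. So the work is in proving the statement on $C_n$.

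The approach is to identify a dense irreducible open subvariety of the expected dimension. Set
\[
U_n \,=\, \bigl\{(A,B)\in C_n \,;\, A \text{ is regular semisimple}\bigr\} \,\subset\, C_n.
\]
The first projection $p_1 : U_n \to \gl_n^{\on{rss}}$ onto the regular semisimple locus is a Zariski-locally trivial affine bundle with fibers $Z(A) = \bbC[A] \simeq \bbA^n$, because the centralizer of a regular semisimple matrix is precisely its $n$-dimensional polynomial algebra. Since $\gl_n^{\on{rss}} \subset \gl_n$ is open, dense, and irreducible of dimension $n^2$, the total space $U_n$ is irreducible of dimension $n^2+n$. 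It then remains to show the closure identity $\overline{U_n} = C_n$.

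This density is the content of the classical Motzkin--Taussky theorem (with alternative proofs by Gerstenhaber and others). The plan is to construct, for any $(A_0,B_0)\in C_n$, an algebraic one-parameter family $(A(t),B(t))\in C_n$ with $(A(0),B(0))=(A_0,B_0)$ and $(A(t),B(t))\in U_n$ for $t\neq 0$ generic. One route is direct: since $A_0$ and $B_0$ commute, they can be simultaneously upper-triangularized, reducing to an explicit perturbation problem in the variety of commuting pairs of upper triangular matrices with prescribed diagonals. A more conceptual route passes through the Hilbert scheme $\Hilb^n(\AAA^2)$: encoding $(A_0,B_0)$ as a $\bbC[x,y]$-module structure on $\bbC^n$ and smoothing this module (via Fogarty's theorem that $\Hilb^n(\AAA^2)$ is irreducible, and suitable framing) to one supported on $n$ distinct reduced points of $\AAA^2$, which corresponds to a pair in $U_n$. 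Granted density, $C_n = \overline{U_n}$ is irreducible of the claimed dimension.

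The main obstacle is this density step: it is not a formal consequence of fiber-dimension bounds, since a priori $C_n$ could acquire spurious irreducible components of strictly smaller dimension. All standard proofs bypass this by an explicit deformation argument, and I would rely on one of the two routes sketched above.
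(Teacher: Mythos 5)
The paper does not actually prove this proposition: it is recalled without proof as a classical fact (irreducibility of the commuting variety is the Motzkin--Taussky theorem, with later proofs by Gerstenhaber and others), and the stack assertion follows at once from $\Coh_0^{(n)}(\AAA^2)\simeq C_n/\!/GL_n$ and $\dim GL_n=n^2$, exactly as you say. Your outline is the standard proof of that classical fact: the locus $U_n$ where $A$ is regular semisimple fibres over $\gl_n^{\mathrm{rss}}$ with fibre the centralizer $\CC[A]\simeq\AAA^n$, so $U_n$ is irreducible of dimension $n^2+n$, and everything reduces to the density $\overline{U_n}=C_n$, which is the Motzkin--Taussky deformation statement. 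So your argument is correct and fully compatible with what the paper relies on; it just makes explicit what the paper leaves as a citation.

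One caveat on your two suggested routes to the density step. The direct route (simultaneous triangularization plus an explicit commuting perturbation making one matrix regular semisimple) is essentially the classical argument and is fine to invoke or cite. The Hilbert-scheme route is weaker than you suggest: Fogarty's theorem concerns $\mathrm{Hilb}^n(\AAA^2)$, i.e.\ ideals, equivalently \emph{cyclic} $\CC[x,y]$-modules, so it controls only the open locus of commuting pairs admitting a cyclic vector. Showing that an arbitrary (non-cyclic) $n$-dimensional module is a limit of cyclic ones, hence that $C_n$ has no spurious components consisting of non-cyclic pairs, is precisely the same kind of deformation problem you were trying to avoid, and it is not a formal consequence of irreducibility of the Hilbert scheme. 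So rest the density step on the first route (or simply cite Motzkin--Taussky/Gerstenhaber), and the proof is complete.
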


Accordingly, we have a direct sum decomposition
\[
R(\AAA^2) \,=\,\bigoplus_{n\geq 0} R^{n}(\AAA^2), \quad R^{n}(\AAA^2) \,=\, H_\bullet^\BM(\Coh_0^{(n)}(\AAA^2))\,=\,
H_\bullet^{\BM}(C_n/\!/ GL_n),
\]
where on the right we have the equivariant Borel-Moore homology of the topological space $C_n$. 
The algebra $R(\AAA^2)$ 
  has a $\ZZ^2$ grading (compatible with multiplication), consisting of
(in this order):
\begin{itemize}[leftmargin=8mm]
\item[$\mathrm{(a)}$]   the {\em length degree}, by the  decomposition into the $ \Hc_{\{x\}}^{(n)}$,

\item[$\mathrm{(b)}$]  the  {\em homological degree}, where we put $H_i^\BM$ in degree $i$. 
\end{itemize}
Define the $\ZZ^2$-graded vector space
\be\label{eq:space-V}
\Theta \,=\,  q^{-1}t\cdot \k[q, t], \quad \deg(q) = (0,-2), \,\,\,\deg(t) = (1,0). 
\ee
The following is well known, see, e.g., \cite[\S 5.3]{SV18} and the references there, 
and goes back to the Feit-Fine formula for the number of points in the commuting varieties
over finite fields \cite[(2)]{feit-fine} and  the purity of the Borel-Moore homology
of the commuting stack $C_n/\!/ GL_n$ proved in \cite{D}.

\begin{prop}\label{prop:R-dims}
  As a $\ZZ^2$-graded vector space, $R(\AAA^2) \simeq \Sym(\Theta)$. 
  \qed
 \end{prop}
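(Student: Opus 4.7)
The plan is to match bigraded dimensions on both sides via Poincar\'e series, using the classical Feit--Fine product formula as the bridge. From $\Coh_0^{(n)}(\AAA^2) \simeq C_n/\!/GL_n$ we have $R^{(n)}(\AAA^2) = H_\bullet^{\BM, GL_n}(C_n)$, and on the $\Sym(\Theta)$ side one reads off directly from the basis $\{q^{j-1} t^{i+1}\}_{i,j \geq 0}$ of $\Theta$, with $\deg(q^{j-1} t^{i+1}) = (i+1,\, 2-2j)$, the Hilbert series
$$\mathrm{Hilb}(\Sym \Theta)(z, u) \,=\, \prod_{i \geq 1,\, j \geq 0} \frac{1}{1 - z^i u^{2-2j}}.$$
So the task is to show that the bigraded Poincar\'e series $P_R(z, u) = \sum_n z^n \sum_k \dim_\k H_k^{\BM, GL_n}(C_n)\, u^k$ agrees with this product.

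The key input is the Feit--Fine formula \cite{feit-fine},
$$\sum_{n \geq 0} \frac{|C_n(\FF_q)|}{|GL_n(\FF_q)|}\, z^n \,=\, \prod_{i \geq 1,\, j \geq 0} \frac{1}{1 - q^{1-j} z^i},$$
which is exactly the target product after the substitution $q = u^2$. To upgrade this point-count identity to a Poincar\'e-series identity, I would establish that the stack $\Coh_0^{(n)}(\AAA^2)$ is cohomologically pure, so that its equivariant Poincar\'e polynomial is determined by its $\FF_q$-point count. A standard route is to stratify $C_n$ by the conjugacy class of one of the two matrices: each stratum fibers as an iterated affine bundle over the commuting variety of an appropriate Levi centralizer, and purity propagates through such fibrations by Deligne's theorem, reducing the claim on $C_n$ inductively to known cases in smaller rank.

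The main obstacle is this purity step, which has to be carried out carefully because $C_n$ is singular for $n \geq 2$; it is the only nontrivial technical content, everything else being bookkeeping of generating functions. A painless alternative is to invoke the computation of the BM cohomological Hall algebra of the preprojective algebra of the one-loop (Jordan) quiver from \cite{SV13} (surveyed in \cite[\S 5.3]{SV18}), whose underlying graded vector space is precisely $R(\AAA^2)$ and whose Poincar\'e series is already known to factor as the infinite product above. Either route produces the equality $P_R = \mathrm{Hilb}(\Sym \Theta)$, whence the claimed isomorphism of $\ZZ^2$-graded vector spaces.
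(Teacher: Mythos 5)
Your proposal is correct and takes essentially the same route as the paper, which gives no independent proof but records the statement as well known, citing the Feit--Fine point count \cite[(2)]{feit-fine} and the computation for the Jordan-quiver preprojective COHA in \cite[\S 5.3]{SV18} --- precisely your ``painless alternative''. The only caveat is that your sketched purity argument would need more care than stated (the centralizer of a non-semisimple matrix is not a Levi subgroup, and one must stratify by similarity type rather than by individual conjugacy classes to get finitely many strata), but since you ultimately fall back on the known computation this does not affect the correctness of the argument.
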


\smallskip
 
The goal  of this section is to prove the following.
 
 \begin{thm}\label{thm:flat}\hfill
 \begin{itemize}[leftmargin=8mm]
\item[$\mathrm{(a)}$] $\Theta$ has a natural structure of a graded Lie algebra.
\item[$\mathrm{(b)}$] $R(\AAA^2)$ is isomorphic to $U(\Theta)$ as a graded algebra.
\item[$\mathrm{(c)}$] The symmetrized product map yields a graded vector space isomorphism
$ \Sym(\Theta)\simeq R(\AAA^2)$. 
 \end{itemize}
 \end{thm}

 Before to do this, let us observe the following.
 
 \begin{prop}
The algebra $R(\AAA^2)$ is the same as the COHA considered in \cite[\S 4.4]{SV13} in the case of the Jordan quiver.
 \end{prop}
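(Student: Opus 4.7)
The plan is to show that the two constructions coincide essentially tautologically, once the appropriate dictionaries are set up. I would proceed in three steps: (i) match the underlying graded vector spaces, (ii) match the induction correspondences, and (iii) match the virtual pullbacks.

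\textbf{Step 1 (vector spaces).} Sending a $0$-dimensional coherent sheaf $\Fc$ on $\AAA^2$ of length $n$ to the $\CC[x,y]$-module $H^0(\AAA^2,\Fc)$ identifies the abelian category $\Coh_0(\AAA^2)$ with the category of finite-dimensional $\CC[x,y]$-modules, i.e.\ with the category of finite-dimensional modules over the preprojective algebra $\Pi_{J}$ of the Jordan quiver $J$. Under the presentation $\Coh_0^{(n)}(\AAA^2) \simeq C_n/\!/GL_n$ recalled above, this identifies $R^n(\AAA^2)$ with the $GL_n$-equivariant Borel--Moore homology of $C_n = \mu^{-1}(0) \subset \gl_n \oplus \gl_n$, which is precisely the degree-$n$ piece of the COHA of $\Pi_J$ in \cite[\S 4.4]{SV13}.

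\textbf{Step 2 (induction diagrams).} Under the equivalence of Step 1, an object of $\SES(T)$ over a test scheme $T$ is the same as a $T$-flat family of short exact sequences of finite-dimensional $\CC[x,y]$-modules, i.e.\ the same datum used in \cite{SV13} to define the induction correspondence for the Jordan-quiver preprojective algebra. Concretely, after choosing bases of the sub and quotient, a sequence $0\to\Ec\to\Gc\to\Fc\to 0$ corresponds to a block-upper-triangular pair of commuting matrices, which is exactly the incidence variety used by Schiffmann--Vasserot. The projections $p$ and $q$ of the induction diagram \eqref{eq:ind-diagram} and the proper map $p$ thus match on the nose with the corresponding maps in \cite{SV13}. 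Consequently, the exterior product $\otimes$ and the proper pushforward $p_*$ on the two sides agree.

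\textbf{Step 3 (virtual pullbacks).} The remaining point, and the main technical obstacle, is that the virtual pullback $q^!_\Cc$ used to define our Hall product coincides with the refined pullback used in \cite{SV13}. The latter is built from the Koszul obstruction theory of the moment map: $C_n$ is cut out in the smooth space $\gl_n\oplus\gl_n$ by the section $\mu=[x,y]$ of the trivial bundle with fiber $\gl_n$, and the induction correspondence inherits an analogous two-term obstruction theory controlled by the moment map on block-upper-triangular pairs. On our side, $q^!_\Cc$ is built from the $[-1,1]$-perfect complex $\Cc=Rp_*\ul\RHom(p_{23}^*\Uc,p_{13}^*\Uc)[1]$ of Proposition \ref{eq:prop:C} and the curvature section of $\Cc^1$. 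To match them, I would use the Koszul resolution of the diagonal
\[
0\to\Oc_{\AAA^2\times\AAA^2}\lra\Oc_{\AAA^2\times\AAA^2}^{\,2}\lra\Oc_{\AAA^2\times\AAA^2}\lra\Oc_\Delta\to 0,
\]
to compute $R\Hom_{\AAA^2}(\Fc,\Ec)[1]$ for $0$-dimensional $\Fc,\Ec$ in terms of the spaces $\Hom(\Fc,\Ec)$ with differentials given by commutators with $x$ and $y$. Under this presentation the three-term complex representing $\Cc$ over $\Coh_0^{(m)}(\AAA^2)\times\Coh_0^{(n)}(\AAA^2)$ becomes precisely the complex
\[
\Hom(\CC^n,\CC^m)\lra\Hom(\CC^n,\CC^m)^{\oplus 2}\lra\Hom(\CC^n,\CC^m),
\]
with differentials $u\mapsto([x,u],[y,u])$ and $(u,v)\mapsto[x,v]-[y,u]$, equivariantly over $GL_m\times GL_n$. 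This is exactly the obstruction complex of the Jordan-quiver preprojective moment map restricted to the incidence locus, so the curvature section of Proposition \ref{prop:perf-com} matches the moment-map section used in \cite{SV13}. The two refined Gysin maps then coincide by the usual compatibility of virtual pullbacks with quasi-isomorphisms of presentations (Proposition \ref{prop:glue}).

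Combining Steps 1--3 gives the desired identification of the two COHAs. The only nontrivial verification is the Koszul identification in Step~3, but this is a direct computation with the diagonal resolution on $\AAA^2\times\AAA^2$ and the definition of the moment map, so it presents no conceptual difficulty.
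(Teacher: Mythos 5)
Your proposal follows essentially the same route as the paper: identify the complex $\Cc$ over $\Coh_0^{(m)}(\AAA^2)\times\Coh_0^{(n)}(\AAA^2)$ with the $GL_m\times GL_n$-equivariant three-term commutator (Koszul) complex $\fraku\to\fraku^2\to\fraku$ with differentials $u\mapsto([A,u],[B,u])$ and $(v,w)\mapsto[A,w]-[B,v]=[A\oplus v,B\oplus w]$, note that the resulting section cuts out the parabolic commuting variety $\widetilde C_{n,m}$, and conclude that the two refined pullbacks (hence the two products) agree. The one cosmetic difference is the last step: the paper does not appeal to Proposition \ref{prop:glue} (which only gives independence of the presentation) but instead writes an explicit fiber diagram decomposing the bracket map as $g=f\circ s$ with $f$ an l.c.i.\ projection, and uses the composition rule $g^!=s^!\circ f^!=s^!\circ\pi^*$ to match the one-step Gysin map of \cite{SV13} with the two-step virtual pullback $s_\Cc^!\circ\pi^*$ — a compatibility you should state explicitly, but which is exactly the computation your Step 3 is gesturing at.
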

 
 \begin{proof}
 To prove this, we abbreviate
 $X_n = C_n/\!/GL_n$, $S=\AAA^2$,  and note that the tautological sheaf 
$\calU$ over $X_n\times S$ 
is identified with the $GL_n$-equivariant
sheaf over $C_n\times S$ given by
$\calU=\bbC^n\otimes\calO_{C_n},$
with the $\calO_{C_n}$-linear action of $\calO_S=\bbC[x,y]$ 
such that $x,$ $ y$ act as $A\otimes 1$, $B\otimes 1$ respectively on the fiber $\calU|_{(A,B)}$.
Let $\frakp$ be the Lie algebra consisting of $(n,m)$-uper triangular matrices in $\frakgl_{n+m}$,
and let $\fraku$, $\frakl$ be its nilpotent radical and its standard Levi subalgebras.
Let $P$, $U$ and $L$ be the corresponding linear groups.
Write $X_{n,m}=X_n\times X_m$ and $C_{n,m}=C_n\times C_m$. 
We identify $C_{n,m}$ with the commuting variety of the Lie algebra $\frakl$ and
$X_{n,m}$ with the moduli stack $C_{n,m}/\!/L.$
We have $\fraku=\Hom_\bbC(\bbC^n,\bbC^m)$, and
the perfect [-1,1]-complex $\calC$ over $X_{n,m}$
in \eqref{eq:PC} is identifed with the $L$-equivariant
Koszul complex of vector bundles over $C_{n,m}$
 given by 
$$\xymatrix{\fraku\otimes\calO_{C_{n,m}}\ar[r]^{d^0}& \fraku^2\otimes\calO_{C_{n,m}}\ar[r]^{d^1}& 
\fraku\otimes\calO_{C_{n,m}}},$$
where the differentials over the $\bbC$-point $(A,B)$ in $C_{n,m}$ are given respectively by
$$d^0(u)= ([A,u]\,,\,[B,u]),\quad d^1(v,w)= [A,w]-[B,v]=[A\oplus v, B\oplus w],$$
and the direct sum holds for the canonical isomorphism $\frakl\times\fraku\to\frakp$.
The total space $\Tot(\calC)$ of this complex, defined in \eqref{dg-tot},
 is a smooth derived stack over $X_{n,m}$ such that :

\begin{itemize}[leftmargin=8mm]
\item[$\mathrm{(a)}$] 
The underlying Artin stack is the vector bundle stack $\Cc^0/\!/\Cc^{-1}$ over $X_{n,m}$ such that
$$\Cc^{-1}=(C_{n,m}\times\fraku)/\!/L,\quad \Cc^0=(C_{n,m}\times\fraku^2)/\!/L.$$ 
It is isomorphic to the
following quotient relatively to the diagonal $P$-action
$$\Tot(\calC^{\leqslant 0})=(C_{n,m}\times\fraku^2)/\!/P.$$
\item[$\mathrm{(b)}$] The structural sheaf of derived algebras is the 
free $P$-equivariant graded-commutative $\calO_{C_{n,m}\times\fraku^2}$-algebra
generated  by the elements of $\fraku^\vee$ in degree -1. The differential is given by the transpose of the Lie bracket
$\fraku\times\fraku\to\fraku$.
\end{itemize}
Therefore, the derived induction diagram \eqref{eq:virdiagram} is
\begin{align}\label{DI}
\xymatrix{
C_{n,m}/\!/L &(C_{n,m}\times\fraku^2)/\!/P \ar[l]_-{\pi}&
\widetilde C_{n,m}/\!/P\ar@{_{(}->}[l]_-{i} \ar[r]^-{p} & C_{n+m}/\!/GL_{n+m},
}
\end{align}
where $\widetilde C_{n,m}$ is the commuting variety of the Lie algebra $\frakp$.
We can now compare the product
$$m:H_\bullet^\BM(X_n) \otimes H_\bullet^\BM(X_m) \to H_\bullet^\BM(X_{n+m})$$
in \eqref{eq:virm}
with the multiplication in \cite[\S 4.4]{SV13}. 
We have the fiber diagram of stacks
$$\xymatrix{
(C_{n,m}\times\fraku)/\!/P&
\ar[l]_-f(C_{n,m}\times\fraku^3)/\!/P&
\ar[l]_{s}(C_{n,m}\times\fraku^2)/\!/P\\
C_{n,m}/\!/P\ar@{_{(}->}[u]^-{1\times 0}&\ar[l]_-{\pi}\ar@{_{(}->}[u]^-{1\times 0}
(C_{n,m}\times\fraku^2)/\!/P&
\ar@{_{(}->}[l]_-{i}\widetilde C_{n,m}/\!/P.\ar@{_{(}->}[u]_-{i}}$$
where 1 is the identity, 0 is the zero section,
$f$ is the projection to the third component of $\fraku^3$ 
(which is a local complete intersection morphism) and
$s=1\times d^1$. 
Hence, the composed map $g=f\circ s$ is the Lie bracket 
$(A,B;v,w)\mapsto[A\oplus v,B\oplus w]$ and
the composition rule of refined pullback morphisms implies that
\begin{align*}
g^!(x)=s^! f^!(x)
=s^!\pi^* (x)
\end{align*}
in $H_\bullet^\BM(\widetilde C_{n,m}/\!/P)$ 
 for any class $x\in H_\bullet^\BM(X_n\times X_m)$.
We deduce that the multiplication map $m$
is the same as the multiplication considered in \cite[\S 4.4]{SV13}.
\end{proof}
 
 
 \subsection{$R(\AAA^2)$ as a Hopf algebra}\label{subsec:R-bialg}
 
 As a first step in the proof of Theorem \ref{thm:flat}, we introduce on $R(\AAA^2)$ a compatible comultiplication.
 
\smallskip
 
 Let $U\subset \CC^2$ be any open set in the complex analytic topology. We denote by $Coh_0(U)$ the category
 of $0$-dimensional coherent analytic sheaves on $U$. The corresponding moduli stack $\Coh_0(U)$
 can be understood as a complex analytic stack in the sense of \cite{porta}, i.e., as a stack of groupoids
 on the site of Stein complex analytic spaces. It can also be understood in a more elementary way, as follows. 
 
\smallskip
 
 Let $C_n(U)\subset C_n$ be the open subset (in the complex analytic topology) formed by pairs $(A,B)$
 of commuting matrices for which the joint spectrum (the support of the corresponding coherent sheaf
 on $\CC^2$) is contained in $U$. It is, therefore,  a complex analytic space. Then we can define
 \[
 \Coh_0^{(n)}(U) \,=\, C_m(U)/\!/ GL_n(\CC),
 \]
as  the quotient analytic stack, and put
 \[
 \Coh_0(U)\,=\, \bigsqcup_{n\geq 0} \Coh_0^{(n)}(U).
 \]
 Using this understanding, we define directly 
 \[
 R(U) \,=\, H_\bullet^\BM (\Coh_0(U)) \, = \, \bigoplus_{n\geq 0} H_{\bullet}^{\BM} (C_n(U)/\!/GL_n(\CC))
\, = \, \bigoplus_{n\geq 0} R^{n}(U).
 \]
 The same considerations as in \S \ref{sec:COHA} make $R(U)$ into a graded associative algebra. 
 
\smallskip
 
 If $U'\subset U\subset \CC^2$ are two open sets, then $C_n(U')\hookrightarrow C_n(U)$ is an open
 embedding, and we have maps of $\ZZ$-graded, resp. $\ZZ^2$-graded vector spaces
 \[
 \begin{gathered}
 \rho_{U, U'}^{n}: H_\bullet^{\BM}(C_n(U)/\!/GL_n(\CC)) \lra H_\bullet^{\BM}(C_n(U')/\!/GL_n(\CC)),
 \\
 \rho_{U, U'} \, = \bigoplus_{n\geq 0} \rho_{U,U'}^{n}: R(U)\lra R(U'). 
 \end{gathered}
 \]
 
 \begin{prop}\label{prop:R(U)-properties}
  \hfill
\begin{itemize}[leftmargin=8mm]
\item[$\mathrm{(a)}$]
 $\rho_{U, U'}$ is an algebra homomorphism.
 
\item[$\mathrm{(b)}$] If the embedding $U'\hookrightarrow U$ is a homotopy equivalence, then $\rho_{U, U'}$ is an
 isomorphism. 
 
\item[$\mathrm{(c)}$] If $U$ is a disjoint union of open subsets $U_1, \cdots, U_m$, then
 \[
 R(U) \,\simeq \, R(U_1) \otimes \cdots \otimes R(U_n).
 \]
 \end{itemize}
 
 \end{prop}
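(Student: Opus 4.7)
My plan treats the three parts in turn.

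For part (a), the compatibility of $\rho_{U,U'}$ with the Hall multiplication is a base-change statement. For $j\colon U'\hookrightarrow U$ an open immersion in $\CC^2$, a zero-dimensional coherent sheaf on $U$ is supported in $U'$ if and only if every short exact sequence in which it appears has all three terms so supported (since the support of the middle term equals the union of the supports of the outer ones). Consequently the derived induction diagram for $\Coh_0(U')$ is the open pullback of the one for $\Coh_0(U)$. Both the virtual pullback (Proposition~\ref{prop:glue}) and the proper pushforward commute with this open base change, so $\rho_{U,U'}\circ m_U=m_{U'}\circ(\rho_{U,U'}\otimes\rho_{U,U'})$.

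For part (c), any zero-dimensional coherent sheaf on $U=\bigsqcup_i U_i$ decomposes canonically as a direct sum of its restrictions to the $U_i$, yielding equivalences of analytic stacks $\Coh_0(U)\simeq\prod_i\Coh_0(U_i)$ and a corresponding product decomposition of the stack of short exact sequences (there are no nontrivial extensions between sheaves with disjoint supports). The K\"unneth formula for Borel-Moore homology, combined with the product factorization of the entire induction diagram, gives $R(U)\simeq\bigotimes_i R(U_i)$ as graded algebras.

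Part (b) is the essential point. My approach is to consider the joint-support morphism $\sigma_n\colon\Coh_0^{(n)}(U)\to\Sym^n(U)$ sending a length-$n$ sheaf to its support multiset, and to analyze the complex $R(\sigma_n)_*\omega_{\Coh_0^{(n)}(U)}$ on $\Sym^n(U)$. I claim this complex is constructible with respect to the partition stratification of $\Sym^n(U)$ and locally constant on each stratum. The key input is a local assertion around a configuration of type $(m_1,\dots,m_k)$: the canonical decomposition of a zero-dimensional sheaf into local components at each support point gives an analytic-local trivialization of $\sigma_n$ as a product of ``punctual'' stacks parametrizing length-$m_i$ sheaves supported at a single variable point, and these factors are translation-equivariant, so the stalks of $R(\sigma_n)_*\omega$ along the stratum depend only on $(m_1,\dots,m_k)$. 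Granted this, a homotopy equivalence $j\colon U'\hookrightarrow U$ (which can be realized as a stratum-preserving homotopy equivalence on $\Sym^n$ via ambient isotopy) induces an isomorphism on cohomology with these constructible coefficients, giving the result summed over $n$.

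The main obstacle is the local-constancy assertion, which is the crux of the homotopy invariance. Conceptually this is the statement that $U\mapsto\Coh_0(U)$ defines a locally constant factorization algebra on $\CC^2$ (Proposition~\ref{prop:fact}), so that its factorization homology is automatically homotopy invariant; the support-decomposition argument above is the concrete realization of this principle.
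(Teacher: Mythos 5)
Your parts (a) and (c) agree with the paper. For (a) the paper simply says the statement is clear from the definitions, and your elaboration is the right one: since the support of the middle term of a short exact sequence is the union of the supports of the outer terms, the whole derived induction diagram over $U'$ is the open restriction of the one over $U$, and both the virtual pullback and the proper pushforward commute with open base change. For (c) the paper argues exactly as you do, writing the decomposition as the $GL_n(\CC)$-equivariant identification of $C_n(U)$ with a disjoint union of induced products of the $C_{n_i}(U_i)$. For (b) your route is close in spirit to the paper's but packaged differently: the paper stratifies $C_n(U)$ itself by support/singularity type, asserts that the inclusion $C_n(U')\hookrightarrow C_n(U)$ is a homotopy equivalence \emph{relative to the stratifications} (i.e.\ induces homotopy equivalences on all strata), and concludes by d\'evissage (a spectral sequence argument) in equivariant Borel--Moore homology; you instead push $\omega_{\Coh_0^{(n)}(U)}$ forward along the support map to $\Sym^n(U)$ and argue constructibility and local constancy on the partition strata before invoking a stratified comparison. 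Either packaging is fine once the stratum-wise statement is in hand.

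The weak point is precisely your parenthetical justification of that crux. The claim that a homotopy-equivalence open inclusion $j\colon U'\hookrightarrow U$ ``can be realized as a stratum-preserving homotopy equivalence on $\Sym^n$ via ambient isotopy'' is unjustified and, as a general statement, not true: an open embedding of subsets of $\CC^2$ which is a homotopy equivalence need not be isotopic to a homeomorphism onto $U$, and homotopy equivalence of the strata (which are unordered configuration-type spaces with multiplicity labels, times punctual factors) is not a formal consequence of $j$ being a homotopy equivalence, since configuration spaces are not homotopy functors. What you actually need is exactly the assertion the paper makes: that $j$ induces homotopy equivalences on all strata, after which your constructible-sheaf reformulation (or the paper's d\'evissage in equivariant Borel--Moore homology) goes through; in the situations where the proposition is applied later (a round ball inside $\CC^2$, or disjoint unions of balls) this stratified statement can indeed be checked by an explicit isotopy, so you should either restrict to that case or state the stratum-wise hypothesis as the input, rather than derive it from an ambient isotopy. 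Finally, note that your closing conceptual appeal would be circular within the paper's development: local constancy of the factorization coalgebra $\Rc$ (Proposition \ref{cor:coha-fact-dg}) is itself proved there by the same argument as Proposition \ref{prop:R(U)-properties}(b), and Proposition \ref{prop:fact} alone does not give local constancy.
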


\begin{proof} Part (a) is clear from definitions. To show (b), we note that $C_n(U)$ and $C_n(U')$ are naturally
 stratified (by singularities), and, under our assumption, the embedding $C_n(U') \hookrightarrow C_n(U)$
 is a homotopy equivalence relative to the stratifications, i.e., it induces homotopy equivalences on
 all the strata. By d\'evissage (spectral sequence argument) this implies that the map
 \[
 H_\bullet^{\BM, GL_n(\CC)} (C_n(U)) \,=\, H^{-\bullet}_{GL_n(\CC)} (C_n(U), \omega_{C_n(U)}) 
 \lra  H^{-\bullet}_{GL_n(\CC)} (C_n(U'), \omega_{C_n(U')}) \,=\,  H_\bullet^{\BM, GL_n(\CC)} (C_n(U'))
 \]
 is an isomorphism. 
 
\smallskip
 
 We abbreviate $GL_{n_1,\dots, n_m}=GL_{n_1}\times\dots\times GL_{n_m}$.
 Then, part (c) follows from the $GL_n(\CC)$-equivariant identifications
 \[
 C_n(U)\,=\, \bigsqcup_{n_1+\cdots + n_m=n} 
 \Big(GL_n(\CC)\times_{GL_{n_1,\dots, n_m}(\CC)}C_{n_1}(U_1) \times \cdots \times C_{n_m}(U_m)\Big),
 \]
 which reflect the fact that a length $n$ sheaf $\Fc$ on $U$ consists of sheaves $\Fc_i$ on $U_i$
 of lengths $n_i$ summing up to $n$. 
 \end{proof}
 
 \begin{cor}
 If an open $U\subset\CC^2$ is homeomorphic to a $4$-ball, then $\rho_{\CC^2, U}: R(\CC^2) \to R(U)$
 is an isomorphism. \qed
 \end{cor}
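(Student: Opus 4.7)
The plan is to reduce the corollary directly to Proposition \ref{prop:R(U)-properties}(b). That proposition asserts that whenever $U' \subset U$ is an inclusion of opens in $\CC^2$ such that the embedding $U' \hookrightarrow U$ is a homotopy equivalence, the restriction map $\rho_{U, U'}: R(U) \to R(U')$ is an isomorphism. I will apply this with the larger open taken to be the whole plane $\CC^2$ and with $U' := U$ our given open $4$-ball.

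Thus the only thing to check is that the inclusion $i: U \hookrightarrow \CC^2$ is a homotopy equivalence of topological spaces. Since $U$ is homeomorphic to an open $4$-ball, $U$ is contractible; likewise $\CC^2$ (identified with $\RR^4$) is contractible. Both spaces are open subsets of $\RR^4$, so they have the homotopy type of CW complexes. Hence $i$ induces the (trivial) isomorphism on all homotopy groups, and is therefore a weak equivalence. By Whitehead's theorem it is an honest homotopy equivalence. Combining with Proposition \ref{prop:R(U)-properties}(b), I conclude that $\rho_{\CC^2, U}: R(\CC^2) \to R(U)$ is an isomorphism.

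The real content is not in this corollary but in Proposition \ref{prop:R(U)-properties}(b), whose proof relies on the fact that a homotopy equivalence $U' \hookrightarrow U$ of opens in $\CC^2$ induces a stratified homotopy equivalence $C_n(U') \hookrightarrow C_n(U)$ of commuting varieties (stratified by the type of the support), followed by a d\'evissage argument on the equivariant Borel-Moore homology of the strata. Once that input is granted, the corollary is formally a one-line consequence, so I do not anticipate any genuine obstacle in writing up this particular step beyond making sure that the chosen $U$ and $\CC^2$ both have the homotopy type of CW complexes (which, being open subsets of Euclidean space, they automatically do).
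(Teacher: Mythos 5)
Your proposal is correct and is essentially the paper's own (implicit) argument: the corollary is stated with \qed precisely because it follows at once from Proposition \ref{prop:R(U)-properties}(b), once one notes that $U$ and $\CC^2$ are both contractible so the inclusion $U\hookrightarrow\CC^2$ is a homotopy equivalence. (Your appeal to Whitehead is harmless but unnecessary — any continuous map between contractible spaces is already a homotopy equivalence.)
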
 
 
 Let us now choose, once and for all, two disjoint round balls $U_1, U_2\subset \CC^2$. 
 Define a morphism of $\ZZ^2$-graded vector spaces $\Delta: R(\CC^2) \to R(\CC^2) \otimes R(\CC^2) $ as the composition
 \[
 R(\CC^2) \buildrel \rho_{\CC^2,\, U_1\sqcup U_2}\over\lra R(U_1 \sqcup U_2) \simeq R(U_1) \otimes
 R(U_2)\buildrel \rho^{-1}_{\CC^2, U_1} \otimes \rho^{-1}_{\CC^2, U_2} \over\lra
 R(\CC^2)\otimes R(\CC^2). 
  \]
 
 \begin{prop}
 \hfill
\begin{itemize}[leftmargin=8mm]
\item[$\mathrm{(a)}$]
$\Delta$ does not depend on the choice of the balls $U_1, U_2$ provided they are disjoint.
 
\item[$\mathrm{(b)}$]
 $\Delta$ makes $R(\CC^2) $ into a cocommutative Hopf algebra. 
 \end{itemize}
 \end{prop}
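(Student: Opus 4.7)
\emph{Plan and proof of (a).} I propose to establish (a) first --- that $\Delta$ is independent of the chosen pair of disjoint balls --- and then deduce the three structural properties in (b) from the formalism of Proposition~\ref{prop:R(U)-properties}. For (a), given two pairs $(U_1,U_2)$ and $(U_1',U_2')$ of disjoint open balls in $\CC^2$, I would first handle the case where the pairs are nested, say $U_i\subset V_i$ with $(V_1,V_2)$ disjoint. There the naturality of the restriction maps $\rho_{V,U}$ makes the two composites defining $\Delta$ coincide, since the inclusion of one ball into a larger ball induces an isomorphism on $R$ by Proposition~\ref{prop:R(U)-properties}(b). The general case then follows from the connectedness of the configuration space of pairs of disjoint balls in $\CC^2$: any two such pairs are linked by a chain in which each consecutive pair is contained in a common pair of disjoint balls.

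\emph{Coassociativity and cocommutativity.} For coassociativity, fix three pairwise disjoint open balls $U_1,U_2,U_3\subset\CC^2$. By Proposition~\ref{prop:R(U)-properties}(c) applied twice, both iterated comultiplications $(\Delta\otimes 1)\Delta$ and $(1\otimes\Delta)\Delta$ equal the composite
\[ R(\CC^2)\,\lra\, R(U_1\sqcup U_2\sqcup U_3)\,\simeq\,R(U_1)\otimes R(U_2)\otimes R(U_3) \,\simeq\, R(\CC^2)^{\otimes 3}. \]
For cocommutativity, interchanging $U_1$ and $U_2$ in the definition of $\Delta$ swaps the two tensor factors but, by (a), produces the same map $\Delta$; hence $\Delta$ is invariant under the flip.

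\emph{Bialgebra compatibility (the main obstacle).} The hardest step is to show $\Delta(xy)=\Delta(x)\cdot\Delta(y)$, where the product on $R(\CC^2)^{\otimes 2}$ is the componentwise one. Unwinding the definitions, this reduces to identifying the induction diagram for $\Coh_0(U_1\sqcup U_2)$ with the external product of the induction diagrams for $\Coh_0(U_1)$ and $\Coh_0(U_2)$. The key geometric input is disjoint-support vanishing: if $\Ec=\Ec_1\oplus\Ec_2$ and $\Fc=\Fc_1\oplus\Fc_2$ with $\Ec_i,\Fc_i$ supported in $U_i$, then
\[ R\Hom(\Fc,\Ec)\,\simeq\,R\Hom(\Fc_1,\Ec_1)\,\oplus\,R\Hom(\Fc_2,\Ec_2), \]
because the cross-terms vanish. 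By Proposition~\ref{prop:A}, this implies that $\SES$ and the perfect complex $\Cc$ of \eqref{eq:PC} both factorize across the decomposition $U_1\sqcup U_2$. The three ingredients in the construction of $m$ --- exterior product, virtual pullback $q^!_\Cc$, and proper pushforward $p_*$ --- are each manifestly compatible with this factorization. I expect the technical bulk of the work to lie in verifying that the base-change and excess-intersection properties underlying $q^!_\Cc$ do commute with the open restriction along $\Coh_0(U_1\sqcup U_2)\hookrightarrow\Coh_0(\CC^2)$.

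\emph{Hopf structure.} The unit is the generator of $R^0(\CC^2)=H_\bullet^\BM(\pt)=\k$, and the counit $\eps$ is the projection onto the length-zero component; the counit axioms follow since the length grading is respected by both $m$ and $\Delta$, and $\Coh_0^{(0)}(\CC^2)$ is a point. The resulting graded bialgebra is connected (non-negatively graded by length, with $R^0=\k$), so the antipode exists and is uniquely determined, being constructed degreewise as the convolution inverse of the identity.
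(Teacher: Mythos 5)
Your proposal is correct and follows essentially the same route as the paper: independence of $\Delta$ from the choice of balls by connecting admissible pairs (the paper invokes a path of admissible choices, you argue via nested pairs and chains, using Proposition~\ref{prop:R(U)-properties}(b)), cocommutativity by swapping $U_1,U_2$, coassociativity via triples of disjoint balls, and compatibility with the Hall product through the factorization $R(U_1\sqcup U_2)\simeq R(U_1)\otimes R(U_2)$ --- a point the paper dismisses as immediate and which you usefully unpack via the vanishing of cross-term $R\Hom$'s and the resulting factorization of $\SES$ and of the complex $\Cc$. The only (cosmetic) divergence is the antipode: you use the connected-graded-bialgebra argument with the length grading and $R^0(\CC^2)=\k$, while the paper uses the equivalent conilpotency argument with the series $\sum_n(-1)^n m_n\circ\ol\Delta^n$; both are standard and both apply here.
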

 
\begin{proof} Any two admissible choices of $U_1, U_2$ are connected by a path of
admissible choices, and $\Delta$ does not change along this path.
This proves (a).
To prove (b), note that all the maps in the above chain are compatible with the Hall multiplication, so 
$\Delta$ is a homomorphism of algebras. Its cocommutativity follows from (a) by interchanging $U_1$ and
$U_2$, i.e., by connecting $(U_1, U_2)$ and $(U_2, U_1)$ by a path of admissible choices. 
Coassociativity is proved similarly by considering triples of  disjoint balls.
This proves that $R(\CC^2) $ into a cocommutative  bialgebra. 

\smallskip

It remains to prove that $R(\CC^2) $ has an antipode. This is a standard argument using  co-nilpotency,
see, e.g., \cite[\S 1.2]{loday}.
That is, define 
\[
\ol\Delta: R(\CC^2) \lra R(\CC^2) \otimes R(\CC^2) , \quad \ol\Delta(x) = \Delta(x) - (x\otimes 1 + 1\otimes x),
\]
and let $\ol\Delta^n: R(\CC^2) \lra R(\CC^2)^{\otimes n}$ be the $n$-fold iteration of $\ol\Delta$. 
Then $R(\CC^2)$ is {\em co-nilpotent}, that is, for any $x\in R(\CC^2)$ there is $n$ such that
$\ol\Delta^m(x)=0$ for $m\geq n$. Therefore the antipode $\alpha:  R(\CC^2)\to R(\CC^2)$ is given by the 
following geometric series,
terminating upon evaluation on any $x\in  R(\CC^2)$:
\[
\alpha\,=\,\sum_{n=1}^\infty (-1)^n m_n\circ\ol\Delta^n,
\]
where $m_n:  R(\CC^2)^{\otimes n}\to  R(\CC^2)$ is the $n$-fold multiplication.

 \end{proof}
 
 \smallskip
 
Let $R(\CC^2) _\prim \,=\,\bigl\{ a\in R(\CC^2)  \, ; \, \Delta(a) = a\otimes 1 + 1\otimes a\bigr\}$
be the Lie algebra of primitive elements with the bracket $[a,b]=ab-ba$.

\begin{cor}\label{cor:prim}
 \hfill
\begin{itemize}[leftmargin=8mm]
\item[$\mathrm{(a)}$]
$R(\CC^2) $ is isomorphic, as a  Hopf algebra, to the universal enveloping algebra of $\R(\CC^2) _\prim$.

\item[$\mathrm{(b)}$]  $R(\CC^2) _\prim \simeq \Theta$ as a $\ZZ^2$-graded vector space. 
\end{itemize}
\end{cor}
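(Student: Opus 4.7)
The plan is to derive Corollary \ref{cor:prim} by applying the Cartier--Milnor--Moore theorem, together with the Poincar\'e--Birkhoff--Witt theorem and the bigraded dimension count of Proposition \ref{prop:R-dims}.

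For part (a), I would first verify the hypotheses of Cartier--Milnor--Moore. The length grading on $R(\CC^2) = \bigoplus_{n\geq 0} R^{n}(\CC^2)$ is preserved by the Hall product (lengths add in a short exact sequence) and by the coproduct $\Delta$ from \S\ref{subsec:R-bialg} (decomposition into two disjoint balls splits total length). Connectedness holds because $\Coh_0^{(0)}(\CC^2)$ is the one-point stack classifying the zero sheaf, so $R^{0}(\CC^2) = \k$; each bigraded piece of $R(\CC^2)$ is finite-dimensional by Proposition \ref{prop:R-dims}. Working over the characteristic-zero field $\k$, Cartier--Milnor--Moore then gives the Hopf algebra isomorphism $R(\CC^2) \simeq U(R(\CC^2)_\prim)$ of (a).

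For part (b), my strategy is a Hilbert-series comparison. Proposition \ref{prop:R-dims} and the definition $\Theta = q^{-1}t\cdot \k[q,t]$ with $\deg(q)=(0,-2)$, $\deg(t)=(1,0)$ imply that $R(\CC^2)$ is supported in even homological degrees; therefore the primitives $R(\CC^2)_\prim$ are likewise purely even, hence bosonic with respect to the super-grading. The Poincar\'e--Birkhoff--Witt theorem then yields a bigraded vector-space isomorphism $U(R(\CC^2)_\prim) \simeq \Sym(R(\CC^2)_\prim)$, with $\Sym$ interpreted in the ordinary (non-super) sense. Chaining these isomorphisms with Proposition \ref{prop:R-dims},
\[
\Sym(R(\CC^2)_\prim) \;\simeq\; U(R(\CC^2)_\prim) \;\simeq\; R(\CC^2) \;\simeq\; \Sym(\Theta),
\]
as bigraded vector spaces.

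To conclude $R(\CC^2)_\prim \simeq \Theta$, I would invert the Euler product for $\Sym$: if $V$ is a bigraded space supported in length degree $\geq 1$ with finite-dimensional homogeneous components, then the bigraded Hilbert series of $\Sym(V)$ uniquely determines that of $V$. Both $\Theta$ and $R(\CC^2)_\prim$ satisfy this positivity, so equality of the Hilbert series of the symmetric algebras forces equality of the Hilbert series of the underlying spaces, hence an isomorphism of bigraded vector spaces. The main obstacle is essentially bookkeeping: checking explicitly that both product and coproduct preserve the length grading, and that homological parity is uniformly even so that no Koszul signs enter PBW --- once these are in place, the rest is standard structure theory of connected graded cocommutative Hopf algebras.
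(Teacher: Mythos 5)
Your proposal is correct and takes essentially the same route as the paper: part (a) is the Milnor--Moore theorem applied to the connected, cocommutative, bigraded Hopf algebra $R(\CC^2)$, and part (b) is the Poincar\'e--Birkhoff--Witt theorem combined with the dimension count of Proposition \ref{prop:R-dims}. The checks you spell out (length grading preserved by product and coproduct, $R^{0}(\CC^2)=\k$, evenness of the homological grading so no Koszul signs enter, and inversion of the Euler product for $\Sym$) are exactly the implicit content of the paper's brief argument.
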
 

\begin{proof} Part (a) follows from the Milnor-Moore theorem. 
Part (b) follows from the Poincar\'e-Birkhoff-Witt theorem and from Proposition 
 \ref{prop:R-dims}. \end{proof}

 
 \subsection{Explicit primitive elements in $R(\AAA^2)$}\label{subsec:exp-prim}
 
  For any open $U\subset\CC^2$ let $\Coh^{(n)}_\Ipt(U)\subset \Coh_0^{(n)}(U)$ be the closed
  analytic substack formed by {\em 1-point} coherent sheaves, i.e., sheaves whose support consists
  of precisely one point. In other words,
  \[
  \Coh_\Ipt^{(n)}(\CC^2) \,=\, C_{n, \Ipt}(U)/\!/ GL_n(\CC),
  \]
 where $C_{n, \Ipt}(U) \subset C_n(U)$ is the closed analytic subspace formed by pairs $(A,B)$
 of commuting matrices whose joint spectrum reduces to one point in $\CC^2$ (but can be degenerate).
 Still more explicitly,
 \[
 C_{n, \Ipt}(U) \,=\, U\times \NC_n, 
 \]
 where $\NC_n$ is the $n$ by $n$ {\em nilpotent commuting variety}
 \[
 \NC_n \,=\, \bigl\{ (A,B)\in \gl_n(\CC)\times\gl_n(\CC)\,;\, [A,B]=A^n=B^n=0\bigr\}. 
 \]
 In particular, we have the closed subvariety
 \be\label{eq:c-n-1pt}
 C_{n, \Ipt} \,=\, C_{n, \Ipt}(\CC^2) \,=\, \CC^2 \times \NC_n \,\subset\, C_n, 
 \ee
 invariant under
  $GL_n(\CC)$. We recall.
  
  \begin{prop}[\cite{Ba}]\label{prop:NC-dim}
  $\NC_n$ is an irreducible algebraic variety of dimension $n^2-1$. \qed
  \end{prop}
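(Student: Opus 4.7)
The plan is to interpret $\NC_n$ as the variety of pairs of commuting nilpotent matrices and to prove the statement in the style of Baranovsky. Since $A, B$ are $n \times n$, the conditions $A^n = B^n = 0$ are equivalent to $A$ and $B$ being nilpotent; so $\NC_n = \{(A,B) \in \Nc \times \Nc \mid [A,B] = 0\}$, where $\Nc \subset \gl_n$ is the nilpotent cone. I will analyze $\NC_n$ via the first projection $\pi\colon \NC_n \to \Nc$, $(A,B) \mapsto A$, whose fiber over $A$ is the nilpotent part of the centralizer $\frakz(A) \subset \gl_n$.

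First I would stratify $\Nc = \bigsqcup_\lambda \Nc_\lambda$ by Jordan type $\lambda \vdash n$. Each $\Nc_\lambda$ is a single $GL_n$-orbit, irreducible of dimension $n^2 - \sum_i (\lambda'_i)^2$, where $\lambda'$ is the conjugate partition. On the open stratum $\Nc_{(n)}$ of regular nilpotent matrices, the centralizer is $\frakz(A) = \CC[A]$, of dimension $n$, with nilpotent subspace $A \cdot \CC[A]$ of dimension $n-1$. Hence $\pi^{-1}(\Nc_{(n)})$ is a rank $(n-1)$ vector bundle over the irreducible variety $\Nc_{(n)}$, and so is an irreducible locally closed subvariety of $\NC_n$ of dimension $(n^2 - n) + (n-1) = n^2 - 1$.

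It then remains to prove that $\pi^{-1}(\Nc_{(n)})$ is dense in $\NC_n$, which is the crux of the argument. Given $(A_0, B_0) \in \NC_n$, the goal is to find a one-parameter algebraic family $(A_t, B_t)$ in $\NC_n$ with $(A_0, B_0)$ at $t = 0$ and $A_t$ regular nilpotent for $t \neq 0$. One approach is to pass to the punctual Hilbert scheme $\mathrm{Hilb}^n_0(\AAA^2)$, known to be irreducible of dimension $n-1$ by Brian\c{c}on's theorem, and lift the corresponding deformation of ideals to a deformation of commuting pairs; Baranovsky's original argument instead proceeds by an explicit inductive construction on the number of Jordan blocks of $A_0$, producing the required perturbation by exhibiting a cyclic vector after a small deformation within the commuting centralizer. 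Once density is established, the dimension count of the previous paragraph yields $\dim \NC_n = n^2 - 1$, and irreducibility of $\NC_n$ follows from the irreducibility of $\pi^{-1}(\Nc_{(n)})$. The main obstacle is precisely this density step: controlling the behavior of the pair $(A,B)$ under deformation so that the Jordan structure of $A$ can be degenerated to a single block while preserving commutativity with a deformed $B$.
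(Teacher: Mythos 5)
The first thing to note is that the paper does not prove this statement at all: it quotes the result from Baranovsky \cite{Ba} and closes with a \qed, so there is no internal argument to compare with, and the question is whether your sketch stands on its own. Its correct parts do: identifying $\NC_n$ with the pairs of commuting nilpotent matrices, stratifying the nilpotent cone $\Nc$ by Jordan type, and computing that the preimage of the regular stratum $\Nc_{(n)}$ is irreducible, locally closed, of dimension $(n^2-n)+(n-1)=n^2-1$ (the nilpotent part of the centralizer $\CC[A]$ of a regular nilpotent $A$ is indeed the hyperplane $A\,\CC[A]$) are all fine.

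The genuine gap is exactly where you locate it, and it is not a step one can defer: the density of $\pi^{-1}(\Nc_{(n)})$ in $\NC_n$ \emph{is} the content of the proposition, which is precisely Baranovsky's theorem, so invoking ``Baranovsky's original argument'' there makes the attempt circular as an independent proof. The alternative you gesture at via Brian\c{c}on is not a routine reduction either: the punctual Hilbert scheme $\mathrm{Hilb}^n_0(\AAA^2)$ parametrizes ideals, i.e.\ commuting pairs \emph{equipped with a cyclic vector}, whereas a general $(A_0,B_0)\in\NC_n$ has no cyclic vector; the lift from a family of ideals to a family of commuting pairs therefore needs an extra device (Baranovsky first deforms an arbitrary pair into one admitting a cyclic vector, by an induction on the number of Jordan blocks combined with dimension estimates for the non-cyclic locus), and none of this is supplied. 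Without it, your count only exhibits one irreducible component of dimension $n^2-1$: it rules out neither further components nor, a priori, larger-dimensional pieces sitting over smaller strata (the fibers there are nilpotent commuting varieties of centralizers, whose dimensions you have not bounded), so neither irreducibility nor the dimension statement follows. The honest options are to do what the paper does and simply cite \cite{Ba}, or to carry out the density/deformation argument in full (Baranovsky's induction, or the later proofs of Basili or Premet).
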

  
  The proposition implies that $C_{n, \Ipt}$ is an irreducible variety of dimension $n^2+1$. So 
  $\Coh_{\Ipt}^{(n)}(\CC^2)$, its
  image in $\Coh_0^{(n)}(\CC^2)$, is an irreducible stack of dimension $1$, and it has the equivariant
  fundamental class
  \[
  \theta_n \,=\, [C_{n, \Ipt}] \,\in \, H_2^{\BM}(C_n/\!/ GL_n). 
  \]
 Further, let $\Ec_n$ be the trivial vector bundle of rank $n$ on the $GL_n$-variety $C_n$, equipped with the vectorial 
 representation of $GL_n$. We call $\Ec_n$ the \emph{tautological sheaf}.
 Being an equivariant vector bundle,
 it has the equivariant Chern characters
 \[
 ch_i(\Ec_n)\,\in \, H^{2i}(C_n/\!/ GL_n),\quad i\geqslant 0,
 \]
 and, for $i\geqslant 0$, $n\geqslant 1$, we define
 \be\label{eq:theta-n-i}
 \theta_{n, i} \,=\, ch_i(\Ec_n)\cap\theta_n  \,\in\, H_{2-2i}^{\BM}(C_n/\!/ GL_n) \,=\, R^{n, 2-2i}(\CC^2). 
 \ee
 Comparing the $\ZZ^2$-grading of $\Theta$, we see that the map
 \be\label{eq:alpha}
 \alpha: \Theta\lra R(\CC^2), \quad t^nq^{i-1} \mapsto \theta_{n,i}, 
 \ee
 is a morphism of $\ZZ^2$-graded vector spaces.
 
 \begin{prop}
   \hfill
\begin{itemize}[leftmargin=8mm]
\item[$\mathrm{(a)}$]
 $\alpha$ is injective, i.e., each $\theta_{n,i}$ is non-zero.
 \item[$\mathrm{(b)}$]
 $\theta_{n,i}$ is primitive.
 \end{itemize}
 \end{prop}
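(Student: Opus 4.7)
I plan to prove (b) first, and then to reduce (a) to a non-vanishing check.

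For (b), the coproduct $\Delta$ is computed via restriction to two disjoint open balls $U_1, U_2 \subset \CC^2$ (see \S\ref{subsec:R-bialg}). The decisive geometric observation is that a coherent sheaf with single-point support has connected support, so any object of $\Coh_{\Ipt}^{(n)}(U_1 \sqcup U_2)$ is supported entirely in $U_1$ or entirely in $U_2$. Under the decomposition
\[
\Coh_0^{(n)}(U_1 \sqcup U_2) \,=\, \bigsqcup_{n_1 + n_2 = n} \Coh_0^{(n_1)}(U_1) \times \Coh_0^{(n_2)}(U_2),
\]
this places $\Coh_{\Ipt}^{(n)}(U_1 \sqcup U_2)$ in only the $(n, 0)$ and $(0, n)$ components, so that $\rho_{\CC^2, U_1 \sqcup U_2}(\theta_n) = \theta_n \otimes 1 + 1 \otimes \theta_n$. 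Since the universal sheaf splits along disjoint open sets, $\Ec_n$ restricts on $\Coh_0^{(n_1)}(U_1) \times \Coh_0^{(n_2)}(U_2)$ to the external direct sum $\Ec_{n_1} \oplus \Ec_{n_2}$, and additivity of Chern characters (combined with $ch_i(\Ec_0) = 0$) shows that $ch_i(\Ec_n)$ restricts to $ch_i(\Ec_n) \otimes 1$ on the $(n, 0)$ piece and to $1 \otimes ch_i(\Ec_n)$ on the $(0, n)$ piece. Capping with $\theta_n|_{U_1 \sqcup U_2}$ then yields $\Delta(\theta_{n, i}) = \theta_{n, i} \otimes 1 + 1 \otimes \theta_{n, i}$.

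Given (b), the map $\alpha$ factors through $R(\CC^2)_{\prim}$, which by Corollary \ref{cor:prim}(b) has the same bigraded dimensions as $\Theta$ --- one-dimensional in each bidegree $(n, 2 - 2i)$, $n \geq 1, i \geq 0$, and zero elsewhere. Thus injectivity of $\alpha$ reduces to $\theta_{n, i} \neq 0$ for each such pair. To prove non-vanishing, I plan to use the splitting $C_{n, \Ipt} = \CC^2 \times \NC_n$, which gives $\theta_{n, i} = i_\ast([\CC^2] \otimes (ch_i(\Ec_n) \cap [\NC_n]))$ by the projection formula, and combine it with equivariant localization for the torus $T = (\CC^\ast)^2$ acting on $\CC^2$. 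The $T$-fixed points of $\Coh_0^{(n)}(\CC^2)$ are the monomial ideals $I_\lambda$ with $\lambda \vdash n$, all supported at the origin and hence lying in $C_{n, \Ipt}$; the fiber $\Ec_n|_{I_\lambda} = \CC[x, y]/I_\lambda$ has non-trivial $T$-equivariant Chern character in every degree, so the equivariant refinement $\theta_{n, i}^T$ is nonzero.

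The chief obstacle will be the descent from the equivariant non-vanishing to the non-equivariant statement, since specialization of the $T$-parameters can a priori kill a nonzero equivariant class. The cleanest way to close this gap is to invoke Proposition \ref{prop:R-dims}: the graded dimension formula fixes $\dim R(\CC^2)^{n}_{2-2i}$ to the dimension of $\Sym(\Theta)^{n}_{2-2i}$, and a leading-term analysis in the length filtration (using that $\theta_{n, i}$ is genuinely of length $n$ and cannot be a product of shorter primitive classes) then forces it to be nonzero. As a safeguard, one may verify non-vanishing directly on the smooth open stratum of $\Coh_\Ipt^{(n)}$ parametrizing semisimple sheaves supported at a single point, where the stack reduces to a classifying stack $B(S_n \ltimes (\CC^\ast)^n)$-fibered over $\CC^2$ and the computation of $ch_i$ of the standard $n$-dimensional representation is explicit.
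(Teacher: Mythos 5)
Your treatment of (b) is essentially the paper's own argument: restrict to two disjoint balls, observe that a sheaf with one-point support lies entirely in one of them, and use the splitting of the tautological sheaf together with additivity of the Chern character; that part is correct.

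For (a) there is a genuine gap, and it is exactly the one you flag but do not close. The difficulty is not producing a nonzero class on the one-point locus; it is showing that the pushforward along the closed embedding $C_{n,\Ipt}/\!/GL_n \hookrightarrow C_n/\!/GL_n$ does not annihilate $ch_i(\Ec_n)\cap[C_{n,\Ipt}]$, and pushforward from a closed substack is not injective on Borel--Moore homology. Neither of your proposed repairs addresses this. First, the dimension formula $R(\CC^2)\simeq\Sym(\Theta)$ combined with Milnor--Moore only says that the space of primitives in bidegree $(n,2-2i)$ is one-dimensional, i.e.\ that \emph{some} nonzero primitive exists there; it gives no information on whether the particular geometric class $\theta_{n,i}$ realizes it rather than being zero (zero is also primitive, also of length degree $n$, and also ``not a product of shorter classes''), so no leading-term argument in the length grading is available. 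Second, the ``smooth open stratum of semisimple one-point sheaves'' does not exist: a semisimple sheaf supported at a single point $p$ is $\Oc_p^{\oplus n}$, corresponding to the pairs of scalar matrices $\CC^2\times\{0\}\subset\CC^2\times \NC_n=C_{n,\Ipt}$, i.e.\ the deepest \emph{closed} stratum, with stabilizer all of $GL_n$; and in any case restriction to open substacks cannot detect whether a class pushed forward from a closed substack survives. (Also, the $(\CC^\times)^2$-fixed objects of $\Coh_0^{(n)}(\CC^2)$ are all $\ZZ^2$-graded length-$n$ modules, not just the monomial ideals, and, as you note yourself, equivariant non-vanishing does not descend.) The paper closes this gap with a substantive external input: Davison's cohomological integrality theorem \cite{D}, which expresses $\bigoplus_n H^\bullet_c(C_n/\!/GL_n)$ and $\bigoplus_n H^\bullet_c(C_{n,\Ipt}/\!/GL_n)$ compatibly as symmetric algebras on a BPS sheaf supported on the small diagonal $\CC^3\subset\Mc(Q_3)_n$; comparing the corresponding direct summands shows the relevant map is injective on the summand carrying $\theta_{n,i}$, whence $\theta_{n,i}\neq 0$. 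An argument of this strength (or some explicit pairing detecting the image class in $H^\BM_\bullet(C_n/\!/GL_n)$) is required, and your proposal as written does not supply one.
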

 
 \begin{proof} 
The claim (a) follows from  \cite[thm.~C]{D} and the explicit computations in \cite[\S 5]{D} in the case of the Jordan quiver.
More precisely, let $Q_g$ be the quiver with one vertex and $g$ loops.
For each integer $n\geqslant 0$, 
let $\Mc(Q_g)_n$ be the coarse moduli space of semisimple $n$-dimensional 
representations  of $\CC Q_g$, i.e., the categorical quotient of $(\frakg\frakl_n)^g$ by the adjoint action of $GL_n$.
We'll abbreviate $\Mc(Q_g)=\bigsqcup_{n\geqslant 0}\Mc(Q_g)_n.$
The direct sum of representations yields a finite morphism $\Mc(Q_g)\times\Mc(Q_g)\to\Mc(Q_g)$,
hence a symmetric monoidal structure on the category $\Perv(\Mc(Q_g))$ of perverse sheaves on $\Mc(Q_g)$, which allows
to consider the $n$-th symmetric power $\Sym^n(\Ec)$ for any object $\Ec$ in $\Perv(\Mc(Q_g))$.
Let $\Sym(\Ec)=\bigoplus_{n\geqslant 0}\Sym^n(\Ec)$. 
Set $g=3$ and fix an embedding $Q_2\subset Q_3$. By \cite{D}, we have
\begin{align}\label{diag11}\begin{split}
\bigoplus_{n\geqslant 0}H^\bullet_c( C_n/\!/GL_n)&=H_c^\bullet\big(\Mc(Q_3)\,,\,
\Sym\big( \Bc\Pc\Sc\otimes H_c^\bullet(B\CC^\times)\big) \big)\\
\bigoplus_{n\geqslant 0}H^\bullet_c( C_{n,\Ipt}/\!/GL_n)&=H_c^\bullet\big(\Mc(Q_3)_{\Ipt}\,,\,
\Sym\big( \Bc\Pc\Sc\otimes H_c^\bullet(B\CC^\times)\big)\big)
 \end{split}
\end{align}
where $\Bc\Pc\Sc=\bigoplus_{n>0} \Bc\Pc\Sc_n$ 
and $\Bc\Pc\Sc_n$ is, up to some shift, the constant sheaf supported on the small diagonal
$\CC^3\subset \Mc(Q_3)_n$.
For each $n$, the closed subset $\Mc(Q_3)_{n,\Ipt}\subset\Mc(Q_3)_n$ is the coarse moduli space of 
semisimple representations of $\CC Q_3$
for which the underlying $\CC Q_2$-module has a punctual support in $\CC^2$.
We have $$\CC^3\subset \Mc(Q_3)_{n,\Ipt}\subset\Mc(Q_3)_n.$$
Taking the direct summand in \eqref{diag11}
$$\Bc\Pc\Sc_n\otimes H_c^\bullet(B\CC^\times)\subset\Sym\big( \Bc\Pc\Sc\otimes H_c^\bullet(B\CC^\times)\big),$$
we get the commutative diagram
$$\xymatrix{
H_c^\bullet\big(\Mc(Q_3)_n\,,\,\Bc\Pc\Sc_n\otimes H_c^\bullet(B\CC^\times)\big)^*\ar@{^{(}->}[r]&H_\bullet^\BM(C_n/\!/ GL_n)\\
H_c^\bullet\big(\Mc(Q_3)_{n,\Ipt}\,,\,\Bc\Pc\Sc_n\otimes H_c^\bullet(B\CC^\times)\big)^*\ar@{^{(}->}[r]\ar[u]^-f
&H_\bullet^\BM(C_{n,\Ipt}/\!/ GL_n).\ar[u]^-h
}$$
The map $f$ is invertible, and
$h$ is the pushforward 
by the closed embedding
$C_{n,\Ipt}\subset C_n$.
We deduce that  the class
$ch_i(\Ec_n)\cap[C_{n,\Ipt}]$ is non-zero
in $H_{2-2i}^{\BM}(C_{n,\Ipt}/\!/ GL_n)$ and that its image by $h$
is non-zero. This image is equal to the class $\theta_{n,i}$.

\smallskip

To prove (b), given an open $U\subset \CC^2$, we define, in the same way as before,  elements
 \[
 \theta_{n,i}(U)\,\in\, R^{n, 2-2i}(U) \,=\, H_{2-2i}^{\BM}(C_{n}(U)/\!/ GL_n(\CC)).
 \]
 For $U'\subset U$ we have
 \[
 \rho_{U, U'}(\theta_{i,n}(U)) \,=\,\theta_{n,i}(U'). 
 \]
 For $U=U_1\sqcup U_2$ being a disjoint union of two opens, a
 length $n$ $0$-dimensional sheaf $\Fc$ on $U$ consists of two sheaves $\Fc_i$ on $U_i$
    of lengths $n_i$, $i=1,2$ such that $n_1+n_2=n$. This can be expressed by saying that
  \begin{align}\label{factC}
    C_n(U_1\sqcup U_2) \,=\bigsqcup_{n_1+n_2=n} 
    \Big(GL_n(\CC)\times_{GL_{n_1, n_2}(\CC)}\big(C_{n_1}(U_1) \times C_{n_2}(U_2)\big)\Big),
   \end{align}
    from which we deduce  the following identification
 \begin{align}\label{factR}
 R^n(U) \,=\, \bigoplus_{n_1+n_2=n} R^{n_1}(U_1) \otimes R^{n_2}(U_2),
 \end{align}
 Let $\Ec_{n,U}$ be the tautological sheaf of $C_n(U)$ and similarly for $U_1$, $U_2$. With
    respect to the  identification \eqref{factC}, we have
     \[
   \Ec_{n,U} \,=\bigsqcup_{n_1+n_2=n} \bigl( \Ec_{n_1,U_1} \boxtimes \Oc \, \oplus \, \Oc\boxtimes 
   \Ec_{n_2,U_2}\bigr). 
    \]
    Thus, the additivity of the Chern character gives
    \begin{align}\label{factCh}
   ch_i(\Ec_{n,U}) \,=\sum_{n_1+n_2=n} \bigl( ch_i(\Ec_{n_1, U_1}) \otimes 1 \, + \, 1\otimes 
   ch_i(\Ec_{n_2, U_2})\bigr),\quad\forall i\geqslant 0. 
    \end{align}
Since, under the identification \eqref{factR}, we have
 \[
 \theta_{n}(U) \,=\, \theta_{n}(U_1)\otimes 1 + 1\otimes \theta_{n}(U_2)
 \]
 we deduce that we have also
 \[
 \theta_{n,i}(U) \,=\, \theta_{n,i}(U_1)\otimes 1 + 1\otimes \theta_{n,i}(U_2), \quad\forall i\geqslant 0.
 \]
 Our statement follows from this and from the definition of $\Delta$ via $\rho$. 
 \end{proof}
 
 \smallskip
 
 \begin{cor}\label{cor:prim}
 The space $R(\CC^2)_\prim$ of primitive elements of $R(\CC^2)$ coincides with the image $\alpha(\Theta)$.
 It is closed under the commutator $[a,b]=ab-ba$. 
 \qed
 \end{cor}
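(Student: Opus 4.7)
The plan is to reduce this corollary to a graded dimension count, using results already established in \S \ref{subsec:R-bialg} and \S \ref{subsec:exp-prim}. The inclusion $\alpha(\Theta)\subseteq R(\CC^2)_\prim$ is already the content of part (b) of the preceding proposition, so only the reverse inclusion and the Lie-closure require attention.

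First I would note that $\alpha$ is a morphism of $\ZZ^2$-graded vector spaces by the very definition \eqref{eq:alpha}: the element $t^n q^{i-1}\in\Theta$ sits in bidegree $(n,2-2i)$, which matches the bidegree of $\theta_{n,i}$ recorded in \eqref{eq:theta-n-i}. By part (a) of the preceding proposition, $\alpha$ is injective, so $\alpha(\Theta)$ has the same $\ZZ^2$-graded Hilbert series as $\Theta$. On the other hand, from Corollary \ref{cor:prim} in \S \ref{subsec:R-bialg} (Milnor--Moore plus PBW together with Proposition \ref{prop:R-dims}) we have $R(\CC^2)_\prim\simeq\Theta$ as $\ZZ^2$-graded vector spaces. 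Since each graded piece of $\Theta$ is finite-dimensional, the injection $\alpha(\Theta)\hookrightarrow R(\CC^2)_\prim$ between two $\ZZ^2$-graded vector spaces of equal and finite-dimensional graded components must be an isomorphism in every bidegree, giving the desired equality $\alpha(\Theta)=R(\CC^2)_\prim$.

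For the second assertion, I would simply invoke the general fact that in any Hopf algebra $(H,m,\Delta)$ the subspace of primitive elements is closed under the commutator bracket $[a,b]=ab-ba$: if $\Delta(a)=a\otimes 1+1\otimes a$ and similarly for $b$, a direct computation in $H\otimes H$ shows $\Delta(ab-ba)=(ab-ba)\otimes 1+1\otimes(ab-ba)$. Applied to $H=R(\CC^2)$ with the cocommutative Hopf algebra structure constructed in \S \ref{subsec:R-bialg}, this yields that $R(\CC^2)_\prim$ is a Lie subalgebra, and via the identification just proved, so is $\alpha(\Theta)$.

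There is essentially no obstacle here beyond bookkeeping; the only mildly delicate point is making sure that the bidegrees of the $\theta_{n,i}$ in $R^{n,2-2i}(\CC^2)$ really line up with those of the monomials $t^n q^{i-1}$ in $\Theta$, so that the dimension count is performed bidegree by bidegree rather than just on total dimension (which would be infinite in each fixed length $n$ without the homological grading cutoff).
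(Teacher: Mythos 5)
Your argument is correct and matches what the paper intends: the corollary is stated without proof precisely because it follows, as you say, from the primitivity and injectivity established in the preceding proposition together with the graded identification $R(\CC^2)_\prim\simeq\Theta$ from the Milnor--Moore/PBW corollary of \S\ref{subsec:R-bialg}, plus the standard fact that primitives in a Hopf algebra form a Lie subalgebra. Your remark about comparing dimensions bidegree by bidegree (each piece of $\Theta$ being one-dimensional) is exactly the right bookkeeping.
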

 
 \smallskip
 
Theorem \ref{thm:flat} is proved. 
The symmetrized product map $\sigma: \Sym(\Theta) \to R(\AAA^2)$ 
is considered in details in \eqref{eq:sym-prod1} below.

\medskip

\section{The COHA of a surface $S$ and factorization  homology }\label{sec:global-Hec}

\subsection {Statement of results} 
Let  $S$ be an arbitrary smooth quasi-projective surface and $R(S)= H_\bullet^\BM(\Coh_0(S))$
be the corresponding cohomological Hall algebra. It is $\ZZ^2$-graded
by (length, homological degree). 
We introduce a global analog of the space $\Theta$ generating the flat COHA $R(\AAA^2)$
from \S \ref{subsec:exp-prim}. Let
\be\label{eq:1-pointed}
S\buildrel p_n\over\lla \Coh^{(n)}_\Ipt(S)\buildrel i_n\over\lra \Coh^{(n)}_0(S)
\ee
be the stack of 1-pointed,  length $n$ sheaves on $S$ with its canonical closed 
embedding $i_n$ into $\Coh^{(n)}_0(S)$
and projection $p_n$  to $S$ 
(so $p_n(\Fc)$ is the unique support point of $\Fc$). Proposition \ref{prop:NC-dim} 
implies that $p_n$
is a morphism  with all fibers irreducible of relative dimension $(-1)$ and therefore
$\Coh^{(n)}_\Ipt(S)$
 is irreducible of dimension $+1$.
 Moreover, we have  a natural fundamental class in $H_2^\BM(\Coh^{(n)}_\Ipt(S))$
 constructed as follows. 
 
 We consider the open subscheme
$\FCoh^{(n)}_0(S):=Quot^{(n,0)}(S)$  of the quot-scheme formed by equivalence classes 
of  surjections $\phi:\Oc^{n} \to\Fc$ with $\Fc\in \Coh^{(n)}_0(S)$ such that $\phi$
induces an isomorphism $\CC^{n} \to H^0(S,\Fc)$.
Let $\FCoh^{(n)}_\Ipt(S)\subset \FCoh^{(n)}_0(S)$ be the closed subscheme formed by equivalence classes of 
$\phi$ such that $\Fc$ is a $1$-pointed sheaf. 
Then, the stack $\Coh^{(n)}_0(S)$ is isomorphic to the quotient stack  $\FCoh^{(n)}_0(S)/\!/GL_{n}$
and $\Coh^{(n)}_\Ipt(S)$ is isomorphic to the quotient stack $\FCoh^{(n)}_\Ipt(S)/\!/ GL_n$. 
Further, we have the projection $\FCoh^{(n)}_\Ipt(S)\to S$ with fibers being identified with
the variety $\NC_n$ of pairs of nilpotent commuting matrices, see \S \ref{subsec:exp-prim}.
Since this variety is irreducible of dimension $n^2-1$,  the scheme $\FCoh^{(n)}_\Ipt(S)$
is an irreducible variety of dimension $n^2+1$ and has the fundamental class in
$H_{2(n^2+1)}^\BM (\FCoh^{(n)}_\Ipt(S))$. So the quotient  stack by $GL_n$
has the fundamental class in  $H_2^\BM(\Coh^{(n)}_\Ipt(S))$.

Therefore we have the pullback map
$p_n^\dagger$ given by the composition
\be\label{eq:p-n-dagger}
H^\BM_{\bullet}(S)=H^{4-\bullet}(S)\buildrel p_n^*\over\lra H^{4-\bullet}(\Coh^{(n)}_\Ipt(S))
\lra H_{\bullet-2}^\BM(\Coh^{(n)}_\Ipt(S)),
\ee
where the last arrow is the cap-product with the fundamental class of $\Coh^{(n)}_\Ipt(S)$.

 Define the subspace
\[
\Theta_n(S) \,=\, i_{n*} p_n^\dagger H^\BM_{\bullet}(S) \,\subset \, H_{\bullet-2}^\BM(\Coh^{(n)}_0(S))\,=\,
R^n(S). 
\]
Let $\Ec_n$ denote also the tautological sheaf on $\Coh^{(n)}_0(S)$ and further put, for $i\geq 0$,
\[
\Theta_{n,i}(S)\,=\, \Theta_n(S)\cap ch_i(\Ec_n) \,\subset \, R^n(S). 
\]

\begin{prop}\label{prop:Theta-good}
The canonical map $H_\bullet^\BM(S)\to \Theta_{n,i}(S)$ is an isomorphism.
\end{prop}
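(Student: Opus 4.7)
The map in question, by the projection formula, equals $x\mapsto i_{n*}\bigl(p_n^*x\cap i_n^*ch_i(\Ec_n)\bigr)$. Surjectivity onto $\Theta_{n,i}(S)$ is immediate from the definition, so the task is to prove injectivity. My plan is to decompose the map into three factors and establish injectivity of each in turn, ultimately reducing to the flat case $S=\AAA^2$ treated in \S\ref{sec:local-Hec}.

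I would first analyze the smooth morphism $p_n:\Coh^{(n)}_\Ipt(S)\to S$ of relative complex dimension $-1$. Over each $x\in S$ the fiber is the stack of length-$n$ sheaves concentrated at $x$; a choice of analytic local coordinates identifies this fiber with the universal stack $B:=\NC_n/\!/GL_n$, and two such choices differ by the natural $GL_2(\CC)$-action on $B$ induced by linear coordinate changes. Since $GL_2(\CC)$ is connected, this action is trivial on $H^\bullet(B)$, so the higher direct image sheaves $R^qp_{n*}\underline\k$ are all constant on $S$. From this one obtains a relative K\"unneth isomorphism
\[
H^\BM_\bullet(\Coh^{(n)}_\Ipt(S))\,\simeq\, H^\BM_\bullet(S)\otimes H^\BM_\bullet(B)
\]
compatible with the tautological sheaf $i_n^*\Ec_n$, which is pulled back from $B$. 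Under this decomposition $p_n^*x = x\otimes[B]$ and $i_n^*ch_i(\Ec_n)=1\otimes ch_i(\Ec_n|_B)$, hence
\[
p_n^*x\cap i_n^*ch_i(\Ec_n) \,=\, x\otimes\theta^{\mathrm{loc}}_{n,i},\qquad\text{where }\theta^{\mathrm{loc}}_{n,i}\,:=\,[B]\cap ch_i(\Ec_n|_B).
\]
Non-vanishing of $\theta^{\mathrm{loc}}_{n,i}$ is essentially the Davison-based computation used in \S\ref{subsec:exp-prim} to show $\theta_{n,i}\neq 0$ in $R(\AAA^2)$. Consequently $x\mapsto p_n^*x\cap i_n^*ch_i(\Ec_n)$ is injective on $H^\BM_\bullet(S)$.

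It remains to show that the pushforward by $i_n$ stays injective on the image produced by the previous step. I would argue by locality in the complex topology: for any open ball $U\subset S$, functoriality yields a commutative square relating the canonical map on $S$ to its analog on $U$ via the restriction $\rho_{S,U}$ of Proposition \ref{prop:R(U)-properties}; the analog on $U\cong\AAA^2$ is already an isomorphism, again because $\theta_{n,i}\neq 0$ in $R(\AAA^2)$. Hence any class killed by the map on $S$ must restrict to zero on every ball, and combined with a Mayer--Vietoris-type argument for BM homology this forces the class itself to vanish.

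The main obstacle will be making the relative K\"unneth decomposition completely rigorous for these singular Artin stacks (whose fibers have negative virtual dimension), and also ensuring the compatibility of the analytically-defined $R^n(U)$ of \S\ref{subsec:R-bialg} with the global algebro-geometric construction. These compatibilities are precisely the factorization-coalgebra structure that will underlie the proof of Theorem \ref{thm:PBW} in the sequel.
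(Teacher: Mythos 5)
Your overall strategy (reduce to injectivity, split off the fibre direction, then handle the pushforward $i_{n*}$ by locality) has a genuine gap at its crucial last step. Injectivity of $i_{n*}$ on the image is exactly the hard point, and your argument for it does not work: for an open ball $U\subset S$ one has $H^\BM_k(U)=0$ for $k\neq 4$, so every class in $H^\BM_\bullet(S)$ of homological degree $<4$ (a point class, the class of a curve, \dots) restricts to zero on \emph{every} ball. Hence "killed by the map on $S$ implies restricts to zero on every ball" carries no information about such classes, and there is no naive Mayer--Vietoris principle for Borel--Moore homology that lets you conclude the class vanishes from its ball restrictions. To make a gluing argument of this kind honest you would need the chain-level cosheaf/factorization-coalgebra machinery of \S\ref{sec:global-Hec} (descent for $\Rc$ together with a sheaf-level model $\omega_S\otimes\Theta'$ of the source), i.e.\ essentially the proof of Theorem \ref{thm:PBW} -- which is developed \emph{after}, and stated using, Proposition \ref{prop:Theta-good}; so as written the step is either unfounded or circular.

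There is also a secondary problem in the first step: $\Coh^{(n)}_\Ipt(S)\to S$ is not globally a product, and constancy of the local systems $R^qp_{n*}\ul\k$ (your $GL_2$-connectedness argument) does not by itself give a K\"unneth splitting of $H^\BM_\bullet(\Coh^{(n)}_\Ipt(S))$ -- the Leray-type spectral sequence may have differentials -- nor is $i_n^*\Ec_n$ literally pulled back from the fibre $\NC_n/\!/GL_n$; at best $i_n^*ch_i(\Ec_n)$ agrees with $1\otimes ch_i$ modulo terms of positive degree along $S$, so even this step needs a filtration/leading-term argument rather than the clean tensor formula you write. The paper avoids both issues by a completely different mechanism: writing $\Coh^{(n)}_0(S)=\FCoh^{(n)}_0(S)/\!/GL_n$ with $\FCoh^{(n)}_0(S)$ an open piece of a Quot scheme, it applies torus localization for a maximal torus $T\subset GL_n$, whose fixed locus is $S^n$ (and $S\subset\FCoh^{(n)}_\Ipt(S)$ on the one-point locus). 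After inverting $H^\bullet_{GL_n}$ the pushforwards from the fixed loci become isomorphisms, the comparison map factors through the diagonal $H^\BM_\bullet(S)\otimes H^\bullet_{GL_n,loc}\hookrightarrow (H^\BM_\bullet(S^n)\otimes H^\bullet_T)^{\frakS_n}_{loc}$, which is visibly injective, and injectivity of $i_{n*}p_n^*$ follows with no global decomposition of $\Coh^{(n)}_\Ipt(S)$ and no gluing over balls. I would redo your argument along those localization lines.
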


\begin{proof}
As before, we use the 
  subscheme
$\FCoh^{(n)}_0(S)$ whose quotient stack by $GL_n$ is $\Coh^{(n)}_0(S)$. 
Let $T\subset GL_n$ be a maximal torus.
Then, the fixed points locus $\FCoh^{(n)}_0(S)^T$ is isomorphic to $\FCoh^{(1)}_0(S)^n=S^n$.
Thus, we have a commutative diagram
$$\xymatrix{H_\bullet^\BM(S)\ar[r]^-{p_n^*}\ar@{^{(}->}[rd]_-{a}&
H_\bullet^{\BM,GL_n}(\FCoh^{(n)}_{\Ipt}(S))_{loc}
\ar[r]^-{i_{n*}}&H_\bullet^{\BM,GL_n}(\FCoh^{(n)}_0(S))_{loc}\\
&H_\bullet^\BM(S)\otimes H^\bullet_{GL_n,loc}\ar@{^{(}->}[r]^-\Delta\ar[u]_-{b}&
(H_\bullet^\BM(S^n)\otimes H^*_{T})^{\frakS_n}_{loc}\ar[u]_-c,
}$$
where $H^\bullet_G=H^\bullet(BG)$ and  the subscript $loc$ means the tensor product by
the fraction field $H^\bullet_{GL_n,loc}$ of $H^\bullet_{GL_n}$ over $H^\bullet_{GL_n}$.
The maps $b$, $c$ are the pushforward by the closed
embeddings $S\subset \FCoh^{(n)}_{\Ipt}(S)$ and $S^n\subset \FCoh^{(n)}_{0}(S)$,
which are invertible
by the localization theorem in equivariant cohomology. The map $\Delta$ is the diagonal embedding.
It is injective.
The map $a$ is equal to $\Id\otimes 1$, up to the cap-product by an invertible element in
$H^\bullet(S)\otimes H^\bullet_{GL_n,loc}$.
It is injective. We deduce that the map 
$$i_{n*} p_n^*: H_\bullet^\BM(S)\to H_\bullet^{\BM,GL_n}(\FCoh^{(n)}_{\Ipt}(S))$$ is injective as well.
\end{proof}

\smallskip

We define
$$\Theta(S) \,=\, \bigoplus_{n,i} \Theta_{n,i}(S) \,\subset\, R(S). $$
Thus, for $S=\AAA^2$ we have that $\Theta({\AAA^2})$ is identified with the graded space $\Theta$
from \eqref{eq:space-V}, embedded into $R$ by the map $\alpha$ as in \eqref{eq:alpha}. 
We recall that $H_\bullet^\BM(\AAA^2)$ is 1-dimensional, concentrated in homological degree $4$.
Thus shifting the grading by putting
\be\label{eq:Theta'}
\Theta'\,=\, \Theta[0,-4]\,=\, qt\cdot \k[q, t],
\ee
we have by Proposition \ref{prop:Theta-good}, an identification of $\ZZ^2$-graded vector spaces
\[
\Theta(S) \,\simeq \, H_\bullet^\BM(S)\otimes \Theta'\,\simeq \, H_\bullet^\BM(S/\!/\CC^\times)\otimes t\k[t]. 
\]
We now consider the symmetrized product map $\sigma = \sigma: \Sym(\Theta(S)) \to R(S)$ defined as
\be\label{eq:sym-prod1}
\sigma = \sum_{n\geq 0} \sigma_{n}, \quad \sigma_{n}: \Sym^n(\Theta(S)) \lra R(S), 
\quad
\sigma_{n}(v_1\bullet\cdots\bullet v_n) \,={1\over n!} \sum_{s\in S_n} v_{s(1)} *\cdots * v_{s(n)}.
\ee
Here $\bullet$ is the product in the symmetric algebra and $*$ is the Hall multiplication. 
The second main result of this paper is a version of the Poincar\'e-Birkhoff-Witt theorem
for $R(S)$ which allows us to commute its graded dimension.  It is proved in the next sections.

\begin{thm}\label{thm:PBW}
 $\sigma: \Sym(\Theta(S)) \to R(S)$  is an isomorphism 
 of $\ZZ^2$-graded
vector spaces. \qed
\end{thm}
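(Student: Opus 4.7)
The plan is to globalize the flat case (Theorem \ref{thm:flat}) via factorization cohomology, as outlined in \S 0.3. For every open subset $U\subset S$ in the complex topology, we have the local algebra $R(U) = H_\bullet^\BM(\Coh_0(U))$ together with its generating subspace $\Theta(U) \subset R(U)$ defined exactly as in the global case. By Proposition \ref{prop:R(U)-properties}(c) one has $R(U_1 \sqcup U_2) \simeq R(U_1) \otimes R(U_2)$, and the additivity of the Chern character (compare \eqref{factCh}) gives the analogous decomposition $\Theta(U_1\sqcup U_2) \simeq \Theta(U_1)\oplus\Theta(U_2)$. Hence both assignments $U \mapsto R(U)$ and $U \mapsto \Sym(\Theta(U))$ upgrade to factorization coalgebras on $S$ valued in $\ZZ^2$-graded vector spaces.

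I would next construct a compatible family of maps $\sigma_U: \Sym(\Theta(U)) \to R(U)$ given by the symmetrized Hall product \eqref{eq:sym-prod1}, and verify that they assemble into a morphism of factorization coalgebras. The crucial point, which bypasses the non-commutativity of the global Hall product, is that elements of $\Theta(U_1)$ and $\Theta(U_2)$ with disjoint supports commute in $R(U_1 \sqcup U_2)$: under the identification $R(U_1\sqcup U_2) \simeq R(U_1)\otimes R(U_2)$ the Hall product restricts to the tensor-product multiplication, so the symmetrization is unambiguous on configurations of disjoint balls. For $U$ homeomorphic to a $4$-ball, Proposition \ref{prop:R(U)-properties}(b) identifies $R(U)$ with $R(\AAA^2)$ compatibly with $\Theta$, and Theorem \ref{thm:flat} then implies that the local $\sigma_U$ is an isomorphism. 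Thus $\sigma$ is a local equivalence of factorization coalgebras.

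To conclude that $\sigma$ is a global isomorphism, I would invoke descent for factorization (co)homology: a morphism of factorization coalgebras on $S$ which is an equivalence on a basis of opens (here, balls) is an equivalence on global sections. Concretely, $R(S)$ is computed from a good cover of $S$ by balls together with the structure maps on pairwise and higher intersections; applying the same computation to $\Sym(\Theta(S))$ yields a matching answer once the isomorphism is known on every ball and on every finite disjoint union of balls. This is cleanest at the cochain level using the dg-enhancement $\Rc(S)$ of the introduction together with the standard framework of \cite{CG, lurie-HA}; one then descends to ordinary graded vector spaces by taking cohomology. Concretely, the expected identification $\Sym(H_\bullet^\BM(S)\otimes\Theta')\simeq R(S)$ is in the same spirit as the Atiyah-Bott formula $H^\bullet(\Bun_G(\Sigma))\simeq\Sym(H_\bullet(\Sigma)\otimes V)$ recalled in \S 0.3.

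The main obstacle will be making the factorization coalgebra structure and the descent argument rigorous at the non-derived level. As the authors emphasize, the isomorphism in question is suggestive of, but not a formal consequence of, non-abelian Poincar\'e duality. A clean implementation therefore requires passing to the cochain lift $\Rc(S)$, proving that it is a homotopy factorization coalgebra in dg-algebras with the claimed local form, constructing $\sigma$ as a morphism of such factorization coalgebras, and only then extracting the underlying isomorphism on $\ZZ^2$-graded vector spaces. The additivity of $ch_i(\Ec_n)$ under disjoint union (which is what produces the primitive/grouplike elements used in \S \ref{subsec:R-bialg}) is what should power the required coalgebra-level matching.
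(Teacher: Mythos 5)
Your plan follows essentially the same route as the paper's proof: the paper upgrades $\Theta$ to the sheaf $\Vc=\omega_S\otimes_\k\Theta'$, lifts the symmetrized product to a morphism $\wt\sigma:\Sym(\Vc)\to\Rc$ of factorization coalgebras at the cochain level (using that $\Rc$ is a locally constant factorization coalgebra valued in $E_1$-algebras), checks that $\wt\sigma_U$ is a weak equivalence when $U$ is a disk via Theorem \ref{thm:flat}, and concludes by Proposition \ref{prop:mor-fa-local}. The obstacle you flag --- that the argument is only rigorous after passing to the cochain lift $\Rc$ and the homotopy-coherent ($E_1$) Hall product --- is exactly how the paper proceeds, so your outline matches the actual argument.
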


\smallskip


\subsection{Reminder on factorization algebras}
We follow the approach of \cite{CG}  and \cite{ginot}. 
Let $(\Cc, \otimes, \1)$ be a symmetric monoidal model category. In particular, it has a class $W$ of weak equivalences.
We will consider three examples:
\begin{itemize}[leftmargin=8mm]
\item[$\mathrm{(a)}$]
$\Cc=\Top$ is the category of topological spaces (homotopy equivalent to a CW-complex),
$\otimes$ is cartesian product, and weak equivalence have the usual topological meaning.
\item[$\mathrm{(b)}$] $\Cc$ is the category of Artin stacks, $\otimes$ is the Cartesian product of stacks and
weak equivalences are equivalences of stacks. 

\item[$\mathrm{(c)}$]
$\Cc= \dgVect$ is the category of cochain complexes, $\otimes$ is the usual tensor product
and weak equivalences are quasi-isomorphisms. 
\end{itemize}
Let $M$ be a $C^\infty$ manifold of dimension $n$.

\begin{Defi}
A {\em prefactorization algebra} on $M$ valued in $\Cc$ is a rule $\Ac$ which associates
      \hfill
\begin{itemize}[leftmargin=8mm]
\item[$\mathrm{(a)}$]
 to any open set $U\subset M$ an object $\Ac(U)\in\Cc$, so that $\Ac(\emptyset) = \1$. 
\item[$\mathrm{(b)}$]
to any system $U_1,\cdots, U_p$ of disjoint open sets contained in an open set $U_0$, a morphism
$\mu_{U_1,\cdots, U_p}^{U_0}: \Ac(U_1) \otimes\cdots \otimes\Ac(U_p) \lra\Ac(U_0),$ such that

\item[$\mathrm{(c)}$]the morphisms $\mu_{U_1,\cdots, U_p}^{U_0}$ satisfy associativity.

\end{itemize}
A {\em morphism of prefactorization algebra} $\sigma: \Ac\to\Ac'$ is   a datum of
morphisms $\sigma_U: \Ac(U) \to\Ac'(U)$ compatible with the structures. It is a
{\em weak equivalence} if each $\sigma_U$ is a weak equivalence.
\end{Defi} 

A prefactorization algebra is, in particular, a \emph{precosheaf}  via the maps $\mu_{U_1}^{U_0}$, i.e., 
it is a covariant functor from the category of open subsets in $M$ to $\Cc$.

\begin{definition}
An open covering of $M$ is called a {\em Weiss covering} if any finite subset of
$M$ is contained in an open set of the covering. 
\end{definition}

\begin{example}\label{ex:fact}
     \hfill
\begin{itemize}[leftmargin=8mm]
\item[$\mathrm{(a)}$]
Let $D\subset \RR^n$ be the standard unit disk $\| x\| < 1$. 
A \emph{disk} in $M$ is an open subset which is homeomorphic to $D$.
The open covering $\Den(M)$ of $M$ generated by the disks of $M$ is a Weiss covering.
By definition, an open subset of $\Den(M)$ consists of a finite disjoint union of disks.

\item[$\mathrm{(b)}$]
A prefactorization algebra is  called \emph{locally constant}, if for any inclusion of disks $U_0\subset U_1$
the map $\mu_{U_1}^{U_0}$ is a weak equivalence.

\end{itemize}
\end{example}

\begin{Defi}
\hfill
\begin{itemize}[leftmargin=8mm]
\item[$\mathrm{(a)}$]
A prefactorization algebra $\Ac$ is called a {\em (homotopy) factorization algebra} if :

\begin{itemize} 
\item[$\mathrm{(a1)}$]
For any Weiss covering $\Uen = \{U_i\}_{i\in I}$
of any open set $U\subset M$ the natural morphism
   \[
   \begin{gathered}
  \hocolim \,  \Nc_\bullet(\Uen, \Ac)   \,\,  \lra  \,\, 
\Ac(U),
   \\
   \xymatrix{
   \Nc_\bullet(\Uen, \Ac) \,\,:=\,\,
  \biggl\{  \cdots   \ar@<.9ex>[r]   \ar@<.3ex>[r]   \ar@<-.3ex>[r]   \ar@<-.9ex>[r] &
\coprod\limits_{i,j,k\in I} \Ac(U_{ijk})  \ar@<.6ex>[r] \ar@<-.6ex>[r] \ar[r] &
\coprod\limits_{i,j\in I} \Ac(U_{ij}) \ar@<.4ex>[r] \ar@<-.4ex>[r] &  
  \coprod\limits_{i\in I} \Ac(U_i) 
\biggr\} 
},
\end{gathered} 
   \]
with $U_{ij}=U_i\cap U_j$, etc., is a weak equivaelnce {\em (co-descent)}. 

\item[$\mathrm{(a2)}$]
$\mu_{U_1,\cdots, U_p}^{U_0}$ is a weak equivalence 
for any system $U_0,\cdots, U_p$ of open sets with \hfill \break
$U_0 = U_1\sqcup\cdots\sqcup U_p$
\emph{(multiplicativity)}.
\end{itemize} 

\item[$\mathrm{(b)}$]
 The {\em factorization homology} of $M$ with coefficients in a factorization algebra $\Ac$ is the object
of global cosections of $\Ac$ which we denote
$$\int_M \Ac \,=\, \Ac(M)\,\in \, \Cc. $$

\end{itemize}
\end{Defi}

\smallskip

\begin{remark}
\hfill
\begin{itemize}[leftmargin=8mm]
\item[$\mathrm{(a)}$]
A multiplicative
prefactorization algebra $\Ac$ 
is a factorization algebra if and only if for the particular  Weiss covering $\Den(U)$ of 
any open subset $U\subset M$, the 
object $\Ac(U)$ is the homotopy colimit of the diagram
 \[
\coprod_{U_1,U_2\in\Den(U)}\Ac(U_1\cap U_2)\rightrightarrows\coprod_{U_1\in\Den(U)}\Ac(U_1).
\]
In particular, we have
$$\int_M \Ac  \, = \, \hocolim_{U\in\frakD(M)} \Ac(U).$$
See \cite[\S A.4.3]{CG} for details.
\item[$\mathrm{(b)}$] 
Any locally constant prefactorization algebra has a unique extension as a locally constant factorization algebra taking 
the same value on any disk, but possibly different values on other open sets, see \cite[rem.~24]{ginot}.

\end{itemize}
\end{remark}

 \smallskip
 
Sometimes it is convenient to use the dual language. 
By a {\em (pre)factorization coalgebra} $\Bc$ in $\Cc$ we will mean a (pre)factorization algebra in $\Cc^\op$. 
Thus, we have maps
\[
\nu_{U_0}^{U_1,\cdots, U_p}: \Bc(U_0) \lra \Bc(U_1) \otimes\cdots \otimes\Bc(U_p)
\]
yieldding a presheaf on $M$. For a factorization coalgebra $\Bc$ we have the
{\em factorization cohomology} which we denote as
\[
\oint_M \Bc \,=\, \Bc(M)  \, = \, \holim_{U\in\frakD(M)} \Bc(U). 
\]

 Let us record the following  two statements for later use.
 
\begin{prop}\label{prop:sym-sh-fa}
If $\Fc$ is a locally constant sheaf  of $\k$-dg-vector spaces, then $\Sym(\Fc): U\mapsto \Sym_\k(\Fc(U))$
 is a locally constant
factorization coalgebra. 
\end{prop}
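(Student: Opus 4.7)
The plan is to equip $\Sym(\Fc)$ with the structure of a prefactorization coalgebra, then verify in turn multiplicativity, local constancy, and finally Weiss codescent. The first three steps are essentially formal; the last is the main technical point.

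First I would build the costructure maps. For disjoint opens $U_1,\ldots,U_p$ contained in $U_0$, the sheaf property of $\Fc$ (applied to the open embedding $U_1\sqcup\cdots\sqcup U_p\hookrightarrow U_0$ together with the cover by connected components) yields a canonical restriction $\Fc(U_0)\to\bigoplus_i\Fc(U_i)$; composing with the standard identification $\Sym(\bigoplus_i V_i)\simeq\bigotimes_i\Sym(V_i)$ of $\k$-dg-vector spaces produces the structure map $\nu^{U_1,\ldots,U_p}_{U_0}\colon\Sym(\Fc)(U_0)\to\bigotimes_i\Sym(\Fc)(U_i)$. Associativity of these maps is routine from functoriality of $\Fc$ together with the monoidal coherence of the identification. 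When $U_0=U_1\sqcup\cdots\sqcup U_p$ is an actual disjoint union, the first map is already an isomorphism by the sheaf property, so $\nu$ is an isomorphism, giving multiplicativity. For local constancy: on nested disks $V\subset U$, the restriction $\Fc(U)\to\Fc(V)$ is a quasi-isomorphism by hypothesis, and in characteristic zero each $\Sym^n$ is a direct summand of $(-)^{\otimes n}$ via the symmetrizer $\tfrac{1}{n!}\sum_{\sigma\in S_n}\sigma$, so $\Sym^n$ preserves quasi-isomorphisms; summing over $n$ yields the required quasi-isomorphism $\Sym(\Fc)(U)\to\Sym(\Fc)(V)$.

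The main obstacle is Weiss codescent: for any Weiss cover $\Uen$ of any open $U\subset M$, the natural map $\Sym(\Fc)(U)\to\holim\Nc^\bullet(\Uen,\Sym(\Fc))$ must be a quasi-isomorphism. My strategy is to refine $\Uen$ to a Weiss cover by finite disjoint unions of small contractible disks, which form a cofinal family in $\Den(M)$. On such a refinement, multiplicativity identifies each term of the cosimplicial object $\Nc^\bullet(\Uen,\Sym(\Fc))$ with $\Sym$ applied term-by-term to the analogous \v{C}ech diagram $\Nc^\bullet(\Uen,\Fc)$ for $\Fc$, since each multi-intersection is a disjoint union of disks. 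Local constancy of $\Fc$ then makes each disk-value a pro-constant diagram on connected components, so the relevant $\holim$ involves only finite tensor products of fixed objects and $\Sym$ commutes with it. The result then reduces to the sheaf-theoretic descent $\Fc(U)\simeq\holim\Nc^\bullet(\Uen,\Fc)$, which holds because $\Fc$ is a sheaf. Equivalently, one may invoke the remark following Example \ref{ex:fact}: steps 1--3 show $\Sym(\Fc)$ is a locally constant prefactorization coalgebra, hence extends uniquely to a locally constant factorization coalgebra, and the same disk-refinement identifies this extension with $\Sym(\Fc)$ on all opens. In either approach, the delicate point is commuting $\Sym$ past $\holim$, which relies essentially on the locally constant hypothesis and would fail for an arbitrary sheaf.
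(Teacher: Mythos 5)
Your construction of the costructure maps, the multiplicativity for genuine disjoint unions, and the fact that each $\Sym^n$ preserves quasi-isomorphisms in characteristic zero are all fine; these are the formal parts. The gap is in the Weiss codescent step, which is the actual content of the proposition. Your argument there never uses the Weiss property of the cover: after refining to finite disjoint unions of disks you assert that local constancy makes the terms ``pro-constant'', that $\Sym$ therefore commutes with the homotopy limit of the \v{C}ech cosimplicial object, and that the statement then ``reduces to the sheaf-theoretic descent $\Fc(U)\simeq\holim\Nc^\bullet(\Uen,\Fc)$''. None of this is justified: $\Sym=\bigoplus_n\Sym^n$ is an infinite coproduct of nonlinear functors, and commuting it past a homotopy limit is exactly the delicate point, not something granted by local constancy. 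Worse, if the reduction to ordinary sheaf descent of $\Fc$ on $U$ were valid, the same reasoning would apply to any open cover by disjoint unions of disks and would show that $\Sym(\Fc)$ is a homotopy sheaf --- which it is not (see the remark immediately following the proposition in the paper). The same objection hits your alternative route through the unique locally constant extension of a prefactorization coalgebra: identifying that extension with $\Sym(\Fc)$ on all opens is precisely what has to be proved, and your identification rests on the same unjustified commutation.

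The paper's proof (modelled on Costello--Gwilliam, Thm.~5.2.1) repairs this by working one symmetric power at a time. Call a cover $\Uen$ an $n$-Weiss cover if every subset of $M$ of cardinality $\leq n$ lies in some member; a Weiss cover is $n$-Weiss for every $n$, so it suffices to prove that $\Sym^n(\Fc)$ satisfies descent for $n$-Weiss covers. The key is that $\Sym^n_\k(\Fc(U))$ is controlled by the external power $\Fc^{\boxtimes n}$, which \emph{is} an honest sheaf on $M^n$, and the $n$-Weiss condition is exactly what guarantees that $\{U_i^n\}$ is an ordinary open cover of $U^n$: for any $(x_1,\dots,x_n)\in U^n$ the finite set $\{x_1,\dots,x_n\}$ lies in some $U_i$. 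So the correct reduction is to ordinary sheaf descent of $\Fc^{\boxtimes n}$ on $U^n$, not of $\Fc$ on $U$; this is where the Weiss hypothesis does its work and where your argument needs to be rebuilt. (A smaller point: intersections of disks need not be disjoint unions of disks, so your refinement step also needs a good-cover argument, but that is minor compared to the main gap.)
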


Note that $\Sym(\Fc)$ as we define it,
 is not  the same as the symmetric algebra of $\Fc$ in the symmetric monoidal category of  sheaves
  of (dg-)vector spaces, in fact
it is not a sheaf in the usual sense.

\begin{proof}
This is an analog of \cite[thm.~5.2.1]{CG} which deals with sheaves corresponding to $C^\infty$ sections of vector bundles,
and their symmetric products in the sense of bornological vector spaces. In our case the proof is similar but easier
due to the absense of analytic difficulties.
That is,  call a covering $\Uen$ an $n$-Weiss covering, if each subset $I\subset M$ of cardinality $\leq n$ is contained
in one of the opens of $\Uen$. Then it suffices to show   that $\Sym^n(\Fc): U\mapsto \Sym^n_\k(\Fc(U))$
 satisfies descent for $n$-Weiss coverings. 
This follows, as in the proof of  \cite[thm.~5.2.1]{CG},  from the fact that $\Fc^{\boxtimes n}$ is a sheaf of $M^n$.
\end{proof}

\begin{prop}\label{prop:mor-fa-local}
Let $\sigma:\Bc\to\Bc'$ be a morphism of factorization coalgebras. Suppose that for any disk $U\subset M$ the
morphism $\sigma_U: \Bc(U)\to \Bc(U')$ is a weak equivalence. Then $\sigma$ is a weak equivalence of factorization coalgebras,
in particular, $\sigma$ induces a weak equivalence $\sigma_M: \oint_M\Bc\to\oint_M\Bc'$. 
\end{prop}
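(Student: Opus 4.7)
The plan is to bootstrap the hypothesized equivalence on disks to an equivalence on every open $U\subset M$, by combining the two structural properties of a factorization coalgebra: multiplicativity on disjoint unions, and homotopy descent with respect to Weiss coverings. Setting $U=M$ then gives the statement on factorization cohomology.

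First I would extend the hypothesis from individual disks to arbitrary objects of $\Den(M)$, i.e.~finite disjoint unions of disks. For $V = V_1\sqcup\cdots\sqcup V_p$ with each $V_i$ a disk, multiplicativity for $\Bc$ and $\Bc'$ produces a commutative square
$$\xymatrix{
\Bc(V)\ar[r]^-\sim \ar[d]_-{\sigma_V}& \Bc(V_1)\otimes\cdots\otimes \Bc(V_p)\ar[d]^-{\sigma_{V_1}\otimes\cdots\otimes\sigma_{V_p}}\\
\Bc'(V)\ar[r]^-\sim & \Bc'(V_1)\otimes\cdots\otimes \Bc'(V_p)
}$$
whose horizontal arrows are weak equivalences. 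Each $\sigma_{V_i}$ is a weak equivalence by hypothesis, and tensor product of complexes over the field $\k$ preserves quasi-isomorphisms, so the right vertical arrow, and hence $\sigma_V$, is a weak equivalence.

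Next, for an arbitrary open $U\subset M$, I would invoke the Weiss descent axiom for the covering $\Den(U)$, which expresses $\Bc(U)$ and $\Bc'(U)$ as homotopy limits of cosimplicial \v{C}ech-type diagrams whose terms are tensor products of the values of $\Bc$ (respectively $\Bc'$) on elements of $\Den(U)$; all opens appearing in these diagrams lie again in $\Den(U)$, so by the previous step the map of diagrams induced by $\sigma$ is levelwise a weak equivalence. Since homotopy limits in $\dgVect_\k$ preserve levelwise weak equivalences, the induced map $\sigma_U$ is a weak equivalence, and taking $U=M$ yields the desired equivalence $\sigma_M\colon\oint_M\Bc\to\oint_M\Bc'$.

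I do not anticipate a serious obstacle: the argument is formal once the multiplicativity and Weiss-descent axioms are invoked. The only verifications needed are that $\otimes_\k$ and cosimplicial homotopy limits in $\dgVect_\k$ preserve pointwise weak equivalences, both of which are standard over a field.
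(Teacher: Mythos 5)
Your overall route is the same as the paper's, whose proof is literally the one line ``realize $\sigma_U$ by descent from the Weiss cover $\Den(U)$'': your two steps (multiplicativity over finite disjoint unions of disks, then descent over $\Den(U)$, then $U=M$) spell out exactly that argument.

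One assertion needs correcting, though it does not change the substance. In the \v{C}ech-type diagram attached to the Weiss covering $\Den(U)$, the terms are values on intersections $U_{i_0}\cap\cdots\cap U_{i_k}$ of elements of $\Den(U)$, and an intersection of finite disjoint unions of disks need not again be a finite disjoint union of disks (two disks in a $4$-manifold can intersect in an open set with arbitrary topology, possibly with infinitely many components). So your claim that ``all opens appearing in these diagrams lie again in $\Den(U)$'' is not literally true, and the levelwise equivalence of the \v{C}ech diagrams does not follow directly from the hypothesis on disks. The repair is the form of descent the paper actually records in the Remark after the definition of factorization algebra: $\int_M\Ac=\hocolim_{U\in\frakD(M)}\Ac(U)$ and, dually, $\Bc(U)\simeq\holim_{V\in\frakD(U)}\Bc(V)$, a homotopy limit over the poset $\Den(U)$ itself, in which every index \emph{is} a finite disjoint union of disks and no intersection terms occur. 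With that formulation your argument goes through verbatim: multiplicativity plus the disk hypothesis give a levelwise weak equivalence of these poset-indexed diagrams, homotopy limits in $\dgVect_\k$ preserve levelwise weak equivalences, and taking $U=M$ yields $\sigma_M:\oint_M\Bc\to\oint_M\Bc'$.
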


\begin{proof}
For any open $U$ we realize $\sigma_U$ 
  by descent from the Weiss cover $\Den(U)$. 
\end{proof}

 
 \subsection {Analytic stacks}\label{subsec:anal-stack}
 For the analytic version of the theory of algebraic stacks we follow \cite{porta} (where, in fact, the case
 of higher and derived  stacks is also considered). 
 
 \smallskip
 
  An {\em analytic stack} is a stack of groupoids on
  the category of (possibly singular)
 Stein analytic spaces over $\CC$, equipped with the Grothendieck topology consisting of
 open covers in the usual sense.
 Analytic stacks form a 2-category $\Stang$ as well as a model category $\Stan$ where
 weak equivalences are equivalences of stacks. 
 
 \smallskip
 
 For every scheme $Y$ of finite type over $\CC$ we have the analytic
 space $Y^\an$, the analytification of $Y$. Further, 
 or any Artin stack $X$ over $\CC$ we have the analytic stack
 $X^\an$, the  analytification
 $X^\an$ defined as the Kan extension of $X$  from the category of affine
 schemes of finite type to the category of Stein analytic spaces, see
 \cite{porta} \S 4 or \cite{holstein-porta} \S 3.  
 Note that we have a canonical map
 \be
 \eta_X: R\Gamma(X, \omega_X) \lra R\Gamma(X^\an, \omega_{X^\an}).
 \ee
 If $X=Y$ is 
 a scheme of finite type considered as an Artin stack,
  then $X^\an=Y^\an$ is the corresponding analytic space
 considered as an analytic stack. 
 
  \smallskip
 
 We will need only analytic stacks of special form, namely the {\em quotient analytic stacks} $Z/\!/K$,
 where $Z$ is an analytic space and $K$ is a complex Lie group. For such stacks various concepts
 such as Borel-Moore homology, etc., can be defined directly in terms of equivariant homology of
 the topological spaces of $\CC$-points, that is,
 \be
 H^\bullet(Z/\!/K, \omega_{Z/\!/K}) \,=\, H_{-\bullet}^{\BM, K} (Z(\CC), \CC)
 \ee 
 is the equivariant Borel-Moore homology in the topological sense.

 If $Y$ is a scheme of finite type
 over $\CC$
 and $G$ is an algebraic group over $\CC$ acting on $Y$, then $G^\an$
 is a complex Lie group and we have the quotient analytic stack
 $Y^\an/\!/G^\an$. Note that in this case we have
 \be
 (Y/\!/G)^\an \,\simeq \, Y^\an/\!/ G^\an,
 \ee
 and the map $\eta_{Y/\!/G}$ is a quasi-isomorphism, so that
 \be\label{eq:BM=alg=anal}
 H^\bullet (R\Gamma( (Y/\!/G)^\an , \omega_{(Y/\!/G)^\an})) \,\simeq \,  
 H^\bullet (R\Gamma(Y/\!/G , \omega_{Y/\!/G}))
 \,\simeq \, H_{-\bullet}^{\BM, G(\CC)} (Y(\CC), \CC)
 \ee
 is the equivariant Borel-Moore homology in the topological sense, as above. 
  \smallskip
  
\subsection {The stack $\Coh_0$ and factorization algebras}\label{AnCoh}

Let  $\Sigma$ be  complex analytic  surface. 
We view it as a $C^\infty$ manifold of dimension $4$
 and  consider open subsets $U\subset \Sigma$ in
the complex analytic topology. 
For any such  nonempty $U$  we have  the category $Coh_0(U)$
of $0$-dimensional coherent sheaves on $U$ (with finite support). 
We set $Coh_0(\emptyset)=\{\bullet\}$.
We also have 
the analytic  moduli stack $\Coh_0(U)=\bigsqcup_{n\geq 0} \Coh^{(n)}_0(U)$  parametrizing objects of 
  $Coh_0(U)$, with its components given by the length, as in the algebraic case. 
Each component is explicitly realized as a quotient analytic stack
\[
\Coh_0^{(n)}(U) \,=\,\FCoh_0^{(n)}(U)/\!/ GL_n(\CC),
\]
where $\FCoh_0^{(n)}(U)$ is the analytic space parametrizing pairs $(\Fc, \phi)$, where
$\Fc$ is a $0$-dimensional coherent sheaf on $U$ and $\phi: \CC^n\to H^0(U,\Fc)$ is an isomorphism. 
To see that $\FCoh_0^{(n)}(U)$ is well defined as an analytic space, we note that the datum of $\phi$
is equivalent to the datum of the corresponding surjection $\ul\phi: \Oc_U^{\oplus n} \to \Fc$. 
Thus $\FCoh_0^{(n)}(U)$  is a locally closed analytic subspace in $\Quot^{(n)}(\Oc_U^{\oplus n})$,
the analytic analog of the Grothendieck Quot scheme parametrizing all length $n$ quotients of $\Oc_U^{\oplus n}$. This makes clear the following fact.

\begin{prop}\label{prop:coh-alg=anal}
Let $S$ be a smooth quasi-projective algebraic surface over $\CC$. Then we have
an equivalence of analytic stacks
\[
\Coh_0(S^\an) \= \Coh_0(S)^\an. 
\]
In particular, $\Coh_0(\CC^2)$ is identified with the analytification of the Artin stack
$\Coh_0(\AAA^2)$. \qed
\end{prop}

 \smallskip

If $U_1,\dots,U_n$ are disjoint open sets contained in the open subset $U_0\subset \Sigma$,
then we have an open embedding of analytic stacks
\be\label{eq:fact-Coh}
\alpha_{U_1,\cdots, U_n}^{U_0}: \Coh_0(U_1)\times \cdots \times\Coh_0(U_n) \lra \Coh_0(U_0).
\ee

\begin{prop}\label{prop:fact}
$\Coh_0$ is a factorization algebra 
on $\Sigma$ with values in the category $\Stan$.

\end{prop}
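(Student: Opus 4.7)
My plan for proving Proposition \ref{prop:fact} has two main steps: verifying multiplicativity (axiom (a2)) and then Weiss co-descent (axiom (a1)), both performed in the model category $\Stan$ of analytic stacks of Porta.

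For multiplicativity, given a disjoint decomposition $U_0 = U_1 \sqcup \cdots \sqcup U_p$, I would exhibit an explicit inverse to the map $\alpha_{U_1,\ldots,U_p}^{U_0}$, working with the groupoids of $T$-points for each test Stein space $T$. A $T$-point of $\Coh_0(U_0)$ is a coherent sheaf $\Fc$ on $U_0 \times T$, flat over $T$, with $0$-dimensional fibers and support proper over $T$. Properness of the support over $T$ ensures that it meets each $U_i \times T$ in a subset closed in $U_0 \times T$, so that the restrictions $\Fc_i = \Fc|_{U_i \times T}$ (extended by zero) reassemble to $\Fc$ via $\Fc = \bigoplus_{i=1}^p \Fc_i$. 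This construction and its inverse are manifestly functorial in $T$ and in isomorphisms of sheaves, yielding an equivalence of the groupoids $(\Coh_0(U_1) \times \cdots \times \Coh_0(U_p))(T) \simeq \Coh_0(U_0)(T)$, hence an equivalence of stacks.

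For co-descent, by the remark following the definition of a factorization algebra it is enough to verify the descent condition for the specific Weiss cover $\Den(U)$ generated by open disks. The crucial observation is that any $\Fc \in \Coh_0(U)$ has finite support $\{x_1,\ldots,x_k\} \subset U$, and one can pick pairwise disjoint analytic disks $D_j \ni x_j$ contained in $U$, so that $\Fc$ comes from $\Coh_0(D_1 \sqcup \cdots \sqcup D_k)$, which is an element of $\Den(U)$. In families parameterized by a Stein space $T$, properness of the support over $T$ allows one to cover the base locally and pick such disjoint disks uniformly, showing that the open embeddings $\{\Coh_0(V) \hookrightarrow \Coh_0(U)\}_{V \in \Den(U)}$ form an honest open cover of $\Coh_0(U)$ in the analytic topology. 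The Čech overlaps $\Coh_0(V_i) \times_{\Coh_0(U)} \Coh_0(V_j)$ identify with $\Coh_0(V_i \cap V_j)$, since a sheaf supported in both $V_i$ and $V_j$ has support in their intersection; combined with multiplicativity (applied to $V_i \cap V_j$, which is itself a disjoint union of disks or empty), standard $2$-categorical descent for open covers in $\Stan$ yields the weak equivalence $\hocolim \Nc_\bullet(\Den(U), \Coh_0) \simeq \Coh_0(U)$.

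The main obstacle I expect is justifying that the inclusion $\Coh_0(V) \hookrightarrow \Coh_0(U)$ is indeed an open embedding of analytic stacks for every open $V \subset U$, so that the Čech-type descent apparatus for open covers applies. This reduces to showing that, for a family of $0$-dimensional coherent sheaves over $T$ with support proper over $T$, the locus of $t \in T$ where the support of the fiber is contained in $V$ is open in $T$; this in turn follows from properness of the support combined with the fact that $V \subset U$ is open. Once that is in hand, the rest of the argument is formal and parallel to standard proofs that the moduli stack of objects of a geometric abelian category satisfies analytic descent.
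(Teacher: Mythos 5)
Your proposal is correct, and it rests on the same key observation as the paper's proof (a $0$-dimensional sheaf has finite support, so by the Weiss property it lives on some member of the cover, and the overlap of the substacks $\Coh_0(U_i)\subset\Coh_0(U)$ is exactly $\Coh_0(U_{ij})$), but it is organized differently. The paper does not reduce to the disk cover $\Den(U)$: it takes an arbitrary Weiss cover $\Uen$ and identifies the $2$-categorical colimit of $\Nc_\bullet(\Uen,\Coh_0)$ directly, by describing its points as pairs $(i,\Fc\in Coh_0(U_i))$ modulo the coherent identifications coming from the intersections, and then observing that this is precisely $\Coh_0(U)$; multiplicativity is left implicit (it is the obvious fact that the open embedding \eqref{eq:fact-Coh} is an equivalence when $U_0=U_1\sqcup\cdots\sqcup U_p$, since a sheaf on a disjoint union splits canonically as the direct sum of its restrictions). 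You instead verify multiplicativity explicitly, invoke the remark reducing the codescent axiom to the single Weiss cover $\Den(U)$, and then recognize $\{\Coh_0(V)\}_{V\in\Den(U)}$ as an honest open cover of the analytic stack $\Coh_0(U)$ (openness of the loci via properness of the support over the Stein base, which is the right argument), so that ordinary \v{C}ech descent for open covers of stacks finishes the proof. Your route buys more precision: it makes explicit the points the paper's informal colimit description glosses over (that each $\Coh_0(V)\to\Coh_0(U)$ is an open immersion and that the \v{C}ech fiber products are $\Coh_0(V_i\cap V_j)$, which should also be recorded for triple and higher overlaps before citing $2$-categorical descent); the paper's route buys uniformity, treating all Weiss covers at once without passing through the multiplicativity-plus-$\Den(U)$ criterion. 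Note also that, strictly speaking, the covering and overlap identifications you prove for $\Den(U)$ hold verbatim for an arbitrary Weiss cover, so your descent step could be run directly in that generality, recovering the paper's argument without appeal to the reduction remark.
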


\begin{proof}
Let $\Uen = \{U_i\}_{i\in I}$ be a Weiss open cover of $U$. Let us understand more explicitly the 
 analytic stack $\hocolim \, \Nc_\bullet(\Uen, \Coh_0)$, a homotopy colimit in the model category $Stan$, or, equivalently,
 the 2-categorical colimit of $\Nc_\bullet(\Uen, \Coh_0)$ in the 2-category $\Stang$. 
It is parametrized by pairs $(i\in I, \Fc\in Coh_0(U_i))$, 
the leftmost term  in the diagram $\Nc_\bullet(\Uen, \Coh_0)$,  subject to coherent systems of
identifications given by the rest of the diagram. These identifications say that two pairs  $(i\in I, \Fc\in Coh_0(U_i))$ and
$(j\in J, \Fc\in\Coh_0(U_j))$ are identified, whenever in the second pair 
 $\Fc$ is \emph{the same sheaf but living on $U_j$}.
This happens whenever $\Fc$ lives in fact on $U_{ij}=U_i\cap U_j$.
Further terms in the diagram $\Nc_\bullet(\Uen, \Coh_0)$ impose coherence conditions on such identifications.
This means that this homotopy colimit parametrizes $0$-dimensional
coherent sheaves which live {\em on some $U_i$}. But $\Uen$ is a Weiss cover and  every $\Fc\in Coh_0(U)$,
has finite support which, therefore, must lie in some  $U_i$. Thus, our homotopy colimit
is identified with $\Coh_0(U)$. 
\end{proof}


\subsection{Chain-level COHA as a factorization coalgebra} 

For each open set $U\subset \Sigma$ as above we consider the complex
 of Borel-Moore chains of $\Coh_0(U)$

\[
 \Rc(U) \, =  \, C_\bullet^\BM(\Coh_0(U)) \, := \, R\Gamma (\Coh_0(U), \omega_{\Coh_0(U)}).
 \]

\begin{prop}\label{cor:coha-fact-dg}
The assignment $\Rc:U\mapsto\Rc(U)$ is a locally constant factorization coalgebra on $S$ in the category 
$\Co(\Vect_\k)$ (complexes of $\k$-vector spaces). 

\end{prop}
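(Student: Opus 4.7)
The plan is to derive the factorization coalgebra structure on $\Rc$ by applying the functor $C_\bullet^{\BM}(-)$ to the factorization algebra structure on $\Coh_0$ established in Proposition \ref{prop:fact}, using the contravariant nature of Borel–Moore chains with respect to open embeddings. Concretely, for any open embedding of analytic stacks $j : V \hookrightarrow W$, one has $j^! = j^{-1}$ and $j^{-1}\omega_W = \omega_V$, yielding a restriction morphism $R\Gamma(W,\omega_W) \to R\Gamma(V,\omega_V)$. This is the mechanism by which covariant open-set geometry on $S$ will produce contravariant (coalgebra) structure on $\Rc$.

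The comultiplication maps are constructed as follows. For disjoint opens $U_1,\dots,U_n \subset U_0$, the open embedding $\alpha_{U_1,\dots,U_n}^{U_0}$ of \eqref{eq:fact-Coh} induces a restriction map
\[
\Rc(U_0) \, =\, R\Gamma(\Coh_0(U_0),\omega) \, \lra \, R\Gamma\bigl(\textstyle\prod_i \Coh_0(U_i),\omega\bigr).
\]
Composing with the K\"unneth quasi-isomorphism $\omega_{X\times Y} \simeq \omega_X \boxtimes \omega_Y$ (which, since each $\Coh_0^{(n)}(U_i)$ is a locally quotient f-Artin stack of finite type, lifts to a chain-level quasi-isomorphism $R\Gamma(X\times Y,\omega) \simeq R\Gamma(X,\omega) \otimes R\Gamma(Y,\omega)$), we obtain the comultiplication $\Rc(U_0) \to \Rc(U_1)\otimes\cdots\otimes\Rc(U_n)$. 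Associativity of these maps follows from the associativity of the analogous structure on $\Coh_0$ and the functoriality of $j^{-1}$. Multiplicativity (axiom (a2)) is then immediate: if $U_0 = U_1 \sqcup \cdots \sqcup U_p$, then $\alpha$ is an equivalence $\prod \Coh_0(U_i) \simeq \Coh_0(U_0)$, and K\"unneth gives a quasi-isomorphism of complexes.

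For the Weiss descent axiom, we use that Proposition \ref{prop:fact} exhibits $\Coh_0(U_0)$ as the homotopy colimit of $\Nc_\bullet(\Uen,\Coh_0)$ for any Weiss cover $\Uen = \{U_i\}$ of $U_0$; concretely, the $\Coh_0(U_i)$ form an open cover of $\Coh_0(U_0)$ in the analytic topology (every $0$-dimensional sheaf has finite support, which must lie in some $U_i$). Since $\omega$ is a sheaf in the analytic topology on f-Artin analytic stacks and $R\Gamma$ converts sheafy descent into a homotopy limit, we obtain
\[
\Rc(U_0) \, \simeq \, \holim \Bigl\{ \textstyle\prod_i \Rc(U_i) \rightrightarrows \textstyle\prod_{i,j}\Rc(U_{ij}) \to \cdots \Bigr\},
\]
which is the cosimplicial dual of the descent diagram in Proposition \ref{prop:fact}. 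Finally, for the locally constant property, if $U_0 \subset U_1$ is an inclusion of disks, the argument in the proof of Proposition \ref{prop:R(U)-properties}(b) applies verbatim: the stratum-wise homotopy equivalence $C_n(U_0) \hookrightarrow C_n(U_1)$ implies that the restriction map on $GL_n(\CC)$-equivariant Borel–Moore chains is a quasi-isomorphism in every length $n$, hence $\Rc(U_0)\to\Rc(U_1)$ is a quasi-iso in $\dgVect$.

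The main obstacle is the chain-level (as opposed to homology-level) nature of all three axioms: one must work consistently in the dg- or $\infty$-enhancement $\D(\Coh_0(U))_\dg$ constructed in \S\ref{sec:BM}, so that K\"unneth and analytic descent hold as quasi-isomorphisms of complexes rather than isomorphisms of cohomology. Both are clean once one realizes $C_\bullet^{\BM}(-) = R\Gamma(-,\omega)$ as an $\infty$-functor from analytic f-Artin stacks to $\dgVect$ that sends open immersions to restriction and preserves finite products (K\"unneth) and analytic hypercovers (descent); the point is simply to verify that the descent hypothesis for the Weiss cover of $\Coh_0(U_0)$ constructed in Proposition \ref{prop:fact} falls within the scope of analytic descent for the sheaf $\omega$.
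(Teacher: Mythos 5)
Your proposal is correct and follows essentially the same route as the paper: the coalgebra structure is obtained by applying Borel--Moore chains $R\Gamma(-,\omega)$ to the factorization algebra structure on $\Coh_0$ from Proposition \ref{prop:fact} (open restriction plus K\"unneth, with Weiss codescent dualizing to sheaf-theoretic descent for $\omega$), and local constancy is the stratified homotopy-equivalence argument of Proposition \ref{prop:R(U)-properties}(b). You have simply filled in the details that the paper's two-line proof leaves implicit.
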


\begin{proof}The fact that $\Rc$ it is a factorization algebra follows from  Proposition \ref{prop:fact}.
The fact that $\Rc$ is locally constant is proved in the same way as Proposition \ref{prop:R(U)-properties}(b).
\end{proof}

Next, we upgrade this statement to take into account the Hall multiplication. The relevant concept here is that of
a {\em homotopy associative} ($E_1$-)algebra which we now recall.
We will use the language of operads, see, e.g.,  \cite{CG} for a brief background and additional references.

\begin{Defi}\label{def:operads}
 Let  $(\Cc,\otimes, \1)$ be a symmetric monoidal category. 
\begin{itemize} 
\item[(a)] An operad $\Pc$  in  $\Cc $ is a  system of:

\begin{itemize}
\item[$\mathrm{(O1)}$] objects $\Pc(r)\in\Cc$ with
actions of $S_r$, given for $r\geq 0$.

\item[$\mathrm{(O2)}$] The unit morphism ${\bf1}\to \Pc(1)$.

\item[$\mathrm{(O3)}$] The 
 {\em operadic compositions} for any $k, r_1,\cdots, r_k$
 \[
\Pc(k)\otimes \Pc(r_1) \otimes\cdots\otimes\Pc(r_k) \lra \Pc(r_1+\cdots + r_k).
\]

These data must satisfy the axioms of equivariance, associativity and unit. 

  \end{itemize}  

\item[(b)] 
An algebra over an operad $\Pc$ is an object $A\in\Cc$ together with  $S_r$-equivariant morphisms
$\Pc(r)\otimes A^{\otimes r} \to A$, $r\geq 0$ satisfying the axioms of unit and associativity.

\end{itemize} 
\end{Defi}

We will use the case when $\Cc=\sset$,  $\Cc=\Top$ or $\Cc=\Co(\Vect_\k)$. We will refer to these cases as
{\em simplicial, topological} and  {\em dg-operads}. Any topological operad $\Pc$ gives a simplicial
operad $\Sing(\Pc)$ by passing to the singular simplicial sets of the $\Pc(r)$'s. It further gives a dg-operad
$C_\bullet(\Pc)$ formed by the singular chain complexes of the $\Pc(r)$ (considered, as usual,
 as cochain complexes with reverse indexation).

\smallskip

A {\em weak equivalence} of simplicial operads is a morphism $\Pc\to\Qc$ of such operads
such that for each $r$ the morphism of simplicial sets $\Pc(r)\to\Qc(r)$ is a weak equivalence,
i.e., it induces a homotopy equivalence on the  realizations. 

 \smallskip

Recall  \eqref{eq:dg-map} that the category $\Co(\Vect_\k)$  is enriched in the category
$\sset$  of simplicial sets. Thus, for any simplicial operad $\Pc$ we can speak about $\Pc$-{\em algebras} in $\dgVect$.
Such an algebra is a cochain complex $A$ together with morphisms of simplicial sets
\[
\Pc(r) \lra \Map(A^{\otimes r}, A)
\]
compatible with the $S_r$-actions and  operadic compositions.
It  sends the image of ${\bf 1}=\pt$ to the identity map. 
Dually, a $\Pc$-{\em coalgebra} in $\dgVect$ is a complex $B$ with
 morphisms of simplicial sets
\[
\Pc(r) \lra \Map(B, B^{\otimes r})
\]
satisfying similar compatibilities. 
If $\Pc$ is a topological operad, its (co)algebras in $\Co(\Vect_\k)$ are understood as (co)algebras over the simplicial
operad $\Sing(\Pc)$.


\smallskip

Let $m\geq 1$. Let $D_m$  be the topological {\em operad of little $m$-disks}. 
The space $D_m(r)$ parametrizes families $(B_1,\cdots, B_r)$ of round $m$-dimensional open balls
disjointly embedded into the standard unit ball $B=\{|x| < 1\}$ of $\RR^m$, see, e.g.,   \cite{CG}
for more details including the definition of 
  the operadic compositions. 
  
  \begin{Defi}
  By a {\em $E_m$-operad} we mean a topological operad weakly equivalent to $D_m$. 
  An \emph{$E_m$-(co)algebra} in $\dgVect$ is a (co)algebra over an $E_m$-operad.
    \end{Defi}
    We can now formulate our upgrade of the chain level COHA. 

\begin{thm}\label{prop:loc-cst}
$\Rc$ is a locally constant factorization coalgebra on $\Sigma$ in the category of
$E_1$-algebras. 
\end{thm}

 An  $E_1$-algebra can be seen  as a weakly (homotopy) associative dg-algebra, see
 discussion below.

 \subsection{Proof of Theorem \ref{prop:loc-cst}.}

 We first note that $D_1(r)$ is the union of $r!$ contractible components which are
 permuted by $S_r$. This means that algebras over $D_1$ (and so over any $E_1$-operad)
 can be described using the concept of a {\em non-symmetric} (non-$\Sigma$) operad
   \cite[Def. 9]{markl}. A non-symmetric operad in  a monoidal (not necessarily symmetric)
    category 
   $\Cc$ is a datum $\Qc$ of objects $\Qc(r)$, $r\geq 0$ (no symmetric
group action is required) together with a unit morphism $\1\to\Qc(1)$ and the compositions
as in (O3) satisfying the axioms of associativity and unit. Similarly, an algebra over
a non-symmetric operad $\Qc$ is an object $A\in\Cc$ together with morphisms
$\Qc(r)\otimes A^{\otimes r}\to A$ satisfying the axioms of unit and associativity. 
 
 Let $ND_1(r)\subset D_1(r)$ be the connected component formed by families
 $(B_1,\cdots, B_r)$ of   disjoint $1$-disks (i.e., open intervals) in $B=(-1,1)$
 such that the centers of the $B_i$ are positioned in the increasing order.
 Then $ND_1=(ND_1(r))$ is a non-symmetric operad in $\Top$ with each $ND_1(r)$ contractible.
 Let us call an $NE_1$-operad any non-symmetric operad $Q$ in $\Top$ with each $Q(r)$ contractible.
 Given an $NE_1$-operad $\Qc$, we can ``symmetrize'' it, forming an $E_1$-operad
 $S\Qc$ with $S\Qc(r)=Q(r)\times S_r$ and the $S_r$ acting via the second factor.
  This establishes an equivalence between the
 categories of $NE_1$-operads and $E_1$-operads, with the categories of algebras
 over the corresponding operads being identified as well. 
 
 Let us now consider dg-versions of the topological operads above and use slightly different notation for these versions. 
  Let us call an {\em $\ne_1$-operad} a non-symmetric dg-operad $\Kc$ such that
 each cochain complex $Q(r)$ is situated in degrees $\leq 0$ and quasi-isomorphic to $\k$. 
 Because of Dold-Kan equivalence between $\Co^{\leq 0}(\Vect_\k)$ and $\Delta^\circ\Vect_\k$,
 see
 Example \ref{ex:compl-stack}(b), equipping a complex with a structure of an algebra
 over a $NE_1$-operad is the same as equipping it with the structure of 
 an algebra over an $\ne_1$-operad. 
 
 An example of a $\ne_1$-operad is given by the {\em non-symmetric associative operad}
 $\Ass$ with $\Ass(r)=\k$ for all $r$ and all the compositions being the identities.
 Dg-algebras over $\Ass$ are the same as associative dg-algebras. 
 
  So for the proof of Theorem  \ref{prop:loc-cst}
 we exhibit an $\ne_1$-operad $\Kc$ and equip each  $\Rc(U)$ with the structure  of
 a $\Kc$-algebra in a way compatible with factorization coalgebra structure. 
 The argument is an upgrade of the proof of Theorem \ref{thm:ALG} (associativity of COHA)
 so parts of the treatment will be brief.



For $r\geq 1$ let  $\FILT^{(r)} = \FILT^{(r)}(U)$ be
  the stack $\FILT^{(r)} $ parametrizing flags of objects of $Coh_0(U)$
\[
E_{1}\subset E_{2}\subset\cdots\subset E_{r}.
\]
For $r=0$ we put $\FILT^{(0)}=\pt$. 
The stack  $\FILT^{(r)}$ comes with the projections
$$
\xymatrix{
\FILT^{(r)}\ar[r]^{\rho_r} 
\ar[d]_{q_r}& \Coh_0(U)
\\
\Coh_0(U)^r&
}
$$
\begin{align*}
\rho_r(E_{1}\subset E_{2}\subset\cdots\subset E_{r})&= E_{r},
\\
q_r(E_{1}\subset E_{2}\subset\cdots\subset E_{r})&= (E_{1}, E_{2}/E_{1}, \cdots, E_{r}/E_{,r-1}).
 \end{align*}
For $r=0$ we have $\Coh(U)^{0}=\pt$ and we define  $q_0: \pt\to\pt$ to be the identity map
and $\rho_0: \pt\to \Coh(U)$ to sent $\pt$ to the zero sheaf.

Let $\Ec_i$, $i=1,\cdots,  r$,  be the $i$th tautological sheaf on $\Coh_0(U)^r\times U$ 
and
$p_r: \Coh_0(U)\times U \to\Coh_0(U)$ be the projection. 

\smallskip

Similarly to \S \ref{sec:FILT}, we form the sheaf of associative dg-algebras (and, passing to the super-commutator, of dg-Lie algebras) on $\Coh_0(U)^r$
\[
\Gc_r = \Gc_r(U)\,=\bigoplus_{1\leq i<j\leq r } Rp_{r*} \, \ul\RHom(\Ec_{j}, \Ec_{i})
\]
 and find that $\FILT^{(r)} = \MC(\Gc_r)$ so that $q_r$ is identified with the projection
 of the Maurer-Cartan stack. 
  Therefore we have the diagram
 \be
 \Coh_0(U)^r \buildrel \pi_r\over\lla \Tot(\Gc_r^{\leq 1}) \buildrel i_r\over\lla \FILT^{(r)} \buildrel\rho_r\over\lra\Coh_0(U),
 \ee
in which the map $i_r$ realizes $\FILT^{(r)}$ as the zero locus of the section $s_r$ of $\pi_r^*\Gc_r^{2}$ given by the curvature
map. This gives a virtual pullback $i_r^!$ on Borel-Moore homology. We get, so far 
{\em at the level of BM-homology}, the map
\be\label{eq:mult-COHA-r-fold}
m_r = \rho_{r*}\circ  i_r^!\circ \pi_r^*: R(U)^{\otimes r}\lra R(U), \quad R(U) = H_\bullet^\BM(\Coh_0(U)).
\ee
As in \S \ref{sec:associativity},  we see 
that $m_r$ is  the $r$-fold product in the
(associative) COHA $R(U)$. 

\smallskip

Next, we notice that the family $(\Gc_r)_{r\geq 0}$ of dg-Lie algebras carries a kind of
operadic structure.  For $r_1,\cdots, r_n\geq 0$ consider the summation map
\[
\begin{gathered}
\sigma_{r_1,\cdots, r_n}: \Coh_0(U)^{r_1+\cdots + r_n} \,\, \lra \Coh(U)^n,
\\
(\Fc_1, \cdots, \Fc_{r_1+\cdots+ r_n}) \,\mapsto \, \biggl( \bigoplus_{i=1}^{r_1} \Fc_i, 
\bigoplus_{i=r_1+1}^{r_1+r_2} \Fc_i, \cdots, \bigoplus_{i=r_1+\cdots + r_{n-1}+ 1}^{r_1+\cdots + r_n} \Fc_i
\biggr). 
\end{gathered}
\]
 
 \begin{prop}\label{prop:semidirect}
 We have a semidirect product decomposition, more precisely, an isomorphism
 \[
 \lambda_{r_1,\cdots, r_n}: \bigl( \sigma_{r_1,\cdots, r_n}^*\Gc_n\bigr) \rtimes
 \bigl( \Gc_{r_1} \boxplus \cdots \boxplus \Gc_{r_n}\bigr) \buildrel \simeq \over\lra
 \Gc_{r_1+\cdots + r_n}
 \]
 of sheaves of dg-Lie algebras on $\Coh(U)^{r_1+\cdots + r_n}$. 
 \end{prop}
 
 The proposition means that we have a split exact sequence 
  \be\label{eq:SES-semidirect}
 0\to \sigma_{r_1,\cdots, r_n}^*\Gc_n \buildrel a\over\lra  \Gc_{r_1+\cdots + r_n}
 \buildrel b\over\lra \Gc_{r_1} \boxplus \cdots \boxplus \Gc_{r_n} \to 0
 \ee
  in which 
 $\sigma_{r_1,\cdots, r_n}^*\Gc_n$ is a dg-Lie ideal with
 quotient $\Gc_{r_1} \boxplus \cdots \boxplus \Gc_{r_n}$.
 
 \vskip .2cm
 
 \noindent { \sl Proof of Proposition  \ref{prop:semidirect}:}  By construction,  $\Gc_{r_1,\cdots, r_n}$
 consists of upper-triangular square matrices of size $r_1+\cdots + r_n$. We can decompose
 such a matrix into blocks of sizes $r_i\times r_j$, $1\leq i\leq j\leq n$. Of these, the diagonal blocks
 (of sizes $r_i\times r_i$) are upper-triangular, since the total matrix must be upper-triangular. These
 correspond to the  $\Gc_{r_i}$. So the block-diagonal part is 
 $\Gc_{r_1} \boxplus \cdots \boxplus \Gc_{r_n}$. Similarly, the over-diagonal blocks
 are seen as representing $\sigma_{r_1,\cdots, r_n}^*\Gc_n$. \qed
 
 \begin{prop}
  The isomorphisms $\lambda_{r_1,\cdots, r_n}$ satisfy operadic assocativity. That is, suppose that
 each $r_i$ is decomposed as $r_i=r_{i,1} +\cdots + r_{i, m_i}$. Then the isomorphisms
 \[
 \lambda_{r_{i,1}, \cdots,  r_{i, m_i}}: 
  \bigl( \sigma_{r_{i,1},\cdots, r_{i, m_i}}^*\Gc_{n_i}\bigr) \rtimes
 \bigl( \Gc_{r_{i,1}} \boxplus \cdots \boxplus \Gc_{r_{i,m_i}}\bigr) \buildrel \simeq \over\lra
 \Gc_{r_{i,1}+\cdots + r_{i,m_i}}=\Gc_{r_i}, \quad i=1,\cdots, n
 \]
 together with $\lambda_{r_i,\cdots, r_n}$, compose to $\lambda_{r_{1,1},\cdots, r_{1,m_1},
 r_{2,1}, \cdots, r_{2, m_2}, \cdots, r_{n,1}, \cdots, r_{n, m_n}}$. 
  \end{prop}
  
  \begin{proof} Straightforward verification  in terms of matrices whose blocks are decomposed
  into further blocks. \end{proof}

 Next, we study the compatibilty of the curvature sections $s_r$ on the $\pi_r^*\Gc_r^2$  with the semidirect product decompositions $\lambda_{r_1, \cdots, r_n}$. 
 Let $r=r_1 + \cdots +r_n$. 
 On $\Tot(\Gc_{r }^{\leq 1})$ the sequence \eqref{eq:SES-semidirect} gives
 a short exact sequence of vector bundles
 \be\label{eq:induced-from-semidirect}
 0\to \pi_r^* \sigma_{r_1,\cdots, r_n}^*\Gc_n^2 \buildrel \alpha\over\lra
 \pi_r^* \Gc_r^2 \buildrel \beta\over\lra \pi_r^*\bigl(\Gc_{r_1}^2 \boxplus\cdots\boxplus
 \Gc_{r_n}^2\bigr) \to 0
 \ee
 pulled back from $\Coh_0(U)^r$. 
 We apply to this situation the analysis of \S \ref{subsec:mult-euler},  taking
 $X=\Tot(\Gc_{r }^{\leq 1})$ and viewing \eqref{eq:induced-from-semidirect} as
 an instance of the sequence \eqref{eq:SES-Euler}. The curvature section $s=s_r$ 
 of the middle bundle
 gives rise to the section $s''=s_r''=\beta(s)$ of $\pi_r^*\bigl(\Gc_{r_1}^2 \boxplus\cdots\boxplus
 \Gc_{r_n}^2\bigr)$ with zero locus $X_{s''} = \Tot(\Gc_{r }^{\leq 1})_{s_r''}$
 and the section $s'=s'_r$ of  $\pi_r^* \sigma_{r_1,\cdots, r_n}^*\Gc_n^2$ over $X_{s''}$. 
 To describe them we consider, for each $i=1,\cdots, n$, the stack $\FILT^{(r_i)}$
 parametrizing flags $E_{i,1}\subset E_{i,2}\subset\cdots\subset E_{i, r_i}=E_i$
 of objects of $\Coh_0(U)$. Let
 \[
 \phi: \prod_{i=1}^n\FILT^{(r_i)} \lra \Coh_0(U)^r = \prod_{i=1}^r \Coh_0(U)
 \]
be the projection which sends a tuple of flags as above to the tuple  $(E_1,\cdots, E_n)$
of their maximal elements. We also denote by
\[
\pi_{r_1, \cdots, r_n}: \Tot(\phi^* \Gc_n^{\leq 1}) \lra \prod_{i=1}^n \FILT^{(r_i)}
\]
the projection. 

\begin{prop}
(a) $s''_r$ equal to (the pullback of) the tuple $(s_{r_1}, \cdots, s_{r_n})$
considered as a section of the external direct sum.

\vskip .2cm

(b) $X_{s''}$ is identified with $\Tot(\phi^*\Gc_n^{\leq 1})$. 

\vskip .2cm

(c) Under the identification of (b), the restriction of $\pi_r^*\sigma_{r_1, \cdots, r_n}^* 
\Gc_n^2$ to $X_{s''}$ is identified with $\pi_{r_1,\cdots, r_n}^*\phi^* \Gc_n^2$.

\vskip .2cm

(d) Under the identification of (c), the section $s_r'$ is identified with the pullback of
$s_n$.
\end{prop}

\begin{proof}
(a) As in the proof of Proposition \ref{prop:semidirect}, let us view sections of $\Gc_r$ as  upper-triangular $r\times r$ matrices subdivided
into blocks of sizes $r_i\times r_j$. The projection $b$ (whose pullback is $\beta$)
associates to such a matrix $x$ its block-diagonal part which we denote $x_\Delta$.
Thus $\beta(s_r)$ associates to $x$ the block-diagonal part of the curvature,
i.e., $(dx+ (1/2)[x,x])_\Delta$. 
Since the block-diagonal subspace is a dg-Lie subalgebra, this equals
$d(x_\Delta) + (1/2)[x_\Delta, x_\Delta]$ which corresponds to the pullback
of $(s_{r_1},\cdots, s_{r_n})$. 

\vskip .2cm

(b) Let us represent a point of $\Coh_0(U)^r$, $r=r_1+\cdots + r_n$
 as a sequence of sheaves
 \[
 \Fc_{1,1},\cdots, \Fc_{1, r_1}, \Fc_{2,1}, \cdots, \Fc_{2, r_2}, \cdots
 \Fc_{n,1}, \cdots, \Fc_{n, r_n}. 
 \]
In terms of matrices $x\in\Gc_r^1$, vanishing of the block-diagonal part of the curvature
of $x$ means that the Ext-data for the $\Fc_{i,j}$ provided by $x$, integrate to
$n$ filtrations
\[
E_{i,1} \subset E_{i,2} \subset\cdots\subset E_{i, r_i} = E_i, 
\quad E_{i,j}/E_{i, j-1}\simeq \Fc_{i,j}, \quad i=1,\cdots, n, \,\, j=1,\cdots, r_i,
\]
i.e., we have a point of $\prod_{i=1}^n \FILT^{(r_i)}$. Further, the summation
map $\sigma_{r_1,\cdots, r_n}$ on the sequence of the $\Fc_{I,j}$ corresponds
to the projection $\phi$. The over-diagonal blocks of $x$ assemble into a section
of $\phi^* \Gc_n^{\leq 1}$, whence the statement. 

\vskip .2cm

Part (c) is clear from the above.
To see (d), notice that the pullback of $s_n$ represents the over-diagonal 
blocks of the curvature of $x$. 
\end{proof}

We now apply the formalism of homotopy canonical Euler classes from Appendix \ref{sec:hom-can-Euler}.
Let $\Kc_r(U)=K_{\pi_r^*\Gc_r^2}$ be the parameter complex for the homotopy canonical orientation
class of the bundle $\pi_r^*\Gc_r^2$ on $\Tot(\Gc_r^{\leq 1})$, see \S \ref {subsec:hom-can-Euler}. 
Here $\Gc_r=\Gc_r(U)$, as above.  By construction, each $\Kc_r(U)$ maps quasi-isomorphically to $\k$. 
The semidirect product decomposition of Proposition 
\ref{prop:semidirect} and the pairings \eqref{eq:pairings-of-K} of the $K$-complexes give morphisms of
complexes
\[
\Kc_n(U)\otimes \Kc_{r_1}(U)\otimes\cdots\otimes\Kc_{r_n}(U) \lra \Kc_{r_1+\cdots r_n}(U). 
\]
The operadic associativity of the isomoprhisms $\lambda_{r_1,\cdots, r_n}$ and the associativity of the
pairings  \eqref{eq:pairings-of-K}  imply that $\Kc(U)=(\Kc_r(U))_{r\geq 0}$ is a (non-symmetric)
dg-operad. Since each $\Kc_r(U)$ is quasi-isomorphic to $\k$, we see that $\Kc(U)$ is
an $\ne_1$-operad. Further, the correspondence $U\mapsto \Kc(U)$ forms a presheaf
(in fact, a sheaf up to homotopy, by the above) of dg-operads on the analytic surface $\Sigma$. 
Let $\Kc=\Kc(\Sigma)$ be the operad of global sections. 

\vskip .2cm
 Finally, let us upgrade the $r$-fold multiplication map \eqref{eq:mult-COHA-r-fold} to the cochain level
 by analyzing the ambiguity. This maps involves the virtual pullback $i_r^!$ which is defined
 in terms of the refined Chern class $c_d(\pi_r^*\Gc_r^2)$, $d=\rk (\Gc_r^2)$. Using the homotopy
 canonical cochain lifting $\wt c_d(\pi_r^*\Gc_r^2)$, $d=\rk (\Gc_r^2)$, see \eqref{eq:wt-c_r},
 we define a cochain level multiplication
 \be\label{eq:mu-r}
\wt m_{r,U}: \Kc_r(U)\otimes  \Rc(U)^{\otimes r} = \Kc_r(U) \otimes C_\bullet^\BM(\Coh_0(U))^{\otimes r}  \lra C_\bullet^\BM(\Coh_0(U))=\Rc(U). 
\ee
The multiplicativity of the $\wt c_d$ in short exact sequences \eqref{eq:c_r-mult-hom-can} implies
that the $\mu_{r,U}$ make $\Rc(U)$ into an algebra over the $\ne_1$-operad $\Kc(U)$ and therefore
over $\Kc=\Kc(\Sigma)$. Further, these $\Kc$-algebra structures are clearly compatible with
the factorization coalgebra structure on the presheaf  $\Rc=(\Rc(U))$. This finishes the proof of Theorem
\ref{prop:loc-cst}. \qed


\subsection{Proof of Theorem \ref{thm:PBW}} As before, let $\Sigma=S^\an$. 
 For any open subset $U\subset \Sigma$   (in the complex analytic topology)
 we have the $\ZZ^2$-graded space $\Theta(U)$ and the symmetrized product map
$\sigma_U: \Sym(\Theta(U))\to R(U)$. Because of  Proposition \ref{prop:coh-alg=anal} and the identification \eqref{eq:BM=alg=anal}, the map $\sigma$ of Theorem \ref{thm:PBW}
is identified with the global map $\sigma_\Sigma$, corresponding to $U=\Sigma$. 
Now, if $U$ is a disk, then $\sigma_U$ is an isomorphism by Theorem \ref{thm:flat}.  
We will deduce the global statement  (for $U=\Sigma$) from these local ones.

\smallskip

For this, we upgrade the correspondence $U\mapsto \Theta(U)$ to a complex of 
sheaves $\Vc$ on $\Sigma$
so that $\Theta(U) = \HH^{-\bullet}(U, \Vc)$ is the hypercohomology of $U$ with coefficients in $\Vc$.
That is,  we define
\[
\Vc = \omega_\Sigma\otimes_\k \Theta', 
\]
the tensor product of the dualizing complex $\omega_\Sigma$ and the graded
vector space $\Theta'=\Theta[0,-4]$, see \eqref{eq:Theta'}.
Recall that $\Theta$ and therefore $\Theta'$ is spanned by the basis vectors $t^n q^{i-1}$, and
such a vector is 
  identified with $\theta_{n,i}= ch_i(\Ec_n)\cap\theta_n \in
R^{n, 2-2i}(\CC^2)$, see \eqref{eq:theta-n-i}.  Here, as we recall,  $\Ec_n$ 
is the tautological rank $n$ bundle on $\Coh^{(n)}_0(\CC^2)$
whose fiber at a point represented by a coherent sheaf $\Fc$ is $H^0(\Fc)$
and $\theta_n$ is the fundamental class of $\Coh_\Ipt^{(n)}(\CC^2)$.

As before, we denote by the same symbol $\Ec_n$ the analogous tautological bundle on $\Coh_0^{(n)}(\Sigma)$
and, if necessary,  its restriction to $\Coh_\Ipt^{(n)}(\Sigma)$. 

Extending the construction of \eqref{eq:p-n-dagger}, we choose a cocycle representing the fundamental class
of  $\Coh_\Ipt^{(n)}(\Sigma)$ in $H_2^\BM$ and define the morphism
\[
p_n^\dagger: p_n^*\omega_\Sigma \lra p_n^!\omega_\Sigma [2] = \omega_{\Coh_\Ipt^{(n)}(\Sigma)}[2]
\]
as the cup-product with this cocycle. Here $p_n: \Coh^{(n)}_{\Ipt}(S)\to S$ is the projection defined in \eqref{eq:1-pointed}. 
Further, for each $i$ we fix a cocycle representative $\wt {ch}_i(\Ec_n)$ of $ch_i(\Ec_n)\in H^{2i}(\Coh_\Ipt^{(n)}(\Sigma), \k)$.

 The sheaf $\Vc$ and the factorization coalgebra $\Rc$ are both presheaves with values
in the category of cochain complexes.
We define a morphism of presheaves $\wt\alpha: \Vc\lra\Rc$ as the composition of the two morphisms:
first, the morphism
\[
 R\Gamma(U, \omega_\Sigma)\otimes  t^n q^{i-1}\lra
  R\Gamma(\Coh_\Ipt^{(n)}(U), \omega_{\Coh_\Ipt^{(n)}(U)}), \quad \gamma\otimes t^n q^{i-1} \mapsto
  \wt{ch}_i(\Ec_n)\cap  p_n^\dagger (p_n^*(\gamma)), 
\]
(here $p_n^*(\gamma)$ is  an element of $R\Gamma(\Coh_\Ipt^{(n)}(U)),  p_n^*\omega_U)$)
and, second, the direct image morphism  
\[
\eps_*:  R\Gamma(\Coh_\Ipt^{(n)}(U), \omega_{\Coh_\Ipt^{(n)}(U)}) \
\lra R\Gamma(\Coh^{(n)}_0(U), \omega_{\Coh^{(n)}_0(U)}) \, = \Rc(U)^{(n)},
\]
where $\eps: \Coh_\Ipt^{(n)}(U) \to \Coh_0^{(n)}(U)$ is the (closed) embedding.

 Since $\Vc$ is a sheaf with values in the  category of cochain complexes, its symmetric algebra
$\Sym(\Vc)$ is a factorization coalgebra with values in this category, by  Proposition \ref{prop:sym-sh-fa}. 
Since $\Rc$ is a factorization algebra in the category of $E_1$-algebras, 
we can define the  symmetrized product 
$
\wt\sigma: \Sym(\Vc) \to \Rc
 $
 by setting $\wt\sigma = \sum_{n\geq 0} \wt\sigma_n,$ where
 \be\label{eq:sym-prod2}
 \wt\sigma_n: \Sym^n(\Vc) \lra \Rc, 
\quad
\wt\sigma_n(v_1\bullet\cdots\bullet v_n) \,={1\over n!} \sum_{s\in S_n} \mu_n (\wt\alpha(v_{s(1)}) \otimes \cdots \otimes 
\wt\alpha(v_{s(n)})),
\ee
lifting the map $\sigma$ from \eqref{eq:sym-prod1}. 
In other words, $\wt\sigma_n$ is the symmetrization of the map
\[
\mu_n\circ(\wt\alpha \otimes\cdots\otimes\wt\alpha): \Vc^{\otimes n} \lra\Rc. 
\]

The map $\wt\sigma$ is a morphism of factorization coalgebras 
{\em in the category of $\ZZ^2$-graded cochain complexes}. Note that we do not claim
(and it is not true) that $\wt\sigma$ is a morphism of factorization coalgebras
in the category of $E_1$-algebras.

By the above,
$\wt\sigma_U$ is a weak equivalence (of $\ZZ^2$-graded cochain complexes)
 for any $U$ which is, topologically, a disk.
  Therefore $\wt\sigma$ is a weak equivalence (of factorization coalgebras
 in the category of $\ZZ^2$-graded cochain complexes)
  by Proposition \ref{prop:mor-fa-local}.
Taking $U=\Sigma$
we obtain Theorem \ref{thm:PBW}.


 \subsection{$E_4$-structure on the flat COHA}

By  \cite{ginot}, \cite{lurie-HA}, locally constant factorization (co)algebras on $\RR^m$ 
with values in  $\Co(\Vect_\k)$ can be identified
with $E_m$-(co)-algebras in $\Co(\Vect_\k)$, the identification associating to a (co)algebra $\Bc$ the object $\Bc(B)$ where $B\subset\RR^m$ is the standard unit $m$-ball. 
Note that $\Bc(B)$ is weak equivalent to $\Bc(\RR^d)$. 

\smallskip

Let us specialize this to the case when $\Bc=\Rc$ and  $m=4$, since  $\CC^2\simeq\RR^4$. 
In this case we form the cochain complex
$\Rc(B) \,\simeq \, \Rc(\CC^2)$ 
whose cohomology is the flat Hecke algebra
$R(B)\simeq R(\CC^2)$ studied in \S \ref{sec:local-Hec}.  The general results above,
applied to the category $\Cc$ of $E_1$-algebras, imply:

\begin{cor}
$\Rc(\CC^2)$ 
is $E_1$-algebra in the category of $E_4$-coalgebras. \qed
\end{cor}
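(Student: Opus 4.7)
The plan is that this should follow essentially verbatim from Proposition~\ref{prop:loc-cst} combined with the Ginot--Lurie identification (\cite{ginot}, \cite{lurie-HA}) recalled just above the statement, specialized to $m = 4$ via the identification $\CC^2 \simeq \RR^4$. So the ``proof'' is really one of bookkeeping, and the work is mostly in unpacking what it says.

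First I would note that the category $\Cc$ of $E_1$-algebras in $\dgVect$ carries a symmetric monoidal model structure (for instance, the structure transferred along the forgetful functor $\Cc \to \dgVect$), so the general formalism of locally constant factorization (co)algebras with values in $\Cc$ makes sense. By Proposition~\ref{prop:loc-cst}, the prescription $U \mapsto \Rc(U)$ defines a locally constant factorization coalgebra on the $4$-manifold $S = \CC^2$ with values in $\Cc$.

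Next, I would invoke the equivalence quoted just above the corollary: locally constant factorization coalgebras on $\RR^m$ with values in a symmetric monoidal model category correspond, by evaluation on the standard unit $m$-ball $B$, to $E_m$-coalgebras in the same target category. Applying this to $m = 4$ and the target $\Cc$, we obtain that $\Rc(B) \simeq \Rc(\CC^2)$ acquires the structure of an $E_4$-coalgebra in $\Cc$, i.e.\ an $E_4$-coalgebra in the category of $E_1$-algebras. This is, tautologically, the same datum as an $E_1$-algebra in the category of $E_4$-coalgebras: both unpack to an object $A \in \dgVect$ equipped with compatible $E_1$-multiplications and $E_4$-comultiplications, where compatibility means that each comultiplication is a map of $E_1$-algebras (equivalently, each multiplication is a map of $E_4$-coalgebras). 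This symmetry between the two operadic structures is the content of the corollary.

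The only point requiring a word of care, which I expect to be the main (mild) obstacle, is to verify that the Ginot--Lurie equivalence applies with target category the model category of $E_1$-algebras, rather than just $\dgVect$. This amounts to checking that the relevant homotopy colimits (or, dually for coalgebras, homotopy limits) used to extend a prefactorization structure from disks to arbitrary opens behave well in $\Cc$, which follows from the fact that the forgetful functor $\Cc \to \dgVect$ creates the relevant homotopy limits; once this is granted, no further content is needed.
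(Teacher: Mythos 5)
Your proposal is correct and follows the paper's own route exactly: the corollary is obtained by applying the Ginot--Lurie identification of locally constant factorization coalgebras on $\RR^4\simeq\CC^2$ with $E_4$-coalgebras, taking as target the symmetric monoidal (model) category of $E_1$-algebras, and invoking Proposition \ref{prop:loc-cst}. The paper likewise leaves implicit both the verification that the general identification applies to this target category and the interchange between ``$E_4$-coalgebra in $E_1$-algebras'' and ``$E_1$-algebra in $E_4$-coalgebras,'' which you make explicit.
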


 \begin{rems} 
 \hfill
\begin{itemize}[leftmargin=8mm]
\item[$\mathrm{(a)}$]
The $E_4$-coalgebra structure on $\Rc(\CC^2)$
 is a cochain level refinement of the comultiplication $\Delta$ on $R(\CC^2)$,
 see \S \ref{subsec:R-bialg}. While $\Delta$ is cocommutative,
 because it is independent on the choice of two distinct disks $U_1, U_2\subset \CC^2$,
 at the cochain level  we do not seem to have cocommutativity 
 since the space of choices of such pairs of disks is not
 contractible (it is precisely the space of binary operations in the operad $D_4$).

\item[$\mathrm{(b)}$] By forming the Koszul dual to   the $E_1$-algebra   structure on $\Rc(\CC^2)$, 
we obtain an $E_1$-coalgebra in the category of
 $E_4$-coalgebras, i.e., an
 $E_5$-coalgebra. Alternatively,
 forming the Koszul dual to the $E_4$-algebra structure, we obtain an $E_5$-algebra. This suggest that some
 $5$-dimensional field theory may be relevant to this picture. 
 
 \end{itemize}
\end{rems}


\medskip

\appendix


\section{Basics on $\infty$-categories  and dg-categories}\label{sec:appendix}

 \subsection{$\oo$-categories}
 
 Let $\k$ be a field  of characteristic $0$. 
 By $\Vect = \Vect_\k$ we denote the category of  $\k$-vector spaces
 and by $\Co(\Vect) = \Co(\Vect_\k)$ the category of complexes of $\k$-vector
 spaces bounded below, with morphisms being morphisms of complexes.
By $\sset$ we denote the category of simplicial sets. 
For a simplicial set $Y$ we denote by $|Y|$ the geometric realization of $Y$.
We say that $Y$ is {\em contractible}, if $|Y|$ is a contractible topological space. 
For a topological space $T$ we denote by $\Sing(T)$ the singular simplicilal set of $T$.

\smallskip
 
 An $\infty$-{\em category}  $\frakC$ is a simplicial set
$(\frakC_n)_{n\geq 0}$ satisfying the partial Kan condition, with elements of $\frakC_0$ called objects and elements
of $\frakC_1$ called morphisms.

Every $\oo$-category $\frakC$ gives rise to an ordinary category $\h\frakC$
known as the {\em homotopy category of} $\frakC$. It has the same objects
as $\frakC$ and its morphisms are certain equivalence classes of morphisms in
$\frakC$. 
Further, $\frakC$ contains the maximal Kan simplicial subset
$\frakC^\Kan$ with $\frakC_0^\Kan=\frakC_0$, having the meaning of the subgroupoid of (weakly) invertible morphisms. 
We refer to \cite{lurie-HTT} for more details. 

\smallskip

A {\em simplicial category} is a category $\Cc$ enriched in $\sset$,
so that for any two objects $\Fc, \Gc\in\Cc$ we are given a simplicial set $\Map_\Cc(\Fc, \Gc)$ with standard properties.
A simplicial category $\Cc$ gives an $\oo$-category $\frakN(\Cc)$ with  the same objects,
 as explained in \cite{lurie-HTT}. 
 
 \smallskip

A {\em dg-category} is a category  $\Cc$  enriched in $\Co(\Vect)$, so that  for any two objects 
$\Fc, \Gc\in\Cc$ we are given 
a cochain complex $\Hom^\bullet_\Cc(\Fc, \Gc)$ with standard properties.
Any dg-category $\Cc$ gives rise to a $\k$-linear category $H^0\Cc$ with the
same objects as $\Cc$ and
\be
\Hom_{H^0\Cc} (\Fc, \Gc) \,=\, H^0 \Hom^\bullet_\Cc(\Fc, \Gc). 
\ee
Further, $\Cc$ can be made into a simplicial category (with the same objects) by
\begin{align}\label{eq:dg-map}
\Map(\Fc, \Gc) \,=\, \DK\bigl( \tau_{\leq 0} \Hom^\bullet_\Cc(\Fc,\Gc)\bigr)
\end{align}
where $\DK$ is the Dold-Kan simplicial set associated to a $\ZZ_{\leq 0}$-graded complex, 
 see  \cite[\S 8.4.1]{weibel} and a discussion in
Example \ref{ex:compl-stack}.  
So it gives rise to an $\oo$-category denoted $N^\dg(\Cc)$, see \cite{lurie-HA}.

\smallskip

\subsection{Enhanced derived categories}\label{subsec:enh-der}

Let $\Ac$ be a $\k$-linear abelian category. We denote by $\Co(\Ac)$ the  category of  complexes over $\Ac$   
with morphisms being morphisms of complexes.   By $\Co(\Ac)_\dg$ we denote the dg-category with the same objects
as $\Co(\Ac)$. For any two objects $\Fc, \Gc$ of  $\Co(\Ac)_\dg$,  the complex $\Hom_{\Co(\Ac)_\dg}(\Fc,\Gc)$ 
is the graded $\k$-vector space $\Hom_\Ac(\Fc,\Gc)$
with the differential given by the commutation with $d_\Fc$ and $d_\Gc$. Thus
$\Co(\Ac)=H^0\Co(\Ac)_\dg$. 
 By $\D(\Ac)= \Co(\Ac)[\Qis^{-1}]$ we denote the  derived category of $\Ac$, i.e., 
 the  localization of $\Co(\Ac)$ by the class $\Qis$ of
 quasi-isomorphisms. 
   There are three closely related \emph{enhancements} of $\D(\Ac)$ with the same objects:

 \begin{itemize}[leftmargin=8mm]
 \item[(a)] The {\em derived dg-category} $\D(\Ac)_\dg$ with the property that
 $\D(\Ac)=H^0 \D(\Ac)_\dg$. 
 If $\Ac$ has canonical injective resolutions $A\mapsto I(A)$, then we define,
  see \cite{bondal-kapranov},
 \[
 \Hom_{\D(\Ac)_\dg}(\Fc, \Gc) = \Hom^\bullet_{\Co(\Ac)_\dg}(I(\Fc), I(\Gc)).
 \]
 The complex in the RHS is also denoted $ \RHom^\bullet(\Fc, \Gc)$.

 \item[(b)] The {\em simplicial derived category}  $\D(\Ac)_\Delta$
  with the property that 
  $\Hom_{\D(\Ac)}(\Fc, \Gc) = \pi_0 \Map_{\D(\Ac)_\Delta}(\Fc, \Gc)$.
 There are two homotopy equivalent ways of constructing 
 $ \Map_{\D(\Ac)_\Delta}(\Fc, \Gc)$: 
 \begin{itemize}[leftmargin=8mm]
 \item[(b1)]  Given the
 data in (a), we can define, as in \eqref{eq:dg-map},
 \[
 \Map_{\D(\Ac)_\Delta}(\Fc, \Gc) \,=\, \DK\bigl(\tau_{\leq 0} \RHom^\bullet(\Fc, \Gc)\bigr).
 \]
 \item[(b2)] 
 The Dwyer-Kan simplicial localization procedure \cite{DK1},\cite{ DK2}  produces
 simplicial sets $\Map(\Fc, \Gc)$,  starting 
 from the category $\Co(\Ac)$ and the class  of  morphisms $\Qis$.  
 We can take  $ \Map_{\D(\Ac)_\Delta}(\Fc, \Gc)$ to be the simplicial sets $\Map(\Fc, \Gc)$. 
 Further, we can use them to 
  get  an intrinsic definition of the
 $\RHom^\bullet(\Fc,\Gc)$ in (a) by taking the normalized chain complexes and stabilizing with respect to the
 shift.  This allows one to define $\D(\Ac)_\dg$  even without the  use of canonical
 injective resolutions. 
 \end{itemize} 
 
 \item[(c)] The {\em derived $\oo$-category}  $\D(\Ac)_\infty$
 with the property that $\h \D(\Ac)_\infty=\D(\Ac)$. 
  As in (b2), it can be defined intrinsically, as the $\infty$-categorical localization
 of $\Co(\Ac)$ by $\Qis$, see \cite{lurie-HA}. 
 \end{itemize}


\medskip

\section{Homotopy canonical Euler classes}\label{sec:hom-can-Euler}

The concept of \emph{coherent homotopy uniqueness} of objects, morphisms, cohomology classes, etc.,
is implicit in the formalism of $\oo$-categories, as well
as in homotopical algebra in general. In this appendix we spell out some instances of this concept
in the dg-context.  

\subsection{Cocycles defined up to a contractible choice}

Let $V$ be a cochain complex over $\k$, and $a\in H^d(V)$. Viewing $a$ as a 
morphism $a: \k\to V[d]$ in $\D(\Vect_\k)$, we can represent $a$ (non-uniquely)  
by a diagram of morphisms of complexes
$\k \buildrel q\over\leftarrow K \buildrel \alpha\over\to V[d]$,
where $q$ is a quasi-isomorphism. Such a diagram is just a right fraction representing the morphism
$a$ in  $\D(\Vect_\k)=\Co(\Vect_\k)[\Qis^{-1}]$ as $a=\alpha q^{-1}$.
We will refer to any such diagram as a {\em $d$-cocycle defined up to a contractible choice} and say that it {\em represents $a$ up to a contractible choice}.

\begin{exas}\label{exas:h<0=0}
(a) Suppose that $a\neq 0$ and $H^j(V)=0$ for $j<d$. Let 
$Z^d(V)\subset V^d$ be the space of $d$-cocycles and $\gamma: Z^d(V)\to H^d(V)$ be the projection. Let
\[
V^{\leq d}_a \,=\,\bigl\{ \cdots \to V^{d-2}\to V^{d-1}\to \gamma^{-1}(\k a)\bigr\}\,
\subset \, V^{\leq d}.
\]
Then the projection to $\k a \simeq \k$ gives a
quasi-isomorphism $V^{\leq d}_a [d] \buildrel \sim\over\to \k$
and the diagram
\[
\k \buildrel\sim\over\longleftarrow V^{\leq d}_a[d] \hra V[d]
\]
represents $a$ up to a contractible choice. 

\vskip .2cm

(b) In particular, let $\Cc$ be a dg-category and $x,y\in\Ob(\Cc)$ be such that
$\Hom^\bullet_\Cc (x,y)$ has $H^j=0$ for $j<0$. Then any nonzero
morphism $f: x\to y$ in $H^0\Cc$ is represented, up to a contractible choice, by the
diagram
\[
\k \buildrel \sim\over\longleftarrow \Hom^{\leq 0}_\Cc (x,y)_f \hra \Hom^\bullet_\Cc
(x,y). 
\]

\end{exas}

\subsection{Homotopy canonical orientation classes}\label{subsec:hom-can-or}
 Let $X\buildrel i\over\hra Y$
be a regular embedding of stacks of codimension $d$. We have then the
canonical {\em orientation isomorphism}
$\eta_{X/Y}: \ul\k_X\buildrel \sim\over\to i^! \ul\k_Y[2d]$
in the derived category $\D(X)$.  If $X\buildrel i\over\hra Y \buildrel j\over\hra Z$
are two composable regular embeddings, with $i$ of codimension $d$ and
$j$ of codimension $e$, then $ji$ is a regular embedding of codimension $d+e$
and $\eta_{X/Z}: \ul \k_X \to (ji)^! \ul \k_Z[2(d+e)]$ is equal to the composition
\be\label{eq:composition-eta}
\ul \k_X \buildrel \eta_{X/Y}\over\lra i^! \ul \k_Y[2d] \buildrel i^! \eta_{Y/Z}[2d]
\over\lra i^! j^! \ul \k_Z[2d+2e].
\ee
Passing to the dg-enhancements, we notice that $\eta_{X/Y}$ connects two
objects which are quasi-isomorphic to single sheaves in degree $0$
and so negative Ext's between these objects vanish. We are therefore
in the situation of Example \ref{exas:h<0=0}(b) and so the diagram
\be
\k \buildrel \sim\over\leftarrow K_{X/Y}:= \Hom^{\leq 0}_{\D(X)_\dg}
(\ul \k_X, i^! \ul\k_Y[2d])_{\eta_{X/Y}} \,\hra \,
\Hom^{\bullet}_{\D(X)_\dg}
(\ul \k_X, i^! \ul\k_Y[2d])
\ee
represents $\eta_{X/Y}$ up to a contractible choice. We can write it
as a canonical closed morphism in $\D(X)_\dg$ of degree $0$
\be
\wt\eta_{X/Y}: K_{X/Y} \otimes \ul\k_X \lra i^! \ul \k_Y[2d]. 
\ee
If $X\buildrel i\over\hra Y \buildrel j\over\hra Z$
are two composable regular embeddings as before, then the composition
of Hom-complexes in the dg-category $\D(X)_\dg$ induces a composition
\be\label{eq:pairing-m-XYZ}
m_{X,Y,Z}: K_{Y/Z}\otimes K_{X/Y} \lra K_{X/Z}
\ee
and such compositions are associative for any composable triple of
regular embeddings. The composition $m_{X,Y,Z}$ fits into the
commutative square
\be\label{eq:composition-of-K}
\xymatrix{
K_{Y/Z}\otimes K_{X/Y}\otimes \ul\k_X 
\ar[d]_{m_{X,Y,Z}\otimes \ul\k_X} 
\ar[rr]^{K_{Y/Z}\otimes \wt\eta_{X/Y} }&&
K_{Y/Z}\otimes i^! \ul\k_Y[2d]
\ar[d]^{i^! \wt\eta_{Y/Z}}
\\
K_{X/Z}\otimes \ul\k_X \ar[rr]_{\wt\eta_{X/Z}} && i^! j^! \ul\k_Z[2d+2e]
}
\ee
which underlies the identification of $\eta_{X/Z}$ with the composition
\ref{eq:composition-eta}.

 \subsection{Homotopy canonical Euler classes  } \label{subsec:hom-can-Euler}
 Let $\Ec$ be a rank $d$ vector
 bundle over a stack $X$. Let $s\in H^0(X,\Ec)$ be a section.
 We consider it as a morphism $s: X\to\Tot(\Ec)$. 
  Let $i_s: X_s\to X$
 be the embedding of the zero locus of $s$. We have then a Cartesian square
 of closed embeddings
 \be\label{eq:square:X-s}
 \xymatrix{
 X_s \ar[d]_{i_s}
 \ar[r]^{i_s} & X\ar[d]^0
 \\
 X \ar[r]_s & \Tot(\Ec). 
 }
 \ee
 The zero section embedding $0: X\hra\Tot(\Ec)$ is regular of codimension $d$, so
 we have the orientation isomorphism in $\D(X)$
 \be
 \eta_\Ec:= \eta_{X/\Tot(\Ec)}: \ul\k_X \lra 0^! \ul\k_{\Tot(\Ec)}[2d].
 \ee
 Applying $i_s^{-1}$ to $\eta_\Ec$, we get a morphism in $\D(X_s)$
 \be\label{eq:euler-orient}
 \ul\k_{X_s} = i_s^{-1}\ul\k_X \buildrel i_s^{-1} \eta_\Ec\over\lra
 i_s^{-1} 0^! \ul\k_{\Tot(\Ec)}[2d] \buildrel\text{B.C.}\over\lra
  i_s^! s^{-1}\ul\k_{\Tot(\Ec)}[2d] = i_s^! \ul\k_X[2d], 
 \ee
 where ``B.C.'' means the base change morphism for the square
 \eqref{eq:square:X-s}, see \cite{kashiwara-schapira} Prop. III.1.9(iii).
  The morphism \eqref{eq:euler-orient} can be
 viewed as an element $c_d(E,s)\in H^{2d}_{X_s}(X, \k)$ which is known
 as the {\em refined Euler (top Chern)  class of} $(\Ec, s)$. Its image
 in $H^{2d}(X,\k)$ is the usual Euler (top Chern) class $c_d(\Ec)$. 
 
 Passing to dg-enhancements, we denote $K_\Ec:= K^\bullet_{X/\Tot(\Ec)}$.
 We can think of objects of the dg-categories  $\D(Y)_\dg$ associated to various stacks $Y$
 as (systems of, see \eqref{eq: D-holim})  complexes consisting of flabby sheaves.
 Now, for a flabby sheaf the $!$-inverse image under a closed embedding
 is given by the sheaf of sections with support. With this understanding, 
 the base change morphism in a Cartesian square of closed embeddings of topological
 spaces is a canonical morphism of sheaves. 
 Therefore our conventions imply that the base change morphism in 
 \eqref{eq:euler-orient} is defined canonically  (no choice needed).  So lifting $\eta_\Ec$ 
 to $\wt\eta_\Ec:=\wt\eta_{X/\Tot(\Ec)}$ as defined in \S\ref{subsec:hom-can-or},
  we upgrade the 
  composite
 morphism \eqref{eq:euler-orient}  to a  closed degree $0$ morphism in $\D(X)_\dg$
 \be\label{eq:wt-c_r}
 \wt c_d(\Ec,s): K_\Ec \otimes \ul\k_{X_s} \lra i_s^! \ul\k_{\Tot(\Ec)}[2d],
 \ee
 representing $c_d(\Ec,s)$ up to a contractible choice.
 
 \subsection{Multiplicativity of homotopy canonical Euler classes}\label{subsec:mult-euler}
 Let
 \be\label{eq:SES-Euler}
 0\to\Ec'\buildrel a\over\lra \Ec\buildrel b\over\lra \Ec''\to 0
 \ee
 be a short exact sequence of vector bundles on a stack $X$, of ranks $d', d, d''$ respectively, so
 $d=d'+d''$. We explain how to upgrade the multiplicativity relation $c_d(\Ec)=c_{d'}(\Ec') c_{d''}(\Ec'')$
 in $H^\bullet(\X,\k)$ to the level of homotopy canonical refined classes. 
 
 Let $s\in H^0(X,\Ec)$ be a section. Then $s'':=b(s)$ is a section of $\Ec''$. Its zero locus 
 $ i_{s''}: X_{s''} \buildrel\over  \hra X$ can be described, informally, as the locus of points
 $x\to X$ such that $s(x)\in\Ec'$. In particular, the bundle $i_{s''}^*\Ec'$ on $X_{s''}$ carries a section
 $s'$ given by the restriction of $s$. The zero locus $(X_{s''})_{s'}$ of this latter section is nothing but $X_s$,
 so we have a commutative triangle of closed embeddings
 \be\label{eq:i-s-s'-s''}
 \xymatrix{
 X_s=(X_{s''})_{s'}
 \ar[rd]_{i_s}  \ar[r]^{\hskip 0.7cm i_{s'}} & X_{s''}\ar[d]^{i_{s''}}
 \\
 & X. 
 }
 \ee
 The multiplicativity of refined Euler classes at the cohomology level can be expressed as the commutativity of the
 triangle in $\D(X_s)$
 \be\label{eq:c_r-mult-homology}
 \xymatrix{
 \ul\k_{X_s} \ar[rr]^{\hskip -0.5cm c_{r'}(i_{s''}^*\Ec', s')} 
 \ar[drr]_{c_d (\Ec, s)}
 &&i_{s'}^! \ul\k_{X_{s''}} [2 d']
 \ar[d]^{i_{s'}^! c_{r''}(\Ec'', s'')[2d']}
 \\
 && i_{s'}^! i_{s''}^! \ul\k_X[2d'+2d'']. 
 } 
 \ee
 To prove this commutativity and to lift it to the homotopy canonical level, we denote by
 \be
 \xymatrix{
 \Tot(\Ec') \ar[dr]_{\pi'} \ar[r]^a & \Tot(\Ec)\ar[d]^\pi  \ar[r]^b&\Tot(\Ec'')\ar[dl]^{\pi''}
 \\
 & X & 
 }
 \ee
 the diagram of the total spaces induced by \eqref{eq:SES-Euler}. We note that
 \be\label{eq:Tot-E'-E-E''-Cart}
 \xymatrix{
 \Tot(\Ec') \ar[r]^a \ar[d]_{\pi'}
  & \Tot(\Ec) \ar[d]^b
 \\
 X \ar[r]_{0_\Ec''} & \Tot(\Ec'') 
 }
 \ee
 is a Cartesian square. Therefore the same base change argument as used in \eqref{eq:euler-orient}
 gives a morphism of complexes
 \[
 \Hom_{\D(X)_\dg}(\ul\k_X, 0_{\Ec''}^! \ul\k_{Tot(\Ec'')}[2d'']) \lra \Hom^\bullet_{\D(\Tot(\Ec')_\dg}
 (\ul\k_{\Tot(\Ec')}, a^! \ul\k_{\Tot(\Ec)}[2d'']. 
 \]
 This morphism induces a morphism
 \be\label{eq:K_E''-->K-rel}
 K_{\Ec''}=K_{X/\Tot(\Ec'')} \lra K_{\Tot(\Ec')/\Tot(\Ec)}. 
 \ee
 Also,
 \be\label{eq:X-Tot(E')-Tot(E)}
 X\buildrel 0_{\Ec'}\over\lra \Tot(\Ec') \buildrel a\over\lra \Tot(\Ec)
 \ee
 is a composable pair of regular embeddings with composition $0_\Ec$. Therefore composing the pairing
 \eqref{eq:pairing-m-XYZ} of this composable pair with the morphism \eqref{eq:K_E''-->K-rel}, we get a
 pairing
 \be\label{eq:pairings-of-K}
 m_{\Ec', \Ec, \Ec''} : K_{\Ec''} \otimes K_{\Ec'} \lra K_\Ec. 
 \ee
 These pairings are associative for any admissible (locally split)  filtration $\Ec_1\subset\Ec_2\subset\Ec$ of vector bundles. 
 
 Further,  \eqref{eq:i-s-s'-s''} and \eqref{eq:X-Tot(E')-Tot(E)} combine into a diagram 
  \be
 \xymatrix{
 X\ar[r]^{0_{\Ec'}} & \Tot(\Ec') \ar[r]^a & \Tot(\Ec)
 \\
 X_s \ar[u]^{i_s} \ar[r]_{i_{s'}}& X_{s''}\ar[u]_{s'}  \ar[r]_{i_{s''}} & X\ar[u]_{s}
 }
 \ee
consisting of two
 Cartesian squares, whose concatenation (i.e., the outer perimeter diagram with horizontal edges composed)
 is  the Cartesian square \eqref{eq:square:X-s}.  We now notice that:
 \begin{itemize}
 \item The right square recovers $\wt c_{d''}(\Ec'', s'')$ by pullback, as in 
 \eqref{eq:euler-orient},  from $\wt \eta_{\Tot(\Ec')/\Tot(\Ec)}$. This follows from the square  
  \eqref{eq:Tot-E'-E-E''-Cart} which shows that $\wt \eta_{\Tot(\Ec')/\Tot(\Ec)}$ 
  is the image of $\wt\eta_\Ec$ under \eqref{eq:K_E''-->K-rel}.
  
  \item The left square recovers $\wt c_{d'} (i_{s''}^*\Ec', s')$ by pullback from $\wt\eta_{\Ec'}=\wt\eta_{X/Tot(\Ec')}$.
  This is because we can subdivide the square into two Cartesian squares
  \[
  \xymatrix{
  X\ar[rr]^{0_{\Ec'}} && \Tot(\Ec')
  \\
  X_{s''} \ar[u]^{i_{s''}} \ar[rr]^{0_{i_{s''}^*\Ec'}} && \Tot(i_{s''}^*\Ec')\ar[u]
  \\
  X_s \ar[u]^{i_{s'}} \ar[rr]_{i_{s'}} && X_{s''}\ar[u]_{s'}
  }
  \]
 \noindent  which show that $\wt\eta_{i_{s''}\Ec'}$ is the pullback of $\wt \eta_{\Ec'}$
  
 \item The composite (outer)  square   \eqref{eq:square:X-s}
 recovers $\wt c_d(\Ec, s)$ by pullback from $\wt\eta_\Ec$ by definition. 
   \end{itemize}
   
   So applying \eqref{eq:composition-of-K}, we obtain a commutative square
   \be\label{eq:c_r-mult-hom-can}
 \xymatrix{
 K_{\Ec''}\otimes K_{\Ec'} \otimes \ul\k_{X_s}
 \ar[d]_{m_{\Ec', \Ec, \Ec''}\otimes \ul\k_{X_s}} \ar[rr]^{ K_{\Ec''} \otimes \wt c_{d'}(i_{s''}^*\Ec', s')} 
 &&i_{s'}^! \ul\k_{X_{s''}} [2d']
 \ar[d]^{i_{s'}^! \wt c_{d''}(\Ec'', s'')[2d']}
 \\
 K_\Ec\otimes\ul\k_{X_s} \ar[rr]_{\wt c_d(\Ec, s)}&& i_{s'}^! i_{s''}^! \ul\k_X[2d'+2d'']. 
 } 
 \ee
  which is a lift of  \eqref{eq:c_r-mult-homology} to the homotopy canonical level. 
 

\medskip

\vskip 1.5cm

\small{

M.K.: Kavli IPMU, 5-1-5 Kashiwanoha, Kashiwa, Chiba, 277-8583 Japan.
Email:  \hfil\break
{\tt mikhail.kapranov@protonmail.com}

\smallskip

E.V.: Institut de Math\'ematiques de Jussieu - Universit\'e Paris Diderot, B\^ atiment Sophie Germain, 8 place Aur\` elie Nemours, 75013 Paris, France.  Email: {\tt     eric.vasserot@imj-prg.fr   }

}

\end{document}